\tikzset{
   mybox/.style  = {draw, rectangle, minimum width=3.4cm, minimum height=0.8cm, text centered, text width=3.2cm,   
  font=\normalsize},
  box/.style  = {draw, rectangle, minimum width=3cm, minimum height=0.8cm, text centered, text width=3.6cm,   
  font=\normalsize},
   myarrow/.style = {line width=0.2pt, draw=black, -triangle 60, postaction={draw, line width=0.2pt, shorten >=10pt,-}}
}
\tikzstyle{arrow} = [->, >=stealth, -triangle 60]
\numberwithin{equation}{section}
\newtheorem{thm}{Theorem}
\newtheorem{thm2}{Theorem}[section]
\newtheorem{lem}[thm2]{Lemma}
\newtheorem{coro}[thm2]{Corollary}
\newtheorem{prop}[thm2]{Proposition}
\theoremstyle{remark}
\newtheorem{rem}[thm2]{Remark}
\newcommand\ee{\mathrm{e}}
\newcommand\dd{\mathrm{d}}
\newcommand{\goto}{\rightarrow}
\renewcommand{\comment}[1]{}
\definecolor{wjs}{RGB}{0,0,255}
\title{Understanding the Acceleration Phenomenon via High-Resolution Differential Equations}
\begin{document}

\author{Bin~Shi\thanks{Florida International University, Miami. Email: bshi001@cs.fiu.edu}  \qquad Simon S.~Du\thanks{Carnegie Mellon University, Pittsburgh. Email: ssdu@cs.cmu.edu}  \qquad   Michael I.~Jordan\thanks{University of California, Berkeley. Email: 
jordan@cs.berkeley.edu (correspondence)} \qquad  Weijie J.~Su\thanks{University of Pennsylvania, Philadelphia. Email: suw@wharton.upenn.edu (correspondence)}}

\date{}
\maketitle

\begin{abstract}
Gradient-based optimization algorithms can be studied from the perspective of
limiting ordinary differential equations (ODEs).  Motivated by the fact that
existing ODEs do not distinguish between two fundamentally different 
algorithms---Nesterov's accelerated gradient method for strongly convex 
functions (NAG-\texttt{SC}) and Polyak's heavy-ball method---we study an
alternative limiting process that yields \textit{high-resolution ODEs}.
We show that these ODEs permit a general Lyapunov function framework for
the analysis of convergence in both continuous and discrete time.  We also
show that these ODEs are more accurate surrogates for the underlying algorithms;
in particular, they not only distinguish between NAG-\texttt{SC} and Polyak's 
heavy-ball method, but they allow the identification of a term that we 
refer to as ``gradient correction'' that is present in NAG-\texttt{SC} but 
not in the heavy-ball method and is responsible for the qualitative difference
in convergence of the two methods.  We also use the high-resolution ODE 
framework to study Nesterov's accelerated gradient method for (non-strongly) convex 
functions, uncovering a hitherto unknown result---that NAG-\texttt{C} minimizes
the squared gradient norm at an inverse cubic rate.  Finally, by modifying 
the high-resolution ODE of NAG-\texttt{C}, we obtain a family of new optimization 
methods that are shown to maintain the accelerated convergence rates of
NAG-\texttt{C} for smooth convex functions.
\end{abstract}

{\bf Keywords.} Convex optimization, first-order method, Polyak's heavy ball method, Nesterov's accelerated gradient methods, ordinary differential equation, Lyapunov function, gradient minimization, dimensional analysis, phase space representation, numerical stability

\section{Introduction}\label{sec: introduction}

Machine learning has become one of the major application areas for optimization 
algorithms during the past decade.  While there have been many kinds of applications,
to a wide variety of problems, the most prominent applications have involved large-scale
problems in which the objective function is the sum over terms associated with individual
data, such that stochastic gradients can be computed cheaply, while gradients are much 
more expensive and the computation (and/or storage) of Hessians is often infeasible.  
In this setting, simple first-order gradient descent algorithms have become dominant,
and the effort to make these algorithms applicable to a broad range of machine 
learning problems has triggered a flurry of new research in optimization, both 
methodological and theoretical.

We will be considering unconstrained minimization problems,
\begin{equation}\label{eq:convex_p}
\min_{x \in \mathbb{R}^n} ~~ f(x),
\end{equation}
where $f$ is a smooth convex function.  Perhaps the simplest first-order 
method for solving this problem is gradient descent. Taking a fixed step size 
$s$, gradient descent is implemented as the recursive rule
\[
x_{k+1} = x_{k} - s \nabla f(x_k),
\]
given an initial point $x_0$.

As has been known at least since the advent of conjugate gradient algorithms,
improvements to gradient descent can be obtained within a first-order framework
by using the history of past gradients.  Modern research on such extended 
first-order methods arguably dates to Polyak~\cite{polyak1964some,polyakintroduction}, 
whose \emph{heavy-ball method} incorporates a momentum term into the gradient 
step.  This approach allows past gradients to influence the current step, while 
avoiding the complexities of conjugate gradients and permitting a stronger
theoretical analysis.  Explicitly, starting from an initial point 
$x_{0},\; x_{1} \in \mathbb{R}^n$, the heavy-ball method updates the 
iterates according to
\begin{equation}\label{eqn: polyak_heavy_ball}
x_{k + 1} =  x_{k} +  \alpha \left( x_{k} - x_{k - 1} \right) -  s \nabla f( x_{k} ),
\end{equation}
where $\alpha > 0$ is the momentum coefficient.  While the heavy-ball method 
provably attains a faster rate of \emph{local} convergence than gradient 
descent near a minimum of $f$, it does not come with \emph{global} guarantees.
Indeed, \cite{lessard2016analysis} demonstrate that even for strongly convex 
functions the method can fail to converge for some choices of the step size.\footnote{\cite{polyak1964some} considers $s = 4/(\sqrt{L} + \sqrt{\mu})^2$ and $\alpha = ( 1 - \sqrt{\mu s})^{2}$. This momentum coefficient is basically the same as the choice $\alpha = \frac{1 - \sqrt{\mu s}}{1 + \sqrt{\mu s}}$ (adopted starting from Section~\ref{sec:myst-grad-corr}) if $s$ is small. }
 
The next major development in first-order methodology was due to Nesterov, who
discovered a class of \emph{accelerated gradient methods} that have a faster 
global convergence rate than gradient descent~\cite{nesterov1983method,
nesterov2013introductory}.  For a $\mu$-strongly convex objective $f$ with 
$L$-Lipschitz gradients, Nesterov's accelerated gradient method (NAG-\texttt{SC}) 
involves the following pair of update equations:
\begin{equation}\label{eqn: Nesterov_strongly}
\begin{aligned}
         & y_{k + 1} = x_{k} - s \nabla f( x_{k} ) \\
         & x_{k + 1} = y_{k + 1} + \frac{1 - \sqrt{\mu s} }{ 1 + \sqrt{\mu s} } \left( y_{k + 1} - y_{k} \right),
         \end{aligned}
\end{equation}
given an initial point $x_{0} = y_{0} \in \mathbb{R}^n$.  Equivalently, 
NAG-\texttt{SC} can be written in a single-variable form that is similar to 
the heavy-ball method:
\begin{equation}\label{eqn: Nesterov_strongly_single}
x_{k + 1} =  x_{k} + \frac{1 - \sqrt{\mu s} }{ 1+ \sqrt{\mu s} } \left( x_{k} - x_{k - 1} \right) -  s\nabla f( x_{k} ) - \frac{ 1 - \sqrt{\mu s} }{ 1 + \sqrt{\mu s} } \cdot  s \left( \nabla f(x_{k}) - \nabla f(x_{k - 1}) \right),
\end{equation}
starting from $x_{0}$ and $x_{1} = x_{0} - \frac{2 s \nabla f(x_{0})}{1 + \sqrt{\mu s}}$. 
Like the heavy-ball method, NAG-\texttt{SC} blends gradient and momentum contributions 
into its update direction, but defines a specific momentum coefficient 
$\frac{1-\sqrt{\mu s}}{1+\sqrt{\mu s}}$. Nesterov also developed the 
\emph{estimate sequence technique} to prove that NAG-\texttt{SC} achieves 
an accelerated linear convergence rate:
\[
f(x_{k}) - f(x^{\star}) \leq O\left( \left(1 - \sqrt{s\mu} \right)^{k} \right),
\] 
if the step size satisfies $0 < s \leq 1/L$. Moreover, for a (weakly) convex 
objective $f$ with $L$-Lipschitz gradients, Nesterov defined a related accelerated 
gradient method (NAG-\texttt{C}), that takes the following form:
\begin{equation}\label{eqn:nagm-c}
\begin{aligned}  
& y_{k + 1} = x_{k} -s  \nabla f(x_{k})\\
& x_{k + 1} = y_{k + 1} + \frac{k}{k+3} (y_{k + 1} - y_{k}),
\end{aligned}
\end{equation}
with $x_{0} = y_{0} \in \mathbb{R}^n$. The choice of momentum coefficient 
$\frac{k}{k+3}$, which tends to one, is fundamental to the estimate-sequence-based
argument used by Nesterov to establish the following inverse quadratic convergence rate:
\begin{equation}\label{eq:nac_k_2_conv}
f(x_{k}) - f(x^{\star}) \leq  O\left( \frac{1}{sk^2} \right),
\end{equation}
for any step size $s \le 1/L$.  Under an oracle model of optimization complexity, 
the convergence rates achieved by NAG-\texttt{SC} and NAG-\texttt{C} are 
\textit{optimal} for smooth strongly convex functions and smooth convex 
functions, respectively \cite{nemirovsky1983problem}.

\subsection{Gradient Correction: Small but Essential}
\label{sec:myst-grad-corr}
Throughout the present paper, we let $\alpha = \frac{1 - \sqrt{\mu s}}{1 + \sqrt{\mu s}}$ 
and $x_{1} = x_{0}-\frac{2s\nabla f(x_{0})}{1+\sqrt{\mu s}}$ to define a specific
implementation of the heavy-ball method in \eqref{eqn: polyak_heavy_ball}. This choice of the momentum coefficient and the second initial point renders 
the heavy-ball method and NAG-\texttt{SC} identical except for the last (small) term 
in \eqref{eqn: Nesterov_strongly_single}. Despite their close resemblance, however, 
the two methods are in fact fundamentally different, with contrasting convergence 
results (see, for example, \cite{bubeck2015convex}). Notably, the former algorithm
in general only achieves \emph{local} acceleration, while the latter achieves 
acceleration method for all initial values of the iterate \cite{lessard2016analysis}. 
As a numerical illustration, Figure~\ref{fig:nag-c-and-heavy} presents the trajectories
that arise from the two methods when minimizing an ill-conditioned convex quadratic 
function. We see that the heavy-ball method exhibits pronounced oscillations throughout 
the iterations, whereas NAG-\texttt{SC} is monotone in the function value once the 
iteration counter exceeds $50$.

This striking difference between the two methods can \textit{only} be attributed to 
the last term in \eqref{eqn: Nesterov_strongly_single}:
\begin{equation}\label{eq:gradient_cor}
\frac{ 1 - \sqrt{\mu s} }{ 1 + \sqrt{\mu s} } \cdot  s 
\left( \nabla f(x_{k}) - \nabla f(x_{k - 1}) \right),
\end{equation}
which we refer to henceforth as the \textit{gradient correction}\footnote{The 
gradient correction for NAG-\texttt{C} is $\frac{k}{k+3} \cdot s 
(\nabla f(x_k) - \nabla f(x_{k-1}))$, as seen from the single-variable 
form of NAG-\texttt{C}: $x_{k+1} = x_k + \frac{k}{k+3}(x_k - x_{k-1}) 
- s \nabla f(x_k) - \frac{k}{k+3} \cdot s (\nabla f(x_k) - \nabla f(x_{k-1}))$.}. 
This term corrects the update direction in NAG-\texttt{SC} by contrasting the 
gradients at consecutive iterates. Although an essential ingredient in 
NAG-\texttt{SC}, the effect of the gradient correction is unclear from the 
vantage point of the estimate-sequence technique used in Nesterov's proof. 
Accordingly, while the estimate-sequence technique delivers a proof of acceleration
for NAG-\texttt{SC}, it does not explain why the absence of the gradient correction 
prevents the heavy-ball method from achieving acceleration for strongly convex functions.

\begin{figure}[htp!]
\centering
\includegraphics[width=3.8in]{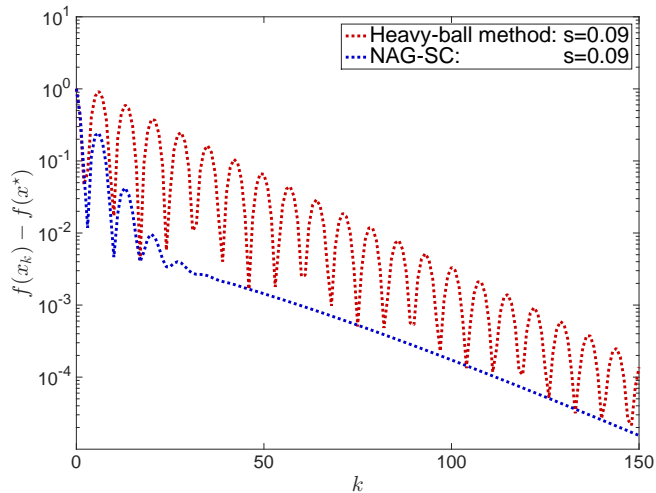}
\caption{A numerical comparison between NAG-\texttt{SC} and heavy-ball method. The objective function (ill-conditioned $\mu/L \ll 1$) is $f(x_{1}, x_{2}) = 5 \times 10^{-3} x_1^{2} + x_2^{2} $, with the initial iterate $(1, 1)$.} 
\label{fig:nag-c-and-heavy}
\end{figure}

A recent line of research has taken a different point of view on the theoretical
analysis of acceleration, formulating the problem in continuous time and obtaining
algorithms via discretization~\cite{su2014differential, krichene2015accelerated,
wibisono2016variational}).  This can be done by taking continuous-time limits
of existing algorithms to obtain ordinary differential equations (ODEs) that
can be analyzed using the rich toolbox associated with ODEs, including Lyapunov
functions\footnote{One can think of the Lyapunov function as a generalization of the idea of the energy of
a system. Then the method studies stability by looking at the rate of change of this measure of energy.}.  For instance, \cite{su2016differential} shows that 
\begin{equation}\label{eqn:ode_old_nagmc}
\ddot{X}(t) + \frac{3}{t} \dot{X}(t) + \nabla f(X(t)) = 0,
\end{equation}
with initial conditions $X(0) = x_0$ and $\dot X(0) = 0$, is the exact limit of 
NAG-\texttt{C}~\eqref{eqn:nagm-c} by taking the step size $s \rightarrow 0$. 
Alternatively, the starting point may be a Lagrangian or Hamiltonian 
framework~\cite{wibisono2016variational}.  In either case, the continuous-time
perspective not only provides analytical power and intuition, but it also provides 
design tools for new accelerated algorithms.

Unfortunately, existing continuous-time formulations of acceleration stop 
short of differentiating between the heavy-ball method and NAG-\texttt{SC}.
In particular, these two methods have the \textit{same} limiting ODE (see, 
for example, \cite{wilson2016lyapunov}):
\begin{equation}\label{eqn:ode_old_nagsc_hb}
\ddot{X}(t) + 2 \sqrt{\mu} \dot{X}(t) + \nabla f(X(t))= 0,
\end{equation}
and, as a consequence, this ODE does not provide any insight into the stronger
convergence results for NAG-\texttt{SC} as compared to the heavy-ball method. 
As will be shown in Section \ref{sec:techniques}, this is because the gradient 
correction $\frac{ 1 - \sqrt{\mu s} }{ 1 + \sqrt{\mu s} } s 
\left( \nabla f(x_{k}) - \nabla f(x_{k - 1}) \right) = O(s^{1.5})$ 
is an order-of-magnitude smaller than the other terms in 
\eqref{eqn: Nesterov_strongly_single} if $s = o(1)$. Consequently, 
the gradient correction is \textit{not} reflected in the \textit{low-resolution} 
ODE \eqref{eqn:ode_old_nagsc_hb} associated with NAG-\texttt{SC}, which is derived 
by simply taking $s \rightarrow 0$ in both \eqref{eqn: polyak_heavy_ball}
and \eqref{eqn: Nesterov_strongly_single}.


\subsection{Overview of Contributions}
\label{subsec: contribution}

Just as there is not a singled preferred way to discretize a differential equation, 
there is not a single preferred way to take a continuous-time limit of a difference 
equation.  Inspired by dimensional-analysis strategies widely used in fluid mechanics 
in which physical phenomena are investigated at multiple scales via the inclusion 
of various orders of perturbations~\cite{pedlosky2013geophysical}, we propose to 
incorporate $O(\sqrt{s})$ terms into the limiting process for obtaining an ODE, 
including the (Hessian-driven) gradient correction $\sqrt{s}  \nabla^{2} 
f(X) \dot{X}$ in \eqref{eq:gradient_cor}.  This will yield \emph{high-resolution 
ODEs} that differentiate between the NAG methods and the heavy-ball method. 

We list the high-resolution ODEs that we derive in the paper here\footnote{We note
that the form of the initial conditions is fixed for each ODE throughout the paper. 
For example, while $x_0$ is arbitrary, $X(0)$ and $\dot X(0)$ must always be equal 
to $x_0$ and $-2\sqrt{s} f(x_0)/(1+\sqrt{\mu s})$ respectively in the high-resolution 
ODE of the heavy-ball method. This is in accordance with the choice of $\alpha = 
\frac{1 - \sqrt{\mu s}}{1 + \sqrt{\mu s}}$ and 
$x_{1} = x_{0}-\frac{2s\nabla f(x_{0})}{1+\sqrt{\mu s}}$.}:
\begin{itemize}
\item[(a)]
The high-resolution ODE for the heavy-ball method~\eqref{eqn: polyak_heavy_ball}: 
\begin{align}\label{eqn: heavy_ball_first}
\ddot{X}(t) +2 \sqrt{\mu}  \dot{X}(t)  +  (1 + \sqrt{\mu s} ) \nabla f(X(t)) = 0,
\end{align}
with $X(0) = x_{0}$ and $\dot{X}(0) =  - \frac{2\sqrt{s} \nabla f(x_{0})}{1 + \sqrt{\mu s}}$. 

\item[(b)]
The high-resolution ODE for NAG-\texttt{SC}~\eqref{eqn: Nesterov_strongly}:
\begin{align}\label{eqn: nag-sc_first}
\ddot{X}(t) + 2 \sqrt{\mu} \dot{X}(t) + \sqrt{s} \nabla^{2} f(X(t)) \dot{X}(t) + \left( 1 + \sqrt{\mu s} \right)\nabla f(X(t))= 0,
\end{align}
with $X(0) = x_{0}$ and $\dot{X}(0) = - \frac{2\sqrt{s} \nabla f(x_{0})}{1 + \sqrt{\mu s}}$.

\item[(c)] 
The high-resolution ODE for NAG-\texttt{C}~\eqref{eqn:nagm-c}:
\begin{equation}\label{eqn: nag-c_first}
\ddot{X}(t) + \frac{3}{t} \dot{X}(t) + \sqrt{s} \nabla^{2} f(X(t)) \dot{X}(t) + \left(1 + \frac{3\sqrt{s}}{2t}\right) \nabla f(X(t)) = 0
\end{equation}
for $t \ge 3\sqrt{s}/2$, with $X(3\sqrt{s}/2) = x_{0}$ and $\dot{X}(3\sqrt{s}/2) = -\sqrt{s} \nabla f(x_{0})$.

\end{itemize}

High-resolution ODEs are more accurate continuous-time counterparts for the 
corresponding discrete algorithms than low-resolution ODEs, thus allowing 
for a better characterization of the accelerated methods. This is illustrated 
in Figure~\ref{fig: ODE_algorithm}, which presents trajectories and convergence of the discrete 
methods, and the low- and high-resolution ODEs. For both NAGs, the high-resolution 
ODEs are in much better agreement with the discrete methods than the low-resolution 
ODEs\footnote{Note that for the heavy-ball method, the trajectories of the 
high-resolution ODE and the low-resolution ODE are almost identical.}. Moreover, for NAG-\texttt{SC}, its high-resolution ODE captures the non-oscillation pattern while the low-resolution ODE does not.

\begin{figure}[t]
\begin{minipage}[t]{0.5\linewidth}
\centering
\includegraphics[width=3.2in]{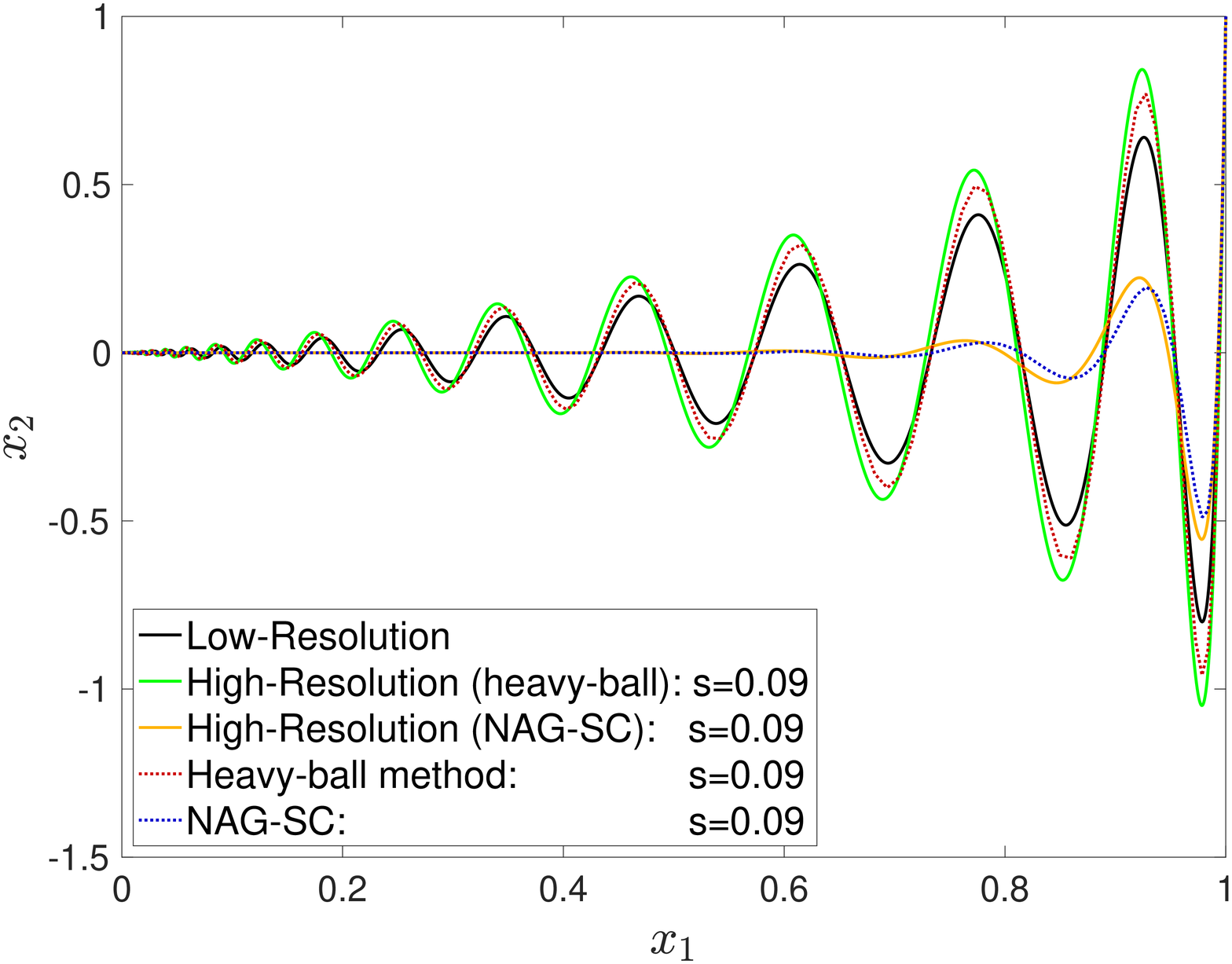}
\end{minipage}
\begin{minipage}[t]{0.5\linewidth}
\centering
\includegraphics[width=3.2in]{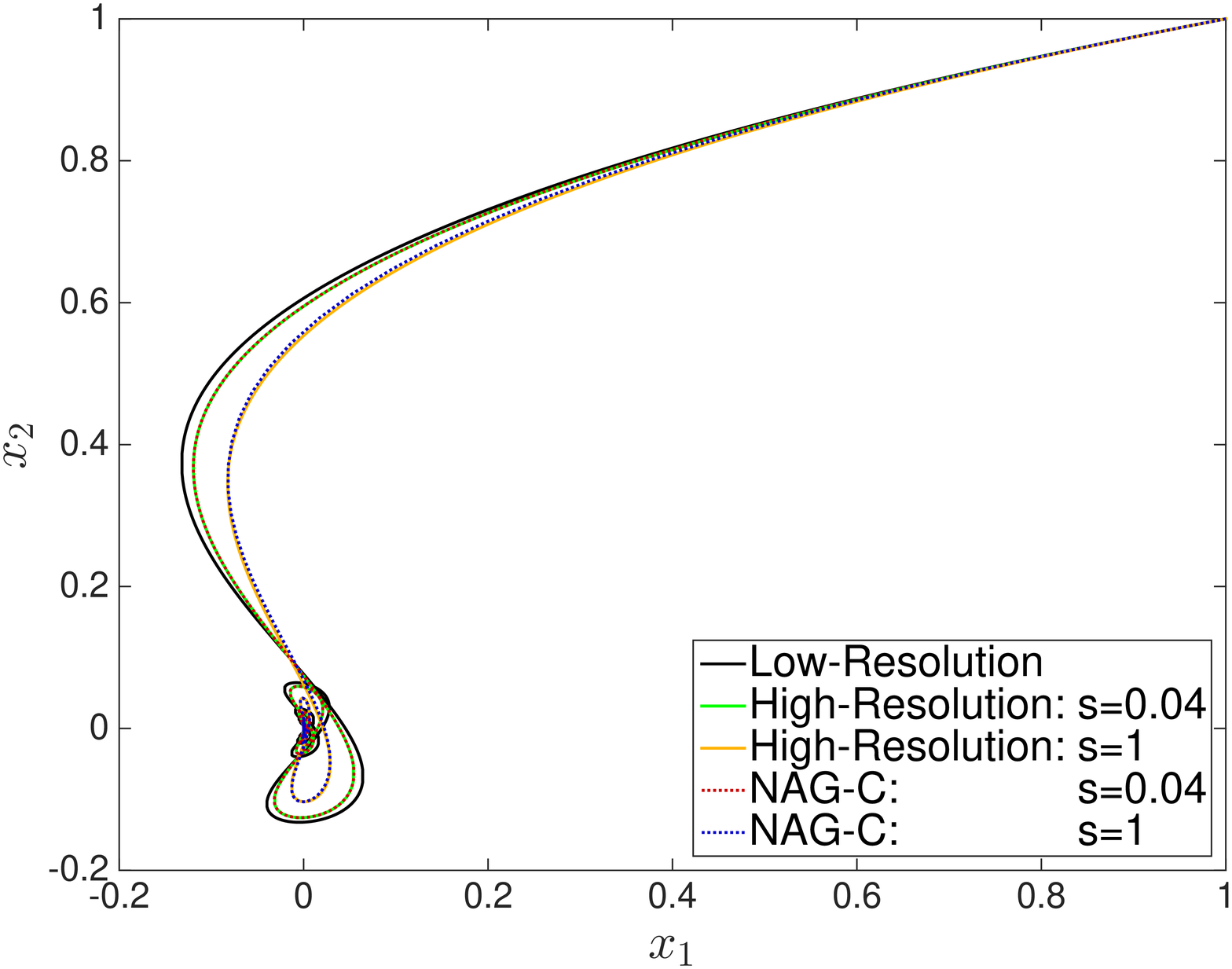}
\end{minipage}
\begin{minipage}[t]{0.5\linewidth}
\centering
\includegraphics[width=3.2in]{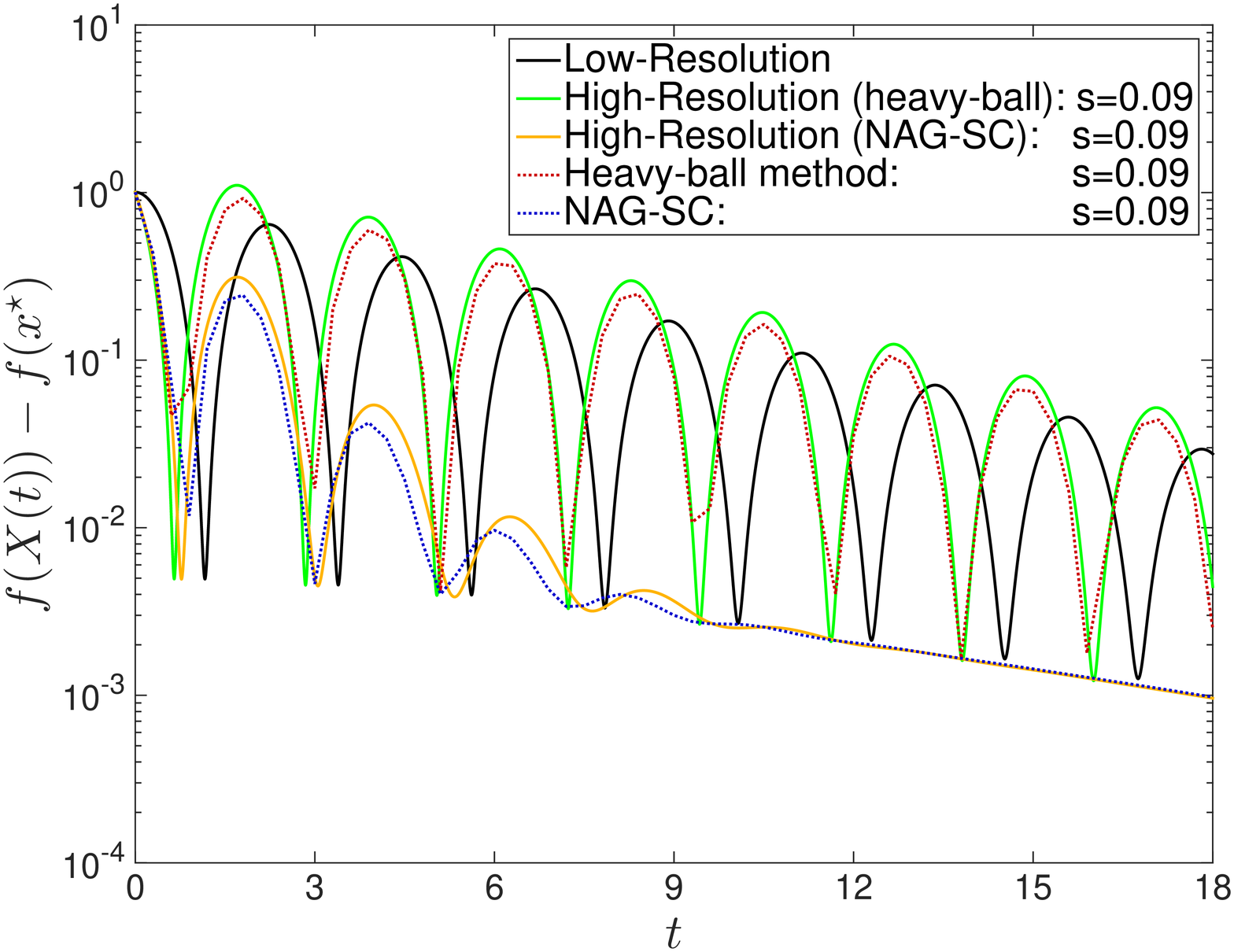}
\end{minipage}
\begin{minipage}[t]{0.5\linewidth}
\centering
\includegraphics[width=3.2in]{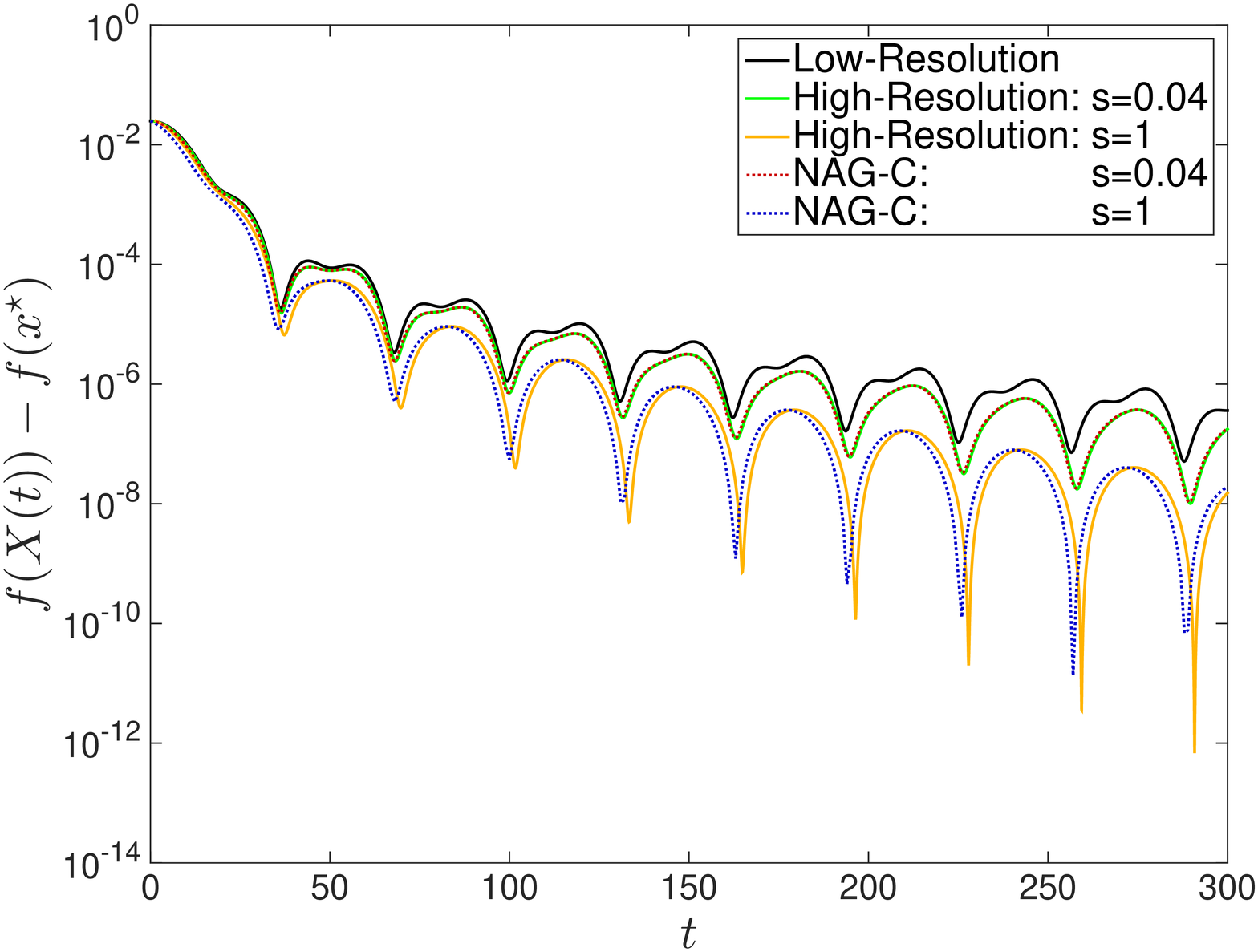}
\end{minipage}
\caption{Top left and bottom left: trajectories and errors of NAG-\texttt{SC} and the heavy-ball method for minimizing $f(x_{1}, x_{2}) = 5 \times 10^{-3} x_1^{2} + x_2^{2}$, from the initial value $(1, 1)$, the same setting as Figure~\ref{fig:nag-c-and-heavy}. Top right and bottom right: trajectories and errors of NAG-\texttt{C} for minimizing $f(x_{1}, x_{2}) = 2 \times 10^{-2}x_{1}^{2} + 5 \times 10^{-3} x_{2}^{2}$, from the initial value $(1, 1)$. For the two bottom plots, we use the identification $t = k\sqrt{s}$ between time and iterations for the x-axis. 
} 
\label{fig: ODE_algorithm}
\end{figure}

The three new ODEs include $O(\sqrt{s})$ terms that are not present in the
corresponding low-resolution ODEs (compare, for example, \eqref{eqn: nag-c_first} 
and \eqref{eqn:ode_old_nagmc}). Note also that if we let $s \goto 0$, each 
high-resolution ODE reduces to its low-resolution counterpart.  Thus, the 
difference between the heavy-ball method and NAG-\texttt{SC} is reflected 
only in their high-resolution ODEs: the gradient correction \eqref{eq:gradient_cor} 
of NAG-\texttt{SC} is preserved only in its high-resolution ODE in the form 
$\sqrt{s} \nabla^{2} f(X(t)) \dot{X}(t)$.  This term, which we refer to as 
the (Hessian-driven) gradient correction, is connected with the discrete 
gradient correction by the approximate identity:
\[
\frac{ 1 - \sqrt{\mu s} }{ 1 + \sqrt{\mu s} } \cdot s \left( \nabla f(x_{k}) - \nabla 
f(x_{k - 1}) \right) \approx s \nabla^2 f(x_{k})(x_{k} - x_{k - 1}) 
\approx s^{\frac32} \nabla^2 f(X(t)) \dot X(t)
\]
for small $s$, with the identification $t = k \sqrt{s}$. The gradient correction 
$\sqrt{s} \nabla^{2} f(X) \dot{X}$ in NAG-\texttt{C} arises in the same 
fashion\footnote{Henceforth, the dependence of $X$ on $t$ is suppressed when 
clear from the context.}. Interestingly, although both NAGs are first-order 
methods, their gradient corrections brings in second-order information from 
the objective function.  

Despite being small, the gradient correction has a fundamental effect on the 
behavior of both NAGs, and this effect is revealed by inspection of the 
high-resolution ODEs.  We provide two illustrations of this.

\begin{itemize}

\item \textbf{Effect of the gradient correction in acceleration.} 
Viewing the coefficient of $\dot X$ as a damping ratio, the ratio 
$2 \sqrt{\mu} + \sqrt{s} \nabla^{2} f(X)$ of $\dot X$ in the high-resolution 
ODE \eqref{eqn: nag-sc_first} of NAG-\texttt{SC} is \textit{adaptive} 
to the position $X$, in contrast to the \textit{fixed} damping ratio 
$2\sqrt{\mu}$ in the ODE \eqref{eqn: heavy_ball_first} for the heavy-ball 
method.  To appreciate the effect of this adaptivity, imagine that the 
velocity $\dot X$ is highly correlated with an eigenvector of $\nabla^{2} f(X)$ 
with a large eigenvalue, such that the large friction $(2 \sqrt{\mu} + 
\sqrt{s} \nabla^{2} f(X)) \dot X$ effectively ``decelerates'' along the 
trajectory of the ODE \eqref{eqn: nag-sc_first} of NAG-\texttt{SC}. 
This feature of NAG-\texttt{SC} is appealing as taking a cautious step 
in the presence of high curvature generally helps avoid oscillations. 
Figure~\ref{fig:nag-c-and-heavy} and the left plot of 
Figure~\ref{fig: ODE_algorithm} confirm the superiority of 
NAG-\texttt{SC} over the heavy-ball method in this respect.

If we can translate this argument to the discrete case we can understand
why NAG-\texttt{SC} achieves acceleration globally for strongly convex functions
but the heavy-ball method does not.  We will be able to make this translation
by leveraging the high-resolution ODEs to construct discrete-time Lyapunov 
functions that allow maximal step sizes to be characterized for the 
NAG-\texttt{SC} and the heavy-ball method.  The detailed analyses is given 
in Section \ref{sec:strongly}.

\item \textbf{Effect of gradient correction in gradient norm minimization.} 
We will also show how to exploit the high-resolution ODE of NAG-\texttt{C} 
to construct a continuous-time Lyapunov function to analyze convergence in 
the setting of a smooth convex objective with $L$-Lipschitz gradients.  
Interestingly, the time derivative of the Lyapunov function is not only
negative, but it is smaller than $-O(\sqrt{s} t^2 \|\nabla f(X)\|^2)$. 
This bound arises from the gradient correction and, indeed, it cannot be
obtained from the Lyapunov function studied in the low-resolution case by 
\cite{su2016differential}. This finer characterization in the high-resolution 
case allows us to establish a new phenomenon:
\[
\min_{0 \leq i \leq k} \left\| \nabla f(x_{i}) \right\|^{2} \leq O\left(\frac{L^{2}}{k^3}\right).
\]
That is, we discover that NAG-\texttt{C} achieves an inverse \emph{cubic} 
rate for minimizing the squared gradient norm.  By comparison, from 
\eqref{eq:nac_k_2_conv} and the $L$-Lipschitz continuity of $\nabla f$ 
we can only show that $\left\| \nabla f(x_k) \right\|^{2} \le O\left(L^{2}/k^2\right)$. 
See Section \ref{sec:nagm-c_analysis} for further elaboration on this
cubic rate for NAG-\texttt{C}.

\end{itemize}

As we will see, the high-resolution ODEs are based on a phase-space representation 
that provides a systematic framework for translating from continuous-time Lyapunov 
functions to discrete-time Lyapunov functions.  In sharp contrast, the process
for obtaining a discrete-time Lyapunov function for low-resolution ODEs presented
by \cite{su2016differential} relies on ``algebraic tricks'' (see, for example, 
Theorem 6 of \cite{su2016differential}). On a related note, a Hessian-driven damping term also appears in ODEs for modeling Newton's method \cite{alvarez2002second,attouch2012second,attouch2016fast}.

\subsection{Related Work}
\label{sec:rel}

There is a long history of using ODEs to analyze optimization 
methods \cite{helmke2012optimization, schropp2000dynamical, fiori2005quasi}. 
Recently, the work of \cite{su2014differential,su2016differential} has sparked 
a renewed interest in leveraging continuous dynamical systems to understand 
and design first-order methods and to provide more intuitive proofs for the 
discrete methods. Below is a rather incomplete review of recent work that 
uses continuous-time dynamical systems to study accelerated methods.

In the work of \cite{wibisono2016variational,wilson2016lyapunov,betancourt2018symplectic}, 
Lagrangian and Hamiltonian frameworks are used to generate a large class of 
continuous-time ODEs for a unified treatment of accelerated gradient-based 
methods. Indeed, \cite{wibisono2016variational} extend NAG-\texttt{C} to 
non-Euclidean settings, mirror descent and accelerated higher-order gradient 
methods, all from a single ``Bregman Lagrangian.'' In \cite{wilson2016lyapunov}, 
the connection between ODEs and discrete algorithms is further strengthened 
by establishing an equivalence between the estimate sequence technique and 
Lyapunov function techniques, allowing for a principled analysis of the
discretization of continuous-time ODEs.  Recent papers have considered
symplectic \cite{betancourt2018symplectic} and Runge--Kutta \cite{zhang2018direct}
schemes for discretization of the low-resolution ODEs.

An ODE-based analysis of mirror descent has been pursued in another line of 
work by \cite{krichene2015accelerated,krichene2016adaptive,krichene2017acceleration},
delivering new connections between acceleration and constrained optimization, 
averaging and stochastic mirror descent.

In addition to the perspective of continuous-time dynamical systems, there has 
also been work on the acceleration from a control-theoretic point of view 
\cite{lessard2016analysis,hu2017dissipativity,fazlyab2018analysis} 
and from a geometric point of view \cite{bubeck2015geometric, chen2017geometric}. 
See also \cite{o2015adaptive,flammarion2015averaging,ghadimi2016accelerated,diakonikolas2017approximate, lin2018catalyst,drusvyatskiy2018optimal} for a number of other recent contributions
to the study of the acceleration phenomenon.



\subsection{Organization and Notation}
\label{sec:organ-notat}

The remainder of the paper is organized as follows. In Section~\ref{sec:techniques}, 
we briefly introduce our high-resolution ODE-based analysis framework. This 
framework is used in Section~\ref{sec:strongly} to study the heavy-ball method 
and NAG-\texttt{SC} for smooth strongly convex functions.  In Section 
\ref{sec:nagm-c_analysis}, we turn our focus to NAG-\texttt{C} for a general 
smooth convex objective. In Section~\ref{sec:extension} we derive some 
extensions of NAG-\texttt{C}. We conclude the paper in Section~\ref{sec:con} 
with a list of future research directions. Most technical proofs are deferred 
to the Appendix.

We mostly follow the notation of \cite{nesterov2013introductory}, with slight 
modifications tailored to the present paper. Let $\mathcal{F}_{L}^1( \mathbb{R}^{n})$ 
be the class of $L$-smooth convex functions defined on $\mathbb{R}^n$; that is, 
$f \in \mathcal{F}^1_L$ if $f(y) \geq f(x) + \left\langle \nabla f(x), 
y - x \right\rangle$ for all $x, y \in \mathbb R^n$ and its gradient is 
$L$-Lipschitz continuous in the sense that 
\[
\left\| \nabla f(x) - \nabla f(y) \right\| \leq L \left\| x - y \right\|,
\]
where $\|\cdot\|$ denotes the standard Euclidean norm and $L > 0$ is the 
Lipschitz constant.  (Note that this implies that $\nabla f$ is also 
$L'$-Lipschitz for any $L' \geq L$.)  The function class 
$\mathcal{F}_{L}^2(\mathbb{R}^{n})$ is the subclass of 
$\mathcal{F}_{L}^1(\mathbb{R}^{n})$ such that each $f$ has a 
Lipschitz-continuous Hessian.  For $p = 1, 2$, let 
$\mathcal{S}_{\mu,L}^p(\mathbb{R}^{n})$ denote the subclass of 
$\mathcal{F}_{L}^p(\mathbb{R}^{n})$ such that each member $f$ is 
$\mu$-strongly convex for some $0 < \mu \le L$. That is, 
$f \in  \mathcal{S}_{\mu, L}^p(\mathbb{R}^{n})$ if 
$f \in  \mathcal{F}_{L}^p(\mathbb{R}^{n})$ and
\[
f(y) \geq f(x) + \left\langle \nabla f(x), y - x \right\rangle + \frac{\mu}{2} \left\| y - x \right\|^{2},
\]
for all $x, y \in \mathbb{R}^{n}$. Note that this is equivalent to 
the convexity of $f(x) - \frac{\mu}{2}\|x - x^\star\|^2$, where $x^\star$ 
denotes a minimizer of the objective $f$.


\section{The High-Resolution ODE Framework}
\label{sec:techniques}

This section introduces a high-resolution ODE framework for analyzing gradient-based 
methods, with NAG-\texttt{SC} being a guiding example.  Given a (discrete) optimization 
algorithm, the first step in this framework is to derive a high-resolution ODE using 
dimensional analysis, the next step is to construct a continuous-time Lyapunov function 
to analyze properties of the ODE, the third step is to derive a discrete-time Lyapunov
function from its continuous counterpart and the last step is to translate properties 
of the ODE into that of the original algorithm. The overall framework is illustrated 
in Figure \ref{fig:chart}.

\begin{figure}[!htp]
\begin{center}
\begin{tikzpicture}[node distance=4cm]
    \node (n00) [box, draw=black] {\small Algorithms};
    \node (n10) [box, draw=black,  above of=n00] {\small High-Resolution ODEs};
    \node (n11) [box, draw=black,  above of=n00, xshift=+8cm] {\small Continuous $\mathcal{E}(t)$};
    \node (n100) [box, draw=black,  below of=n11] {\small Discrete $\mathcal{E}(k)$};
    \node (n101) [mybox, draw=black,  above left of=n100, xshift = - 1.15cm, yshift = -1cm] {\small Nesterov's \, Acceleration};
    \node (n110) [mybox, draw=black,  below left of=n100, xshift = - 1.15cm, yshift = +1cm] {\small Gradient Norm Minimization};

    \draw [arrow] (n00) --node [left] {dimensional analysis} (n10);
    \draw [arrow] (n10) -- (n11);
    \draw [arrow] (n11) -- node [right] {phase-space representation} (n100);
    \draw [arrow] (n100)  to [bend right=25] (n101);
    \draw [arrow] (n100)  to [bend left=25] (n110);
    \draw [arrow] [dashed] (n00) -- (n100);
    
\end{tikzpicture}
\end{center}
\caption{An illustration of our high-resolution ODE framework. The three solid straight 
lines represent Steps 1, 2 and 3, and the two curved lines denote Step 4. The dashed 
line is used to emphasize that it is difficult, if not impractical, to construct discrete 
Lyapunov functions directly from the algorithms.}
\label{fig:chart}
\end{figure}
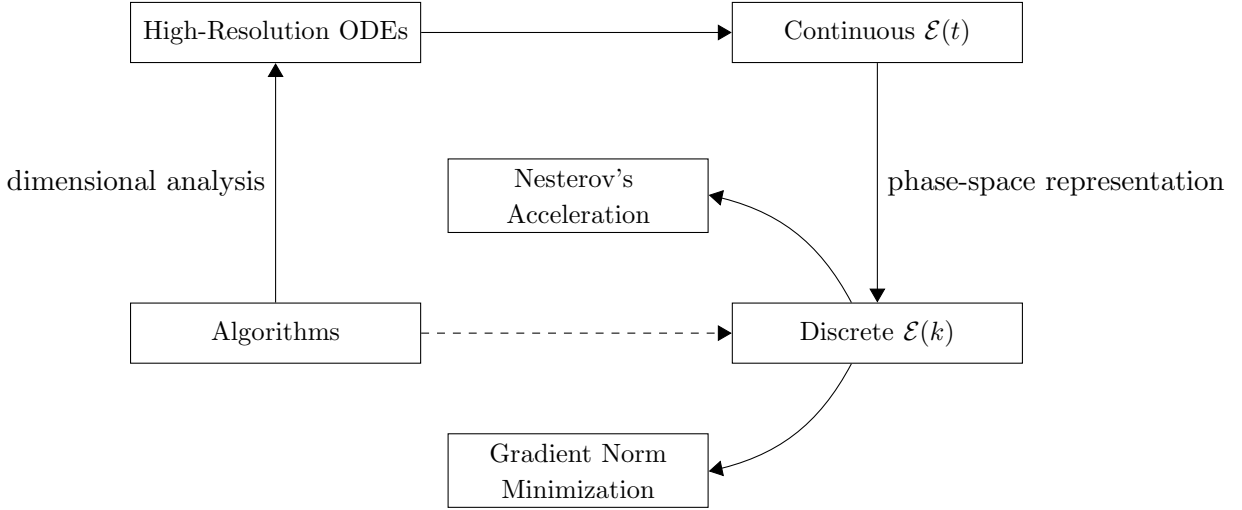

\subsection*{Step 1: Deriving High-Resolution ODEs}

Our focus is on the single-variable form~\eqref{eqn: Nesterov_strongly_single} of 
NAG-\texttt{SC}. For any nonnegative integer $k$, let $t_{k} = k\sqrt{s}$ and assume 
$x_k = X(t_k)$ for some sufficiently smooth curve $X(t)$.  Performing a Taylor expansion 
in powers of $\sqrt{s}$, we get
\begin{equation}
\label{eqn: 3rd_taylor_variable}
\begin{aligned}
& x_{k + 1} = X(t_{k+1}) = X(t_{k}) + \dot{X}(t_{k})\sqrt{s} + \frac{1}{2} \ddot{X}(t_{k})\left(\sqrt{s}\right)^{2}  + \frac{1}{6}\dddot{X}(t_{k})\left(\sqrt{s}\right)^{3} + O\left( \left(\sqrt{s}\right)^{4} \right) \\
& x_{k  - 1} =  X(t_{k-1}) = X(t_{k})  - \dot{X}(t_{k})\sqrt{s} + \frac{1}{2} \ddot{X}(t_{k})\left(\sqrt{s}\right)^{2} - \frac{1}{6}\dddot{X}(t_{k})\left(\sqrt{s}\right)^{3}  + O\left( \left(\sqrt{s}\right)^{4} \right).
\end{aligned}
\end{equation}
We now use a Taylor expansion for the gradient correction, which gives
\begin{equation}
\label{eqn: gradient_correction}
\nabla f(x_{k}) - \nabla f(x_{k - 1}) =  \nabla^{2} f(X(t_{k})) \dot{X}(t_{k})\sqrt{s} + O\left(\left( \sqrt{s} \right)^{2}\right). 
\end{equation}
Multiplying both sides of \eqref{eqn: Nesterov_strongly_single} by 
$\frac{1 + \sqrt{\mu s}}{1 - \sqrt{\mu s}} \cdot \frac1{s}$ and rearranging 
the equality, we can rewrite NAG-\texttt{SC} as
\begin{align}
\frac{x_{k + 1} + x_{k - 1} - 2x_{k}}{s} +\frac{2\sqrt{\mu s}}{1 - \sqrt{\mu s}} \cdot \frac{x_{k + 1} - x_{k}}{s} + \nabla f(x_{k}) - \nabla f(x_{k - 1}) + \frac{1 + \sqrt{\mu s}}{1 - \sqrt{\mu s}} \nabla f(x_{k}) = 0.\label{eqn:nagm-sc_rewrite}
\end{align}
Next, plugging~\eqref{eqn: 3rd_taylor_variable} and~\eqref{eqn: gradient_correction} 
into~\eqref{eqn:nagm-sc_rewrite}, we have\footnote{Note that we use the approximation 
$\frac{x_{k+1} + x_{k -1} - 2x_{k}}{s} = \ddot{X}(t_{k}) + O(s)$, whereas 
\cite{su2016differential} relies on the low-accuracy Taylor expansion 
$\frac{x_{k+1} + x_{k -1} - 2x_{k}}{s} = \ddot{X}(t_{k}) + o(1)$ in the 
derivation of the low-resolution ODE of NAG-\texttt{C}. We illustrate this
derivation of the three low-resolution ODEs in Appendix~\ref{sec: ODE_1};
they can be compared to the high-resolution ODEs that we derive here.}
\begin{multline*}
\ddot{X}(t_{k}) + O\left( \left(\sqrt{s}\right)^{2} \right)  + \frac{2\sqrt{\mu }}{1 - \sqrt{\mu s}} \left[ \dot{X}(t_{k})  + \frac{1}{2} \ddot{X}(t_{k})\sqrt{s} + O\left( \left(\sqrt{s}\right)^{2} \right)  \right]  \nonumber\\
  + \nabla^{2} f(X(t_{k})) \dot{X}(t_{k}) \sqrt{s} + O\left( \left(\sqrt{s}\right)^{2} \right) + \left(\frac{1 + \sqrt{\mu s}}{1 - \sqrt{\mu s}} \right)\nabla f(X(t_{k})) = 0,
\end{multline*} 
which can be rewritten as
\[
\frac{\ddot X (t_k)}{1 - \sqrt{\mu s}}   + \frac{2\sqrt{\mu}}{1 - \sqrt{\mu s}} \dot X(t_k) + \sqrt{s} \nabla^2 f(X(t_k)) \dot X(t_k) + \frac{1 + \sqrt{\mu s}}{1 - \sqrt{\mu s}} \nabla f(X(t_k)) + O(s) = 0.
\]
Multiplying both sides of the last display by $1-\sqrt{\mu s}$, we obtain 
the following high-resolution ODE of NAG-\texttt{SC}:
\[
\ddot{X} + 2\sqrt{\mu} \dot{X} + \sqrt{s}  \nabla^{2} f(X) \dot{X} + (1 + \sqrt{\mu s})\nabla f(X)= 0,
\]
where we ignore any $O(s)$ terms but retain the $O(\sqrt{s})$ terms (note 
that $(1 - \sqrt{\mu s})\sqrt{s} = \sqrt{s} + O(s)$).


Our analysis is inspired by dimensional analysis \cite{pedlosky2013geophysical}, 
a strategy widely used in physics to construct a series of differential equations 
that involve increasingly high-order terms corresponding to small perturbations.  
In more detail, taking a small $s$, one first derives a differential equation that 
consists only of $O(1)$ terms, then derives a differential equation consisting of 
both $O(1)$ and $O(\sqrt{s})$, and next, one proceeds to obtain a differential 
equation consisting of $O(1), O(\sqrt{s})$ and $O(s)$ terms. High-order terms in 
powers of $\sqrt{s}$ are introduced sequentially until the main characteristics 
of the original algorithms have been extracted from the resulting approximating 
differential equation. Thus, we aim to understand Nesterov acceleration by 
incorporating $O(\sqrt{s})$ terms into the ODE, including the (Hessian-driven) 
gradient correction $\sqrt{s}  \nabla^{2} f(X) \dot{X}$ which results from the 
(discrete) gradient correction \eqref{eq:gradient_cor} in the single-variable 
form \eqref{eqn: Nesterov_strongly_single} of NAG-\texttt{SC}.  We also show
(see Appendix~\ref{sec: ODE_2} for the detailed derivation) that this $O(\sqrt{s})$ 
term appears in the high-resolution ODE of NAG-\texttt{C}, but is not found in 
the high-resolution ODE of the heavy-ball method. 

As shown below, each ODE admits a unique global solution under mild conditions 
on the objective, and this holds for an arbitrary step size $s > 0$. The solution 
is accurate in approximating its associated optimization method if $s$ is small. 
To state the result, we use $C^2(I; \mathbb{R}^n)$ to denote the class of 
twice-continuously-differentiable maps from $I$ to $\mathbb{R}^n$ for 
$I = [0, \infty)$ (the heavy-ball method and NAG-\texttt{SC}) and 
$I = [1.5\sqrt{s}, \infty)$ (NAG-\texttt{C}).
\begin{prop}
\label{prop: exist_unique_SC}
For any $f \in \mathcal{S}_{\mu}^2(\mathbb{R}^n) := \cup_{L \geq \mu} 
\mathcal{S}^2_{\mu,L}(\mathbb{R}^n)$, each of the ODEs \eqref{eqn: heavy_ball_first} 
and \eqref{eqn: nag-sc_first} with the specified initial conditions has a 
unique global solution $X \in C^2([0, \infty); \mathbb{R}^n)$. Moreover, 
the two methods converge to their high-resolution ODEs, respectively, in the 
sense that
\[
\limsup_{s \rightarrow 0} \max_{0 \le k \le \frac{T}{\sqrt{s}}} \left\| x_k - X(k\sqrt{s}) \right\| = 0,
\]
for any fixed $T > 0$.
\end{prop}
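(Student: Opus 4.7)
The plan is to dispatch existence/uniqueness by applying classical ODE theory to a phase-space reformulation of each ODE, and then to establish convergence of the discrete iterates via a consistency-plus-stability argument driven by a discrete Grönwall inequality.

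For existence and uniqueness, I would introduce $Y = (X, V)$ with $V = \dot{X}$ and recast each second-order ODE as a first-order system $\dot{Y} = F(Y)$. For the heavy-ball ODE \eqref{eqn: heavy_ball_first},
\[
F(X, V) = \bigl(V,\; -2\sqrt{\mu}\, V - (1+\sqrt{\mu s})\nabla f(X)\bigr),
\]
which is locally Lipschitz on $\mathbb{R}^{2n}$ since $\nabla f$ is $L$-Lipschitz. For the NAG-\texttt{SC} ODE \eqref{eqn: nag-sc_first},
\[
F(X, V) = \bigl(V,\; -(2\sqrt{\mu}\, I + \sqrt{s}\,\nabla^2 f(X))\, V - (1+\sqrt{\mu s})\nabla f(X)\bigr),
\]
which is locally Lipschitz precisely because $f \in \mathcal{S}_{\mu,L}^2$ ensures $\nabla^2 f$ is Lipschitz on bounded sets. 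Picard--Lindelöf then yields a unique local $C^2$ solution. To extend to $[0, \infty)$, I would exhibit a Lyapunov-type energy functional $\mathcal{E}(t)$ (using $\mu$-strong convexity) that controls $\|X - x^\star\|^2$ and $\|V\|^2$, verify that $\mathcal{E}$ remains bounded on any finite interval, and conclude that no finite-time blow-up is possible.

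For the convergence claim, fix $T > 0$, set $t_k = k\sqrt{s}$, and define the error $e_k = x_k - X(t_k)$. The strategy is to view the discrete iteration as a one-step perturbation of the ODE flow. Substituting $x_k = X(t_k) + e_k$ into \eqref{eqn: Nesterov_strongly_single} (respectively \eqref{eqn: polyak_heavy_ball}) and using the Taylor expansions \eqref{eqn: 3rd_taylor_variable} and \eqref{eqn: gradient_correction}, one obtains a second-order recursion of the schematic form
\[
e_{k+1} - 2 e_k + e_{k-1} = \sqrt{s}\, A_k (e_{k+1} - e_k) - s\, B_k (e_k - e_{k-1}) - s\, C_k\, e_k + s^{3/2}\, r_k,
\]
where $A_k, B_k, C_k$ are bounded matrices built from $\nabla f$ and $\nabla^2 f$ along the ODE trajectory, and the residual $r_k$ is $O(1)$ uniformly for $k \le T/\sqrt{s}$. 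Recasting this as a one-step recursion for the phase-space pair $(e_k,\, (e_k - e_{k-1})/\sqrt{s})$ and applying a discrete Grönwall inequality over the $O(1/\sqrt{s})$ steps yields a bound of the form $\max_k \|e_k\| \le C(T)\, \sqrt{s}$, which tends to $0$ as $s \to 0$.

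The main obstacle is this second step, and within it, handling the Hessian-driven term. The discrete iteration contains the gradient difference $\nabla f(x_k) - \nabla f(x_{k-1})$, while the ODE carries $\sqrt{s}\, \nabla^2 f(X) \dot{X}$; equating them requires Lipschitz continuity of $\nabla^2 f$ (exactly the $\mathcal{S}_{\mu,L}^2$ hypothesis) together with uniform bounds on $\|\dot X(t)\|$ for $t \in [0, T]$ and on $\|e_k - e_{k-1}\|$. To justify using local Lipschitz constants, a bootstrap is needed: one first shows that, for $s$ sufficiently small, the discrete iterates remain in a bounded neighborhood of the compact trajectory $\{X(t) : t \in [0, T]\}$; this bound is then fed back into the Grönwall estimate to close the argument.
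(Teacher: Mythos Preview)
Your existence-and-uniqueness argument matches the paper's: phase-space reformulation, local Lipschitz regularity (with $\nabla^2 f$ Lipschitz for NAG-\texttt{SC}), and a Lyapunov energy to rule out blow-up. The paper uses the simple energy $\mathcal{E}(t) = (1+\sqrt{\mu s})(f(X)-f(x^\star)) + \tfrac12\|\dot X\|^2$ to obtain $\sup_t\|\dot X\|<\infty$, which then allows global Lipschitz bounds on a compact phase set; your sketch is the same in spirit.

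For the approximation claim, your route differs from the paper's. You propose a direct consistency-plus-stability argument: compare $x_k$ to $X_s(k\sqrt{s})$ via a second-order error recursion with $O(s^{3/2})$ local truncation and close with a discrete Gr\"onwall bound, yielding a quantitative $O(\sqrt{s})$ error. The paper instead passes through the \emph{low-resolution} ODE as an intermediary: it first proves $\max_{t\le T}\|X_s(t) - X(t)\| \to 0$ by a continuous Gr\"onwall argument (treating the $\sqrt{s}$-terms in the high-resolution ODE as perturbations), and then invokes the known fact (from \cite{su2016differential,wilson2016lyapunov}) that the discrete schemes converge to the low-resolution ODE; the triangle inequality finishes. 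Your approach is self-contained and delivers an explicit rate in one pass, but requires the bootstrap you mention and uniform-in-$s$ control of $\ddot X_s$ (and effectively $C^{2,1}$ regularity of $X_s$, which the Lipschitz-Hessian hypothesis supplies). The paper's approach is shorter because it offloads the discrete-to-continuous step to the literature, at the cost of not giving a direct rate for $\|x_k - X_s(k\sqrt s)\|$.
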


In fact, Proposititon~\ref{prop: exist_unique_SC} holds for $T = \infty$ because both the discrete iterates and the ODE trajectories converge to the unique minimizer when the objective is stongly convex.

\begin{prop}
\label{prop: exist_unique_C}
For any $f \in \mathcal{F}^2(\mathbb{R}^n) := \cup_{L > 0} \mathcal{F}^2_L(\mathbb{R}^n)$, 
the ODE \eqref{eqn: nag-c_first} with the specified initial conditions has a unique 
global solution $X \in C^2([1.5\sqrt{s}, \infty); \mathbb{R}^n)$. Moreover, 
NAG-\texttt{C} converges to its high-resolution ODE in the sense that
\[
\limsup_{s \rightarrow 0} \max_{0 \le k \le \frac{T}{\sqrt{s}}} \left\| x_k - X(k\sqrt{s} + 1.5\sqrt{s}) \right\| = 0,
\]
for any fixed $T > 0$.
\end{prop}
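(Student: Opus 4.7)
The plan is to split the statement into (i) global well-posedness of \eqref{eqn: nag-c_first} for each fixed $s > 0$ and (ii) convergence of the NAG-\texttt{C} iterates to the solution as $s \to 0$, mirroring the strongly convex argument used for Proposition~\ref{prop: exist_unique_SC} but adapted to the time-dependent damping and the absence of strong convexity. The key observation is that the initial time $t_0 := 3\sqrt{s}/2 > 0$ keeps the singular factors $3/t$ and $3\sqrt{s}/(2t)$ smooth on the whole domain, so the only possibly non-Lipschitz dependence comes from $f$. I would recast \eqref{eqn: nag-c_first} as a first-order system in $Z = (X,V)$ with $V = \dot X$,
\[
\dot X = V, \qquad \dot V = -\frac{3}{t} V - \sqrt{s}\,\nabla^2 f(X) V - \Bigl(1 + \frac{3\sqrt{s}}{2t}\Bigr)\nabla f(X),
\]
whose right-hand side is continuous in $t$ and locally Lipschitz in $Z$ on $[t_0,\infty) \times \mathbb{R}^{2n}$ because $f \in \mathcal{F}_L^2$ has $L$-Lipschitz gradient and locally Lipschitz Hessian. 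Picard--Lindel\"{o}f then yields a unique $C^2$ solution on a maximal interval $[t_0,T_{\max})$.

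\textbf{Global extension via an energy.} To upgrade to $T_{\max} = \infty$ I would use the Lyapunov candidate $\mathcal{E}(t) = \tfrac12 \|V(t)\|^2 + f(X(t)) - f(x^\star)$, which differentiates along the flow to
\[
\dot{\mathcal{E}}(t) = -\frac{3}{t}\|V\|^2 - \sqrt{s}\,V^{\top} \nabla^2 f(X) V - \frac{3\sqrt{s}}{2t} \langle \nabla f(X), V\rangle.
\]
Convexity of $f$ makes the first two terms non-positive, and the cross term can be controlled by Young's inequality combined with the smoothness bound $\|\nabla f(X)\|^2 \le 2L(f(X) - f^\star) \le 2L\mathcal{E}$, producing an estimate of the form $\dot{\mathcal{E}}(t) \le C(L)\mathcal{E}(t)$ on any compact sub-interval of $[t_0,\infty)$. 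Gronwall then controls $\mathcal{E}$, hence $\|V\|$ and $f(X) - f^\star$, on finite intervals, and the integral identity $X(t) = X(t_0) + \int_{t_0}^{t} V(\tau)\,d\tau$ converts the velocity bound into a bound on $\|X\|$, ruling out blow-up.

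\textbf{Discrete-to-continuous convergence.} With the global $C^2$ solution in hand, I would reuse the Taylor expansion strategy of Section~\ref{sec:techniques} (applied to NAG-\texttt{C} in Appendix~A.2) to show that the single-variable form of NAG-\texttt{C} satisfies \eqref{eqn: nag-c_first} up to a one-step truncation residual of size $O(s^{2})$, uniformly on $[t_0,t_0+T]$, using $X \in C^2$ together with the $L$-Lipschitz continuity of $\nabla f$ and $\nabla^2 f$. Setting $r_k := x_k - X(t_0 + k\sqrt{s})$, these residuals feed into a linear second-order inhomogeneous difference inequality for $r_k$ with coefficients bounded uniformly over the $O(T/\sqrt{s})$ iterates. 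A discrete Gronwall argument then gives $\max_{0 \le k \le T/\sqrt{s}} \|r_k\| = O(\sqrt{s})$, which vanishes as $s \to 0$. The initial conditions match because the NAG-\texttt{C} recursion at $k = 0$ yields $(x_1 - x_0)/\sqrt{s} = -\sqrt{s}\,\nabla f(x_0) + O(\sqrt{s})$, agreeing with $\dot X(t_0)$ to the required order.

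\textbf{Main obstacle.} The delicate step is the global extension in Step 2: without strong convexity, $\mathcal{E}$ is not coercive in $X$, so an a priori bound on $\mathcal{E}$ does not directly bound $\|X\|$. Closing the loop requires chaining convexity (to eliminate the Hessian-driven and damping contributions to $\dot{\mathcal{E}}$), $L$-smoothness (to dominate $\|\nabla f(X)\|^2$ by $\mathcal{E}$), and the velocity integral (to translate an $\|V\|$ bound into an $\|X\|$ bound). Once these a priori estimates are established, the consistency-plus-discrete-Gronwall calculation in Step 3 is routine, and the structure is entirely parallel to the existing proof of Proposition~\ref{prop: exist_unique_SC} modulo the time-dependent coefficients.
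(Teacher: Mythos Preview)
Your existence/uniqueness argument (Steps 1--2) is correct and close to the paper's; the paper uses the slightly different energy $\mathcal{E}(t) = \bigl(1 + \tfrac{3\sqrt{s}}{2t}\bigr)(f(X)-f(x^\star)) + \tfrac12\|\dot X\|^2$ together with a compact-manifold Lipschitz argument, but your continuation-via-Gronwall route is equivalent in substance.

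For the convergence part, however, your approach diverges from the paper's and has a gap. The paper does \emph{not} compare NAG-\texttt{C} directly to the high-resolution ODE. Instead it proceeds by triangle inequality through the low-resolution ODE \eqref{eqn:ode_old_nagmc}: it invokes the known result of \cite{su2016differential} that the discrete iterates converge to the low-resolution solution, and then proves separately (via Gronwall) that the high-resolution solution $X_s$ converges to the low-resolution solution as $s\to 0$. Crucially, the latter comparison is carried out after replacing $t$ by $\max\{\delta,t\}$ to mollify the $3/t$ singularity, so that the Lipschitz constant in Gronwall depends on $\delta$ but not on $s$; one then lets $\delta\to 0$ after $s\to 0$.

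The gap in your direct route is precisely this singularity. Your claim that the coefficients are ``bounded uniformly over the $O(T/\sqrt{s})$ iterates'' is false as stated: at $t=t_0=3\sqrt{s}/2$ the damping coefficient $3/t$ equals $2/\sqrt{s}$, and correspondingly the early discrete steps carry friction $3/k$ with $k=O(1)$. A black-box discrete Gronwall over $T/\sqrt{s}$ steps then produces an amplification factor of order $\prod_{k}(1+3/k)\sim (T/\sqrt{s})^{3}$, which blows up. The direct approach can be rescued, but only by explicitly exploiting the \emph{sign} of the damping (the update $v_k=\tfrac{k}{k+3}[v_{k-1}+\cdots]$ is contractive in $v$), so that the $3/t$ term contributes a factor $\le 1$ rather than entering the Lipschitz constant; you do not mention this, and without it the argument does not close. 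A minor point: the truncation residual is $O(s)$, not $O(s^2)$, since the high-resolution ODE is obtained by discarding $O(s)$ terms in the derivation of Section~\ref{sec:techniques}.
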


The proofs of these propositions are given in Appendix~\ref{sec: proof_exist_unique_SC} 
and Appendix~\ref{sec: proof_exist_unique_C}.

\subsection*{Step 2: Analyzing ODEs Using Lyapunov Functions}
With these high-resolution ODEs in place, the next step is to construct 
Lyapunov functions for analyzing the dynamics of the corresponding ODEs, 
as is done in previous work \cite{su2016differential,wilson2016lyapunov,
lessard2016analysis}. For NAG-\texttt{SC}, we consider the Lyapunov function
\begin{equation}\label{eqn:nagm-sc_ode_lypaunov}
\mathcal{E}(t)  =  (1 + \sqrt{\mu s} )\left( f(X) - f(x^{\star})
\right) + \frac{1}{4} \| \dot{X} \|^{2}  + \frac{1}{4} \| \dot{X} 
+ 2 \sqrt{\mu} (X - x^{\star}) + \sqrt{s} \nabla f(X) \|^2.
\end{equation}
The first and second terms $(1 + \sqrt{\mu s})\left( f(X) - f(x^{\star}) \right)$ and $\frac{1}{4} \| \dot{X} \|^{2}$ can be regarded, respectively, as the potential energy and kinetic energy, and the last term is a mix. For the mixed term, it is interesting to note that the time derivative of $\dot{X} + 2 \sqrt{\mu} (X - x^{\star}) + \sqrt{s} \nabla f(X)$ equals $-(1+\sqrt{\mu s}) \nabla f(X)$.

The differentiability of $\mathcal{E}(t)$ will allow us to investigate 
properties of the ODE~\eqref{eqn: nag-sc_first} in a principled manner. 
For example, we will show that $\mathcal{E}(t)$ decreases exponentially 
along the trajectories of \eqref{eqn: nag-sc_first}, recovering the 
accelerated linear convergence rate of NAG-\texttt{SC}. Furthermore, 
a comparison between the Lyapunov function of NAG-\texttt{SC} and that
of the heavy-ball method will explain why the gradient correction 
$\sqrt{s}\nabla^{2} f(X) \dot{X}$ yields acceleration in the former
case.  This is discussed in Section~\ref{sec:continuous-time-1-s}.

\subsection*{Step 3: Constructing Discrete Lyapunov Functions}

Our framework make it possible to translate continuous Lyapunov functions 
into discrete Lyapunov functions via a phase-space representation (see, 
for example, \cite{arnol2013mathematical}).  We illustrate the procedure
in the case of NAG-\texttt{SC}.  The first step is formulate explicit 
position and velocity updates:
\begin{equation}\label{eqn: Nesterov_sc_symplectic}
\begin{aligned}
& x_{k} - x_{k - 1} =  \sqrt{s} v_{k - 1} \\
& v_{k} - v_{k - 1} = - \frac{2\sqrt{\mu s}}{1 - \sqrt{\mu s}} 
v_{k} - \sqrt{s}( \nabla f(x_{k}) - \nabla f(x_{k - 1}) ) - 
\frac{1 + \sqrt{\mu s}}{1 - \sqrt{\mu s}} \cdot \sqrt{s}\nabla f( x_{k} ),
\end{aligned} 
\end{equation}
where the velocity variable $v_k$ is defined as:
\[
v_k = \frac{x_{k+1} - x_k}{\sqrt{s}}.
\] 
The initial velocity is $v_{0} = - \frac{2\sqrt{s}}{1 + \sqrt{\mu s}} 
\nabla f(x_{0})$. Interestingly, this phase-space representation has the
flavor of symplectic discretization, in the sense that the update for 
$x_k - x_{k-1}$ is explicit (it only depends on the last iterate $v_{k-1}$) 
while the update for $v_{k} - v_{k - 1}$ is implicit (it depends on the 
current iterates $x_k$ and $v_{k}$)\footnote{Although this suggestion is
a heuristic one, it is also possible to rigorously derive a symplectic 
integrator of the high-resolution ODE of NAG-\texttt{SC}; this integrator
has the form:
\begin{equation}\nonumber
\begin{aligned}
& x_{k} - x_{k - 1} = \sqrt{s} v_{k - 1} \\
& v_{k} - v_{k - 1} = - 2\sqrt{\mu s}v_{k} - s\nabla^{2}f(x_{k})  v_{k}  - (1 + \sqrt{\mu s})\sqrt{s}\nabla f(x_{k}).
\end{aligned}
\end{equation}}.

The representation \eqref{eqn: Nesterov_sc_symplectic} suggests translating 
the continuous-time Lyapunov function \eqref{eqn:nagm-sc_ode_lypaunov} into 
a discrete-time Lyapunov function of the following form:
\begin{equation}\label{eqn:lypunov_NAGM-SC_strongly}
\begin{aligned}
\mathcal{E}(k) =  &  \underbrace{\frac{1 + \sqrt{\mu s} }{1 - \sqrt{\mu s} }  
\left( f(x_{k}) - f(x^{\star}) \right)}_{\mathbf{I}} + \underbrace{\frac{1}{4} 
\left\| v_{k} \right\|^{2}}_{\mathbf{II}}  + \underbrace{\frac{1}{4} 
\left\| v_{k} +   \frac{2\sqrt{\mu}}{1 - \sqrt{\mu s}} ( x_{k + 1} - x^{\star} ) 
+  \sqrt{s} \nabla f(x_{k}) \right\|^{2}}_{\mathbf{III}}\\
&  \underbrace{- \frac{s\left\| \nabla f(x_{k})\right\|^{2}}{2(1 - \sqrt{\mu s})}}_{\textbf{a negative term}},
\end{aligned}                           
\end{equation}
by replacing continuous terms (e.g., $\dot X$) by their discrete counterparts 
(e.g., $v_k$). Akin to the continuous \eqref{eqn:nagm-sc_ode_lypaunov}, here 
$\mathbf{I}$, $\mathbf{II}$, and $\mathbf{III}$ correspond to potential energy, 
kinetic energy, and mixed energy, respectively, from a mechanical perspective.
To better appreciate this translation, note that the factor 
$\frac{1 + \sqrt{\mu s}}{1 - \sqrt{\mu s}}$ in $\mathbf{I}$ results 
from the term $\frac{1 + \sqrt{\mu s}}{1 - \sqrt{\mu s}} \sqrt{s}\nabla 
f( x_{k} )$ in \eqref{eqn: Nesterov_sc_symplectic}. Likewise, 
$\frac{ 2\sqrt{\mu}}{1 - \sqrt{\mu s}}$ in $\mathbf{III}$ is from 
the term $\frac{2\sqrt{\mu s}}{1 - \sqrt{\mu s}}v_{k}$ in \eqref{eqn:
  Nesterov_sc_symplectic}. The need for the final (small) negative term 
is technical; we discuss it in Section \ref{sec:discrete-time}.




\subsection*{Step 4: Analyzing Algorithms Using Discrete Lyapunov Functions}

The last step is to map properties of high-resolution ODEs to corresponding 
properties of optimization methods. This step closely mimics Step 2 except that 
now the object is a discrete algorithm and the tool is a discrete Lyapunov 
function such as \eqref{eqn:lypunov_NAGM-SC_strongly}. Given that Step 2 has 
been performed, this translation is conceptually straightforward, albeit often 
calculation-intensive. For example, using the discrete Lyapunov function 
\eqref{eqn:lypunov_NAGM-SC_strongly}, we will recover the optimal linear rate of 
NAG-\texttt{SC} and gain insights into the fundamental effect of the gradient 
correction in accelerating NAG-\texttt{SC}. In addition, NAG-\texttt{C} is 
shown to minimize the squared gradient norm at an inverse cubic rate by a 
simple analysis of the decreasing rate of its discrete Lyapunov function.


\section{Gradient Correction for Acceleration}
\label{sec:strongly}

In this section, we use our high-resolution ODE framework to analyze NAG-\texttt{SC} 
and the heavy-ball method. Section~\ref{sec:continuous-time-1-s} focuses on the 
ODEs with an objective function $f \in \mathcal{S}^2_{\mu, L}(\mathbb{R}^{n})$, 
and in Section~\ref{sec:discrete-time} we extend the results to the discrete case 
for $f \in \mathcal{S}^1_{\mu, L}(\mathbb{R}^{n})$.  Finally, in Section 
\ref{sec:insights-into-accel} we offer a comparative study of NAG-\texttt{SC} 
and the heavy-ball method from a finite-difference viewpoint.

Throughout this section, the strategy is to analyze the two methods in parallel, 
thereby highlighting the differences between the two methods.  In particular, 
the comparison will demonstrate the vital role of the gradient correction, 
namely $\frac{ 1 - \sqrt{\mu s} }{ 1 + \sqrt{\mu s} } \cdot s 
\left( \nabla f(x_{k}) - \nabla f(x_{k - 1}) \right)$ in the discrete case 
and $\sqrt{s} \nabla^2 f(X) \dot X$ in the ODE case, in making NAG-\texttt{SC} 
an accelerated method.

\subsection{The ODE Case}
\label{sec:continuous-time-1-s}

The following theorem characterizes the convergence rate of the high-resolution 
ODE corresponding to NAG-\texttt{SC}.
\begin{thm}[Convergence of NAG-\texttt{SC} ODE]\label{thm: ODE_first-NAGM-SC_strongly}
Let $f \in \mathcal{S}^2_{\mu, L}(\mathbb{R}^n)$. For any step size $0 < s \le 1/L$, the solution $X = X(t)$ of the high-resolution ODE \eqref{eqn: nag-sc_first} satisfies
\begin{equation}\nonumber
f(X(t)) - f(x^{\star}) \leq \frac{2 \left\| x_{0} - x^{\star}  \right\|^{2}}{s} \mathrm{e}^{- \frac{ \sqrt{\mu} t}{4}}. 
\end{equation}
\end{thm}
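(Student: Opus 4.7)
The plan is to use the continuous-time Lyapunov function
\[
\mathcal{E}(t) = (1+\sqrt{\mu s})\bigl(f(X)-f(x^\star)\bigr) + \tfrac{1}{4}\|\dot X\|^2 + \tfrac{1}{4}\bigl\|\dot X + 2\sqrt{\mu}(X-x^\star) + \sqrt{s}\,\nabla f(X)\bigr\|^2
\]
already introduced in Step 2 of Section~\ref{sec:techniques}, and to show that $\mathcal{E}(t)$ decays exponentially at rate at least $\sqrt{\mu}/4$ along trajectories of \eqref{eqn: nag-sc_first}. The main estimate will then follow by Gr\"onwall, together with a crude upper bound on $\mathcal{E}(0)$.

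\textbf{Step 1: differentiate $\mathcal{E}$ along the ODE.} I will compute $\dot{\mathcal{E}}(t)$ term by term. The potential energy contributes $(1+\sqrt{\mu s})\langle\nabla f(X),\dot X\rangle$. For the kinetic term $\tfrac14\|\dot X\|^2$ I will substitute $\ddot X$ using \eqref{eqn: nag-sc_first}, which gives $-\tfrac12 \langle \dot X,\, 2\sqrt{\mu}\dot X + \sqrt{s}\nabla^2 f(X)\dot X + (1+\sqrt{\mu s})\nabla f(X)\rangle$. For the mixed term I will use the crucial identity (already noted in the paper) that the time derivative of $\dot X + 2\sqrt{\mu}(X-x^\star)+\sqrt{s}\nabla f(X)$ equals $-(1+\sqrt{\mu s})\nabla f(X)$, so this term contributes $-\tfrac{1+\sqrt{\mu s}}{2}\langle \nabla f(X),\, \dot X + 2\sqrt{\mu}(X-x^\star)+\sqrt{s}\nabla f(X)\rangle$. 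Collecting, the $\langle \nabla f(X),\dot X\rangle$ cross-terms should cancel and leave
\[
\dot{\mathcal{E}} = -\sqrt{\mu}\,\|\dot X\|^2 - \tfrac{\sqrt{s}}{2}\langle \dot X, \nabla^2 f(X)\dot X\rangle - \sqrt{\mu}(1+\sqrt{\mu s})\langle \nabla f(X), X-x^\star\rangle - \tfrac{\sqrt{s}(1+\sqrt{\mu s})}{2}\|\nabla f(X)\|^2,
\]
up to minor rearrangements. The Hessian term is non-positive by convexity of $f$.

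\textbf{Step 2: compare $-\dot{\mathcal{E}}$ to $\tfrac{\sqrt{\mu}}{4}\mathcal{E}$.} The target inequality $\dot{\mathcal{E}} \le -\tfrac{\sqrt{\mu}}{4}\mathcal{E}$ amounts to verifying
\[
\tfrac{\sqrt{\mu}}{4}\mathcal{E}(t) + \dot{\mathcal{E}}(t) \le 0.
\]
Expanding the squared mixed term and using $\mu$-strong convexity in the form $\langle \nabla f(X), X-x^\star\rangle \ge f(X)-f(x^\star) + \tfrac{\mu}{2}\|X-x^\star\|^2$, together with $\|\nabla f(X)\|^2 \ge 2\mu (f(X)-f(x^\star))$, I expect the negative quantities produced in Step~1 to dominate the positive contribution $\tfrac{\sqrt{\mu}}{4}\mathcal{E}$. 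This algebraic verification, balancing the $\|\dot X\|^2$, $\|X-x^\star\|^2$, $\|\nabla f(X)\|^2$, and $(f(X)-f(x^\star))$ coefficients, is where the main technical work lies, and it is the step I expect to be the main obstacle: the constants have to be chosen so that each quadratic form in $(\dot X, X-x^\star, \nabla f(X))$ is manifestly non-positive.

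\textbf{Step 3: Gr\"onwall and initial-value bound.} Once $\dot{\mathcal{E}}(t) \le -\tfrac{\sqrt{\mu}}{4}\mathcal{E}(t)$ is established, Gr\"onwall's inequality yields $\mathcal{E}(t) \le \mathcal{E}(0)\,\mathrm{e}^{-\sqrt{\mu}t/4}$, and since $f(X(t))-f(x^\star) \le \mathcal{E}(t)/(1+\sqrt{\mu s})$, it suffices to bound $\mathcal{E}(0) \le 2\|x_0-x^\star\|^2/s$. Using $\dot X(0) = -2\sqrt{s}\nabla f(x_0)/(1+\sqrt{\mu s})$, $L$-smoothness to control $\|\nabla f(x_0)\|^2 \le 2L(f(x_0)-f(x^\star)) \le L^2\|x_0-x^\star\|^2$, the standard bound $f(x_0)-f(x^\star) \le \tfrac{L}{2}\|x_0-x^\star\|^2$, and the assumption $s\le 1/L$ (so $sL\le 1$), each piece of $\mathcal{E}(0)$ is $O(\|x_0-x^\star\|^2/s)$, and a direct numerical check should give the stated constant $2$. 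Combining these gives the claimed rate.
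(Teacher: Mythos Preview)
Your proposal is correct and follows essentially the same route as the paper. The paper packages your Step~1 and Step~2 into Lemma~\ref{lm:nag_sc_e}, computing $\dot{\mathcal E}$ exactly as you do, then bounding $\mathcal E$ from above via Cauchy--Schwarz on the mixed squared term and comparing coefficients; the only minor difference is that the paper does not invoke $\|\nabla f(X)\|^2\ge 2\mu(f(X)-f(x^\star))$ but instead splits $(1+\sqrt{\mu s})\langle\nabla f(X),X-x^\star\rangle$ into two halves and applies $\langle\nabla f(X),X-x^\star\rangle\ge f(X)-f(x^\star)+\tfrac{\mu}{2}\|X-x^\star\|^2$ to one and $\langle\nabla f(X),X-x^\star\rangle\ge \mu\|X-x^\star\|^2$ to the other, which makes the coefficient-matching in Step~2 slightly cleaner.
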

The theorem states that the functional value $f(X)$ tends to the minimum 
$f(x^\star)$ at a linear rate. By setting $s = 1/L$, we obtain 
$f(X) - f(x^{\star}) \le 2L\left\| x_{0} - x^{\star}  \right\|^{2} 
\mathrm{e}^{- \frac{ \sqrt{\mu} t}{4}}$. 

The proof of Theorem~\ref{thm: ODE_first-NAGM-SC_strongly} is based on analyzing 
the Lyapunov function $\mathcal{E}(t)$ for the high-resolution ODE of NAG-\texttt{SC}.
Recall that $\mathcal{E}(t)$ defined in \eqref{eqn:nagm-sc_ode_lypaunov} is
\[
\mathcal{E}(t) = (1 + \sqrt{\mu s} )\left( f(X) - f(x^{\star}) \right) + \frac{1}{4} \| \dot{X} \|^{2}  + \frac{1}{4} \| \dot{X} + 2 \sqrt{\mu} (X - x^{\star}) + \sqrt{s} \nabla f(X)\|^2.
\]
The next lemma states the key property we need from this Lyapunov function
\begin{lem}[Lyapunov function for NAG-\texttt{SC} ODE]\label{lm:nag_sc_e}
Let $f \in \mathcal{S}^2_{\mu, L}(\mathbb{R}^n)$. For any step size $s > 0$, 
and with $X = X(t)$ being the solution to the high-resolution ODE 
\eqref{eqn: nag-sc_first}, the Lyapunov function~\eqref{eqn:nagm-sc_ode_lypaunov} 
satisfies
\begin{align}
\label{eqn: nag-sc_ode_conver-rate}
\frac{\dd \mathcal{E}(t)}{\dd t} \le -\frac{\sqrt{\mu}}{4}\mathcal{E}(t) -  \frac{\sqrt{s}}{2} \left[ \left\| \nabla f(X(t))\right\|^{2} + \dot{X}(t)^{\top}\nabla^{2}  f(X(t))\dot{X}(t) \right].
\end{align}
\end{lem}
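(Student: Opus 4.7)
The plan is to compute $\dd\mathcal{E}/\dd t$ by differentiating each of the three blocks in \eqref{eqn:nagm-sc_ode_lypaunov}, substitute the ODE \eqref{eqn: nag-sc_first} to eliminate $\ddot X$, and then use convexity to absorb the surviving terms into $-\tfrac{\sqrt{\mu}}{4}\mathcal{E}(t)$ together with the claimed negative remainder.

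First I would differentiate term by term. The potential energy contributes $(1+\sqrt{\mu s})\langle \nabla f(X),\dot X\rangle$. The kinetic energy contributes $\tfrac12\langle \dot X,\ddot X\rangle$. For the mixed term, the key observation is that
$$\tfrac{\dd}{\dd t}\bigl(\dot X + 2\sqrt{\mu}(X-x^\star)+\sqrt{s}\nabla f(X)\bigr)=\ddot X+2\sqrt{\mu}\dot X+\sqrt{s}\nabla^2 f(X)\dot X,$$
which is exactly the left side of \eqref{eqn: nag-sc_first} minus the gradient term. Hence, using the ODE, this derivative equals $-(1+\sqrt{\mu s})\nabla f(X)$, so the mixed term contributes $-\tfrac{1+\sqrt{\mu s}}{2}\langle \dot X + 2\sqrt{\mu}(X-x^\star)+\sqrt{s}\nabla f(X),\nabla f(X)\rangle$. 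Substituting the ODE once more into the kinetic-energy piece, I expect the $\langle \dot X,\nabla f(X)\rangle$ terms coming from the three blocks to cancel exactly, leaving
$$\frac{\dd\mathcal{E}}{\dd t}=-\sqrt{\mu}\|\dot X\|^2-\tfrac{\sqrt{s}}{2}\dot X^\top\nabla^2 f(X)\dot X-\sqrt{\mu}(1+\sqrt{\mu s})\langle X-x^\star,\nabla f(X)\rangle-\tfrac{\sqrt{s}(1+\sqrt{\mu s})}{2}\|\nabla f(X)\|^2.$$
Notice already that the Hessian-quadratic term has the right sign and magnitude to reproduce the $-\tfrac{\sqrt{s}}{2}\dot X^\top\nabla^2 f(X)\dot X$ on the right side of \eqref{eqn: nag-sc_ode_conver-rate}; this is the place where the gradient correction visibly pays off.

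The remaining task is the inequality $\frac{\sqrt{\mu}}{4}\mathcal{E}(t)\le \sqrt{\mu}\|\dot X\|^2+\sqrt{\mu}(1+\sqrt{\mu s})\langle X-x^\star,\nabla f(X)\rangle+\tfrac{s\sqrt{\mu}}{2}\|\nabla f(X)\|^2$, where the residual $\tfrac{s\sqrt{\mu}}{2}\|\nabla f\|^2$ comes from bookkeeping on the $\|\nabla f\|^2$ coefficients. I would dispose of it by three clean inputs: convexity gives $(1+\sqrt{\mu s})(f(X)-f(x^\star))\le (1+\sqrt{\mu s})\langle \nabla f(X),X-x^\star\rangle$; the elementary expansion $\|a+b+c\|^2\le 3(\|a\|^2+\|b\|^2+\|c\|^2)$ applied to the mixed-energy norm yields $\tfrac14\|\dot X+2\sqrt{\mu}(X-x^\star)+\sqrt{s}\nabla f(X)\|^2\le \tfrac34\|\dot X\|^2+3\mu\|X-x^\star\|^2+\tfrac{3s}{4}\|\nabla f(X)\|^2$; and strong convexity (together with $\nabla f(x^\star)=0$) gives $\mu\|X-x^\star\|^2\le \langle \nabla f(X),X-x^\star\rangle$. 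Plugging these in collapses the inequality to a sum of manifestly non-negative terms, and the lemma follows.

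The main obstacle I foresee is purely the bookkeeping of the many $O(\sqrt{s})$ cross terms, especially confirming that the $\langle \dot X,\nabla f\rangle$ contributions cancel completely; the conceptual content is light. A secondary caution is that the constants on the right of the target inequality have to survive the crude $\|a+b+c\|^2\le 3(\cdots)$ bound, which they do because the $\tfrac14$ prefactor in front of the mixed-energy square leaves enough slack.
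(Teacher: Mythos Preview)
Your proposal is correct and follows essentially the same route as the paper: compute $\dd\mathcal{E}/\dd t$ exactly via the ODE (using the observation that the time derivative of the mixed-energy vector equals $-(1+\sqrt{\mu s})\nabla f(X)$), isolate the $-\tfrac{\sqrt{s}}{2}\bigl[\|\nabla f\|^2+\dot X^\top\nabla^2 f\,\dot X\bigr]$ remainder, bound the mixed-energy square by $3(\|\dot X\|^2+4\mu\|X-x^\star\|^2+s\|\nabla f\|^2)$, and close with convexity/strong convexity. The only cosmetic difference is that the paper splits $(1+\sqrt{\mu s})\langle\nabla f(X),X-x^\star\rangle$ into two halves and applies the two strong-convexity lower bounds separately to manufacture $\tfrac{1+\sqrt{\mu s}}{2}(f(X)-f(x^\star))+\tfrac{3\mu}{4}\|X-x^\star\|^2$ before comparing coefficients, whereas you bound $\mathcal{E}$ first and then invoke convexity; the constants work out identically either way.
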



The proof of this theorem relies on Lemma \ref{lm:nag_sc_e} through the 
inequality $\dot{\mathcal{E}}(t) \le - \frac{\sqrt{\mu}}{4}\mathcal{E}(t)$. 
The term $\frac{\sqrt{s}}{2} ( \left\| \nabla f(X)\right\|^{2} + \dot{X}^\top \nabla^{2}f(X)\dot{X} ) \ge 0$ plays no role at the moment, but Section~\ref{sec:discrete-time} will shed light on its profound effect in the discretization of the high-resolution ODE of NAG-\texttt{SC}. 

\begin{proof}[Proof of Theorem~\ref{thm: ODE_first-NAGM-SC_strongly}]

Lemma \ref{lm:nag_sc_e} implies $\dot{\mathcal{E}}(t) \le - \frac{\sqrt{\mu}}{4}\mathcal{E}(t)$, which amounts to
\[
\frac{\dd }{\dd t} \left( \mathcal{E}(t) \mathrm{e}^{\frac{\sqrt{\mu} t}{4}} \right) \le 0.
\]
By integrating out $t$, we get
\begin{equation}\label{eq:e_exp_bd}
\mathcal{E}(t) \le \mathrm{e}^{-\frac{\sqrt{\mu} t}{4}} \mathcal{E}(0).
\end{equation}
Recognizing the initial conditions $X(0) = x_{0}$ and $\dot X(0) = -\frac{2\sqrt{s} \nabla f(x_{0})}{1 + \sqrt{\mu s}}$, we write \eqref{eq:e_exp_bd} as
\[
\begin{aligned}
f(X) - f(x^{\star}) \leq & \mathrm{e}^{- \frac{ \sqrt{\mu} t}{4}} \left[   f(x_0) - f(x^{\star}) + \frac{s}{\left( 1 + \sqrt{\mu s} \right)^{3}} \left\| \nabla f(x_{0}) \right\|^{2}  \right. \\
	&\qquad \quad \left.+ \frac{1}{4(1 + \sqrt{\mu s})} \left\| 2\sqrt{\mu} ( x_0 - x^{\star} )- \frac{1 - \sqrt{\mu s}}{1 + \sqrt{\mu s}} \cdot \sqrt{s} \nabla f(x_0) \right\|^{2} \right]. 
\end{aligned}
\]
Since $f \in \mathcal{S}_{\mu, L}^{2}$, we have that 
$\|\nabla f(x_0)\| \le L \|x_0 - x^\star\|$ and 
$f(x_0) - f(x^\star) \le L\|x_0 - x^\star\|^2/2$. 
Together with the Cauchy--Schwarz inequality, the two inequalities yield
\[
\begin{aligned}
f(X) - f(x^{\star}) &\leq \left[ f(x_{0}) - f(x^{\star}) + \frac{2 + (1 - \sqrt{\mu s})^{2}}{2(1 + \sqrt{\mu s})^{3}} \cdot s\left\| \nabla f(x_{0})\right\|^{2} + \frac{2\mu}{1 + \sqrt{\mu s}} \left\| x_{0} - x^{\star} \right\|^{2}  \right] \mathrm{e}^{- \frac{ \sqrt{\mu} t}{4}}\\
&\leq \left[ \frac{L}{2} + \frac{3 - 2\sqrt{\mu s} + \mu s}{2(1 + \sqrt{\mu s})^{3}} \cdot sL^{2} + \frac{2\mu}{1 + \sqrt{\mu s}}  \right]  \left\| x_{0} - x^{\star} \right\|^{2} \mathrm{e}^{- \frac{ \sqrt{\mu} t}{4}},
\end{aligned}
\]
which is valid for all $s > 0$. To simplify the coefficient of 
$\left\| x_{0} - x^{\star} \right\|^{2} \mathrm{e}^{- \frac{ \sqrt{\mu} t}{4}}$, 
note that $L$ can be replaced by $1/s$ in the analysis since $s \le 1/L$. 
It follows that
\[
f(X(t)) - f(x^{\star}) \leq \left[ \frac{1}{2} + \frac{3 - 2\sqrt{\mu s} + \mu s}{2(1 + \sqrt{\mu s})^{3}} + \frac{2\mu s}{1 + \sqrt{\mu s}}  \right]  \frac{\left\| x_{0} - x^{\star} \right\|^{2} \mathrm{e}^{- \frac{ \sqrt{\mu} t}{4}}}{s}. 
\]
Furthermore, a bit of analysis reveals that
\[
\frac{1}{2} + \frac{3 - 2\sqrt{\mu s} + \mu s}{2(1 + \sqrt{\mu s})^{3}} + \frac{2\mu s}{1 + \sqrt{\mu s}}  < 2,
\]
since $\mu s \le \mu/L \le 1$, and this step completes the proof of 
Theorem~\ref{thm: ODE_first-NAGM-SC_strongly}.
\end{proof}

We now consider the heavy-ball method~(\ref{eqn: polyak_heavy_ball}).  Recall that 
the momentum coefficient $\alpha$ is set to $\frac{1 - \sqrt{\mu s}}{1 + \sqrt{\mu s}}$. 
The following theorem characterizes the rate of convergence of this method.

\begin{thm}[Convergence of heavy-ball ODE]\label{thm: ODE_first-PHBM_strongly}
Let $f \in \mathcal{S}_{\mu, L}^{2}(\mathbb{R}^{n})$. For any step size  $0 < s \le 1/L$,  
the solution $X = X(t)$ of the \textit{high-resolution ODE}~\eqref{eqn: heavy_ball_first} 
satisfies
\[
f(X(t)) - f(x^{\star}) \leq \frac{7\left\| x_0 - x^{\star}  \right\|^{2}}{2s}  
\mathrm{e}^{- \frac{ \sqrt{\mu} t}{4}}.
\]
\end{thm}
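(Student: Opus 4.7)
\textbf{Proposal for proving Theorem~\ref{thm: ODE_first-PHBM_strongly}.} The plan is to mirror the Lyapunov-function strategy used for Theorem~\ref{thm: ODE_first-NAGM-SC_strongly}, but with a Lyapunov function tailored to the heavy-ball ODE~\eqref{eqn: heavy_ball_first}, which lacks the Hessian-driven gradient-correction term $\sqrt{s}\nabla^2 f(X)\dot X$. Concretely, I will work with the candidate
\[
\mathcal{E}(t) = (1+\sqrt{\mu s})(f(X) - f(x^\star)) + \tfrac{1}{4}\|\dot X\|^2 + \tfrac{1}{4}\|\dot X + 2\sqrt{\mu}(X-x^\star)\|^2,
\]
obtained by deleting the $\sqrt{s}\nabla f(X)$ piece from the mixed-energy term in \eqref{eqn:nagm-sc_ode_lypaunov}; this deletion reflects the fact that $\tfrac{\dd}{\dd t}[\dot X + 2\sqrt{\mu}(X-x^\star)] = -(1+\sqrt{\mu s})\nabla f(X)$ along the heavy-ball ODE, with no extra gradient-correction residue.

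The first main step is to differentiate $\mathcal{E}$ in time and substitute the ODE identity $\ddot X = -2\sqrt{\mu}\dot X - (1+\sqrt{\mu s})\nabla f(X)$. All $\langle \nabla f(X),\dot X\rangle$ contributions should cancel, leaving
\[
\dot{\mathcal{E}}(t) = -\sqrt{\mu}\|\dot X\|^2 - \sqrt{\mu}(1+\sqrt{\mu s})\langle \nabla f(X), X - x^\star\rangle.
\]
Next I will invoke $\mu$-strong convexity $\langle\nabla f(X), X-x^\star\rangle \ge f(X)-f(x^\star) + \tfrac{\mu}{2}\|X-x^\star\|^2$, and handle the cross term $\tfrac{\mu}{4}\langle \dot X, X-x^\star\rangle$ appearing inside $\mathcal{E}$ via the weighted Cauchy--Schwarz bound $\tfrac{\mu}{4}\langle\dot X, X-x^\star\rangle \le \tfrac{\sqrt{\mu}}{8}\|\dot X\|^2 + \tfrac{\mu^{3/2}}{8}\|X-x^\star\|^2$. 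Balancing the resulting coefficients yields the exponential-decay inequality $\dot{\mathcal{E}}(t) \le -\tfrac{\sqrt{\mu}}{4}\mathcal{E}(t)$, and hence $\mathcal{E}(t) \le \mathcal{E}(0)\,\mathrm{e}^{-\sqrt{\mu}t/4}$ by the standard integrating-factor argument used in the proof of Theorem~\ref{thm: ODE_first-NAGM-SC_strongly}. Note that unlike the NAG-\texttt{SC} case there is no leftover $\tfrac{\sqrt{s}}{2}[\|\nabla f\|^2 + \dot X^\top \nabla^2 f\,\dot X]$ term, which is precisely the continuous-time shadow of the missing gradient correction and which will matter in Section~\ref{sec:discrete-time} but not here.

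The last step is to bound $\mathcal{E}(0)$ from the prescribed initial data $X(0)=x_0$ and $\dot X(0) = -\tfrac{2\sqrt{s}\nabla f(x_0)}{1+\sqrt{\mu s}}$. Expanding the mixed-energy square and applying $\|\nabla f(x_0)\|\le L\|x_0 - x^\star\|$ and $f(x_0) - f(x^\star) \le \tfrac{L}{2}\|x_0 - x^\star\|^2$ (both valid since $f \in \mathcal{S}_{\mu,L}^2$), together with Cauchy--Schwarz to split the cross product in $\|\dot X(0)+2\sqrt{\mu}(x_0-x^\star)\|^2$, gives an explicit bound on $\mathcal{E}(0)$ of the form $C(\mu s)\cdot L\|x_0-x^\star\|^2$. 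Finally, using $s\le 1/L$ one can substitute $L \le 1/s$ to convert the bound into the form $\tfrac{C'}{s}\|x_0-x^\star\|^2$ and then verify numerically that the coefficient $C'$ is at most $7/2$ whenever $\mu s \le \mu/L \le 1$, which yields the stated bound. The main obstacle I anticipate is purely arithmetic: keeping track of the several $(1+\sqrt{\mu s})$ factors when bounding $\mathcal{E}(0)$ so that the resulting constant in front of $\|x_0-x^\star\|^2/s$ comes out no larger than $7/2$.
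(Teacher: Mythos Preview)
Your proposal is correct and follows essentially the same route as the paper: the same Lyapunov function $\mathcal{E}(t)$ (which is exactly \eqref{eqn: energy_heavy-b_ode}), the same derivative identity $\dot{\mathcal{E}} = -\sqrt{\mu}\|\dot X\|^2 - \sqrt{\mu}(1+\sqrt{\mu s})\langle\nabla f(X),X-x^\star\rangle$, strong convexity plus Cauchy--Schwarz to obtain $\dot{\mathcal{E}}\le -\tfrac{\sqrt{\mu}}{4}\mathcal{E}$ (Lemma~\ref{lm:heavyball_e}), and then the same $L$-smoothness/$s\le 1/L$ bookkeeping on $\mathcal{E}(0)$ to extract the constant $7/2$. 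The only cosmetic difference is that the paper applies Cauchy--Schwarz to the full mixed-energy square $\|\dot X + 2\sqrt{\mu}(X-x^\star)\|^2 \le 2(\|\dot X\|^2 + 4\mu\|X-x^\star\|^2)$ rather than to the expanded cross term, which amounts to the same estimate.
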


As in the case of NAG-\texttt{SC}, the proof of Theorem \ref{thm: ODE_first-PHBM_strongly} is based on a Lyapunov function:
\begin{align}\label{eqn: energy_heavy-b_ode}
\mathcal{E}( t ) =  (1 + \sqrt{\mu s} ) \left( f(X) - f(x^{\star}) \right) + \frac{1}{4} \| \dot{X} \|^{2} + \frac{1}{4} \|  \dot{X}  + 2 \sqrt{\mu} (X - x^{\star})\|^2,
\end{align}
which is the same as the Lyapunov function \eqref{eqn:nagm-sc_ode_lypaunov} for NAG-\texttt{SC} except for the lack of the $\sqrt{s} \nabla f(X)$ term. In particular, \eqref{eqn:nagm-sc_ode_lypaunov} and \eqref{eqn: energy_heavy-b_ode} are identical if $s = 0$. The following lemma considers the decay rate of \eqref{eqn: energy_heavy-b_ode}.

\begin{lem}[Lyapunov function for the heavy-ball ODE]\label{lm:heavyball_e}
Let $f\in \mathcal{S}^{2}_{\mu, L}(\mathbb{R}^n)$. For any step size $s>0$, 
the Lyapunov function~\eqref{eqn: energy_heavy-b_ode} for the high-resolution 
ODE~\eqref{eqn: heavy_ball_first} satisfies
\[
\frac{\dd \mathcal{E}(t)}{\dd t} \le -\frac{\sqrt{\mu}}{4}\mathcal{E}(t).
\]
\end{lem}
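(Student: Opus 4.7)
The plan is to verify the inequality by direct differentiation, using the heavy-ball ODE \eqref{eqn: heavy_ball_first} to eliminate $\ddot X$, and then exploiting $\mu$-strong convexity together with a Cauchy--Schwarz/AM--GM step to absorb the cross term. Since $\mathcal{E}(t)$ is a sum of three pieces, I would compute $\dd\mathcal{E}/\dd t$ piece by piece. The potential term contributes $(1+\sqrt{\mu s})\langle\nabla f(X),\dot X\rangle$; the kinetic term contributes $\tfrac{1}{2}\langle\dot X,\ddot X\rangle$; and the mixed term contributes $\tfrac{1}{2}\langle\dot X+2\sqrt{\mu}(X-x^\star),\ddot X+2\sqrt{\mu}\dot X\rangle$. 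The ODE gives $\ddot X+2\sqrt{\mu}\dot X=-(1+\sqrt{\mu s})\nabla f(X)$, and substituting this into the kinetic and mixed terms produces two $\langle\nabla f(X),\dot X\rangle$ pieces that cancel the potential contribution exactly. After simplification I expect to obtain
\[
\frac{\dd\mathcal{E}(t)}{\dd t}=-\sqrt{\mu}\|\dot X\|^{2}-\sqrt{\mu}(1+\sqrt{\mu s})\langle X-x^\star,\nabla f(X)\rangle.
\]

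The second step is to invoke $\mu$-strong convexity in the form $\langle X-x^\star,\nabla f(X)\rangle\ge f(X)-f(x^\star)+\tfrac{\mu}{2}\|X-x^\star\|^{2}$. This converts the inner-product term into a combination of the potential energy and a quadratic penalty, which are precisely the ingredients appearing in $\mathcal{E}(t)$ itself. The goal then becomes showing $\dot{\mathcal E}(t)+\tfrac{\sqrt{\mu}}{4}\mathcal E(t)\le 0$, which reduces to the coefficient inequality
\[
\frac{\sqrt{\mu}}{16}\bigl\|\dot X+2\sqrt{\mu}(X-x^\star)\bigr\|^{2}+\frac{\sqrt{\mu}}{16}\|\dot X\|^{2}-\sqrt{\mu}\|\dot X\|^{2}+\frac{\sqrt{\mu}(1+\sqrt{\mu s})}{4}(f(X)-f(x^\star))-\sqrt{\mu}(1+\sqrt{\mu s})\langle X-x^\star,\nabla f(X)\rangle\le 0.
\]

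The one nontrivial step, and the main obstacle, is handling the cross term $\tfrac{\mu}{4}\langle\dot X,X-x^\star\rangle$ that appears after expanding $\|\dot X+2\sqrt{\mu}(X-x^\star)\|^{2}$. I would control it with the weighted Young inequality $\tfrac{\mu}{4}\langle\dot X,X-x^\star\rangle\le\tfrac{\sqrt{\mu}}{8}\|\dot X\|^{2}+\tfrac{\mu^{3/2}}{8}\|X-x^\star\|^{2}$, choosing the weight so that the resulting $\|\dot X\|^{2}$ coefficient is still dominated by the $-\tfrac{7\sqrt{\mu}}{8}\|\dot X\|^{2}$ already in hand and so that the surviving $\|X-x^\star\|^{2}$ coefficient is absorbed by the strong-convexity contribution $-\tfrac{\mu^{3/2}(1+\sqrt{\mu s})}{2}\|X-x^\star\|^{2}$. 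Combined with the fact that the surviving $(f(X)-f(x^\star))$ coefficient is manifestly nonpositive and $f(X)\ge f(x^\star)$, every remaining term becomes nonpositive, giving $\dot{\mathcal E}(t)\le -\tfrac{\sqrt{\mu}}{4}\mathcal E(t)$.

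Conceptually the proof is simpler than its NAG-\texttt{SC} counterpart (Lemma~\ref{lm:nag_sc_e}) because the ODE lacks the $\sqrt{s}\nabla^{2}f(X)\dot X$ term and the Lyapunov function lacks the $\sqrt{s}\nabla f(X)$ summand; consequently no Hessian inner product $\dot X^{\top}\nabla^{2}f(X)\dot X$ or $\|\nabla f(X)\|^{2}$ remainder survives on the right-hand side. This absence is exactly what makes the heavy-ball bound weaker in the discrete translation, a point that Section~\ref{sec:discrete-time} exploits.
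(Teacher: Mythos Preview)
Your proposal is correct and follows essentially the same route as the paper. Both arguments compute $\dot{\mathcal E}(t)=-\sqrt{\mu}\bigl[\|\dot X\|^2+(1+\sqrt{\mu s})\langle\nabla f(X),X-x^\star\rangle\bigr]$ exactly from the ODE, then invoke $\mu$-strong convexity and a quadratic inequality on the mixed term; the only cosmetic difference is that the paper first bounds $\mathcal{E}\le(1+\sqrt{\mu s})(f(X)-f(x^\star))+\tfrac34\|\dot X\|^2+2\mu\|X-x^\star\|^2$ via $\|a+b\|^2\le 2\|a\|^2+2\|b\|^2$ and then compares coefficients, whereas you expand $\dot{\mathcal E}+\tfrac{\sqrt{\mu}}{4}\mathcal{E}$ directly and use Young's inequality on the cross term---logically the same step in a different order.
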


The proof of Theorem~\ref{thm: ODE_first-PHBM_strongly} follows the same 
strategy as the proof of Theorem~\ref{thm: ODE_first-NAGM-SC_strongly}. 
In brief, Lemma~\ref{lm:heavyball_e} gives $\mathcal{E}(t) \le 
\mathrm{e}^{-\sqrt{\mu} t/4} \mathcal{E}(0)$ by integrating over the 
time parameter $t$. Recognizing the initial conditions
\[
X(0) = x_{0}, \quad \dot{X}(0) =  - \frac{2\sqrt{s} \nabla f(x_{0})}{1 + \sqrt{\mu s}}
\]
in the high-resolution ODE of the heavy-ball method and using the $L$-smoothness 
of $\nabla f$, Lemma~\ref{lm:heavyball_e} yields
\[
f(X) - f(x^{\star}) \leq \left[ \frac{1}{2} + \frac{3}{( 1 + \sqrt{\mu s} )^{3}} 
+ \frac{2(\mu s)}{1 + \sqrt{\mu s}} \right]  \frac{\left\| x_{0} - x^{\star}\right\|^{2} 
\mathrm{e}^{- \frac{ \sqrt{\mu} t}{4}}}{s},
\]
if the step size $s \leq 1/L$. Finally, since $0 < \mu s \le \mu/L \le 1$, 
the coefficient satisfies
\[
 \frac{1}{2} + \frac{3}{( 1 + \sqrt{\mu s} )^{3}} + \frac{2\mu s}{1 + \sqrt{\mu s}}< \frac{7}{2}.
\]

The proofs of Lemma~\ref{lm:nag_sc_e} and Lemma~\ref{lm:heavyball_e} share similar ideas. 
In view of this, we present only the proof of the former here, deferring the proof of 
Lemma~\ref{lm:heavyball_e} to Appendix~\ref{subsec: heavy_ball_continuous}.

\begin{proof}[Proof of Lemma \ref{lm:nag_sc_e}]
Along trajectories of \eqref{eqn: nag-sc_first} the Lyapunov 
function~(\ref{eqn:nagm-sc_ode_lypaunov}) satisfies
\begin{equation}\label{eq:eq_key_diff_app}
\begin{aligned}
	\frac{\dd \mathcal{E}}{\dd t} & =(1 + \sqrt{\mu s} ) \langle \nabla f(X), \dot{X} \rangle + \frac{1}{2}\left\langle \dot{X}, - 2 \sqrt{\mu} \dot{X} - \sqrt{s} \nabla^{2} f(X) \dot{X} - (1 + \sqrt{\mu s} )\nabla f(X) \right\rangle  \\
	& \quad\quad + \frac{1}{2} \left\langle \dot{X} + 2\sqrt{\mu} \left( X - x^{\star} \right) +  \sqrt{s} \nabla f(X), - (1 + \sqrt{\mu s}) \nabla f(X) \right\rangle  \\
	& = - \sqrt{\mu} \left( \| \dot{X}\|^{2} + (1 + \sqrt{\mu s} ) \left\langle \nabla f(X), X - x^{\star} \right\rangle + \frac{s}{2}\left\| \nabla f(X)\right\|^2\right) \\
         &\quad\quad -  \frac{\sqrt{s}}{2} \left[ \left\| \nabla f(X)\right\|^{2} + \dot{X}^{\top}\nabla^{2}  f(X)\dot{X} \right]\\
	& \le - \sqrt{\mu} \left( \| \dot{X}\|^{2} + (1 + \sqrt{\mu s} ) \left\langle \nabla f(X), X - x^{\star} \right\rangle + \frac{s}{2}\left\| \nabla f(X)\right\|^2\right).
\end{aligned}
\end{equation}
Furthermore, $\left\langle \nabla f(X), X - x^{\star} \right\rangle$ is greater than or equal to both $f(X) - f(x^\star) + \frac{\mu}{2} \|X - x^\star\|^2$ and $\mu \|X - x^\star\|^2$ 
due to the $\mu$-strong convexity of $f$. This yields
\begin{align*}
(1 + \sqrt{\mu s} ) \left\langle \nabla f(X), X - x^{\star} \right\rangle &\ge \frac{1 + \sqrt{\mu s}}{2} \left\langle \nabla f(X), X - x^{\star} \right\rangle + \frac12 \left\langle \nabla f(X), X - x^{\star} \right\rangle\\
&\ge \frac{1 + \sqrt{\mu s}}{2} \left[ f(X) - f(x^\star) + \frac{\mu}{2} \|X - x^\star\|^2\right] + \frac{\mu}{2} \|X - x^\star\|^2\\
&\ge \frac{1 + \sqrt{\mu s}}{2} (f(X) - f(x^\star)) + \frac{3\mu}{4} \|X - x^\star\|^2,
\end{align*}
which together with \eqref{eq:eq_key_diff_app} suggests that the time derivative of this Lyapunov function can be bounded as
\begin{equation}\label{eq:e_t_bound_1}
\frac{\dd \mathcal{E}}{\dd t} \leq - \sqrt{\mu} \left(\frac{1 + \sqrt{\mu s} }{2} (f(X) - f(x^{\star})) + \| \dot{X} \|^{2} +  \frac{3\mu}{4} \left\| X - x^{\star} \right\|^{2} + \frac{s}{2} \left\| \nabla f(X)\right\|^{2} \right).
\end{equation}
Next, the Cauchy--Schwarz inequality yields
\[
\left\| 2\sqrt{\mu} ( X - x^{\star} ) + \dot{X} + \sqrt{s} \nabla f(X)\right\|^{2} \leq 3 \left( 4 \mu \left\| X - x^{\star} \right\|^{2} + \| \dot{X} \|^{2} + s \left\| \nabla f(X) \right\|^{2} \right),
\]
from which it follows that
\begin{align}
\label{eqn: estimate_ef_sc_ode}
\mathcal{E}(t) \leq \left(1 + \sqrt{\mu s} \right) \left(f(X) - f(x^{\star}) \right) + \| \dot{X} \|^{2} +  3 \mu \left\|X - x^{\star}  \right\|^{2} + \frac{3s}{4} \left\| \nabla f(X)\right\|^{2}.
\end{align} 
Combining \eqref{eq:e_t_bound_1} and \eqref{eqn: estimate_ef_sc_ode} 
completes the proof of the theorem.

\end{proof}

\begin{rem}\label{rem:gc_effect}
The only inequality in \eqref{eq:eq_key_diff_app} is due to the term $\frac{\sqrt{s}}{2} (\left\| \nabla f(X)\right\|^{2} + \dot{X}^{\top}\nabla^{2}  f(X)\dot{X} )$, which is discussed right after the statement of Lemma~\ref{lm:nag_sc_e}. This term results from the gradient correction $\sqrt{s} \nabla^2 f(X) \dot X$ in the NAG-\texttt{SC} ODE. For comparison, this term does not appear in Lemma~\ref{lm:heavyball_e} in the case of the heavy-ball method as its ODE does not include the gradient correction and, accordingly, its Lyapunov function \eqref{eqn: energy_heavy-b_ode} is free of the $\sqrt{s} \nabla f(X)$ term.

\end{rem}

\subsection{The Discrete Case}
\label{sec:discrete-time}
This section carries over the results in Section \ref{sec:continuous-time-1-s} to the two discrete algorithms, namely NAG-\texttt{SC} and the heavy-ball method. Here we consider an objective $f \in \mathcal{S}_{\mu,L}^1(\mathbb{R}^n)$ since second-order differentiability of $f$ is not required in the two discrete methods. Recall that both methods start with an arbitrary $x_0$ and $x_1 = x_0 - \frac{2 s\nabla f(x_0)}{1 + \sqrt{\mu s}}$. 

\begin{thm}[Convergence of NAG-\texttt{SC}]\label{thm: strongly_NAGM-SC}
Let $f \in \mathcal{S}_{\mu,L}^1(\mathbb{R}^n)$. If the step size is set to $s = 1/(4L)$, the iterates $\{x_k\}_{k=0}^{\infty}$ generated by NAG-\texttt{SC} \eqref{eqn: Nesterov_strongly} satisfy
\[
f(x_{k}) - f(x^\star) \leq \frac{5 L \left\| x_{0} - x^{\star} \right\|^{2}}{\left( 1 + \frac{1}{12}\sqrt{\mu/L} \right)^k},
\]
for all $k \ge 0$.
\end{thm}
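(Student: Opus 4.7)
The plan is to execute Step 4 of the high-resolution ODE framework with the discrete Lyapunov function $\mathcal{E}(k)$ from \eqref{eqn:lypunov_NAGM-SC_strongly}. Concretely, I will establish a one-step contraction
\[
\mathcal{E}(k+1) \;\le\; \frac{\mathcal{E}(k)}{1+\tfrac{1}{12}\sqrt{\mu/L}}
\]
at the step size $s=1/(4L)$. Iterating this bound from $k=0$ immediately gives $\mathcal{E}(k)\le \mathcal{E}(0)/(1+\tfrac{1}{12}\sqrt{\mu/L})^{k}$, and it then suffices to (i) extract $f(x_k)-f(x^\star)$ from $\mathcal{E}(k)$ via its potential piece $\mathbf{I}$, and (ii) bound $\mathcal{E}(0)$ by a constant multiple of $L\|x_0-x^\star\|^2$. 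Part (ii) uses the initial conditions $x_1=x_0-\tfrac{2s\nabla f(x_0)}{1+\sqrt{\mu s}}$ and $v_0=-\tfrac{2\sqrt{s}}{1+\sqrt{\mu s}}\nabla f(x_0)$, together with the $L$-smoothness estimates $\|\nabla f(x_0)\|\le L\|x_0-x^\star\|$ and $f(x_0)-f(x^\star)\le \tfrac{L}{2}\|x_0-x^\star\|^2$, exactly as in the closing lines of the proof of Theorem~\ref{thm: ODE_first-NAGM-SC_strongly}; tracking constants at $s=1/(4L)$ then produces the advertised prefactor $5L\|x_0-x^\star\|^2$.

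The heart of the argument is computing $\mathcal{E}(k+1)-\mathcal{E}(k)$ piece by piece, mirroring the continuous calculation in Lemma~\ref{lm:nag_sc_e}. For the potential term $\mathbf{I}$, I apply $\mu$-strong convexity in the form $f(x_{k+1})\le f(x_k)+\langle\nabla f(x_{k+1}),x_{k+1}-x_k\rangle-\tfrac{\mu}{2}\|x_{k+1}-x_k\|^2$, together with the phase-space identity $x_{k+1}-x_k=\sqrt{s}\,v_k$ from \eqref{eqn: Nesterov_sc_symplectic}. For the kinetic term $\mathbf{II}$ and the mixed term $\mathbf{III}$, I expand $\|v_{k+1}\|^2-\|v_k\|^2$ and the corresponding difference for $\mathbf{III}$ using the implicit velocity update in \eqref{eqn: Nesterov_sc_symplectic}, which replaces $v_k-v_{k-1}$ by a friction contribution, a gradient-correction contribution, and a driving contribution proportional to $\sqrt{s}\nabla f(x_k)$. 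Splitting the resulting cross term $\langle\nabla f(x_{k+1}),x_{k+1}-x^\star\rangle$ via strong convexity exactly as in \eqref{eq:eq_key_diff_app} should yield the discrete analog of the continuous $-\tfrac{\sqrt{\mu}}{4}\mathcal{E}(t)$ bound.

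The main obstacle is handling the gradient correction $-\sqrt{s}\bigl(\nabla f(x_k)-\nabla f(x_{k-1})\bigr)$ coming from the velocity update. In continuous time it produced the comfortably nonpositive term $-\tfrac{\sqrt{s}}{2}\dot X^\top\nabla^2 f(X)\dot X$ which could simply be discarded, but in the discrete setting with only first-order smoothness I must absorb squared residuals $s\|\nabla f(x_k)-\nabla f(x_{k-1})\|^2$ using co-coercivity and the Lipschitz bound $\|\nabla f(x_k)-\nabla f(x_{k-1})\|\le L\|x_k-x_{k-1}\|=L\sqrt{s}\|v_{k-1}\|$. This is precisely where the extra negative summand $-s\|\nabla f(x_k)\|^2/(2(1-\sqrt{\mu s}))$ in \eqref{eqn:lypunov_NAGM-SC_strongly} pays off: the Young-inequality splittings of the cross terms between $v_k$ and $\sqrt{s}\nabla f(x_k)$ inside $\mathbf{III}$ generate positive $s\|\nabla f(x_k)\|^2$ residuals that would otherwise block the contraction, and the increment of this summand cancels them up to a telescoping $+s\|\nabla f(x_{k-1})\|^2$ piece that is itself dominated by the friction contribution. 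The specific choice $s=1/(4L)$, which gives $\sqrt{\mu s}\le \tfrac{1}{2}$, supplies enough slack in the Young constants for the factor $1+\tfrac{1}{12}\sqrt{\mu/L}$ to survive; assembling these per-piece estimates together with the initial-condition bound on $\mathcal{E}(0)$ completes the proof.
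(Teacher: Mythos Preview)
Your high-level plan matches the paper exactly: prove Lemma~\ref{lm:nag_sc_ek1}, iterate, bound $\mathcal{E}(0)$, and extract $f(x_k)-f(x^\star)$. But two of your proposed mechanisms are off and would cause real trouble.

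First, for the potential difference $\mathbf{I}$ you should not use strong convexity. The paper (Appendix~\ref{subsec:proof-lemma-nsc}) uses the $L$-smoothness bound
\[
f(x_{k+1})-f(x_k)\;\le\;\langle\nabla f(x_{k+1}),x_{k+1}-x_k\rangle-\tfrac{1}{2L}\|\nabla f(x_{k+1})-\nabla f(x_k)\|^2,
\]
and that $-\tfrac{1}{2L}\|\nabla f(x_{k+1})-\nabla f(x_k)\|^2$ term (called $\mathbf{I}_2$) is \emph{the} term that absorbs the gradient-correction residuals coming out of $\mathbf{II}$ and $\mathbf{III}$. Your $-\tfrac{\mu}{2}\|x_{k+1}-x_k\|^2$ from strong convexity is the wrong currency: after the velocity update spits out terms of the form $s\|\nabla f(x_{k+1})-\nabla f(x_k)\|^2$, you need a matching negative quadratic in the gradient difference with coefficient $O(1/L)$, not a $\|v_k\|^2$ term with coefficient $O(\mu s)$. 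Relatedly, the role of the negative summand $-\tfrac{s}{2(1-\sqrt{\mu s})}\|\nabla f(x_k)\|^2$ is not to absorb Young-inequality residuals from $\mathbf{III}$; its increment combines with the cross term $s\langle\nabla f(x_{k+1})-\nabla f(x_k),\nabla f(x_{k+1})\rangle$ via the polarization identity $\langle a-b,a\rangle-\tfrac12(\|a\|^2-\|b\|^2)=\tfrac12\|a-b\|^2$ to turn it into $\tfrac{s}{2(1-\sqrt{\mu s})}\|\nabla f(x_{k+1})-\nabla f(x_k)\|^2$, which is then merged with $\mathbf{I}_2$ and the $\mathbf{II}_2$ term (handled by co-coercivity $\langle\nabla f(x_{k+1})-\nabla f(x_k),x_{k+1}-x_k\rangle\ge\tfrac1L\|\nabla f(x_{k+1})-\nabla f(x_k)\|^2$) to give a net nonpositive coefficient for $s\le 1/(4L)$.

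Second, your step (i) glosses over a genuine obstacle: because $\mathcal{E}(k)$ contains the negative term, $\mathcal{E}(k)\ge\mathbf{I}$ is false in general, so you cannot simply ``extract via the potential piece.'' The paper uses $\|\nabla f(x_k)\|^2\le 2L(f(x_k)-f(x^\star))$ to show, at $s=1/(4L)$, that
\[
\mathcal{E}(k)\;\ge\;\frac{1+\sqrt{\mu s}}{1-\sqrt{\mu s}}\bigl(f(x_k)-f(x^\star)\bigr)-\frac{f(x_k)-f(x^\star)}{4(1-\sqrt{\mu s})}
=\frac{3+4\sqrt{\mu s}}{4(1-\sqrt{\mu s})}\bigl(f(x_k)-f(x^\star)\bigr),
\]
and it is this factor, together with the explicit bound on $\mathcal{E}(0)$, that yields the prefactor $5L\|x_0-x^\star\|^2$. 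Without this step the extraction simply does not go through.
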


In brief, the theorem states that $\log(f(x_k) - f(x^\star)) \le - O(k\sqrt{\mu/L})$, 
which matches the optimal rate for minimizing smooth strongly convex functions using 
only first-order information \cite{nesterov2013introductory}. More precisely, 
\cite{nesterov2013introductory} shows that $f(x_k) - f(x^\star) = 
O((1 - \sqrt{\mu/L})^k)$ by taking $s = 1/L$ in NAG-\texttt{SC}. Although 
this optimal rate of NAG-\texttt{SC} is well known in the litetature, this is the 
first Lyapunov-function-based proof of this result.

As indicated in Section \ref{sec:techniques}, the proof of 
Theorem \ref{thm: strongly_NAGM-SC} rests on the discrete Lyapunov 
function \eqref{eqn:lypunov_NAGM-SC_strongly}:
\begin{equation*}
\begin{aligned}
\mathcal{E}(k) =  & \frac{1 + \sqrt{\mu s} }{1 - \sqrt{\mu s} }  \left( f(x_{k}) - f(x^{\star}) \right) + \frac{1}{4} \left\| v_{k} \right\|^{2}  + \frac{1}{4} \left\| v_{k} + \frac{2\sqrt{\mu}}{1 - \sqrt{\mu s}} ( x_{k + 1} - x^{\star} ) + \sqrt{s} \nabla f(x_{k}) \right\|^{2}\\
&  - \frac{s\left\| \nabla f(x_{k})\right\|^{2}}{2(1 - \sqrt{\mu s})}.
\end{aligned}                           
\end{equation*}
Recall that this functional is derived by writing NAG-\texttt{SC} in the 
phase-space representation \eqref{eqn: Nesterov_sc_symplectic}. Analogous 
to Lemma~\ref{lm:nag_sc_e}, the following lemma gives an upper bound on 
the difference $\mathcal{E}(k+1) - \mathcal{E}(k)$.

\begin{lem}[Lyapunov function for NAG-\texttt{SC}]\label{lm:nag_sc_ek1}
Let $f \in \mathcal{S}_{\mu, L}^1(\mathbb{R}^n)$. Taking any step size $0 < s \le 1/(4L)$,  the discrete Lyapunov function~\eqref{eqn:lypunov_NAGM-SC_strongly} with $\{x_{k}\}_{k = 0}^{\infty}$ generated by NAG-\texttt{SC} satisfies
\begin{equation}\nonumber
\mathcal{E}(k + 1) - \mathcal{E}(k) \le - \frac{\sqrt{\mu s}}{6}\mathcal{E}(k + 1).
\end{equation}
\end{lem}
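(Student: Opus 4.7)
\textbf{Proof plan for Lemma \ref{lm:nag_sc_ek1}.} The plan is to mirror the continuous-time argument in the proof of Lemma~\ref{lm:nag_sc_e}, but using the phase-space representation \eqref{eqn: Nesterov_sc_symplectic} in place of the ODE. Write
\[
\mathcal{E}(k+1)-\mathcal{E}(k) \;=\; \Delta\mathbf{I} + \Delta\mathbf{II} + \Delta\mathbf{III} + \Delta\mathrm{neg},
\]
corresponding to the four pieces of \eqref{eqn:lypunov_NAGM-SC_strongly}. For $\Delta\mathbf{I}$, I would use $\mu$-strong convexity in the form $f(x_k) \ge f(x_{k+1}) + \langle \nabla f(x_{k+1}), x_k - x_{k+1}\rangle + \tfrac{\mu}{2}\|x_{k+1}-x_k\|^2$, so that $\Delta\mathbf{I}$ produces the ``potential energy'' leading term $\tfrac{1+\sqrt{\mu s}}{1-\sqrt{\mu s}}\langle\nabla f(x_{k+1}), x_{k+1}-x_k\rangle = \tfrac{1+\sqrt{\mu s}}{1-\sqrt{\mu s}}\sqrt{s}\langle \nabla f(x_{k+1}), v_k\rangle$, plus a negative quadratic $-\tfrac{\mu(1+\sqrt{\mu s})}{2(1-\sqrt{\mu s})}\|x_{k+1}-x_k\|^2$ that will be kept as slack.

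For $\Delta\mathbf{II}$ and $\Delta\mathbf{III}$, use the identity $\|a\|^2-\|b\|^2 = 2\langle b, a-b\rangle + \|a-b\|^2$ together with the $v$-update from \eqref{eqn: Nesterov_sc_symplectic}. Substituting this update produces three families of terms that are the direct discrete analogues of the three groups in \eqref{eq:eq_key_diff_app}: a friction-type term $-\tfrac{2\sqrt{\mu s}}{1-\sqrt{\mu s}}$ multiplying $\|v_{k+1}\|^2$-like expressions and inner products with $x_{k+1}-x^\star$; a gradient-correction term involving $-\sqrt{s}\langle \nabla f(x_{k+1})-\nabla f(x_k), v_{k+1}\rangle$ and its companion in $\Delta\mathbf{III}$; and a potential term proportional to $\tfrac{1+\sqrt{\mu s}}{1-\sqrt{\mu s}}\sqrt{s}\nabla f(x_{k+1})$ crossed with $v_{k+1}$ and with $x_{k+1}-x^\star$. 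The inner product $\tfrac{1+\sqrt{\mu s}}{1-\sqrt{\mu s}}\sqrt{s}\langle \nabla f(x_{k+1}), x_{k+1}-x^\star\rangle$ can then be bounded from below via $\mu$-strong convexity by $(f(x_{k+1})-f(x^\star))+\tfrac{\mu}{2}\|x_{k+1}-x^\star\|^2$, which is exactly what is needed to reproduce the $-\tfrac{\sqrt{\mu s}}{4}\mathcal{E}(k+1)$ bound appearing in the continuous-time analysis.

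The key obstruction — and the reason for the explicit negative term in \eqref{eqn:lypunov_NAGM-SC_strongly} — is the squared residuals $\|v_{k+1}-v_k\|^2$ that the expansion $\|a\|^2-\|b\|^2=2\langle b,a-b\rangle+\|a-b\|^2$ leaves behind. Using the $v$-update, each such residual generates a positive multiple of $s\|\nabla f(x_{k+1})\|^2$, $s\|\nabla f(x_{k+1})-\nabla f(x_k)\|^2$, and $\mu s\|x_{k+1}-x^\star\|^2$. I would control the first by the increment $\Delta\mathrm{neg} = -\tfrac{s}{2(1-\sqrt{\mu s})}(\|\nabla f(x_{k+1})\|^2-\|\nabla f(x_k)\|^2)$, noting that the term $-\tfrac{s\|\nabla f(x_{k+1})\|^2}{2(1-\sqrt{\mu s})}$ cancels exactly the leading positive contribution; the second by the $L$-Lipschitz bound $\|\nabla f(x_{k+1})-\nabla f(x_k)\| \le L\|x_{k+1}-x_k\| = L\sqrt{s}\|v_k\|$, which together with $s\le 1/(4L)$ makes the residual $O(sL)\|v_k\|^2$ and hence absorbable into the $\|v\|^2$ friction term; and the third by the strong-convexity surplus produced in $\Delta\mathbf{I}$. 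The hard part is keeping the book-keeping tight enough that, after absorbing all residuals, the coefficient on $\mathcal{E}(k+1)$ comes out to be $-\tfrac{\sqrt{\mu s}}{6}$ rather than $-\tfrac{\sqrt{\mu s}}{4}$; this degradation of the rate is exactly the discretization overhead, and the constant $1/6$ will be dictated by how much room the choice $s\le 1/(4L)$ leaves after the Lipschitz step is invoked. Once every positive residual is dominated, an upper bound of the form $\mathcal{E}(k+1)-\mathcal{E}(k)\le -\tfrac{\sqrt{\mu s}}{6}\mathcal{E}(k+1)$ follows by grouping as in \eqref{eqn: estimate_ef_sc_ode}.
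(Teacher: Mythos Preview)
Your decomposition is right and matches the paper's, but there is a genuine gap in how you propose to control the $\|\nabla f(x_{k+1})-\nabla f(x_k)\|^2$ residuals. For $\Delta\mathbf{I}$ you use $\mu$-strong convexity, whose slack $-\tfrac{\mu}{2}\|x_{k+1}-x_k\|^2=O(\mu s)\|v_k\|^2$ is essentially useless here. The paper instead uses the $L$-smooth convex inequality
\[
f(x_{k+1})-f(x_k)\;\le\;\langle\nabla f(x_{k+1}),x_{k+1}-x_k\rangle-\tfrac{1}{2L}\|\nabla f(x_{k+1})-\nabla f(x_k)\|^2,
\]
and the resulting $-\tfrac{1}{2L}\|\nabla f(x_{k+1})-\nabla f(x_k)\|^2$ slack is precisely what absorbs every positive $O(s)\|\nabla f(x_{k+1})-\nabla f(x_k)\|^2$ residual. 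Your proposed alternative --- Lipschitz-bound the gradient difference and absorb into the $\|v\|^2$ friction --- fails quantitatively: the residual becomes $L^2s^2\|v_k\|^2$, which at $s=1/(4L)$ is of order $1/16$, while the friction is $-\tfrac{\sqrt{\mu s}}{1-\sqrt{\mu s}}\|v_{k+1}\|^2=O(\sqrt{\mu/L})$. For small $\mu/L$ --- exactly the ill-conditioned regime where acceleration is interesting --- $\sqrt{\mu/L}\ll 1/16$ and the friction cannot dominate.

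Your reading of the negative term is also off. It does not cancel a raw $s\|\nabla f(x_{k+1})\|^2$; you cannot use $-\tfrac{s}{2(1-\sqrt{\mu s})}\|\nabla f(x_{k+1})\|^2$ without also carrying $+\tfrac{s}{2(1-\sqrt{\mu s})}\|\nabla f(x_k)\|^2$. In the paper, $\Delta\mathrm{neg}$ combines with cross terms of the form $\tfrac{s}{1-\sqrt{\mu s}}\langle\nabla f(x_{k+1})-\nabla f(x_k),\nabla f(x_{k+1})\rangle$ (coming from both $\Delta\mathbf{II}$ and $\Delta\mathbf{III}$) into the \emph{positive} square $\tfrac{s}{2(1-\sqrt{\mu s})}\|\nabla f(x_{k+1})-\nabla f(x_k)\|^2$, which is then swallowed by the $O(1/L)$ co-coercivity slack together with the monotonicity bound $\langle\nabla f(x_{k+1})-\nabla f(x_k),x_{k+1}-x_k\rangle\ge\tfrac1L\|\nabla f(x_{k+1})-\nabla f(x_k)\|^2$ applied to the gradient-correction inner product. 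Finally, a smaller point: the paper pivots the square-difference identity at $v_{k+1}$, not $v_k$, so the leftover $-\tfrac14\|v_{k+1}-v_k\|^2$ is negative and can be completed to $-\tfrac14\bigl\|v_{k+1}-v_k+\tfrac{1+\sqrt{\mu s}}{1-\sqrt{\mu s}}\sqrt{s}\,\nabla f(x_{k+1})\bigr\|^2\le 0$ and dropped; your pivot at $v_k$ makes both the residual and the friction sign-indefinite.
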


The form of the inequality ensured by Lemma \ref{lm:nag_sc_ek1} is consistent with that of Lemma \ref{lm:nag_sc_e}. Alternatively, it can be written as $\mathcal{E}(k + 1) \le \frac{1}{1+\frac{\sqrt{\mu s}}{6}} \mathcal{E}(k)$. With Lemma \ref{lm:nag_sc_ek1} in place, we give the proof of Theorem \ref{thm: strongly_NAGM-SC}.
\begin{proof}[Proof of Theorem~\ref{thm: strongly_NAGM-SC}]
Given $s = 1/(4L)$, we have
\begin{equation}\label{eq:e_f_diff_rel}
f(x_{k}) - f(x^{\star}) \leq \frac{4 (1- \sqrt{\mu/(4L)} )}{3 + 4\sqrt{\mu/(4L)}}\mathcal{E}(k).
\end{equation}
To see this, first note that
\[
\mathcal{E}(k) \ge  \frac{1 + \sqrt{\mu/(4L)} }{1 - \sqrt{\mu/(4L)} }  \left( f(x_{k}) - f(x^{\star}) \right) - \frac{\left\| \nabla f(x_{k})\right\|^{2}}{8L(1 - \sqrt{\mu/(4L)})}
\]
and
\[
 \frac1{2L} \left\| \nabla f(x_{k})\right\|^2 \le  f(x_k) - f(x^\star).
\]
Combining these two inequalities, we get
\[
\mathcal{E}(k) \ge  \frac{1 + \sqrt{\mu /(4L)} }{1 - \sqrt{\mu /(4L)} }  \left( f(x_{k}) - f(x^{\star}) \right) - \frac{f(x_k) - f(x^{\star})}{4(1 - \sqrt{\mu /(4L)})} = \frac{3 +4 \sqrt{\mu /(4L)}}{4 (1 - \sqrt{\mu /(4L)})} (f(x_k) - f(x^{\star})),
\]
which gives \eqref{eq:e_f_diff_rel}.

Next, we inductively apply Lemma \ref{lm:nag_sc_ek1}, yielding
\begin{equation}\label{eq:e_0_upperx}
\mathcal{E}(k) \le \frac{\mathcal{E}(0)}{\left(1 + \frac{\sqrt{\mu s}}{6}\right)^k} = \frac{\mathcal{E}(0)}{\left( 1 + \frac{1}{12}\sqrt{\mu/L} \right)^k}.
\end{equation}
Recognizing the initial velocity $v_{0} = - \frac{2\sqrt{s} \nabla f(x_{0})}{1 + \sqrt{\mu s}}$ in NAG-\texttt{SC}, one can show that
\begin{equation}\label{eq:eqn_4l_f}
\begin{aligned}
\mathcal{E}(0) &\le \frac{1 + \sqrt{\mu s} }{1 - \sqrt{\mu s}} \left( f(x_{0}) - f(x^{\star})\right) + \frac{s}{(1 + \sqrt{\mu s} )^{2}}  \left\| \nabla f(x_{0})\right\|^{2} \\
&\quad\quad + \frac14\left\| \frac{2\sqrt{\mu}}{1 - \sqrt{\mu s}} (x_{0} - x^{\star}) -\frac{1 + \sqrt{\mu s} }{1 - \sqrt{\mu s} } \sqrt{s} \nabla f(x_{0})\right\|^2\\ 
                        &\le \left[\frac12 \left( \frac{1 + \sqrt{\mu s}}{1 - \sqrt{\mu s}}\right) + \frac{Ls}{(1 + \sqrt{\mu s})^{2}} + \frac{2\mu/L}{(1 - \sqrt{\mu s})^{2}} + \frac{Ls}{2}\left( \frac{1 + \sqrt{\mu s}}{1 - \sqrt{\mu s}}\right)^{2}\right] \cdot L \left\| x_{0} - x^{\star} \right\|^{2}.
\end{aligned}
\end{equation}
Taking $s = 1/(4L)$ in \eqref{eq:eqn_4l_f}, it follows from \eqref{eq:e_f_diff_rel} and \eqref{eq:e_0_upperx} that
\[
f(x_{k}) - f(x^{\star}) \le \frac{C_{\mu/L} \, L \left\| x_{0} - x^{\star} \right\|^{2}}{\left( 1 + \frac{1}{12}\sqrt{\mu/L} \right)^k}.
\]
Here the constant factor $C_{\mu/L}$ is a short-hand for
\[
\frac{4 \left(1 - \sqrt{\mu /(4L)}\right)}{3 +4 \sqrt{\mu /(4L)}} \cdot \left[\frac{1 + \sqrt{\mu/(4L)}}{2 - 2\sqrt{\mu/(4L)}} + \frac{1}{4(1 + \sqrt{\mu/(4L)})^{2}} + \frac{2\mu/L}{(1 - \sqrt{\mu/(4L)})^{2}} + \frac{1}{8}\left( \frac{1 + \sqrt{\mu/(4L)}}{1 - \sqrt{\mu/(4L)}}\right)^{2}\right],
\]
which is less than five by making use of the fact that $\mu/L \le 1$. 
This completes the proof.

\end{proof}

We now turn to the heavy-ball method~(\ref{eqn: polyak_heavy_ball}). 
Recall that $\alpha = \frac{1-\sqrt{\mu s}}{1+\sqrt{\mu s}}$ and 
$x_{1} = x_{0}-\frac{2s\nabla f(x_{0})}{1+\sqrt{\mu s}}$.
\begin{thm}[Convergence of heavy-ball method]
\label{thm: strongly_PHBM}
Let $f \in \mathcal{S}^1_{\mu,L}(\mathbb{R}^n)$. If the step size 
is set to $s = \mu/(16L^2)$, the iterates $\{x_k\}_{k=0}^{\infty}$ 
generated by the heavy-ball method satisfy
\[
f(x_{k}) - f(x_{0}) \leq  \frac{5L\left\| x_{0} - x^{\star} \right\|^{2}}{\left( 1 +  \frac{\mu}{16L} \right)^k}
\]
for all $k \ge 0$.
\end{thm}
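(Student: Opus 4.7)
The plan is to mirror the proof strategy of Theorem~\ref{thm: strongly_NAGM-SC} for NAG-\texttt{SC}, adapting it to the structure of the heavy-ball method, which lacks the discrete gradient correction. (I read the left-hand side of the inequality as $f(x_k) - f(x^\star)$, since the stated $f(x_k) - f(x_0)$ is incompatible with the right-hand side as $k \to 0$.) The argument proceeds in four steps: (i) rewrite the heavy-ball iteration in a phase-space form; (ii) introduce a discrete Lyapunov function patterned on the continuous one \eqref{eqn: energy_heavy-b_ode}; (iii) prove a one-step decay inequality of the form $\mathcal{E}(k+1) - \mathcal{E}(k) \le -\tfrac{\sqrt{\mu s}}{4}\mathcal{E}(k+1)$; and (iv) unwind the recursion and use the initial conditions.

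For step (i), set $v_k = (x_{k+1} - x_k)/\sqrt{s}$ and use $\alpha = \frac{1-\sqrt{\mu s}}{1+\sqrt{\mu s}}$; the update \eqref{eqn: polyak_heavy_ball} rearranges into
\begin{equation*}
x_{k} - x_{k-1} = \sqrt{s}\, v_{k-1}, \qquad v_{k} - v_{k-1} = -\frac{2\sqrt{\mu s}}{1-\sqrt{\mu s}}\, v_{k} - \frac{1+\sqrt{\mu s}}{1-\sqrt{\mu s}}\,\sqrt{s}\,\nabla f(x_k),
\end{equation*}
with $v_0 = -\tfrac{2\sqrt{s}}{1+\sqrt{\mu s}}\nabla f(x_0)$. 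Crucially, the term $-\sqrt{s}(\nabla f(x_k) - \nabla f(x_{k-1}))$ that appears in \eqref{eqn: Nesterov_sc_symplectic} is \emph{absent} here. For step (ii), I would take the direct discrete analogue of \eqref{eqn: energy_heavy-b_ode},
\begin{equation*}
\mathcal{E}(k) = \frac{1+\sqrt{\mu s}}{1-\sqrt{\mu s}}\bigl(f(x_k) - f(x^\star)\bigr) + \frac{1}{4}\|v_k\|^2 + \frac{1}{4}\left\|v_k + \frac{2\sqrt{\mu}}{1-\sqrt{\mu s}}(x_{k+1} - x^\star)\right\|^2,
\end{equation*}
which, compared with \eqref{eqn:lypunov_NAGM-SC_strongly}, contains no $\sqrt{s}\nabla f(x_k)$ inside the mixed term and no negative $\|\nabla f(x_k)\|^2$ corrector.

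Step (iii) is the main obstacle. Expanding $\mathcal{E}(k+1) - \mathcal{E}(k)$ term by term via the phase-space identities and invoking $\mu$-strong convexity to bound the cross term $\langle \nabla f(x_{k+1}), x_{k+1} - x^\star\rangle$ from below (exactly as in the proof of Lemma~\ref{lm:heavyball_e}) produces a leading contribution $-\Theta(\sqrt{\mu s})\mathcal{E}(k+1)$ mirroring the continuous-time calculation, plus discretization error terms of order $s\|\nabla f(x_k)\|^2$, $s\|v_k\|^2$ and $s\sqrt{\mu s}\|x_{k+1}-x^\star\|^2$. In NAG-\texttt{SC} the gradient correction supplied a negative $\Theta(\sqrt{s})\|\nabla f\|^2$ term together with the explicit $-s\|\nabla f(x_k)\|^2/(2(1-\sqrt{\mu s}))$ correction in the Lyapunov function that jointly absorbed these errors (see Remark~\ref{rem:gc_effect}); here no such absorbing mechanism is available, and the errors can be dominated only by choosing $s$ very small. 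Using $L$-smoothness in the forms $\|\nabla f(x_k)\|^2 \le 2L(f(x_k) - f(x^\star))$ and $\|v_k\|^2 \le L \cdot \frac{1-\sqrt{\mu s}}{1+\sqrt{\mu s}}\mathcal{E}(k)$-type estimates, the choice $s = \mu/(16L^2)$ exactly suffices to make every error term at most half of the leading $-\Theta(\sqrt{\mu s})\mathcal{E}(k+1)$ contribution, yielding
\begin{equation*}
\mathcal{E}(k+1) - \mathcal{E}(k) \le -\frac{\sqrt{\mu s}}{4}\,\mathcal{E}(k+1).
\end{equation*}
This is precisely the reason that heavy-ball demands $s = O(\mu/L^2)$ and thereby reaches only the non-accelerated rate $(1+\mu/(16L))^{-k}$, since $\sqrt{\mu s}/4 = \mu/(16L)$ at this step size.

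For step (iv), the recursion gives $\mathcal{E}(k) \le \mathcal{E}(0) \cdot (1 + \mu/(16L))^{-k}$, while dropping the two nonnegative quadratic terms yields the lower bound $\tfrac{1+\sqrt{\mu s}}{1-\sqrt{\mu s}}(f(x_k)-f(x^\star)) \le \mathcal{E}(k)$. Finally, substituting the initial conditions $x_1 - x_0 = -\tfrac{2s}{1+\sqrt{\mu s}}\nabla f(x_0)$ (so that $v_0 = -\tfrac{2\sqrt{s}}{1+\sqrt{\mu s}}\nabla f(x_0)$) into $\mathcal{E}(0)$ and applying the smoothness bounds $\|\nabla f(x_0)\| \le L\|x_0 - x^\star\|$ and $f(x_0) - f(x^\star) \le \tfrac{L}{2}\|x_0 - x^\star\|^2$, exactly as in the last display of the NAG-\texttt{SC} proof, gives $\mathcal{E}(0) \le C\, L\|x_0 - x^\star\|^2$ for a numerical constant $C$; a routine simplification using $\mu/L \le 1$ collapses the overall factor to the claimed $5$.
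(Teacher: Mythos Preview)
Your proposal is correct and follows essentially the same route as the paper: the same phase-space representation, the same discrete Lyapunov function \eqref{eqn: lypunov_PHBM_strongly}, and the same mechanism whereby the leftover $O(s)\|\nabla f\|^2$ discretization term (isolated precisely in the paper as Lemma~\ref{lm:heavy_dis_add}) forces $s = O(\mu/L^2)$ via $\|\nabla f\|^2 \le 2L(f-f^\star)$, after which the recursion and the initial-condition bound give the constant $5$ exactly as you describe. The paper's detailed computation shows that the only obstructing error term is the single $\tfrac{s}{2}\bigl(\tfrac{1+\sqrt{\mu s}}{1-\sqrt{\mu s}}\bigr)^2\|\nabla f(x_{k+1})\|^2$ --- the extra $s\|v_k\|^2$ and $s\sqrt{\mu s}\|x_{k+1}-x^\star\|^2$ errors you anticipate do not in fact survive the cancellations --- but this does not affect the validity of your outline.
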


The heavy-ball method minimizes the objective at the rate 
$\log(f(x_k) - f(x^\star)) \le - O(k\mu/L)$, as opposed to the optimal 
rate $-O(k\sqrt{\mu/L})$ obtained by NAG-\texttt{SC}. Thus, the acceleration 
phenomenon is not observed in the heavy-ball method for minimizing functions 
in the class $\mathcal{S}^1_{\mu,L}(\mathbb{R}^n)$. This difference is, on 
the surface, attributed to the much smaller step size $s = \mu/(16L^2)$ in 
Theorem~\ref{thm: strongly_PHBM} than the ($s = 1/(4L)$) in 
Theorem~\ref{thm: strongly_NAGM-SC}. Further discussion of this difference 
is given after Lemma~\ref{lm:heavy_dis_add} and in Section~\ref{sec:insights-into-accel}.

In addition to allowing us to complete the proof of Theorem \ref{thm: strongly_PHBM}, 
Lemma \ref{lm:heavy_dis_add} will shed light on why the heavy-ball method needs a 
more conservative step size. To state this lemma, we consider the discrete Lyapunov 
function defined as
\begin{equation}\label{eqn: lypunov_PHBM_strongly}
\mathcal{E}(k) = \frac{1 + \sqrt{\mu s} }{1 - \sqrt{\mu s} } \left( f(x_{k}) - f(x^{\star}) \right) + \frac{1}{4} \left\| v_{k} \right\|^{2} + \frac{1}{4} \left\|v_{k} + \frac{2\sqrt{\mu}}{1 - \sqrt{\mu s}}(x_{k + 1} - x^{\star}) \right\|^{2},
\end{equation}
which is derived by discretizing the continuous Lyapunov function 
\eqref{eqn: energy_heavy-b_ode} using the phase-space representation of 
the heavy-ball method:
\begin{equation}
\label{eqn: polyak_heavy_ball_symplectic}
\begin{aligned}
 & x_{k} - x_{k - 1} =  \sqrt{s} v_{k - 1} \\
 & v_{k} - v_{k - 1} = - \frac{ 2 \sqrt{\mu s} }{ 1 - \sqrt{\mu s} }  
v_{k}  -  \frac{ 1 + \sqrt{\mu s}  }{1 - \sqrt{\mu s}  } \cdot \sqrt{s} \nabla f( x_{k} ).
\end{aligned}
\end{equation} 

\begin{lem}[Lyapunov function for the heavy-ball method]\label{lm:heavy_dis_add}
Let $f \in \mathcal{S}_{\mu,L}^{1}(\mathbb{R}^n)$. For any step size $s > 0$, the discrete Lyapunov function~\eqref{eqn: lypunov_PHBM_strongly} with $\{x_{k}\}_{k = 0}^{\infty}$ generated by the heavy-ball method satisfies
\begin{equation}\label{eq:e_diff_neg_sec}
\begin{aligned}
\mathcal{E}(k+1) - &\mathcal{E}(k) \le - \sqrt{\mu s} \min\left\{ \frac{1 - \sqrt{\mu s}}{1 + \sqrt{\mu s}}, \frac{1}{4}\right\}\mathcal{E}(k+1) \\
&- \Bigg[ \frac{3\sqrt{\mu s}}{4} \left( \frac{1 + \sqrt{\mu s} }{1 - \sqrt{\mu s} } \right) \left( f(x_{k + 1}) - f(x^{\star}) \right)  - \frac{s}{2} \left( \frac{1 + \sqrt{\mu s} }{1 - \sqrt{\mu s} } \right)^2 \left\| \nabla f(x_{k + 1}) \right\|^{2} \Bigg].
\end{aligned}
\end{equation}
\end{lem}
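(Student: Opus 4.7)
The plan is to compute $\mathcal{E}(k+1) - \mathcal{E}(k)$ directly from the phase-space representation \eqref{eqn: polyak_heavy_ball_symplectic}, decomposing the change into contributions from the potential, kinetic, and mixed parts of \eqref{eqn: lypunov_PHBM_strongly}, and then absorb favorable terms into $-\sqrt{\mu s}\,C\,\mathcal{E}(k+1)$ while tracking the rest. This parallels the continuous derivation in Lemma~\ref{lm:heavyball_e}, but every $\dot{}$ gets replaced by a finite difference, which produces additional ``second-order in $s$'' terms that we must dispose of carefully.

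First, for the potential part $\tfrac{1+\sqrt{\mu s}}{1-\sqrt{\mu s}}\bigl(f(x_{k+1}) - f(x_k)\bigr)$, I would use $\mu$-strong convexity applied to the pair $(x_k,x_{k+1})$ to obtain
\[
f(x_{k+1}) - f(x_k) \le \sqrt{s}\,\langle \nabla f(x_{k+1}), v_k\rangle - \tfrac{\mu s}{2}\|v_k\|^2,
\]
since $x_{k+1}-x_k = \sqrt{s}\,v_k$. Next, for the kinetic part I would write $\|v_{k+1}\|^2 - \|v_k\|^2 = 2\langle v_{k+1}, v_{k+1}-v_k\rangle - \|v_{k+1}-v_k\|^2$ and substitute the velocity update from \eqref{eqn: polyak_heavy_ball_symplectic}. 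For the mixed part, the identity $x_{k+2} = x_{k+1} + \sqrt{s}\,v_{k+1}$ gives
\[
v_{k+1} + \tfrac{2\sqrt{\mu}}{1-\sqrt{\mu s}}(x_{k+2}-x^\star) = \Bigl(v_k + \tfrac{2\sqrt{\mu}}{1-\sqrt{\mu s}}(x_{k+1}-x^\star)\Bigr) + (v_{k+1}-v_k) + \tfrac{2\sqrt{\mu s}}{1-\sqrt{\mu s}}\,v_{k+1},
\]
and I would again expand $\|a+b\|^2 - \|a\|^2$ in the form $2\langle a,b\rangle + \|b\|^2$ and plug in the velocity update. At this stage every difference is replaced by $-\tfrac{2\sqrt{\mu s}}{1-\sqrt{\mu s}}v_{k+1} - \tfrac{1+\sqrt{\mu s}}{1-\sqrt{\mu s}}\sqrt{s}\,\nabla f(x_{k+1})$, which couples the three pieces through $\langle v_{k+1},\nabla f(x_{k+1})\rangle$ and $\langle x_{k+1}-x^\star,\nabla f(x_{k+1})\rangle$ terms.

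After collecting, I would apply convexity in the form $\langle \nabla f(x_{k+1}), x_{k+1}-x^\star\rangle \ge f(x_{k+1}) - f(x^\star)$ (splitting it into two halves as in the proof of Lemma~\ref{lm:nag_sc_e}) and strong convexity $\langle \nabla f(x_{k+1}), x_{k+1}-x^\star\rangle \ge \mu\|x_{k+1}-x^\star\|^2$ to convert the inner products into a linear combination of the three energy pieces at step $k+1$. The factor $\tfrac{1-\sqrt{\mu s}}{1+\sqrt{\mu s}}$ in the min comes from matching the resulting $-\sqrt{\mu s}\bigl(f(x_{k+1})-f(x^\star)\bigr)$ coefficient against the potential weight $\tfrac{1+\sqrt{\mu s}}{1-\sqrt{\mu s}}$ in $\mathcal{E}(k+1)$, while the factor $\tfrac{1}{4}$ is the decay coefficient that survives for the kinetic and mixed terms after the Cauchy--Schwarz bound analogous to \eqref{eqn: estimate_ef_sc_ode}. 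Taking the minimum of these two coefficients yields the $-\sqrt{\mu s}\min\{\ldots\}\,\mathcal{E}(k+1)$ part.

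The final step is to identify the leftover pieces as the bracket in \eqref{eq:e_diff_neg_sec}. Two terms do not get absorbed: a negative $-\tfrac{3\sqrt{\mu s}}{4}\tfrac{1+\sqrt{\mu s}}{1-\sqrt{\mu s}}(f(x_{k+1}) - f(x^\star))$ coming from reusing half of the convexity inequality, and a positive $+\tfrac{s}{2}\bigl(\tfrac{1+\sqrt{\mu s}}{1-\sqrt{\mu s}}\bigr)^2\|\nabla f(x_{k+1})\|^2$ arising from the squared-norm terms $\|v_{k+1}-v_k\|^2$ and $\|\sqrt{s}\,\nabla f(x_{k+1})\|^2$ that have no counterpart in the continuous analysis of Lemma~\ref{lm:heavyball_e}. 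Together these form exactly the bracket in the statement; crucially, unlike in the NAG-\texttt{SC} case, there is no Hessian-driven gradient-correction term available to cancel the positive $\|\nabla f(x_{k+1})\|^2$ contribution, which is precisely the phenomenon highlighted in Remark~\ref{rem:gc_effect} and which forces the step-size restriction in Theorem~\ref{thm: strongly_PHBM}. The main obstacle will be the bookkeeping in the mixed-energy expansion: keeping the coefficients on $\langle \nabla f(x_{k+1}),v_{k+1}\rangle$ correct so that they cancel against the matching terms from the potential and kinetic pieces, and verifying that the two residual terms package exactly into the bracket with the stated constants $\tfrac{3}{4}$ and $\tfrac{1}{2}$.
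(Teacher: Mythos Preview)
Your plan is the paper's proof: split $\mathcal{E}$ into potential/kinetic/mixed pieces, expand each via \eqref{eqn: polyak_heavy_ball_symplectic}, apply strong convexity at $x_{k+1}$, and compare against a Cauchy--Schwarz upper bound on $\mathcal{E}(k+1)$ (the paper uses the single inequality $\langle\nabla f(x_{k+1}),x_{k+1}-x^\star\rangle\ge f(x_{k+1})-f(x^\star)+\tfrac{\mu}{2}\|x_{k+1}-x^\star\|^2$ rather than splitting into halves, and uses $L$-smoothness instead of $\mu$-strong convexity for the potential step, but either variant works since the resulting extra negative term is discarded). One bookkeeping point worth correcting before you execute: the quadratic remainders $-\tfrac14\|v_{k+1}-v_k\|^2$ and $-\tfrac{s}{4}\bigl(\tfrac{1+\sqrt{\mu s}}{1-\sqrt{\mu s}}\bigr)^2\|\nabla f(x_{k+1})\|^2$ actually combine with two of the cross terms into a \emph{negative} perfect square $-\tfrac14\bigl\|v_{k+1}-v_k+\tfrac{1+\sqrt{\mu s}}{1-\sqrt{\mu s}}\sqrt{s}\,\nabla f(x_{k+1})\bigr\|^2$ and are dropped; the uncontrolled $+\tfrac{s}{2}\bigl(\tfrac{1+\sqrt{\mu s}}{1-\sqrt{\mu s}}\bigr)^2\|\nabla f(x_{k+1})\|^2$ instead comes from the remaining cross term $-\tfrac12\,\tfrac{1+\sqrt{\mu s}}{1-\sqrt{\mu s}}\sqrt{s}\,\bigl\langle\nabla f(x_{k+1}),\tfrac{1+\sqrt{\mu s}}{1-\sqrt{\mu s}}v_{k+1}-v_k\bigr\rangle$ (half of the potential term paired with the mixed-energy velocity term), which the velocity update collapses to exactly that positive quantity.
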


The proof of Lemma~\ref{lm:heavy_dis_add} can be found in 
Appendix~\ref{subsec: heavy_ball_discrete}. To apply this lemma to 
prove Theorem~\ref{thm: strongly_PHBM}, we need to ensure
\begin{align}\label{eqn: condition_strongly_PHBM}
\frac{3\sqrt{\mu s}}{4}\left( \frac{1 + \sqrt{\mu s} }{1 - \sqrt{\mu s} } \right) \left( f(x_{k + 1}) - f(x^{\star}) \right)  - \frac{s}{2} \left( \frac{1 + \sqrt{\mu s} }{1 - \sqrt{\mu s} } \right)^{2}\left\| \nabla f(x_{k + 1}) \right\|^{2} \ge 0.
\end{align}
A sufficient and necessary condition for \eqref{eqn: condition_strongly_PHBM} is
\begin{equation}\label{eqn: condition_strongly_PHBM_22}
\frac{3\sqrt{\mu s}}{4}\left( f(x_{k + 1}) - f(x^{\star}) \right) -  \left( \frac{1 + \sqrt{\mu s} }{1 - \sqrt{\mu s} } \right) s L  \left( f(x_{k + 1}) - f(x^{\star}) \right) \geq 0.
\end{equation}
This is because $\left\| \nabla f(x_{k + 1}) \right\|^{2} \leq 2L \left( f(x_{k + 1}) - f(x^{\star}) \right)$, which can be further reduced to an equality (for example, $f(x) = \frac{L}{2} \|x\|^2$). Thus, the step size $s$ must obey
\[
s = O\left(\frac{\mu}{L^2} \right).
\]
In particular, the choice of $s = \frac{\mu}{16L^2}$ fulfills \eqref{eqn: condition_strongly_PHBM_22} and, as a consequence, Lemma \ref{lm:heavy_dis_add} implies
\[
\mathcal{E}(k+1) - \mathcal{E}(k) \le -\frac{\mu}{16 L} \mathcal{E}(k+1).
\]
The remainder of the proof of Theorem \ref{thm: strongly_PHBM} is similar to 
that of Theorem \ref{thm: strongly_NAGM-SC} and is therefore omitted. As an 
aside, \cite{polyak1964some} uses $s = 4 /(\sqrt{L} + \sqrt{\mu})^{2}$ for 
\textit{local} accelerated convergence of the heavy-ball method. This choice 
of step size is larger than our step size $s = \frac{\mu}{16 L^2}$, which 
yields a non-accelerated but global convergence rate.

The term $\frac{s}{2} \left( \frac{1 + \sqrt{\mu s} }{1 - \sqrt{\mu s} } \right)^2 \left\| \nabla f(x_{k + 1}) \right\|^{2}$ in \eqref{eq:e_diff_neg_sec} that arises from 
finite differencing of \eqref{eqn: lypunov_PHBM_strongly} is a (small) term 
of order $O(s)$ and, as a consequence, this term is not reflected in 
Lemma \ref{lm:heavyball_e}. In relating to the case of NAG-\texttt{SC}, 
one would be tempted to ask why this term does not appear in 
Lemma \ref{lm:nag_sc_ek1}. In fact, a similar term can be found in 
$\mathcal{E}(k+1) - \mathcal{E}(k)$ by taking a closer look at the 
proof of Lemma \ref{lm:nag_sc_ek1}. However, this term is canceled out 
by the discrete version of the quadratic term 
$\frac{\sqrt{s}}{2} (\left\| \nabla f(X)\right\|^{2} + \dot{X}^\top \nabla^{2}f(X)\dot{X} )$ 
in Lemma \ref{lm:nag_sc_e} and is, therefore, not present in the statement of 
Lemma \ref{lm:nag_sc_ek1}. Note that this quadratic term results from the 
gradient correction (see Remark \ref{rem:gc_effect}). In light of the above, 
the gradient correction is the key ingredient that allows for a larger step 
size in NAG-\texttt{SC}, which is necessary for achieving acceleration.

For completeness, we finish Section~\ref{sec:discrete-time} by proving Lemma 
\ref{lm:nag_sc_ek1}.
\begin{proof}[Proof of Lemma \ref{lm:nag_sc_ek1}]
Using the Cauchy--Schwarz inequality, we have\footnote{See the definition of $\mathbf{III}$ in \eqref{eqn:lypunov_NAGM-SC_strongly}.}
\begin{align*}
\mathbf{III} = & \frac14 \left\|  \left(\frac{1 + \sqrt{\mu s} }{1 - \sqrt{\mu s}}\right) v_{k} + \frac{2\sqrt{\mu}}{1 - \sqrt{\mu s}} ( x_{k} - x^{\star} ) +   \sqrt{s} \nabla f(x_{k}) \right\|^{2} \\
	\leq & \frac34 \left[  \left( \frac{1 + \sqrt{\mu s} }{1 - \sqrt{\mu s} }\right)^{2} \left\| v_{k} \right\|^{2} + \frac{ 4 \mu }{(1 - \sqrt{\mu s})^{2}}  \left\| x_{k} - x^{\star} \right\|^{2} + s \left\| \nabla f(x_{k}) \right\|^{2}\right],
\end{align*}
which, together with the inequality
\[
\begin{aligned}
\frac{3s}{4}\left\| \nabla f(x_{k}) \right\|^{2} - \frac{s\left\| \nabla f(x_{k})\right\|^{2}}{2(1 - \sqrt{\mu s})} &= \frac{s}{4}\left\| \nabla f(x_{k}) \right\|^{2} + \frac{s}{2}\left\| \nabla f(x_{k}) \right\|^{2} - \frac{s\left\| \nabla f(x_{k})\right\|^{2}}{2(1 - \sqrt{\mu s})}\\
&\le \frac{Ls}{2} \left( f(x_{k}) - f(x^{\star}) \right) - \frac{s\sqrt{\mu s}\left\| \nabla f(x_{k})\right\|^{2}}{2(1 - \sqrt{\mu s})},
\end{aligned}
\]
for $f \in \mathcal{S}_{\mu, L}^{1}(\mathbb{R}^n)$, shows that the Lyapunov function~(\ref{eqn:lypunov_NAGM-SC_strongly}) satisfies
\begin{equation}\label{eqn: nag-sc_energy_estimate}
\begin{aligned}
\mathcal{E}(k) \leq &  \left( \frac{ 1}{1 - \sqrt{\mu s}} + \frac{Ls}{2}\right) \left( f(x_{k}) - f(x^{\star}) \right) + \frac{1 + \sqrt{\mu s} + \mu s }{ (1 - \sqrt{\mu s} )^{2} } \left\| v_{k} \right\|^{2} \\
	&  + \frac{3\mu}{ (1 - \sqrt{\mu s} )^{2} } \left\| x_{k} - x^{\star} \right\|^{2} + \frac{ \sqrt{\mu s} }{1 - \sqrt{\mu s}}  \left( f(x_{k}) - f(x^{\star}) - \frac{s}{2} \left\| \nabla f(x_{k})\right\|^{2} \right).
\end{aligned}
\end{equation}

Next, as shown in Appendix~\ref{subsec:proof-lemma-nsc}, the inequality
\begin{equation}\label{eq:proof_app_is}
\begin{aligned}
\mathcal{E}&(k + 1) - \mathcal{E}(k)  \leq  - \sqrt{\mu s} \left[ \frac{1 - 2Ls}{ \left( 1 - \sqrt{\mu s}  \right)^{2} }  \left( f(x_{k + 1}) - f(x^{\star})  \right) +  \frac{1}{ 1 - \sqrt{\mu s} } \left\| v_{k + 1} \right\|^{2}   \right. \\
	&\left. + \frac{\mu}{2(1 - \sqrt{\mu s} )^{2}} \left\| x_{k + 1} - x^{\star} \right\|^{2} +  \frac{\sqrt{\mu s} }{(1 - \sqrt{\mu s})^{2}}  \left( f(x_{k + 1}) - f(x^{\star}) - \frac{s}{2} \left\| \nabla f(x_{k + 1}) \right\|^{2} \right)\right]
\end{aligned}
\end{equation}
holds for $s \leq 1/(2L)$. Comparing the coefficients of the same terms 
in \eqref{eqn: nag-sc_energy_estimate} for $\mathcal{E}(k+1)$ and 
\eqref{eq:proof_app_is}, we conclude that the first difference of the 
discrete Lyapunov function~(\ref{eqn:lypunov_NAGM-SC_strongly}) must satisfy
\begin{align*}
	\mathcal{E}(k + 1) - \mathcal{E}(k)
	& \leq  - \sqrt{\mu s} \min\left\{ \frac{1 - 2Ls}{1 - \sqrt{\mu s} + \frac{Ls}{2}\left(1 - \sqrt{\mu s}\right)^{2} },  \frac{1 -  \sqrt{\mu s}}{1 + \sqrt{\mu s} + \mu s}, \frac{1}{6}, \frac{1}{1 - \sqrt{\mu s}}\right\} \mathcal{E}(k + 1) \\
	& \leq  - \sqrt{\mu s} \min\left\{ \frac{1 - 2Ls}{1 + \frac{Ls}{2} }, \frac{1 -  \sqrt{\mu s}}{1 + \sqrt{\mu s} + \mu s}, \frac{1}{6}, \frac{1}{1 - \sqrt{\mu s}}\right\} \mathcal{E}(k + 1) \\
	& =- \frac{\sqrt{\mu s}}{6}\mathcal{E}(k + 1),
\end{align*}
since $s \leq 1/(4L)$.
\end{proof}

\subsection{A Numerical Stability Perspective on Acceleration}
\label{sec:insights-into-accel}

As shown in Section \ref{sec:discrete-time}, the gradient correction is the 
fundamental cause of the difference in convergence rates between the heavy-ball 
method and NAG-\texttt{SC}. This section aims to further elucidate this distinction 
from the viewpoint of numerical stability. A numerical scheme is said to be stable 
if, roughly speaking, this scheme does not magnify errors in the input data. 
Accordingly, we address the question of what values of the step size $s$ are 
allowed for solving the high-resolution ODEs \eqref{eqn: heavy_ball_first} and 
\eqref{eqn: nag-sc_first} in a stable fashion. While various discretization 
schemes on low-resolution ODEs have been explored in \cite{wibisono2016variational,
wilson2016lyapunov,zhang2018direct}, we limit our attention to the forward 
Euler scheme to simplify the discussion (see \cite{stoer2013introduction} 
for an exposition on discretization schemes).

For the heavy-ball method, the forward Euler scheme applied to 
\eqref{eqn: heavy_ball_first} is
\begin{equation}\label{eq:euler_heavy}
\frac{X(t + \sqrt{s}) - 2X(t) + X(t - \sqrt{s})}{s} + 2\sqrt{\mu} \cdot \frac{X(t) - X(t - \sqrt{s})}{\sqrt{s}} + (1 + \sqrt{\mu s} ) \nabla f(X(t - \sqrt{s})) = 0.
\end{equation}
Using the approximation $\nabla f(X(t-\sqrt{s}) + \epsilon) \approx 
\nabla f(X(t-\sqrt{s})) + \nabla^2 f(X(t-\sqrt{s})) \epsilon$ for a 
small perturbation $\epsilon$, we get the characteristic equation of 
\eqref{eq:euler_heavy}:
\[
\det \left(\lambda^2\bm{I} - (2 - 2\sqrt{\mu s} )\lambda \bm{I} + ( 1 - 2\sqrt{\mu s}) \bm{I}  + (1+\sqrt{\mu s} ) s \nabla^2 f(X(t-\sqrt{s})) \right)= 0,
\]
where $\bm{I}$ denotes the $n \times n$ identity matrix. The numerical stability 
of \eqref{eq:euler_heavy} requires the roots of the characteristic equation 
to be no larger than one in absolute value.  Therefore, a necessary condition 
for the stability is that\footnote{The notation $ A \preceq B$ indicates that 
$B - A$ is positive semidefinite for symmetric matrices $A$ and $B$.}
\begin{equation}\label{eq:cha_root}
(1 - 2\sqrt{\mu s})\bm{I}  + (1+\sqrt{\mu s} ) s \nabla^2 f(X(t-\sqrt{s})) \preceq \bm{I}.
\end{equation}
By the $L$-smoothness of $f$, the largest singular value of $\nabla^2 f(X(t - \sqrt{s}))$ can be as large as $L$. Therefore, \eqref{eq:cha_root} is guaranteed in the worst case analysis only if
\[
(1+\sqrt{\mu s} ) s L \le 2\sqrt{\mu s},
\]
which shows that the step size must obey
\begin{equation}\label{eq:num_l_mu}
s \le O \left( \frac{\mu}{L^{2}}\right).
\end{equation}

Next, we turn to the high-resolution ODE \eqref{eqn: nag-sc_first} 
of NAG-\texttt{SC}, for which the forward Euler scheme reads
\begin{equation}\label{eq:euler_sc}
\begin{aligned}
\frac{X(t + \sqrt{s}) - 2X(t) + X(t-\sqrt{s})}{s} &+ (2\sqrt{\mu} + \sqrt{s} \nabla^2 f(X(t-\sqrt{s}))) \cdot \frac{X(t) - X(t-\sqrt{s})}{\sqrt{s}}\\ 
&+ (1 + \sqrt{\mu s} ) \nabla f(X(t-\sqrt{s})) = 0.
\end{aligned}
\end{equation}
Its characteristic equation is
\begin{equation}\nonumber
\det \left(\lambda^2 \bm{I} - (2 - 2\sqrt{\mu s}  - s \nabla^2 f(X(t-\sqrt{s})))\lambda \bm{I} + (1 - 2\sqrt{\mu s}) \bm{I} + \sqrt{\mu s^{3}} \nabla^2 f(X(t-\sqrt{s})) \right)= 0,
\end{equation}
which, as earlier, suggests that the numerical stability condition of \eqref{eq:euler_sc} is
\[
(1 - 2\sqrt{\mu s}) \bm{I} + \sqrt{\mu s^{3}} \nabla^2 f(X(t-\sqrt{s})) \preceq \bm{I}.
\]
This inequality is ensured by setting the step size
\begin{equation}\label{eq:num_l1}
s = O \left( \frac1{L}\right).
\end{equation}

As constraints on the step sizes, both \eqref{eq:num_l_mu} and \eqref{eq:num_l1} 
are in agreement with the discussion in Section \ref{sec:discrete-time}, albeit 
from a different perspective. In short, a comparison between \eqref{eq:euler_heavy} 
and \eqref{eq:euler_sc} reveals that the Hessian $\sqrt{s} \nabla^2 f(X(t-\sqrt{s}))$ 
makes the forward Euler scheme for the NAG-\texttt{SC} ODE numerically stable with 
a larger step size, namely $s = O(1/L)$. This is yet another reflection of the 
vital importance of the gradient correction in yielding acceleration for 
NAG-\texttt{SC}.


\section{Gradient Correction for Gradient Norm Minimization}
\label{sec:nagm-c_analysis}

In this section, we extend the use of the high-resolution ODE framework 
to NAG-\texttt{C} \eqref{eqn:nagm-c} in the setting of minimizing an $L$-smooth 
convex function $f$. The main result is an  improved rate of NAG-\texttt{SC} 
for minimizing the squared gradient norm. Indeed, we show that NAG-\texttt{C} 
achieves the $O(L^2/k^3)$ rate of convergence for minimizing $\|\nabla f(x_k)\|^2$. 
To the best of our knowledge, this is the \textit{sharpest} known bound for 
this problem using NAG-\texttt{C} \textit{without} any modification. 
Moreover, we will show that the gradient correction in NAG-\texttt{C} is 
responsible for this rate and, as it is therefore unsurprising that this 
inverse cubic rate was not perceived within the low-resolution ODE frameworks
such as that of \cite{su2016differential}. In Section~\ref{sec:new-accel-meth}, 
we propose a new accelerated method with the same rate $O(L^2/k^3)$ and briefly 
discuss the benefit of the phase-space representation in simplifying technical 
proofs.

\subsection{The ODE Case}
\label{sec:continuous-time}
We begin by studying the high-resolution ODE \eqref{eqn: nag-c_first} corresponding
to NAG-\texttt{C} with an objective $f \in \mathcal{F}_{L}^2(\mathbb{R}^{n})$ and 
an arbitrary step size $s > 0$. For convenience, let $t_0 = 1.5 \sqrt{s}$.

\begin{thm}\label{thm: first-order_NAGM-C_ode}
Assume $f \in \mathcal{F}_{L}^2(\mathbb{R}^n)$ and let $X = X(t)$ be the solution to the ODE \eqref{eqn: nag-c_first}. The squared gradient norm satisfies
\begin{equation}\nonumber
\inf_{t_0 \leq u \leq t} \left\| \nabla f(X(u))\right\|^{2} \le \frac{(12 + 9 sL)\|x_{0} - x^\star\|^2}{2\sqrt{s} (t^3 - t_{0}^3)},
\end{equation}
for all $t > t_0$.
\end{thm}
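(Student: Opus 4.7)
The plan is to follow the same Lyapunov-function template that worked for NAG-\texttt{SC}, adapted to the time-varying damping $3/t$ of the NAG-\texttt{C} ODE \eqref{eqn: nag-c_first}. Guided by the classical low-resolution Lyapunov function $t^2(f(X)-f(x^\star))+2\|X+(t/2)\dot X-x^\star\|^2$ of \cite{su2016differential} and by the NAG-\texttt{SC} construction \eqref{eqn:nagm-sc_ode_lypaunov}, I would define a high-resolution Lyapunov function of the form
\[
\mathcal{E}(t) \;=\; t\bigl(t+\sqrt{s}\bigr)\bigl(f(X)-f(x^\star)\bigr) \;+\; \tfrac{1}{2}\Bigl\|\,2(X-x^\star)+t\dot X+t\sqrt{s}\,\nabla f(X)\,\Bigr\|^2,
\]
where the $t\sqrt{s}\nabla f(X)$ term inside the squared norm is the NAG-\texttt{C} analogue of the $\sqrt{s}\nabla f(X)$ mix that appears in \eqref{eqn:nagm-sc_ode_lypaunov}. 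The key feature is that the time derivative of $2(X-x^\star)+t\dot X+t\sqrt{s}\nabla f(X)$ along trajectories of \eqref{eqn: nag-c_first} simplifies dramatically, because the $3/t$ damping, the Hessian-driven term, and the $(1+3\sqrt{s}/(2t))$ factor all combine neatly when multiplied by $t$.

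The main computation is to differentiate $\mathcal{E}$ along \eqref{eqn: nag-c_first}, expand using the ODE to eliminate $\ddot X$, and bound the remaining terms using convexity ($f(x^\star)\ge f(X)+\langle\nabla f(X),x^\star-X\rangle$). The low-resolution terms will cancel or combine into a nonpositive contribution exactly as in \cite{su2016differential}, while the $O(\sqrt{s})$ cross-terms produced by the gradient correction $\sqrt{s}\nabla^2 f(X)\dot X$ and by the extra $t\sqrt{s}\nabla f(X)$ inside the norm should yield (after cancellation with the extra $3\sqrt{s}/(2t)$ in the potential coefficient) an inequality of the form
\[
\frac{\dd \mathcal{E}(t)}{\dd t} \;\le\; -\,c\,\sqrt{s}\,t^2\,\bigl\|\nabla f(X(t))\bigr\|^2
\]
for some absolute constant $c>0$. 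This bound is precisely the gradient-norm-minimizing dissipation identified in the introduction and is the step where the $\sqrt{s}\nabla^2 f(X)\dot X$ term is essential; without it, one recovers only the standard $O(1/(sk^2))$ rate.

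Integrating this differential inequality from $t_0=3\sqrt{s}/2$ to $t$ and using $\mathcal{E}(t)\ge 0$ gives
\[
c\,\sqrt{s}\int_{t_0}^{t} u^2\,\|\nabla f(X(u))\|^2\,\dd u \;\le\; \mathcal{E}(t_0).
\]
Since $\int_{t_0}^{t} u^2\,\dd u=(t^3-t_0^3)/3$, pulling the infimum of $\|\nabla f(X(u))\|^2$ out of the integral produces exactly the claimed $1/(t^3-t_0^3)$ decay. The numerator $(12+9sL)\|x_0-x^\star\|^2$ then comes from evaluating $\mathcal{E}(t_0)$ with the initial conditions $X(t_0)=x_0$ and $\dot X(t_0)=-\sqrt{s}\nabla f(x_0)$: the potential term contributes $t_0(t_0+\sqrt{s})(f(x_0)-f(x^\star))\le \tfrac{L}{2}t_0(t_0+\sqrt{s})\|x_0-x^\star\|^2$, while the squared-norm term expands via Cauchy--Schwarz and the $L$-smoothness bound $\|\nabla f(x_0)\|\le L\|x_0-x^\star\|$, and the arithmetic yields the stated constants.

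The main obstacle I expect is the sign bookkeeping when differentiating the mixed squared-norm term and matching coefficients so that only $-\sqrt{s}t^2\|\nabla f(X)\|^2$ (and possibly a nonpositive $-\sqrt{s}t^2\dot X^\top\nabla^2 f(X)\dot X$ piece, as in Lemma~\ref{lm:nag_sc_e}) survives on the right-hand side. In particular, I must carefully verify that the extra factor $1+3\sqrt{s}/(2t)$ in front of $\nabla f(X)$ in \eqref{eqn: nag-c_first} is exactly what the potential coefficient $t(t+\sqrt{s})$ generates upon differentiation, so no stray positive terms of order $\sqrt{s}$ remain. Once this algebraic cancellation is in place, the remaining steps are routine integration and evaluation at $t_0$.
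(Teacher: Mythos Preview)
Your approach is essentially the paper's. Two small corrections will make the algebra close:

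\begin{enumerate}
\item The potential coefficient should be $t\bigl(t+\tfrac{\sqrt{s}}{2}\bigr)$, not $t(t+\sqrt{s})$. With $V=2(X-x^\star)+t\dot X+t\sqrt{s}\nabla f(X)$ one computes $\dot V=-(t+\tfrac{\sqrt{s}}{2})\nabla f(X)$ along \eqref{eqn: nag-c_first}, so the $\langle\nabla f(X),\dot X\rangle$ contribution from $\tfrac12\tfrac{\dd}{\dd t}\|V\|^2$ is $-t(t+\tfrac{\sqrt{s}}{2})\langle\nabla f,\dot X\rangle$. This cancels the potential term's $\langle\nabla f,\dot X\rangle$ piece only if the potential coefficient is $t(t+\tfrac{\sqrt{s}}{2})$; your choice $t(t+\sqrt{s})$ leaves a stray $+\tfrac{t\sqrt{s}}{2}\langle\nabla f,\dot X\rangle$ of indeterminate sign. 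This is exactly the bookkeeping issue you anticipated, and fixing the coefficient resolves it. After that, plain convexity yields $\dot{\mathcal{E}}\le -\sqrt{s}\,t^2\|\nabla f(X)\|^2$ (the paper records a slightly sharper bound using $L$-smoothness, but only the $\sqrt{s}t^2$ piece is needed for the theorem).
\item The evaluation of $\mathcal{E}(t_0)$ is cleaner than you expect: since $\dot X(t_0)=-\sqrt{s}\nabla f(x_0)$, the terms $t_0\dot X(t_0)$ and $t_0\sqrt{s}\nabla f(x_0)$ inside the norm cancel exactly, leaving $\tfrac12\|2(x_0-x^\star)\|^2=2\|x_0-x^\star\|^2$ with no Cauchy--Schwarz needed. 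Combined with $t_0(t_0+\tfrac{\sqrt{s}}{2})=3s$ and $f(x_0)-f(x^\star)\le \tfrac{L}{2}\|x_0-x^\star\|^2$, this gives $\mathcal{E}(t_0)\le(2+\tfrac{3}{2}sL)\|x_0-x^\star\|^2$, and dividing by $\tfrac{\sqrt{s}}{3}(t^3-t_0^3)$ produces the stated constant $(12+9sL)/2$.
\end{enumerate}
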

By taking the step size $s = 1/L$, this theorem shows that
\[
\inf_{t_0 \leq u \leq t} \left\| \nabla f(X(u))\right\|^2 = O(\sqrt{L}/t^3),
\] 
where the infimum operator is necessary as the squared gradient norm is 
generally not decreasing in $t$. In contrast, directly combining the convergence 
rate of the function value (see Corollary \ref{coro:c_f_bound}) and inequality 
$\|\nabla f(X)\|^2 \le 2L (f(X) - f(x^\star))$ only gives a $O(L/t^2)$ rate for 
squared gradient norm minimization.

The proof of the theorem is based on the continuous Lyapunov function
\begin{equation}\label{eqn: lypunov_NAGM-C_first-order_ode2}
\mathcal{E}(t) = t\left(t + \frac{\sqrt{s}}{2}\right) \left( f(X) - f(x^{\star}) \right) + \frac{1}{2} \| t \dot{X} + 2 (X - x^{\star}) + t\sqrt{s}\nabla f(X) \|^{2},
\end{equation}
which reduces to the continuous Lyapunov function in \cite{su2016differential} 
when setting $s = 0$.

\begin{lem}\label{lm:c_ener_neg}
Let $f \in \mathcal{F}_{L}^2(\mathbb{R}^n)$. The Lyapunov function defined in 
\eqref{eqn: lypunov_NAGM-C_first-order_ode2} with $X = X(t)$ being the solution 
to the ODE \eqref{eqn: nag-c_first} satisfies
\begin{equation}\label{eqn: estimate_derivative_ode_nag-c}
\frac{\dd \mathcal{E}(t)}{\dd t}   \le  -\left[ \sqrt{s} t^2 + \left( \frac{1}{L} +\frac{s}{2} \right)t + \frac{\sqrt{s}}{2L}  \right] \left\| \nabla f(X) \right\|^{2}
\end{equation}
for all $t \ge t_0$.
\end{lem}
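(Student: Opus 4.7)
\medskip

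The plan is to differentiate $\mathcal{E}(t)$ directly, substitute $\ddot{X}$ using the high-resolution ODE \eqref{eqn: nag-c_first}, and then apply a smooth-convexity (co-coercivity) inequality to convert the resulting terms into the desired bound on $\|\nabla f(X)\|^2$.

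First I would observe that the ``mixed'' vector $V(t) := t\dot{X} + 2(X - x^\star) + t\sqrt{s}\,\nabla f(X)$ has a very clean derivative. A direct computation gives
\[
\dot V = 3\dot X + t\ddot X + \sqrt{s}\,\nabla f(X) + t\sqrt{s}\,\nabla^2 f(X)\dot X,
\]
and plugging in $t\ddot X = -3\dot X - t\sqrt{s}\,\nabla^2 f(X)\dot X - \bigl(t + \tfrac{3\sqrt{s}}{2}\bigr)\nabla f(X)$ from \eqref{eqn: nag-c_first} produces a complete cancellation of the $\dot X$ and the Hessian-driven terms, leaving $\dot V = -\bigl(t+\tfrac{\sqrt{s}}{2}\bigr)\nabla f(X)$. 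This is the computational heart of the lemma: the gradient correction $\sqrt{s}\nabla^2 f(X)\dot X$ in the ODE is exactly what makes $\dot V$ collapse to a multiple of $\nabla f(X)$.

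Next I would differentiate the potential-energy piece of $\mathcal{E}$, obtaining $(2t+\tfrac{\sqrt{s}}{2})(f(X)-f(x^\star)) + t(t+\tfrac{\sqrt{s}}{2})\langle\nabla f(X),\dot X\rangle$, and the mixed-energy piece gives $\langle V,\dot V\rangle = -(t+\tfrac{\sqrt{s}}{2})\bigl[t\langle\dot X,\nabla f(X)\rangle + 2\langle X-x^\star,\nabla f(X)\rangle + t\sqrt{s}\|\nabla f(X)\|^2\bigr]$. The two $\langle\dot X,\nabla f(X)\rangle$ terms cancel, so
\[
\frac{\dd\mathcal{E}}{\dd t} = \bigl(2t+\tfrac{\sqrt{s}}{2}\bigr)(f(X)-f(x^\star)) - 2\bigl(t+\tfrac{\sqrt{s}}{2}\bigr)\langle X-x^\star,\nabla f(X)\rangle - \sqrt{s}\,t\bigl(t+\tfrac{\sqrt{s}}{2}\bigr)\|\nabla f(X)\|^2.
\]

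Finally, I would apply the standard consequence of $L$-smoothness and convexity, namely
\[
\langle\nabla f(X), X-x^\star\rangle \ge f(X) - f(x^\star) + \tfrac{1}{2L}\|\nabla f(X)\|^2,
\]
to bound the middle term. Using $(2t+\tfrac{\sqrt{s}}{2}) - 2(t+\tfrac{\sqrt{s}}{2}) = -\tfrac{\sqrt{s}}{2}$, the $f(X)-f(x^\star)$ contributions combine into $-\tfrac{\sqrt{s}}{2}(f(X)-f(x^\star)) \le 0$, which can be dropped. Collecting the coefficients of $\|\nabla f(X)\|^2$ then yields exactly $-\sqrt{s}t^2 - \bigl(\tfrac{1}{L}+\tfrac{s}{2}\bigr)t - \tfrac{\sqrt{s}}{2L}$, giving \eqref{eqn: estimate_derivative_ode_nag-c}. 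The main obstacle is bookkeeping in the derivative computation: one has to carry along the $\sqrt{s}$-order terms carefully so that the Hessian contribution and the $3/t$ damping cancel precisely against the derivative of the explicit $t\sqrt{s}\nabla f(X)$ correction inside $V$; without this cancellation the bound would not hold.
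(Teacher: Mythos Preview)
Your proposal is correct and follows essentially the same argument as the paper: both compute the derivative of the mixed vector $V=t\dot X+2(X-x^\star)+t\sqrt{s}\,\nabla f(X)$, observe that the ODE makes $\dot V=-(t+\tfrac{\sqrt{s}}{2})\nabla f(X)$, reach the identical intermediate expression for $\dd\mathcal{E}/\dd t$, and then apply the co-coercivity inequality $\langle\nabla f(X),X-x^\star\rangle\ge f(X)-f(x^\star)+\tfrac{1}{2L}\|\nabla f(X)\|^2$ before discarding the nonpositive $-\tfrac{\sqrt{s}}{2}(f(X)-f(x^\star))$ term. Your write-up is simply a bit more explicit about the cancellation in $\dot V$ than the paper's presentation.
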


The decreasing rate of $\mathcal{E}(t)$ as specified in the lemma is sufficient 
for the proof of Theorem \ref{thm: first-order_NAGM-C_ode}. 
First, note that Lemma \ref{lm:c_ener_neg} readily gives
\[
\begin{aligned}
\int_{t_0}^{t}   \left[ \sqrt{s} u^2 + \left( \frac{1}{L} +\frac{s}{2} \right)u + \frac{\sqrt{s}}{2L}  \right] \left\| \nabla f(X(u)) \right\|^{2} \dd u &\le -\int_{t_0}^{t} \frac{\dd \mathcal{E}(u)}{\dd u} \dd u\\
&= \mathcal{E}(t_0) - \mathcal{E}(t) \\
& \le \mathcal{E}(t_0),
\end{aligned}
\]
where the last step is due to the fact $\mathcal{E}(t) \ge 0$. Thus, it follows that
\begin{equation}\label{eq:nabla_f_inf}
\begin{aligned}
\inf_{t_0 \leq u \leq t} \left\| \nabla f(X(u))\right\|^{2} &\le \frac{\int_{t_0}^{t}   \left[ \sqrt{s} u^2 + \left( \frac{1}{L} +\frac{s}{2} \right)u + \frac{\sqrt{s}}{2L}  \right] \left\| \nabla f(X(u)) \right\|^{2} \dd u}{\int_{t_0}^{t}\sqrt{s} u^2 + \left( \frac{1}{L} +\frac{s}{2} \right)u + \frac{\sqrt{s}}{2L} \dd u}\\
& \le \frac{\mathcal{E}(t_0)}{\sqrt{s} (t^3 - t_0^3)/3 + \left( \frac{1}{L} +\frac{s}{2} \right)(t^2 - t_0^2)/2 + \frac{\sqrt{s}}{2L} (t - t_0)}.
\end{aligned}
\end{equation}
Recognizing the initial conditions of the ODE \eqref{eqn: nag-c_first}, we get
\[
\begin{aligned}
\mathcal{E}(t_0) &= t_0(t_0 + \sqrt{s}/2) (f(x_0) - f(x^\star)) + \frac12 \left\| - t_0 \sqrt{s} \nabla f(x_0) + 2(x_0 - x^\star)  +  t_0 \sqrt{s} \nabla f(x_0) \right\|^2\\
&\le 3s \cdot \frac{L}{2} \|x_0 - x^\star\|^2 + 2\left\| x_0 - x^\star\right\|^2,
\end{aligned}
\]
which together with \eqref{eq:nabla_f_inf} gives
\begin{equation}\label{eq:grad_ode_full}
\inf_{t_0 \leq u \leq t} \left\| \nabla f(X(u))\right\|^{2} \le  \frac{(2 + 1.5sL)\left\| x_0 - x^\star\right\|^2}{\sqrt{s} (t^3 - t_0^3)/3 + \left( \frac{1}{L} +\frac{s}{2} \right)(t^2 - t_0^2)/2 + \frac{\sqrt{s}}{2L} (t - t_0)}.
\end{equation}
This bound reduces to the one claimed by Theorem \ref{thm: first-order_NAGM-C_ode} 
by only keeping the first term $\sqrt{s} (t^3 - t_0^3)/3$ in the denominator.

The gradient correction $\sqrt{s} \nabla^{2} f(X) \dot{X}$ in the high-resolution 
ODE~\eqref{eqn: nag-c_first} plays a pivotal role in Lemma~\ref{lm:c_ener_neg} and 
is, thus, key to Theorem~\ref{thm: first-order_NAGM-C_ode}. As will be seen in the 
proof of the lemma, the factor $\left\| \nabla f(X) \right\|^{2}$ in 
\eqref{eqn: estimate_derivative_ode_nag-c} results from the term 
$t\sqrt{s}\nabla f(X)$ in the Lyapunov function 
\eqref{eqn: lypunov_NAGM-C_first-order_ode2}, which arises from the 
gradient correction in the ODE~\eqref{eqn: nag-c_first}. In light of this, 
the low-resolution ODE~\eqref{eqn:ode_old_nagmc} of NAG-\texttt{C} cannot 
yield a result similar to Lemma~\ref{lm:c_ener_neg} and; furthermore, we 
conjecture that the $O(\sqrt{L}/t^3)$ rate does applies to this ODE. 
Section~\ref{sec:discrete-case} will discuss this point further in the discrete case.

In passing, it is worth pointing out that the analysis above applies to the case of $s = 0$. In this case, we have $t_0 = 0$, and \eqref{eq:grad_ode_full} turns out to be
\[
\inf_{0 \leq u \leq t} \left\| \nabla f(X(u))\right\|^{2} \le  \frac{4L\left\| x_0 - x^\star\right\|^2}{t^2}.
\]
This result is similar to that of the low-resolution ODE in \cite{su2016differential}\footnote{To see this, recall that \cite{su2016differential} shows that $f(X(t)) - f(x^\star) \le \frac{2\|x_0 - x^\star\|^2}{t^2}$, where $X = X(t)$ is the solution to \eqref{eq:grad_ode_full} with $s = 0$. Using the $L$-smoothness of $f$, we get $\|\nabla f(X(t))\|^2 \le 2L(f(X(t)) - f(x^\star)) \le \frac{4L\|x_0 - x^\star\|^2}{t^2}$.}.

This section is concluded with the proof of Lemma \ref{lm:c_ener_neg}.
\begin{proof}[Proof of Lemma \ref{lm:c_ener_neg}]
The time derivative of the Lyapunov function~(\ref{eqn: lypunov_NAGM-C_first-order_ode2}) obeys
\begin{align*}
\frac{\dd \mathcal{E}(t)}{\dd t} & = \left(2t + \frac{\sqrt{s}}{2}\right) \left( f(X) - f(x^{\star}) \right) +t \left(t  +\frac{\sqrt{s}}{2} \right)\left\langle \nabla f(X), \dot{X} \right\rangle \\
                                         &\quad + \left\langle t\dot{X} + 2(X - x^{\star}) +  t\sqrt{s} \nabla f(X), -\left(\frac{\sqrt{s}}{2} + t\right) \nabla f(X) \right\rangle    \\
                                         & = \left(2t + \frac{ \sqrt{s}}{2}\right) \left( f(X) - f(x^{\star}) \right) - (\sqrt{s} + 2t) \left\langle X - x^{\star}, \nabla f(X) \right\rangle \\
                                         & \quad - \sqrt{s}t \left( t + \frac{\sqrt{s}}{2} \right)\left\| \nabla f(X) \right\|^{2}. 
\end{align*}
Making use of the basic inequality $f(x^{\star}) \geq f(X) + \left\langle \nabla f(X), x^{\star} - X \right\rangle + \frac{1}{2L} \left\| \nabla f(X) \right\|^{2}$ for $L$-smooth $f$, the expression of $\frac{\dd \mathcal{E}}{\dd t}$ above satisfies
\begin{align*}
\frac{\dd \mathcal{E}}{\dd t}   & \leq -\frac{\sqrt{s}}{2} \left( f(X) - f(x^{\star}) \right) - \left(\sqrt{s}t + \frac{1}{L}\right)\left(t + \frac{\sqrt{s}}{2}\right)\left\| \nabla f(X) \right\|^{2} \\
                                                  & \leq  - \left(\sqrt{s}t + \frac{1}{L}\right)\left(t + \frac{\sqrt{s}}{2}\right)\left\| \nabla f(X) \right\|^{2} \\
                                                  & =      - \left[ \sqrt{s} t^2 + \left( \frac{1}{L} +\frac{s}{2} \right)t + \frac{\sqrt{s}}{2L}  \right]\left\| \nabla f(X) \right\|^{2} .
\end{align*}
\end{proof}

Note that Lemma \ref{lm:c_ener_neg} shows $\mathcal{E}(t)$ is a decreasing 
function, from which we get
\begin{align*}
f(X) - f(x^{\star}) \leq \frac{\mathcal{E}(t_0)}{t \left(t + \frac{\sqrt{s}}{2}\right) }  =  \frac{3s (f(x_0) - f(x^{\star})) + 2 \left\|x_0 - x^{\star} \right\|^{2}}{t\left(t + \frac{\sqrt{s}}{2}\right) }
\end{align*}
by recognizing the initial conditions of the high-resolution ODE \eqref{eqn: nag-c_first}. 
This gives the following corollary.
\begin{coro}\label{coro:c_f_bound}
Under the same assumptions as in Theorem \ref{thm: first-order_NAGM-C_ode}, 
for any $t > t_0$, we have
\begin{align}\nonumber
f(X(t)) - f(x^{\star}) \leq  \frac{(4 + 3sL)\left\| x_{0} - x^{\star} \right\|^{2}}{t\left(2t + \sqrt{s}\right)}.
\end{align}
\end{coro}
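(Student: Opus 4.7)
The plan is to extract the bound from Lemma~\ref{lm:c_ener_neg}, which is essentially the engine of this corollary. First I would observe that the right-hand side of \eqref{eqn: estimate_derivative_ode_nag-c} is nonpositive, so $\mathcal{E}(t)$ is nonincreasing on $[t_0, \infty)$; integrating from $t_0$ to $t$ yields $\mathcal{E}(t) \le \mathcal{E}(t_0)$. Then I would drop the nonnegative squared-norm term in \eqref{eqn: lypunov_NAGM-C_first-order_ode2} to obtain the one-sided bound
\[
t\!\left(t + \tfrac{\sqrt{s}}{2}\right) \bigl(f(X(t)) - f(x^{\star})\bigr) \;\le\; \mathcal{E}(t) \;\le\; \mathcal{E}(t_0),
\]
which, after rearranging and using $t(t+\sqrt{s}/2) = t(2t+\sqrt{s})/2$, already produces the denominator appearing in the claim.

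The remaining work is to estimate $\mathcal{E}(t_0)$. Here I would carefully plug in the initial conditions $X(t_0) = x_0$ and $\dot X(t_0) = -\sqrt{s}\,\nabla f(x_0)$ (recall $t_0 = 3\sqrt{s}/2$). A pleasant cancellation occurs in the mixed term:
\[
t_0 \dot X(t_0) + 2\bigl(X(t_0) - x^\star\bigr) + t_0\sqrt{s}\,\nabla f(X(t_0)) \;=\; -t_0\sqrt{s}\,\nabla f(x_0) + 2(x_0 - x^\star) + t_0\sqrt{s}\,\nabla f(x_0) \;=\; 2(x_0 - x^\star),
\]
so this piece contributes exactly $2\|x_0 - x^\star\|^2$. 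The potential-energy piece contributes $t_0(t_0 + \sqrt{s}/2) (f(x_0) - f(x^\star)) = 3s\,(f(x_0) - f(x^\star))$, giving $\mathcal{E}(t_0) = 3s(f(x_0)-f(x^\star)) + 2\|x_0-x^\star\|^2$.

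Finally I would invoke $L$-smoothness of $f$ in the form $f(x_0) - f(x^\star) \le \tfrac{L}{2}\|x_0 - x^\star\|^2$ to obtain $\mathcal{E}(t_0) \le \tfrac{4+3sL}{2}\|x_0 - x^\star\|^2$. Substituting back yields
\[
f(X(t)) - f(x^{\star}) \;\le\; \frac{\mathcal{E}(t_0)}{t(t+\sqrt{s}/2)} \;\le\; \frac{(4+3sL)\|x_0 - x^\star\|^2}{t(2t+\sqrt{s})},
\]
which is exactly the stated bound. There is no real obstacle here — the corollary is a direct consequence of Lemma~\ref{lm:c_ener_neg} together with the initial-condition cancellation, and the only care needed is in tracking constants through the evaluation of $\mathcal{E}(t_0)$ at $t_0 = 3\sqrt{s}/2$.
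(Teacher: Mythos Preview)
Your proposal is correct and follows essentially the same approach as the paper: the paper also uses Lemma~\ref{lm:c_ener_neg} to conclude that $\mathcal{E}(t)$ is decreasing, bounds $t(t+\sqrt{s}/2)(f(X)-f(x^\star))$ by $\mathcal{E}(t_0)$, exploits the same initial-condition cancellation to get $\mathcal{E}(t_0)=3s(f(x_0)-f(x^\star))+2\|x_0-x^\star\|^2$, and then applies $f(x_0)-f(x^\star)\le \tfrac{L}{2}\|x_0-x^\star\|^2$.
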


\subsection{The Discrete Case}
\label{sec:discrete-case}

We now turn to the discrete NAG-\texttt{C} \eqref{eqn:nagm-c} for minimizing 
an objective $f \in \mathcal{F}_L^1(\mathbb{R}^{n})$. Recall that this 
algorithm starts from any $x_0$ and $y_0 = x_0$. The discrete counterpart of 
Theorem \ref{thm: first-order_NAGM-C_ode} is as follows.

\begin{thm}\label{thm:NAGM-C_original}
Let $f \in \mathcal{F}_{L}^1(\mathbb{R}^n)$. For any step size $0 < s \leq 1/(3L)$, the iterates $\left\{ x_{k} \right\}_{k = 0}^{\infty}$ generated by NAG-\texttt{C} obey
\[
\min_{0 \leq i \leq k}\left\| \nabla f(x_{i}) \right\|^{2} \leq \frac{8568  \left\| x_{0} - x^{\star}\right\|^{2}}{s^{2}(k + 1)^{3}},
\]
for all $k \ge 0$.  In additional, we have
\[
f(x_{k}) - f(x^{\star}) \leq \frac{119 \left\| x_{0} - x^{\star} \right\|^{2}}{s (k + 1)^{2}},
\]
for all $k \geq 0$.
\end{thm}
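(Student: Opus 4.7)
The plan is to mirror the continuous-time analysis from Section~\ref{sec:continuous-time} step by step, following the high-resolution ODE framework summarized in Figure~\ref{fig:chart}. Concretely, I would (i) rewrite NAG-\texttt{C} in a phase-space representation analogous to \eqref{eqn: Nesterov_sc_symplectic}, (ii) discretize the continuous Lyapunov function \eqref{eqn: lypunov_NAGM-C_first-order_ode2} using this representation, (iii) establish a discrete analog of Lemma~\ref{lm:c_ener_neg} that bounds $\mathcal{E}(k+1) - \mathcal{E}(k)$ by a negative multiple of $\|\nabla f(x_k)\|^2$ with a coefficient growing quadratically in $k$, and then (iv) sum this inequality (respectively, use nonnegativity of $\mathcal{E}$) to extract the two rates.

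For step (i), introduce the velocity variable $v_k = (x_{k+1}-x_k)/\sqrt{s}$ so that NAG-\texttt{C} takes the form
\begin{equation*}
\begin{aligned}
& x_{k+1} - x_k = \sqrt{s}\, v_k, \\
& v_k - v_{k-1} = -\tfrac{3}{k+2} v_k - \sqrt{s}\bigl(\nabla f(x_k) - \nabla f(x_{k-1})\bigr) - \tfrac{k+3}{k+2}\sqrt{s}\,\nabla f(x_k),
\end{aligned}
\end{equation*}
with $v_0 = -\sqrt{s}\,\nabla f(x_0)$, consistent with the initial condition for the ODE at $t_0 = 3\sqrt{s}/2$. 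For step (ii), guided by \eqref{eqn: lypunov_NAGM-C_first-order_ode2} under the identification $t_k = (k + \tfrac{3}{2})\sqrt{s}$, I propose the discrete Lyapunov function
\begin{equation*}
\mathcal{E}(k) = s(k+1)(k+2)\bigl(f(x_k) - f(x^\star)\bigr) + \tfrac{1}{2}\bigl\|\sqrt{s}(k+1) v_k + 2(x_{k+1} - x^\star) + s(k+1)\nabla f(x_k)\bigr\|^2,
\end{equation*}
possibly with the indices shifted by $\pm 1$ and the addition of a small $O(s)$ negative correction term, in analogy with \eqref{eqn:lypunov_NAGM-SC_strongly}, to absorb the residual terms produced by finite differencing.

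Step (iii) is where the main obstacle lies. Computing $\mathcal{E}(k+1) - \mathcal{E}(k)$ generates several cross-terms that are the discrete images of the terms handled by chain rule in the continuous proof; the key inputs needed are (a) the $L$-smoothness inequality $f(x^\star) \ge f(x_k) + \langle \nabla f(x_k), x^\star - x_k\rangle + \tfrac{1}{2L}\|\nabla f(x_k)\|^2$, which supplied the $-\tfrac{1}{L}(t + \tfrac{\sqrt{s}}{2})\|\nabla f(X)\|^2$ term in Lemma~\ref{lm:c_ener_neg}, and (b) the descent lemma $f(x_{k+1}) - f(x_k) \le \langle \nabla f(x_k), x_{k+1}-x_k\rangle + \tfrac{L}{2}\|x_{k+1}-x_k\|^2$, which has no continuous counterpart and is needed to control the new $f(x_{k+1}) - f(x_k)$ increment. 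The step-size restriction $s \le 1/(3L)$ will arise precisely from requiring that the $\tfrac{L}{2}\|x_{k+1}-x_k\|^2 = \tfrac{Ls}{2}\|v_k\|^2$ term be dominated by the negative quadratic gradient-correction term (the discrete image of $\tfrac{\sqrt{s}}{2}\dot{X}^\top \nabla^2 f(X)\dot{X}$ via $\sqrt{s}\,\langle \nabla f(x_k)-\nabla f(x_{k-1}), v_k\rangle$). The target inequality will take the form
\begin{equation*}
\mathcal{E}(k+1) - \mathcal{E}(k) \le -c_1 s^{3/2}(k+1)^2 \|\nabla f(x_k)\|^2
\end{equation*}
for a universal constant $c_1 > 0$.

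For step (iv), the function-value bound follows immediately from $\mathcal{E}(k) \ge s(k+1)(k+2)(f(x_k)-f(x^\star))$ together with $\mathcal{E}(k) \le \mathcal{E}(0) = O(\|x_0-x^\star\|^2)$, which is bounded using the initial velocity and the $L$-smoothness estimate $f(x_0)-f(x^\star) \le \tfrac{L}{2}\|x_0-x^\star\|^2$; the factor $1/(3L)$ is replaced by $s$ wherever possible to absorb $L$ into the step size, yielding the $119\|x_0-x^\star\|^2/(s(k+1)^2)$ bound. The gradient-norm bound follows by telescoping: summing $c_1 s^{3/2}\sum_{i=0}^{k}(i+1)^2 \|\nabla f(x_i)\|^2 \le \mathcal{E}(0)$ and lower-bounding $\sum_{i=0}^k (i+1)^2 \gtrsim (k+1)^3$, we obtain $\min_{0\le i\le k}\|\nabla f(x_i)\|^2 \le O(\|x_0-x^\star\|^2/(s^2(k+1)^3))$, matching the claimed $8568/(s^2(k+1)^3)$ up to the explicit constant. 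The explicit constants $8568$ and $119$ will be determined by carefully tracking the Cauchy--Schwarz slack and the step-size arithmetic; this bookkeeping, while tedious, is not where any essential new idea is required.
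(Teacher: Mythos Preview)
Your overall plan mirrors the paper's approach: phase-space representation, discrete Lyapunov function modeled on \eqref{eqn: lypunov_NAGM-C_first-order_ode2}, a one-step decrease lemma, then telescoping. The paper's Lyapunov function is exactly your proposal with $(k+1)(k+2)$ replaced by $(k+1)(k+3)$ and \emph{no} additional negative correction term (unlike NAG-\texttt{SC}). So structurally you are on target.

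The substantive gap is in step (iii), specifically your choice of inequalities and the absorption mechanism. The paper does \emph{not} use the descent lemma $f(x_{k+1})-f(x_k)\le \langle\nabla f(x_k),x_{k+1}-x_k\rangle+\tfrac{L}{2}\|x_{k+1}-x_k\|^2$. Instead it uses co-coercivity anchored at $x_{k+1}$:
\[
f(x_{k+1})-f(x_k)\le \langle\nabla f(x_{k+1}),x_{k+1}-x_k\rangle-\tfrac{1}{2L}\|\nabla f(x_{k+1})-\nabla f(x_k)\|^2,
\]
together with plain convexity $f(x_{k+1})-f(x^\star)\le\langle\nabla f(x_{k+1}),x_{k+1}-x^\star\rangle$. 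The reason this matters is that the increment of the mixed-energy argument is exactly $-s(k+3)\nabla f(x_{k+1})$ (via the identity $(k+4)(v_{k+1}+\sqrt{s}\nabla f(x_{k+1}))-(k+1)(v_k+\sqrt{s}\nabla f(x_k))=-(k+1)\sqrt{s}\nabla f(x_{k+1})$), so every term is naturally expressed in $\nabla f(x_{k+1})$. After simplification one is left with a cross-term $\beta s^2(k+3)(k+1)\langle\nabla f(x_{k+1}),\nabla f(x_{k+1})-\nabla f(x_k)\rangle$, and it is precisely the co-coercivity term $-\tfrac{s}{2L}(k+3)(k+1)\|\nabla f(x_{k+1})-\nabla f(x_k)\|^2$ that absorbs it by completing the square, leaving a residual $\tfrac{Ls^3}{2}(k+3)(k+1)\|\nabla f(x_{k+1})\|^2$. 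The step-size restriction then arises from this residual being dominated by the main negative term $-\tfrac{s^2}{2}(k+3)(k-1)\|\nabla f(x_{k+1})\|^2$.

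Your proposed mechanism---that the positive $\tfrac{Ls}{2}\|v_k\|^2$ from the descent lemma is absorbed by a term $\sqrt{s}\langle\nabla f(x_k)-\nabla f(x_{k-1}),v_k\rangle$---does not work: since $v_k=(x_{k+1}-x_k)/\sqrt{s}$, this is $\langle\nabla f(x_k)-\nabla f(x_{k-1}),x_{k+1}-x_k\rangle$, which has no definite sign (it is not the monotonicity inequality). There is no standalone kinetic-energy term in the NAG-\texttt{C} Lyapunov to soak up a $k^2\|v_k\|^2$ contribution, so the descent-lemma route stalls here.

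Two smaller points. First, your target decrease should scale as $s^2(k+1)^2\|\nabla f(x_{k+1})\|^2$, not $s^{3/2}(k+1)^2\|\nabla f(x_k)\|^2$; with $s^{3/2}$ the telescoping would give $1/(s^{3/2}k^3)$, not $1/(s^2k^3)$. Second, the paper's decrease inequality $(k+3)(k-1)-Ls(k+3)(k+1)\ge 0$ only holds for $k\ge 2$, so the telescoping starts at $k=3$ with $\mathcal{E}(3)\le\mathcal{E}(2)\le 119\|x_0-x^\star\|^2$, and the cases $k=0,1,2,3$ for both bounds are handled by brute-force computation of the first few iterates; you will need to account for this.
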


Taking $s = 1/(3L)$, Theorem \ref{thm:NAGM-C_original} shows that NAG-\texttt{C} 
minimizes the squared gradient norm at the rate $O(L^2/k^3)$. This theoretical 
prediction is in agreement with two numerical examples illustrated in 
Figure \ref{fig:grad_norm}. To our knowledge, the bound $O(L^2/k^3)$ is 
sharper than any existing bounds in the literature for NAG-\texttt{C} for 
squared gradient norm minimization. In fact, the convergence result 
$f(x_k) - f(x^\star) = O(L/k^2)$ for NAG-\texttt{C} and the $L$-smoothness 
of the objective immediately give $\|\nabla f(x_k)\|^2 \le O(L^2/k^2)$. 
This well-known but loose bound can be improved by using a recent result 
from \cite{attouch2016rate}, which shows that a slightly modified version
NAG-\texttt{C} satisfies $f(x_k) - f(x^\star) = o(L/k^2)$ (see 
Section \ref{sec:super-crit-frict} for more discussion of this 
improved rate). This reveals
\[
\|\nabla f(x_k)\|^2 \le o\left( \frac{L^{2}}{k^2} \right),
\]
which, however, remains looser than that of Theorem \ref{thm:NAGM-C_original}. 
In addition, the rate $o(L^{2}/k^2)$ is not valid for $k \le n/2$ and, 
as such, the bound $o(L^{2}/k^2)$ on the squared gradient norm is 
\textit{dimension-dependent} \cite{attouch2016rate}. For completeness, 
the rate $O(L^{2}/k^{3})$ can be achieved by introducing an additional 
sequence of iterates and a more aggressive step size policy in a variant 
of NAG-\texttt{C} \cite{ghadimi2016accelerated}. In stark contrast, 
our result shows that no adjustments are needed for NAG-\texttt{C} 
to yield an accelerated convergence rate for minimizing the gradient norm.

\begin{figure}[htp!]
\begin{minipage}[t]{0.5\linewidth}
\centering
\includegraphics[width=3.2in]{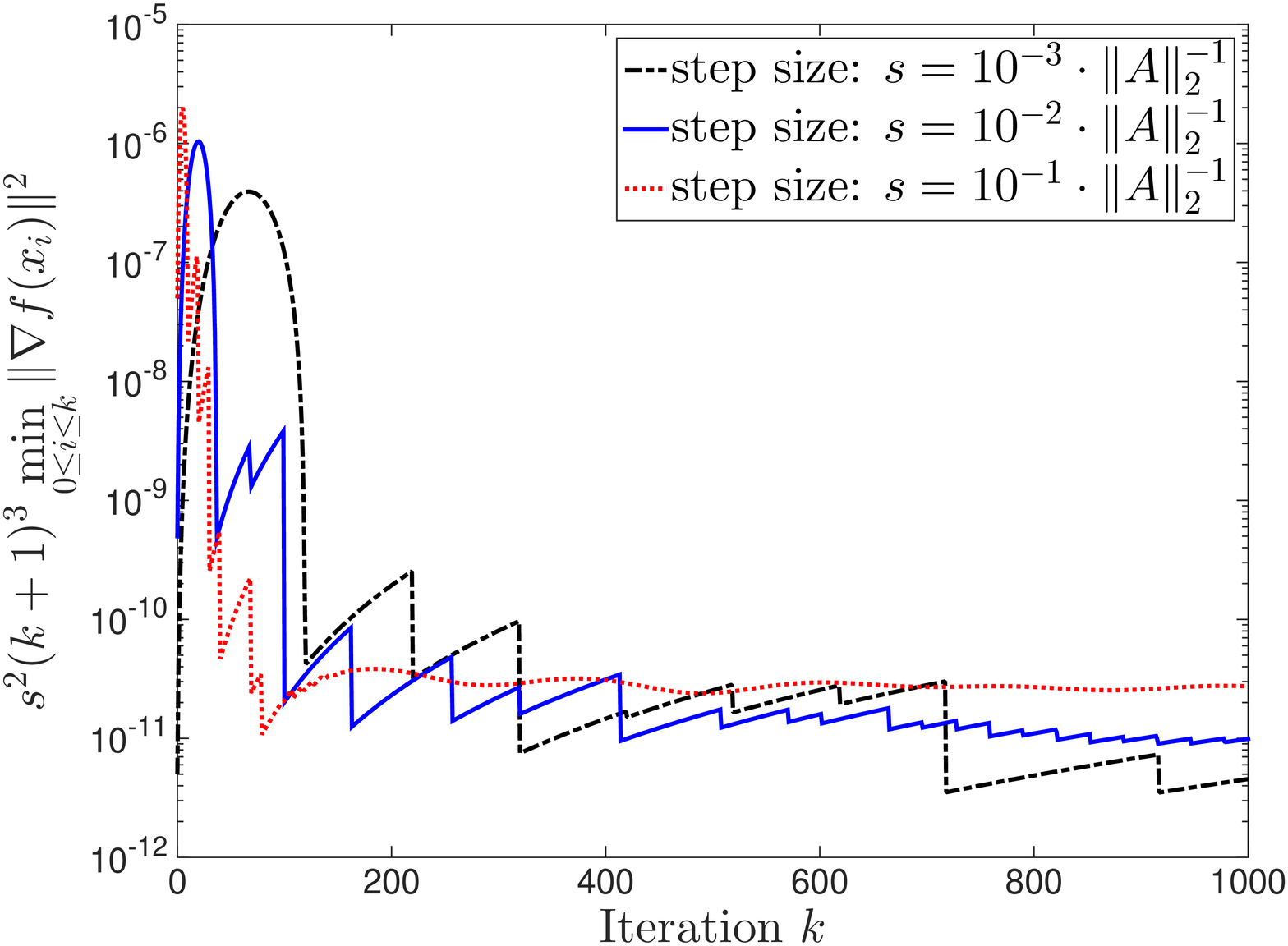}
\end{minipage}
\begin{minipage}[t]{0.5\linewidth}
\centering
\includegraphics[width=3.2in]{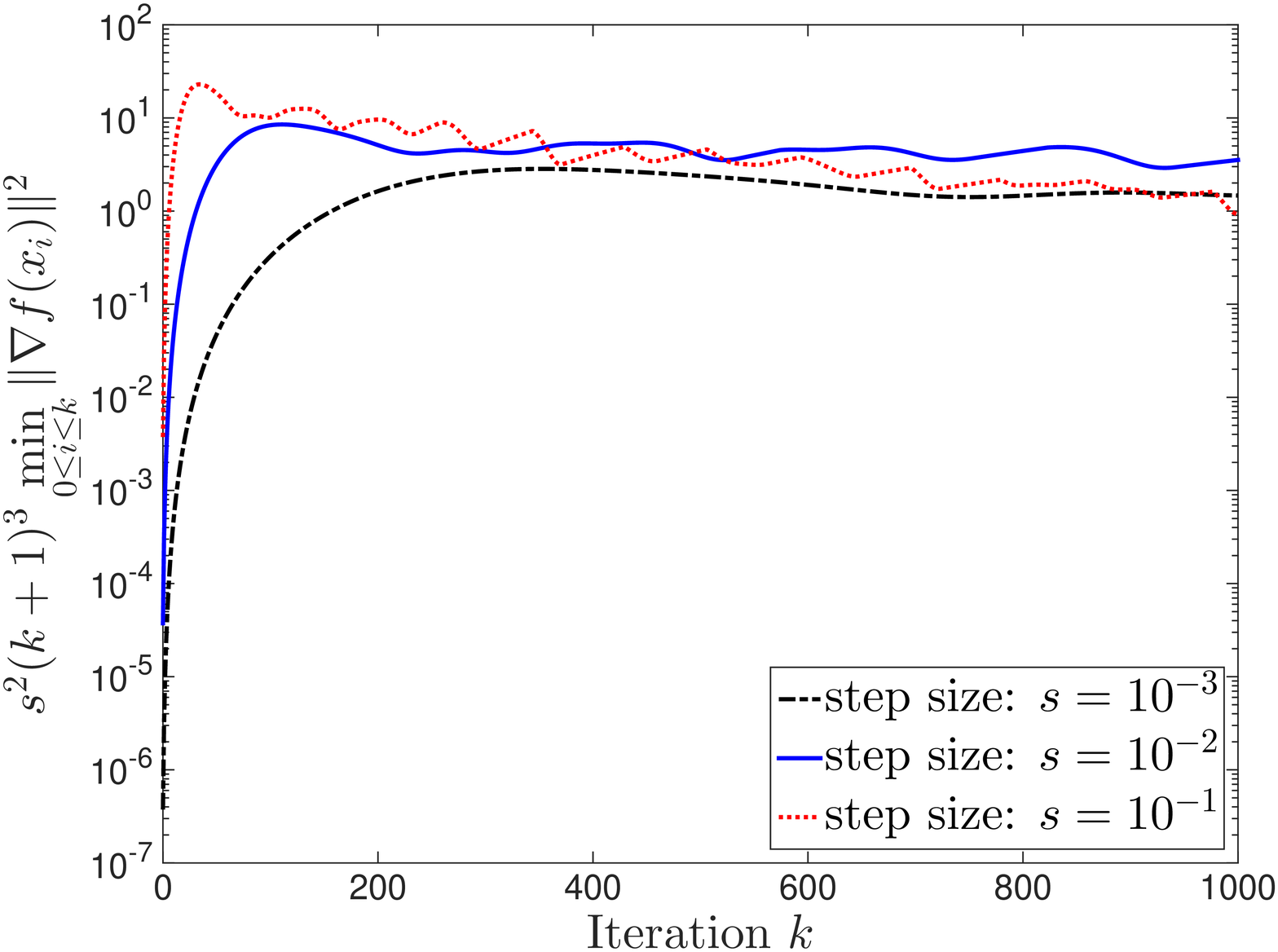}
\end{minipage}
\caption{Scaled squared gradient norm $s^2(k+1)^3 \min_{0 \le i \le k} \|\nabla f(x_i)\|^2$ of NAG-\texttt{C}. In both plots, the scaled squared gradient norm stays bounded as $k \rightarrow \infty$. Left: $f(x) = \frac12\left\langle Ax, x\right\rangle + \left\langle b, x\right\rangle $, where $A = T' T$ is a $500 \times 500$ positive semidefinite matrix and $b$ is $1 \times 500$.  All entries of $b, \; T \in \mathbb{R}^{500 \times 500}$ are i.i.d.~uniform random variables on $(0, 1)$, and $\|\cdot\|_{2}$ denotes the matrix spectral norm. Right: $f(x) = \rho \log \left\{ \sum\limits_{i = 1}^{200} \exp \left[\left( \left\langle a_{i}, x\right\rangle - b_{i} \right)/\rho\right]\right\}$, where $A = [a_{1}, \ldots, a_{200}]'$ is a $200 \times 50$ matrix and $b$ is a $200 \times 1$ column vector. All entries of $A$ and $b$ are i.i.d.-sampled from $\mathcal{N}(0,1)$ and $\rho = 20$.} 
\label{fig:grad_norm}
\end{figure}

An $\Omega(L^2/k^4)$ lower bound has been established by~\cite{nesterov2012make} 
as the optimal convergence rate for minimizing $\|\nabla f\|^2$ with access to only 
first-order information.  (For completeness, Appendix~\ref{subsec: lower_bound} 
presents an exposition of this fundamental barrier.) In the same paper, 
a regularization technique is used in conjunction with NAG-\texttt{SC} to 
obtain a matching upper bound (up to a logarithmic factor). This method, 
however, takes as input the distance between the initial point and the 
minimizer, which is not practical in general \cite{kim2018optimizing}.

Returning to Theorem \ref{thm:NAGM-C_original}, we present a proof of this 
theorem using a Lyapunov function argument. By way of comparison, we remark 
that Nesterov's estimate sequence technique is unlikely to be useful for 
characterizing the convergence of the gradient norm as this technique is 
essentially based on local quadratic approximations. The phase-space 
representation of NAG-\texttt{C}~(\ref{eqn:nagm-c}) takes the following form:
\begin{equation}\label{eqn: Nesterov_convex_symplectic2}
\begin{aligned}
& x_{k} - x_{k - 1} =  \sqrt{s} v_{k - 1} \\
& v_{k} - v_{k - 1} = - \frac{3}{k}  v_{k} - \sqrt{s}( \nabla f(x_{k}) - \nabla f(x_{k - 1}) ) -  \left( 1 + \frac{3}{k} \right) \sqrt{s} \nabla f( x_{k}), 
\end{aligned}
\end{equation}  
for any initial position $x_{0}$ and the initial velocity 
$v_{0} = - \sqrt{s} \nabla f(x_{0})$. This representation allows 
us to discretize the continuous Lyapunov function 
\eqref{eqn: lypunov_NAGM-C_first-order_ode2} into
\begin{equation}\label{eqn: NAGM-C_original_lypunov}
\mathcal{E}(k) = s (k + 3)(k + 1)\left( f(x_{k}) - f(x^{\star}) \right) + \frac{1}{2} \left\| (k + 1)\sqrt{s} v_{k} + 2(x_{k + 1} - x^{\star}) + (k + 1) s \nabla f(x_{k}) \right\|^{2}.
\end{equation}
The following lemma characterizes the dynamics of this Lyapunov function.

\begin{lem}\label{lm:e_decay}
Under the assumptions of Theorem \ref{thm:NAGM-C_original}, we have
\begin{equation}\nonumber
\mathcal{E}(k + 1) - \mathcal{E}(k) \le - \frac{s^2\left( (k + 3)(k - 1) - Ls(k + 3) (k + 1)\right)}{2} \left\| \nabla f(x_{k + 1}) \right\|^{2}
\end{equation}
for all $k \ge 0$. 
\end{lem}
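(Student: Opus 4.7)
The plan is to split $\mathcal{E}(k+1) - \mathcal{E}(k)$ into two pieces---the change in the potential term $s(k+3)(k+1)(f(x_k) - f(x^\star))$ and the change in the squared-norm term $\tfrac12\|Z_k\|^2$, where $Z_k := (k+1)\sqrt{s}v_k + 2(x_{k+1} - x^\star) + (k+1)s\nabla f(x_k)$---and control each separately. The potential change rearranges to $s(2k+5)(f(x_{k+1}) - f(x^\star)) + s(k+3)(k+1)(f(x_{k+1}) - f(x_k))$ by collecting $f(x_{k+1})$ and $f(x_k)$ coefficients.

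The core algebraic step is the discrete analog of the derivative identity underlying Lemma~\ref{lm:c_ener_neg}. Substituting the phase-space velocity update from \eqref{eqn: Nesterov_convex_symplectic2} into $Z_{k+1} - Z_k$ and using $x_{k+2} - x_{k+1} = \sqrt{s}v_{k+1}$, a clean cancellation produces the identity $Z_{k+1} - Z_k = -(k+3)s\nabla f(x_{k+1})$, mirroring $\tfrac{\dd}{\dd t}[t\dot X + 2(X - x^\star) + t\sqrt{s}\nabla f(X)] = -(t + \sqrt{s}/2)\nabla f(X)$ from the ODE analysis. Expanding $\tfrac12\|Z_{k+1}\|^2 - \tfrac12\|Z_k\|^2 = \langle Z_k, Z_{k+1} - Z_k\rangle + \tfrac12\|Z_{k+1} - Z_k\|^2$ then yields $-(k+3)s\langle Z_k, \nabla f(x_{k+1})\rangle + \tfrac{(k+3)^2 s^2}{2}\|\nabla f(x_{k+1})\|^2$.

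To handle $\langle Z_k, \nabla f(x_{k+1})\rangle$, I will use the alternate representation $Z_k = (k+3)\sqrt{s}v_k + 2(x_k - x^\star) + (k+1)s\nabla f(x_k)$, so that $\sqrt{s}v_k = x_{k+1} - x_k$ aligns with the convexity bounds $\langle x_{k+1} - x_k, \nabla f(x_{k+1})\rangle \ge f(x_{k+1}) - f(x_k)$ and $\langle x_{k+1} - x^\star, \nabla f(x_{k+1})\rangle \ge f(x_{k+1}) - f(x^\star)$, together with the polarization identity $\langle \nabla f(x_k), \nabla f(x_{k+1})\rangle = \tfrac12[\|\nabla f(x_k)\|^2 + \|\nabla f(x_{k+1})\|^2 - \|\nabla f(x_{k+1}) - \nabla f(x_k)\|^2]$. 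After collecting terms, the $(f(x_{k+1}) - f(x_k))$ contributions cancel exactly (the coefficients $+s(k+3)(k+1)$ from the potential and $-(k+3)(k+1)s$ from the convexity bound annihilate), the $(f(x_{k+1}) - f(x^\star))$ contribution collapses to $-s(f(x_{k+1}) - f(x^\star)) \le 0$, and the $\|\nabla f(x_k)\|^2$ coefficient is non-positive and can be discarded.

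The main obstacle is tracking the $\|\nabla f(x_{k+1})\|^2$ coefficient with enough precision to match the claimed $O(s^2 k^2)$ scaling---the discrete fingerprint of the gradient-correction effect, analogous to the $-\sqrt{s}t^2\|\nabla f(X)\|^2$ term extracted in Lemma~\ref{lm:c_ener_neg}. This scaling arises from the cross term $-(k+3)(k+1)s^2\langle \nabla f(x_k), \nabla f(x_{k+1})\rangle$ generated by pairing the gradient-correction piece $(k+1)s\nabla f(x_k)$ inside $Z_k$ with $-(k+3)s\nabla f(x_{k+1})$ from $Z_{k+1} - Z_k$. Combining its polarization decomposition with the $\tfrac{(k+3)^2 s^2}{2}\|\nabla f(x_{k+1})\|^2$ contribution from $\tfrac12\|Z_{k+1} - Z_k\|^2$, and invoking the step-size restriction $s \le 1/(3L)$ together with $L$-smoothness to absorb the residual $\|\nabla f(x_{k+1}) - \nabla f(x_k)\|^2$ terms, is what yields the stated coefficient $-\tfrac{s^2((k+3)(k-1) - Ls(k+3)(k+1))}{2}\|\nabla f(x_{k+1})\|^2$.
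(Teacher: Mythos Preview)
Your setup is correct: the identity $Z_{k+1}-Z_k=-(k+3)s\nabla f(x_{k+1})$ holds, and the function-value terms cancel as you describe. The gap is in how you handle the cross term $-(k+3)(k+1)s^{2}\langle \nabla f(x_k),\nabla f(x_{k+1})\rangle$.

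Polarizing and then discarding $-\tfrac{(k+3)(k+1)s^{2}}{2}\|\nabla f(x_k)\|^{2}$ extracts only $-\tfrac{(k+3)(k+1)s^{2}}{2}\|\nabla f(x_{k+1})\|^{2}$ from that cross term. Combined with $+\tfrac{(k+3)^{2}s^{2}}{2}\|\nabla f(x_{k+1})\|^{2}$ from $\tfrac12\|Z_{k+1}-Z_k\|^{2}$, the net coefficient on $\|\nabla f(x_{k+1})\|^{2}$ is $+(k+3)s^{2}$, which is \emph{positive}. The leftover $+\tfrac{(k+3)(k+1)s^{2}}{2}\|\nabla f(x_{k+1})-\nabla f(x_k)\|^{2}$ can indeed be absorbed by the cocoercivity term $-\tfrac{s(k+3)(k+1)}{2L}\|\nabla f(x_{k+1})-\nabla f(x_k)\|^{2}$ (note this requires the sharper bound $f(x_{k+1})-f(x_k)\le\langle\nabla f(x_{k+1}),x_{k+1}-x_k\rangle-\tfrac{1}{2L}\|\nabla f(x_{k+1})-\nabla f(x_k)\|^{2}$, not just the bare convexity inequality you quoted), but after that absorption you are still left with $\mathcal{E}(k+1)-\mathcal{E}(k)\le +(k+3)s^{2}\|\nabla f(x_{k+1})\|^{2}$---wrong sign, and an order of $k$ short of the target.

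The paper's fix is to rewrite $-\langle\nabla f(x_k),\nabla f(x_{k+1})\rangle=-\|\nabla f(x_{k+1})\|^{2}+\langle\nabla f(x_{k+1}),\nabla f(x_{k+1})-\nabla f(x_k)\rangle$, which pulls out the \emph{full} $-(k+3)(k+1)s^{2}\|\nabla f(x_{k+1})\|^{2}$; combined with $+\tfrac{(k+3)^{2}s^{2}}{2}$ this yields the $-\tfrac{(k+3)(k-1)s^{2}}{2}$ you want. The remaining cross term $(k+3)(k+1)s^{2}\langle\nabla f(x_{k+1}),\nabla f(x_{k+1})-\nabla f(x_k)\rangle$ is then completed into a square against the cocoercivity term, yielding at most $+\tfrac{Ls^{3}(k+3)(k+1)}{2}\|\nabla f(x_{k+1})\|^{2}$. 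Your polarization-and-discard step throws away exactly half of the negative $\|\nabla f(x_{k+1})\|^{2}$ mass needed for the lemma.
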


Next, we provide the proof of Theorem~\ref{thm:NAGM-C_original}.
\begin{proof}[Proof of Theorem~\ref{thm:NAGM-C_original}]
We start with the fact that
\begin{equation}\label{eq:k_3_k_1ss}
(k + 3)(k - 1) - Ls(k + 3) (k + 1) \ge 0,
\end{equation}
for $k \ge 2$. To show this, note that it suffices to guarantee
\begin{equation}\label{eqn: NAGM-C-condition}
s \leq \frac{1}{L} \cdot \frac{k - 1}{k + 1},
\end{equation}
which is self-evident since $s \le 1/(3L)$ by assumption.

Next, by a telescoping-sum argument, Lemma \ref{lm:e_decay} leads to 
the following inequalities for $k \ge 4$: 
\begin{equation}\label{eq:sadsvdfvdfb}
\begin{aligned}
 \mathcal{E}(k) - \mathcal{E}(3)   & =\sum_{i=3}^{k - 1} \left(\mathcal{E}(i + 1) - \mathcal{E}(i)\right)\\
                                                     & \le \sum_{i=3}^{k - 1} - \frac{s^2}{2}\left[ (i + 3)(i - 1) - Ls(i + 3) (i + 1)\right] \left\| \nabla f(x_{i + 1}) \right\|^{2} \\
                                                     & \le  - \frac{s^{2}}{2} \min_{4 \leq i \leq k}\left\| \nabla f(x_{i}) \right\|^{2} \sum_{i=3}^{k - 1} \left[ (i + 3)(i - 1) - Ls(i + 3) (i + 1)\right]   \\
                                                     & \le - \frac{s^{2}}{2} \min_{4 \leq i \leq k}\left\| \nabla f(x_{i}) \right\|^{2} \sum_{i=3}^{k - 1} \left[ (i + 3)(i - 1) - \frac{1}{3}(i + 3) (i + 1)\right],
\end{aligned}
\end{equation}
where the second inequality is due to \eqref{eq:k_3_k_1ss}. To further simplify the bound, observe that
\[
\sum_{i=3}^{k - 1} \left[ (i + 3)(i - 1) - \frac{1}{3}(i + 3) (i + 1)\right] = \frac{2k^{3} - 38k + 60}{9} \ge \frac{(k+1)^3}{36},
\]
for $k \geq 4$. Plugging this inequality into \eqref{eq:sadsvdfvdfb} yields
\[
\mathcal{E}(k) - \mathcal{E}(3) \le - \frac{s^2 (k + 1)^3}{72} \min_{4 \leq i \leq k}\left\| \nabla f(x_{i}) \right\|^{2},
\]
which gives
\begin{equation}\label{eqn: estimate_gradsquare}
\min_{4 \leq i \leq k}\left\| \nabla f(x_{i}) \right\|^{2} \leq \frac{72( \mathcal{E}(3) - \mathcal{E}(k))}{s^{2}(k + 1)^{3} } \le \frac{72 \mathcal{E}(3)}{s^{2}(k + 1)^{3} }.
\end{equation}
It is shown in Appendix~\ref{subsec: technical_detail} that
\[
\mathcal{E}(3) \leq \mathcal{E}(2) \le 119 \left\| x_{0} - x^{\star}\right\|^{2},
\]
for $s \le 1/(3L)$. As a consequence of this, \eqref{eqn: estimate_gradsquare} gives
\begin{equation}\label{eqn: estimate_gradsquare_>=4}
\min_{4 \leq i \leq k}\left\| \nabla f(x_{i}) \right\|^{2} \leq \frac{8568  \left\| x_{0} - x^{\star}\right\|^{2}}{s^{2}(k + 1)^{3}}.
\end{equation}
For completeness, Appendix~\ref{subsec: technical_detail} proves, via
a brute-force calculation, that $\left\| \nabla f(x_0) \right\|^{2}, 
\left\| \nabla f(x_1) \right\|^{2}, \left\| \nabla f(x_2) \right\|^{2}$, 
and $\left\| \nabla f(x_3) \right\|^{2}$ are all bounded above by the 
right-hand side of \eqref{eqn: estimate_gradsquare_>=4}.  This completes 
the proof of the first inequality claimed by Theorem~\ref{thm:NAGM-C_original}.

For the second claim in Theorem~\ref{thm:NAGM-C_original}, the definition 
of the Lyapunov function and its decreasing property ensured by \eqref{eq:k_3_k_1ss} implies
\begin{equation}\label{eqn: functiona_value_>=2}
f(x_{k}) - f(x^{\star}) \leq \frac{\mathcal{E}(k)}{s(k + 3)(k+1)} \leq \frac{\mathcal{E}(2)}{s(k + 3)(k+1)} \leq \frac{119\left\| x_{0} - x^{\star}\right\|^{2}}{s (k + 1)^{2}},
\end{equation}
for all $k \geq 2$. Appendix~\ref{subsec: technical_detail} establishes 
that $f(x_{0}) - f(x^{\star})$ and $f(x_{1}) - f(x^{\star})$ are bounded 
by the right-hand side of \eqref{eqn: functiona_value_>=2}. This completes 
the proof.

\end{proof}

Now, we prove Lemma~\ref{lm:e_decay}.

\begin{proof}[Proof of Lemma \ref{lm:e_decay}]

The difference of the Lyapunov function~(\ref{eqn: NAGM-C_original_lypunov}) satisfies
\begin{align*}
\mathcal{E}(k + 1) - \mathcal{E}(k) & = s (k + 3)(k + 1)\left( f(x_{k + 1}) - f(x_{k}) \right) +  s(2k + 5) \left( f(x_{k + 1}) - f(x^{\star}) \right) \\
& \quad + \left\langle 2(x_{k + 2} - x_{k + 1}) + \sqrt{s}(k + 2) (v_{k + 1} + \sqrt{s} \nabla f(x_{k + 1})) - \sqrt{s} (k + 1) (v_{k} + \sqrt{s} \nabla f(x_{k})), \right.\\
&\qquad \left. 2 (x_{k + 2} - x^{\star}) + (k + 2)\sqrt{s} (v_{k + 1} + \sqrt{s} \nabla f(x_{k + 1}))\right\rangle \\
& \quad - \frac{1}{2} \left\|2(x_{k + 2} - x_{k + 1}) + \sqrt{s} (k + 2)(v_{k + 1} + \sqrt{s} \nabla f(x_{k + 1})) - (k + 1)\sqrt{s} (v_{k} + \sqrt{s} \nabla f(x_{k})) \right\|^{2} \\
& = s(k + 3)(k + 1) \left( f(x_{k + 1}) - f(x_{k}) \right) + s(2k + 5)\left( f(x_{k + 1}) - f(x^{\star}) \right) \\
& \quad + \left\langle  - s (k + 3) \nabla f(x_{k + 1}), 2 (x_{k + 2} - x^{\star}) + \sqrt{s}(k + 2)  (v_{k + 1} + \sqrt{s}  \nabla f(x_{k + 1}))\right\rangle \\
& \quad - \frac{1}{2} \left\|  s(k + 3)   \nabla f(x_{k + 1}) \right\|^{2} \\
& = s(k + 3)(k + 1) \left( f(x_{k + 1}) - f(x_{k}) \right) + s(2k + 5)\left( f(x_{k + 1}) - f(x^{\star}) \right) \\
& \quad  - s^{\frac32}(k + 3)(k + 4)  \left\langle   \nabla f(x_{k + 1}), v_{k+1}\right\rangle - 2s (k+3)\left\langle \nabla f(x_{k + 1}), x_{k + 1} - x^{\star} \right\rangle\\
& \quad - s^{2}(k + 3)(k + 2)   \left\| \nabla f(x_{k + 1}) \right\|^{2} - \frac{s^{2}}{2} (k + 3)^{2}  \left\| \nabla f(x_{k + 1}) \right\|^{2},	
\end{align*}  
where the last two equalities are due to
\begin{equation}\label{eq:phase_secondline}
(k + 3)\left( v_{k} + \sqrt{s} \nabla f(x_{k}) \right) -  k \left( v_{k - 1} + \sqrt{s} \nabla f(x_{k - 1}) \right) = -k\sqrt{s} \nabla f(x_{k}),
\end{equation}
which follows from the phase-space representation~\eqref{eqn: Nesterov_convex_symplectic2}. Rearranging the identity for $\mathcal{E}(k + 1) - \mathcal{E}(k)$, we get
\begin{equation}\label{eq:dsfvsdcsdcd}
\begin{aligned}
\mathcal{E}(k + 1) - \mathcal{E}(k) &= s(k + 3)(k + 1) \left( f(x_{k + 1}) - f(x_{k}) \right) - s^{\frac32} (k + 3)(k + 4)\left\langle   \nabla f(x_{k + 1}), v_{k+1}\right\rangle\\
& \quad  + s(2k + 5)\left( f(x_{k + 1}) - f(x^{\star}) \right)  - s(2k+6)\left\langle \nabla f(x_{k + 1}), x_{k + 1} - x^{\star} \right\rangle\\
& \quad - \frac{s^2 (k+3)(3k+7)}{2}\left\| \nabla f(x_{k + 1}) \right\|^{2}.
\end{aligned}
\end{equation}

The next step is to recognize that the convexity and the $L$-smoothness of $f$ gives
\[
\begin{aligned}
& f(x_{k + 1}) - f(x_{k}) \leq \left\langle \nabla f(x_{k + 1}), x_{k+1} - x_{k} \right\rangle - \frac{1}{2L}\left\| \nabla f(x_{k + 1}) - \nabla f(x_{k})  \right\|^{2} \\
& f(x_{k + 1}) - f(x^{\star}) \leq \left\langle \nabla f(x_{k + 1}), x_{k+1} - x^{\star} \right\rangle.
\end{aligned}
\]
Plugging these two inequalities into \eqref{eq:dsfvsdcsdcd}, we have
\begin{align*}                                                        
\mathcal{E}(k + 1) - \mathcal{E}(k)	& \leq - s^{\frac32} (k + 3)\left\langle \nabla f(x_{k + 1}), (k + 4)v_{k + 1} - (k + 1)v_{k}\right\rangle\\
	& \quad - \frac{s}{2L} (k + 3)(k + 1)\left\| \nabla f(x_{k + 1}) - \nabla f(x_{k})  \right\|^{2}- s \left\langle \nabla f(x_{k + 1}), x_{k + 1} - x^{\star} \right\rangle \\
	& \quad - \frac{s^2 (k+3)(3k+7)}{2} \left\| \nabla f(x_{k + 1}) \right\|^{2}\\
& \leq - s^{\frac32} (k + 3)\left\langle \nabla f(x_{k + 1}), (k + 4)v_{k + 1} - (k + 1)v_{k}\right\rangle\\
	& \quad - \frac{s}{2L} (k + 3)(k + 1)\left\| \nabla f(x_{k + 1}) - \nabla f(x_{k})  \right\|^{2} - \frac{s^2 (k+3)(3k+7)}{2} \left\| \nabla f(x_{k + 1}) \right\|^{2},
\end{align*}
where the second inequality uses the fact that $\left\langle \nabla f(x_{k + 1}), x_{k + 1} - x^{\star} \right\rangle \ge 0$.

To further bound $\mathcal{E}(k+1) - \mathcal{E}(k)$, making use of \eqref{eq:phase_secondline} with $k+1$ in place of $k$, we get
\begin{align*}
\mathcal{E}(k + 1) - \mathcal{E}(k) & \leq  s^2(k + 3)(k + 1) \left\langle \nabla f(x_{k + 1}),  \nabla f(x_{k + 1}) -  \nabla f(x_{k}) \right\rangle \\
	& \quad - \frac{s}{2L} (k + 3)(k + 1)\left\| \nabla f(x_{k + 1}) - \nabla f(x_{k})  \right\|^{2} \\
	& \quad -  s^2 \left( \frac{(k+3)(3k+7)}{2} - (k + 3) (k + 4)\right)\left\| \nabla f(x_{k + 1}) \right\|^{2} \\
        & =  \frac{L s^3(k+3)(k+1)}{2} \|\nabla f(x_{k+1})\|^2 - \frac{s (k+3)(k+1)}{2L} \left\|(1 - Ls)\nabla f(x_{k+1}) - \nabla f(x_k) \right\|^2 \\
	& \quad -  \frac{s^2(k+3)(k-1)}{2} \left\| \nabla f(x_{k + 1}) \right\|^{2} \\
	& \leq - \frac{s^2}{2}\left[ (k + 3)(k - 1) - Ls(k + 3) (k + 1)\right] \left\| \nabla f(x_{k + 1}) \right\|^{2}.
\end{align*}
This completes the proof.

\end{proof}

In passing, we remark that the gradient correction sheds light on the superiority 
of the high-resolution ODE over its low-resolution counterpart, just as in 
Section~\ref{sec:strongly}. Indeed, the absence of the gradient correction in 
the low-resolution ODE leads to the lack of the term $(k+1)s \nabla f(x_k)$ in 
the Lyapunov function (see Section 4 of \cite{su2016differential}), as opposed 
to the high-resolution Lyapunov function \eqref{eqn: NAGM-C_original_lypunov}. 
Accordingly, it is unlikely to carry over the bound $\mathcal{E}(k+1) - \mathcal{E}(k) 
\le - O(s^2 k^2 \|\nabla f(x_{k+1})\|^2)$ of Lemma \ref{lm:e_decay} to the 
low-resolution case and, consequently, the low-resolution ODE approach pioneered 
by \cite{su2016differential} is insufficient to obtain the $O(L^2/k^3)$ rate 
for squared gradient norm minimization.

\subsection{A Modified NAG-\texttt{C} without a Phase-Space Representation}
\label{sec:new-accel-meth}

This section proposes a new accelerated method that also achieves the $O(L^2/k^3)$ rate for minimizing the squared gradient norm. This method takes the following form:
\begin{equation}\label{eqn: modified-NAGM-C}
\begin{aligned}
          & y_{k + 1} = x_{k} - s \nabla f(x_{k}) \\
          & x_{k + 1} = y_{k + 1} + \frac{k}{k + 3}(y_{k + 1} - y_{k}) - s \left(\frac{k}{k + 3} \nabla f(y_{k + 1}) - \frac{k - 1}{k + 3} \nabla f(y_{k}) \right),
\end{aligned}
\end{equation}
starting with $x_{0}$ and $y_{0} = x_0$. As shown by the following theorem, 
this new method has the same convergence rates as NAG-\texttt{C}. 

\begin{thm}\label{thm: modified-NAGM-C}
Let $f \in \mathcal{F}_{L}^{1}(\mathbb{R}^n)$. Taking any step size $0 < s \leq 1/L$, the iterates $\{ (x_{k}, y_{k}) \}_{k = 0}^{\infty}$ generated by the modified NAG-\texttt{C} \eqref{eqn: modified-NAGM-C} satisfy
\begin{equation}\nonumber
\begin{aligned}
          & \min_{0 \leq i \leq k} \left\| \nabla f(x_{i}) + \nabla f(y_{i}) \right\|^{2} \leq \frac{882 \left\| x_{0} - x^{\star} \right\|^{2}}{s^2(k+ 1)^{3}} \\
          & f(y_{k}) - f(x^{\star}) \leq  \frac{21 \left\| x_{0} - x^{\star} \right\|^{2}}{s(k + 1)^{2}},
\end{aligned}
\end{equation}
for all $k \ge 0$. 
\end{thm}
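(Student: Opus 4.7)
The strategy closely parallels that of Theorem~\ref{thm:NAGM-C_original}. I would construct a discrete Lyapunov function of the form
\[
\mathcal{E}(k) = s(k+3)(k+1)\bigl(f(y_{k}) - f(x^{\star})\bigr) + \tfrac{1}{2}\bigl\| 2(x_{k+1} - x^{\star}) + \text{(mixed velocity--gradient term)}\bigr\|^{2},
\]
mirroring \eqref{eqn: NAGM-C_original_lypunov}, and then show that $\mathcal{E}(k+1) - \mathcal{E}(k) \le -c\, s^{2}k^{2}\|\nabla f(x_{k}) + \nabla f(y_{k})\|^{2}$ for a positive constant $c$ depending only on $Ls$. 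Once this discrete decrease inequality is in hand, a telescoping argument almost identical to the one in the proof of Theorem~\ref{thm:NAGM-C_original} yields both the bound on $\min_{0\le i \le k}\|\nabla f(x_{i}) + \nabla f(y_{i})\|^{2}$ and on $f(y_{k}) - f(x^{\star})$.

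\textbf{First step: pinning down the Lyapunov function.} To identify the mixed term, I would rewrite \eqref{eqn: modified-NAGM-C} in a form amenable to telescoping. Multiplying the $x$-update by $(k+3)$ gives
\[
(k+3)(x_{k+1} - y_{k+1}) = k(y_{k+1} - y_{k}) - s\bigl(k\nabla f(y_{k+1}) - (k-1)\nabla f(y_{k})\bigr),
\]
which plays the role of the phase-space identity \eqref{eq:phase_secondline} used in the NAG-\texttt{C} analysis. This suggests working with a discrete velocity variable $w_{k} := k(y_{k+1} - y_{k})/\sqrt{s} + (k-1)\sqrt{s}\,\nabla f(y_{k})$ (or an analogous variant), so that the difference $(k+3)w_{k} - k w_{k-1}$ collapses into a clean combination of gradients and dictates the precise form of the quadratic term. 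The virtue of the modified scheme is that such a $w_{k}$ exists as a function of the iterates themselves, without introducing an auxiliary phase-space variable.

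\textbf{Second step: the decrease lemma.} Expanding $\mathcal{E}(k+1) - \mathcal{E}(k)$, I would control the pieces using: (a) convexity plus $L$-smoothness to bound $f(y_{k+1}) - f(y_{k}) \le \langle \nabla f(y_{k+1}), y_{k+1} - y_{k}\rangle - (2L)^{-1}\|\nabla f(y_{k+1}) - \nabla f(y_{k})\|^{2}$ and $f(y_{k+1}) - f(x^{\star}) \le \langle \nabla f(y_{k+1}), y_{k+1} - x^{\star}\rangle$; (b) the gradient-step identity $y_{k+1} = x_{k} - s\nabla f(x_{k})$, together with the descent lemma $f(y_{k+1}) \le f(x_{k}) - s(1 - Ls/2)\|\nabla f(x_{k})\|^{2}$; and (c) the collapsed identity from the first step. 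After substitution, the remaining quadratic-in-gradient terms should combine into a negative contribution proportional to $s^{2}k^{2}\|\nabla f(x_{k}) + \nabla f(y_{k})\|^{2}$, which also explains why the theorem statement concerns this symmetric combination rather than either gradient individually.

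\textbf{Expected obstacle.} The principal difficulty is identifying the exact velocity--gradient combination that makes all cross terms cancel after the telescoping identity for $w_{k}$ is applied: the gradient correction in \eqref{eqn: modified-NAGM-C} is written directly in terms of $\nabla f(y_{k+1})$ and $\nabla f(y_{k})$ instead of consecutive differences along $\{x_{k}\}$, so the coefficients of $\nabla f(x_{k})$, $\nabla f(y_{k})$, and $\nabla f(y_{k+1})$ must be tracked carefully when completing the square. Once the decrease lemma is established, the step-size restriction $s \le 1/L$ is precisely what guarantees the coefficient of $\|\nabla f(x_{k})+\nabla f(y_{k})\|^{2}$ is non-positive, and small-$k$ base cases are handled exactly as in Appendix~\ref{subsec: technical_detail}, at the cost of slightly worse absolute constants.
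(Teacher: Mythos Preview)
Your high-level strategy---construct a Lyapunov function, prove a decrease inequality of the form $\mathcal{E}(k+1)-\mathcal{E}(k)\le -c\,s^2 k^2\|\nabla f(x_k)+\nabla f(y_k)\|^2$, then telescope and handle small $k$ directly---is exactly the paper's. However, the execution in the paper differs from your sketch in two substantive ways, and as written your plan would hit precisely the obstacle you flag.

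First, the paper's Lyapunov function is
\[
\mathcal{E}(k)=\frac{s(k+1)^2}{4}\bigl(f(y_k)-f(x^\star)\bigr)+\frac{1}{2}\|w_k-x^\star\|^2,\qquad
w_k=\tfrac{1}{2}\bigl[(k+2)x_k-ky_k+(k-1)s\,\nabla f(y_k)\bigr],
\]
not the NAG-\texttt{C} template $s(k+3)(k+1)(\,\cdot\,)$. The reason this $w_k$ is the right object is the clean one-step recurrence $w_{k+1}=w_k-\tfrac{s(k+2)}{2}\nabla f(x_k)$, and the potential coefficient $(k+1)^2/4$ is what balances the resulting quadratic increment. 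Your proposed velocity $k(y_{k+1}-y_k)/\sqrt{s}+(k-1)\sqrt{s}\,\nabla f(y_k)$ is related but not the telescoping variable; the paper in fact remarks (Section~\ref{sec:new-accel-meth}) that the modified scheme \emph{does not admit a phase-space representation}, and that the correct $w_k$ was found ``via trial and error'' rather than by mimicking the NAG-\texttt{C} construction.

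Second, the decrease estimate is not obtained from the separate inequalities in your items (a)--(c). The paper instead applies a single tighter inequality (valid for $s\le 1/L$, equation~\eqref{eqn: more_tight_general_convex}):
\[
f(x-s\nabla f(x))\le f(y)+\langle\nabla f(x),x-y\rangle-\tfrac{s}{2}\|\nabla f(x)\|^2-\tfrac{s}{2}\|\nabla f(x)-\nabla f(y)\|^2,
\]
with $y=y_k$ and $y=x^\star$. This simultaneously produces a $\|\nabla f(x_k)\|^2$ term and a $\|\nabla f(x_k)-\nabla f(y_k)\|^2$ term; the cross term $\langle\nabla f(x_k),\nabla f(y_k)\rangle$ coming from $\|w_{k+1}-x^\star\|^2-\|w_k-x^\star\|^2$ then combines with these to give exactly $-\tfrac{s^2(k-1)(k+2)}{8}\|\nabla f(x_k)+\nabla f(y_k)\|^2$. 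Your step (a), which bounds $f(y_{k+1})-f(y_k)$ through $\nabla f(y_{k+1})$, orients the gradient the wrong way for this cancellation and would not produce the symmetric sum without further (and unnecessary) work.
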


We refer readers to Appendix~\ref{subsec: position_acceleration} for the 
proof of Theorem~\ref{thm: modified-NAGM-C}, which is, as earlier, based 
on a Lyapunov function. However, since both $f(x_k)$ and $f(y_k)$ appear 
in the iteration, \eqref{eqn: modified-NAGM-C} does not admit a phase-space 
representation. As a consequence, the construction of the Lyapunov function 
is complex; we arrived at it via trial and error. Our initial aim was 
to seek possible improved rates of the original NAG-\texttt{C} without 
using the phase-space representation, but the enormous challenges arising 
in this process motivated us to (1) modify NAG-\texttt{C} to the current 
\eqref{eqn: modified-NAGM-C}, and (2) to adopt the phase-space representation. 
Employing the phase-space representation yields a simple proof of the 
$O(L^2/k^3)$ rate for the original NAG-\texttt{C} and this technique turned 
out to be useful for other accelerated methods.


\section{Extensions}
\label{sec:extension}
Motivated by the high-resolution ODE \eqref{eqn: nag-c_first} of NAG-\texttt{C}, 
this section considers a family of generalized high-resolution ODEs that take the form
\begin{equation}\label{eqn: generalize_NAGM-C_ode}
\ddot{X} + \frac{\alpha}{t} \dot{X} + \beta\sqrt{s} \nabla^{2} f(X) \dot{X} + \left( 1 + \frac{\alpha \sqrt{s}}{2t}\right) \nabla f(X) = 0,
\end{equation}
for $t \ge \alpha \sqrt{s}/2$, with initial conditions 
$X(\alpha\sqrt{s}/2) = x_{0}$ and $\dot{X}(\alpha\sqrt{s}/2) =  - \sqrt{s} \nabla f(x_{0})$. 
As demonstrated in \cite{su2016differential,attouch2017rate,vassilis2018differential}, 
the low-resolution counterpart (that is, set $s = 0$) of \eqref{eqn: generalize_NAGM-C_ode} 
achieves acceleration if and only if $\alpha \ge 3$. Accordingly, we focus on the 
case where the friction parameter $\alpha \ge 3$ and the gradient correction parameter 
$\beta > 0$. An investigation of the case of $\alpha < 3$ is left for future work.

By discretizing the ODE \eqref{eqn: generalize_NAGM-C_ode}, we obtain a family 
of new accelerated methods for minimizing smooth convex functions:
\begin{equation}\label{eqn: generalize_NAG-C_position}
\begin{aligned}
          & y_{ k + 1} = x_{k} - \beta s\nabla f(x_{k}) \\
          & x_{ k + 1} = x_{k}  - s \nabla f(x_{k})+ \frac{k}{k + \alpha} (y_{k+1} - y_{k}),
\end{aligned}
\end{equation}
starting with $x_{0} = y_{0}$. The second line of the iteration is equivalent to
\[
x_{ k + 1} = \left(1 -  \frac1{\beta}\right)x_{k} + \frac1{\beta} y_{k+1} + \frac{k}{k + \alpha} (y_{k+1} - y_{k}).
\]
In Section \ref{sec:critical-friction}, we study the convergence rates of this 
family of generalized NAC-\texttt{C} algorithms along the lines of 
Section~\ref{sec:nagm-c_analysis}. To further our understanding of 
\eqref{eqn: generalize_NAG-C_position}, Section \ref{sec:super-crit-frict} 
shows that this method in the super-critical regime (that is, $\alpha > 3$) 
converges to the optimum actually faster than $O(1/(sk^2))$. As earlier, 
the proofs of all the results follow the high-resolution ODE framework 
introduced in Section~\ref{sec:techniques}. Proofs are deferred to 
Appendix~\ref{sec: appendix2}. Finally, we note that Section~\ref{sec:con} 
briefly sketches the extensions along this direction for NAG-\texttt{SC}.

\subsection{Convergence Rates}
\label{sec:critical-friction}

The theorem below characterizes the convergence rates of the generalized NAG-\texttt{C}~(\ref{eqn: generalize_NAG-C_position}).
\begin{thm}\label{thm:c_vary_alpha3}
Let $f \in \mathcal{F}^1_L(\mathbb{R}^n), \alpha \ge 3$, and $\beta > \frac12$. There exists $c_{\alpha,\beta} > 0$ such that, taking any step size $0 < s \leq c_{\alpha,\beta}/L$, the iterates $\{x_k\}_{k=0}^{\infty}$ generated by the generalized NAG-\texttt{C} \eqref{eqn: generalize_NAG-C_position} obey
\begin{equation}\label{eq:c_vary_alpha3_11}
\min_{0 \le i \le k} \|\nabla f(x_i)\|^2 \le \frac{C_{\alpha,\beta}  \|x_{0} - x^\star\|^{2}}{s^{2}(k+1)^3},
\end{equation}
for all $k \ge 0$. In addition, we have
\[
f(x_k) - f(x^\star) \le \frac{C_{\alpha,\beta}  \|x_{0} - x^\star\|^2}{s(k+1)^2},
\]
for all $k \ge 0$. The constants $c_{\alpha,\beta}$ and $C_{\alpha,\beta}$ only 
depend on $\alpha$ and $\beta$. 
\end{thm}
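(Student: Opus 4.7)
The plan is to mimic the high-resolution ODE framework used for the original NAG-\texttt{C} in Section~\ref{sec:nagm-c_analysis}, with the Lyapunov function promoted to an $(\alpha,\beta)$-parametric family. The first step is to rewrite the generalized algorithm~\eqref{eqn: generalize_NAG-C_position} in a phase-space representation by introducing $v_k := (x_{k+1}-x_k)/\sqrt{s}$. A short computation using $y_{k+1} - y_k = x_k - x_{k-1} - \beta s(\nabla f(x_k)-\nabla f(x_{k-1}))$ yields an update of the form
\[
x_k - x_{k-1} = \sqrt{s}\, v_{k-1}, \qquad
v_k - v_{k-1} = -\frac{\alpha}{k}v_k - \beta\sqrt{s}\bigl(\nabla f(x_k)-\nabla f(x_{k-1})\bigr) - \Bigl(1+\tfrac{\alpha}{k}\Bigr)\sqrt{s}\,\nabla f(x_k),
\]
which is the exact discrete analogue of \eqref{eqn: generalize_NAGM-C_ode} and reduces to \eqref{eqn: Nesterov_convex_symplectic2} when $(\alpha,\beta)=(3,1)$.

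Guided by the continuous Lyapunov function for \eqref{eqn: generalize_NAGM-C_ode},
\[
\mathcal{E}(t) = t\bigl(t + (\alpha-1)\sqrt{s}/2\bigr)\bigl(f(X)-f(x^\star)\bigr) + \tfrac{1}{2}\bigl\| t\dot X + (\alpha-1)(X-x^\star) + \beta\sqrt{s}\, t\,\nabla f(X)\bigr\|^2,
\]
I would propose the discrete Lyapunov function
\[
\mathcal{E}(k) = s(k+\alpha)(k+1)\bigl(f(x_k)-f(x^\star)\bigr) + \tfrac{1}{2}\bigl\| (k+1)\sqrt{s}\,v_k + (\alpha-1)(x_{k+1}-x^\star) + \beta(k+1)s\,\nabla f(x_k)\bigr\|^2.
\]
The key lemma to establish is the analogue of Lemma~\ref{lm:e_decay}:
\[
\mathcal{E}(k+1)-\mathcal{E}(k) \le -c_1(\alpha,\beta)\, s^2 k^2 \,\|\nabla f(x_{k+1})\|^2,
\]
valid for all sufficiently large $k$ and for $0<s\le c_{\alpha,\beta}/L$. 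To prove this, I would expand $\mathcal{E}(k+1)-\mathcal{E}(k)$ using the phase-space identity $(k+\alpha)(v_k+\beta\sqrt{s}\nabla f(x_k))-k(v_{k-1}+\beta\sqrt{s}\nabla f(x_{k-1})) = -k\sqrt{s}\nabla f(x_k)$ (the generalization of \eqref{eq:phase_secondline}), then invoke convexity and $L$-smoothness in the two standard inequalities
\[
f(x_{k+1})-f(x_k) \le \langle\nabla f(x_{k+1}), x_{k+1}-x_k\rangle - \tfrac{1}{2L}\|\nabla f(x_{k+1})-\nabla f(x_k)\|^2,
\quad
f(x_{k+1})-f(x^\star) \le \langle\nabla f(x_{k+1}), x_{k+1}-x^\star\rangle,
\]
and finally complete the square in $\nabla f(x_{k+1})-\nabla f(x_k)$. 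The quadratic term in $\|\nabla f(x_{k+1})\|^2$ that survives will have a coefficient of order $s^2 k^2$ whose sign is governed by a polynomial in $(\alpha,\beta,Ls)$; ensuring that this coefficient is negative is what forces both the condition $\beta>\tfrac12$ (so that the discrete gradient-correction term dominates the $Ls$-perturbation after completing the square, exactly as $\beta=1$ succeeds in Lemma~\ref{lm:e_decay}) and the smallness of the step size $s\le c_{\alpha,\beta}/L$.

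Once this decay estimate is in hand, the two bounds follow the same scheme as the proof of Theorem~\ref{thm:NAGM-C_original}. Telescoping from a fixed index $k_0=k_0(\alpha,\beta)$ up to $k$ and extracting the minimum yields
\[
\min_{k_0\le i\le k} \|\nabla f(x_i)\|^2 \;\le\; \frac{C\,\mathcal{E}(k_0)}{s^2 (k+1)^3},
\]
while the nonnegativity of the quadratic part gives
\[
f(x_k) - f(x^\star) \le \frac{\mathcal{E}(k_0)}{s(k+\alpha)(k+1)}.
\]
Bounding $\mathcal{E}(k_0)$ by $O_{\alpha,\beta}(\|x_0-x^\star\|^2)$ using $\|\nabla f(x_0)\|\le L\|x_0-x^\star\|$ and $f(x_0)-f(x^\star)\le \tfrac{L}{2}\|x_0-x^\star\|^2$, together with a brute-force check of the finitely many indices $i<k_0$ (as in Appendix~\ref{subsec: technical_detail}), delivers \eqref{eq:c_vary_alpha3_11} and the function-value bound with an explicit $C_{\alpha,\beta}$. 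The main obstacle is the bookkeeping needed to verify the sign of the residual coefficient of $\|\nabla f(x_{k+1})\|^2$ in $\mathcal{E}(k+1)-\mathcal{E}(k)$ as a function of $\alpha,\beta,Ls$; this is where the hypothesis $\beta>\tfrac12$ and the step-size threshold $c_{\alpha,\beta}/L$ must both be extracted simultaneously, and where the right choice of the coefficients $(k+\alpha)(k+1)$, $(\alpha-1)$, and $\beta$ in the Lyapunov function is most delicate.
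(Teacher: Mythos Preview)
Your overall strategy matches the paper's: phase-space representation, an $(\alpha,\beta)$-parametric discrete Lyapunov function modeled on the continuous one, a one-step decay bound of order $-s^2k^2\|\nabla f(x_{k+1})\|^2$, then telescoping plus a brute-force check on finitely many initial indices. The condition $\beta>\tfrac12$ and the step-size threshold emerge exactly as you describe, from the sign of the residual coefficient after completing the square.

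Two points deserve correction, however. First, your generalization of \eqref{eq:phase_secondline} is wrong for $\beta\neq1$: the correct identity is
\[
(k+\alpha)\bigl(v_k+\beta\sqrt{s}\,\nabla f(x_k)\bigr)-k\bigl(v_{k-1}+\beta\sqrt{s}\,\nabla f(x_{k-1})\bigr)=-(k+\alpha-\alpha\beta)\sqrt{s}\,\nabla f(x_k),
\]
not $-k\sqrt{s}\nabla f(x_k)$; this $\beta$-dependence propagates through the entire difference computation and is what makes the coefficient bookkeeping nontrivial. Second, a single Lyapunov of your form does not quite close for all $\alpha\ge3$. The paper splits into $\alpha>3$ (where it uses potential coefficient $s(k+1)(k+\alpha-\beta+1)$ rather than your $s(k+1)(k+\alpha)$, and mixed-energy prefactor $\tfrac12$) and the critical case $\alpha=3$, where an additional $k$-dependent prefactor on the mixed energy is needed, namely $\tfrac{k+3}{2(k+3-\beta)}$ or $\tfrac{k+2}{2(k+3-\beta)}$ according as $\beta<1$ or $\beta\ge1$, chosen so that its monotonicity lets one drop a cross term in $\mathcal{E}(k+1)-\mathcal{E}(k)$. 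Your proposed coefficients coincide with the paper's only at $(\alpha,\beta)=(3,1)$; for general $(\alpha,\beta)$ you will need to adjust them along these lines to make the difference estimate go through.
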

The proof of Theorem~\ref{thm:c_vary_alpha3} is given in 
Appendix~\ref{subsec: proof_gamma_=3} for $\alpha = 3$ and 
Appendix~\ref{subsec: proof_gamma_>3} for $\alpha > 3$.  
This theorem shows that the generalized NAG-\texttt{C} achieves 
the same rates as the original NAG-\texttt{C} in both squared 
gradient norm and function value minimization. The constraint 
$\beta > \frac12$ reveals that further leveraging of the gradient 
correction does not hurt acceleration, but perhaps not the other 
way around (note that NAG-\texttt{C} in its original form corresponds 
to $\beta = 1$). It is an open question whether this constraint 
is a technical artifact or is fundamental to acceleration.

\subsection{Faster Convergence in Super-Critical Regime}
\label{sec:super-crit-frict}
We turn to the case in which $\alpha > 3$, where we show that the 
generalized NAG-\texttt{C} in this regime attains a faster rate for 
minimizing the function value. The following proposition provides a 
technical inequality that motivates the derivation of the improved rate.

\begin{prop}\label{thm: OA_generalize_>3}
Let $f \in \mathcal{F}^1_L(\mathbb{R}^n), \alpha > 3$, and $\beta > \frac12$. There exists $c_{\alpha,\beta}' > 0$ such that, taking any step size $0 < s \leq c_{\alpha,\beta}'/L$, the iterates $\{x_k\}_{k=0}^{\infty}$ generated by the generalized NAG-\texttt{C} \eqref{eqn: generalize_NAG-C_position} obey
\[
\sum_{k = 0}^{\infty} \left[  (k + 1) \left( f(x_{k}) - f(x^{\star}) \right)  + s (k + 1)^2 \left\| \nabla f(x_{k})\right\|^{2} \right] \leq \frac{C'_{\alpha, \beta}\left\| x_{0} - x^{\star}\right\|^{2}}{s},
\]
where the constants $c'_{\alpha, \beta}$ and $C'_{\alpha, \beta}$ only depend on $\alpha$ and $\beta$.
\end{prop}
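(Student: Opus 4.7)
The plan is to design a discrete Lyapunov function whose one-step decrement $\mathcal{E}(k+1) - \mathcal{E}(k)$ picks up both the super-critical slack $\alpha - 3 > 0$ and the gradient-correction slack $\beta - \tfrac12 > 0$, producing two strictly negative terms of order $-(\alpha - 3)(k+1)(f(x_{k+1}) - f(x^\star))$ and $-(2\beta - 1) s(k+1)^2 \|\nabla f(x_{k+1})\|^2$. Once such a one-step inequality is established, the claimed infinite-sum bound follows by telescoping, using $\mathcal{E}(k) \ge 0$, and estimating $\mathcal{E}(0) = O(\|x_0 - x^\star\|^2/s)$ via the $L$-smoothness of $f$ together with the initial condition $v_0 = -\sqrt{s}\nabla f(x_0)$.

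The first step is to put \eqref{eqn: generalize_NAG-C_position} into the phase-space form $x_k - x_{k-1} = \sqrt{s}\, v_{k-1}$, together with a velocity update obtained by eliminating $y_k$. A short calculation, in direct analogy with \eqref{eqn: Nesterov_convex_symplectic2}, yields
\begin{equation*}
v_k - v_{k-1} = -\frac{\alpha}{k} v_k - \beta\sqrt{s}\bigl(\nabla f(x_k) - \nabla f(x_{k-1})\bigr) - \Bigl(1 + \frac{\alpha}{k}\Bigr)\sqrt{s}\,\nabla f(x_k),
\end{equation*}
so the phase-space representation satisfies the discrete counterpart of the identity $(k+\alpha)(v_k + \sqrt{s}\nabla f(x_k)) - k(v_{k-1} + \sqrt{s}\nabla f(x_{k-1})) = -k\sqrt{s}\nabla f(x_k)$, which is what generalizes \eqref{eq:phase_secondline}. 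Guided by the continuous Lyapunov function \eqref{eqn: lypunov_NAGM-C_first-order_ode2} and its discrete counterpart \eqref{eqn: NAGM-C_original_lypunov}, I would then posit
\begin{equation*}
\mathcal{E}(k) = s(k+\alpha)(k+1)\bigl(f(x_k) - f(x^\star)\bigr) + \tfrac{1}{2}\bigl\|(k+1)\sqrt{s}\,v_k + (\alpha-1)(x_{k+1} - x^\star) + (k+1) s\nabla f(x_k)\bigr\|^2,
\end{equation*}
with the precise constants in the mixed-energy square to be tuned after expansion.

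The next step is to expand $\mathcal{E}(k+1) - \mathcal{E}(k)$ along the trajectory using the phase-space identity above, together with the convexity estimate $f(x_{k+1}) - f(x^\star) \le \langle \nabla f(x_{k+1}), x_{k+1} - x^\star\rangle$ and the $L$-smoothness estimate $f(x_{k+1}) - f(x_k) \le \langle \nabla f(x_{k+1}), x_{k+1} - x_k\rangle - \tfrac{1}{2L}\|\nabla f(x_{k+1}) - \nabla f(x_k)\|^2$. The goal is to arrange the telescoping of the mixed-energy square so that the coefficient in front of $(k+1)(f(x_{k+1}) - f(x^\star))$ reduces to $-c_1(\alpha-3)$ and the coefficient in front of $s(k+1)^2 \|\nabla f(x_{k+1})\|^2$ reduces to $-c_2(2\beta-1)$, yielding
\begin{equation*}
\mathcal{E}(k+1) - \mathcal{E}(k) \le -c_1(\alpha-3)(k+1)\bigl(f(x_{k+1}) - f(x^\star)\bigr) - c_2(2\beta - 1)\, s(k+1)^2 \|\nabla f(x_{k+1})\|^2,
\end{equation*}
valid for all $s \le c'_{\alpha,\beta}/L$. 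The $\alpha - 3$ factor emerges in the same way as in the continuous analysis of $t^2(f(X) - f(x^\star))$ under the friction $\alpha/t$, namely through the discrete analog of $\tfrac{\dd}{\dd t}[t^2] - (\alpha - 1) t \cdot \text{(something)}$; the $2\beta - 1$ factor emerges from comparing the Hessian-driven gradient-correction contribution against the smoothness-induced remainder $\tfrac{1}{2L}\|\nabla f(x_{k+1}) - \nabla f(x_k)\|^2$. Telescoping from $k = 0$ to infinity and bounding $\mathcal{E}(0) \le C \|x_0 - x^\star\|^2 / s$ using $f(x_0) - f(x^\star) \le \tfrac{L}{2}\|x_0 - x^\star\|^2$, $\|\nabla f(x_0)\| \le L\|x_0 - x^\star\|$, and $v_0 = -\sqrt{s}\nabla f(x_0)$, then gives the proposition after dividing by $\min\{c_1(\alpha-3), c_2(2\beta-1)\}$.

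The main obstacle is the algebraic tuning of the three coefficients in the mixed-energy square: the multipliers of $v_k$, $(x_{k+1} - x^\star)$, and $s\nabla f(x_k)$ must be chosen so that, after expansion and substitution of the convexity and smoothness inequalities, all cross-terms $\langle \nabla f(x_{k+1}), x_{k+1} - x^\star\rangle$ and $\langle \nabla f(x_{k+1}), v_{k+1}\rangle$ either cancel or combine with favorable sign into pure function-value and squared-gradient contributions. In particular, stray $O(s k)$ and $O(s^2 k^2)$ remainders from the discrete Taylor expansion of the second-difference term must be absorbed into the two strictly negative contributions, and it is this absorption step that pins down the explicit threshold $c'_{\alpha,\beta}$ on the step size.
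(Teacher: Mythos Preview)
Your approach matches the paper's: the same phase-space representation, the same style of Lyapunov function, and the same telescoping argument producing the $(\alpha-3)$ and $(2\beta-1)$ slack terms. The only substantive tuning the paper pins down that you left open is that the gradient term in the mixed-energy square must carry a factor of $\beta$ (so the square is $\tfrac12\|(\alpha-1)(x_{k+1}-x^\star)+\sqrt{s}(k+1)(v_k+\beta\sqrt{s}\nabla f(x_k))\|^2$, and the telescoping identity for general $\beta$ reads $(k+\alpha)(v_k+\beta\sqrt{s}\nabla f(x_k))-k(v_{k-1}+\beta\sqrt{s}\nabla f(x_{k-1}))=-\sqrt{s}(k+\alpha-\alpha\beta)\nabla f(x_k)$, not the $\beta=1$ version you wrote), since this $\beta$-weighting is exactly what makes the $(2\beta-1)$ coefficient appear; also, $\mathcal{E}(0)=O(\|x_0-x^\star\|^2)$ without a $1/s$, and the $1/s$ in the final bound comes from dividing the telescoped inequality by $s$.
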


In relating to Theorem \ref{thm:c_vary_alpha3}, one can show that Proposition \ref{thm: OA_generalize_>3} in fact implies \eqref{eq:c_vary_alpha3_11} in Theorem~\ref{thm:c_vary_alpha3}. To see this, note that for $k \ge 1$, one has
\[
\min_{0 \le i \le k} \|\nabla f(x_i)\|^2 \le \frac{\sum_{i = 0}^{k} s (i + 1)^2 \left\| \nabla f(x_i)\right\|^{2}}{\sum_{i = 0}^{k} s (i + 1)^2} \le \frac{\frac{C'_{\alpha, \beta}\left\| x_{0} - x^{\star}\right\|^{2}}{s}}{\frac{s}{6}(k + 1)(k+2)(2k+1)} = O \left(\frac{\left\| x_{0} - x^{\star}\right\|^2}{s^2 k^3} \right),
\] 
where the second inequality follows from Proposition~\ref{thm: OA_generalize_>3}.

Proposition \ref{thm: OA_generalize_>3} can be thought of as a generalization of Theorem 6 of \cite{su2016differential}. In particular, this result implies an intriguing and important message. To see this, first note that, by taking $s = O(1/L)$, Proposition \ref{thm: OA_generalize_>3} gives
\begin{equation}\label{eqn: function_value_k^2}
\sum_{k=0}^{\infty} (k + 1) \left( f(x_{k}) - f(x^{\star}) \right) = O(L\left\| x_{0} - x^{\star} \right\|^{2}),
\end{equation} 
which would not be valid if $f(x_{k}) - f(x^{\star}) \ge c L\left\| x_{0} - x^{\star} \right\|^{2}/k^2$ for a constant $c > 0$. Thus, it is tempting to suggest that there might exist a faster convergence rate in the sense that
\begin{equation}\label{eqn: faster_conver}
f(x_{k}) - f(x^{\star}) \leq o\left( \frac{L\left\| x_{0} - x^{\star} \right\|^{2}}{k^{2}}\right).
\end{equation}
This faster rate is indeed achievable as we show next, though there are examples 
where \eqref{eqn: function_value_k^2} and $f(x_{k}) - f(x^{\star}) = 
O(L\left\| x_{0} - x^{\star} \right\|^{2}/k^2)$ are both satisfied but 
\eqref{eqn: faster_conver} does not hold (a counterexample is given in 
Appendx~\ref{subsec: counter}).

\begin{thm}\label{thm:faster_rate_0}
Under the same assumptions as in Proposition \ref{thm: OA_generalize_>3}, taking the step size $s = c'_{\alpha,\beta}/L$, the iterates $\{x_k\}_{k=0}^{\infty}$ generated by the generalized NAG-\texttt{C} \eqref{eqn: generalize_NAG-C_position} starting from any $x_0 \ne x^\star$ satisfy
\begin{equation}\nonumber
\lim_{k \goto \infty}\frac{k^2 (f(x_{k}) - f(x^{\star}))}{L\left\| x_{0} - x^{\star} \right\|^{2}} = 0.
\end{equation}
\end{thm}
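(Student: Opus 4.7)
The plan is to combine the summability statement of Proposition~\ref{thm: OA_generalize_>3} with a discrete bounded-variation argument. Writing $\phi_k := f(x_k)-f(x^\star)\ge 0$ and $A_k := (k+1)^2\phi_k$, Proposition~\ref{thm: OA_generalize_>3} at $s = c'_{\alpha,\beta}/L$ gives
\[
\sum_{k=0}^{\infty}\bigl[(k+1)\phi_k + s(k+1)^2\|\nabla f(x_k)\|^2\bigr] \leq \frac{C'_{\alpha,\beta}\,L\|x_0-x^\star\|^2}{c'_{\alpha,\beta}},
\]
while Theorem~\ref{thm:c_vary_alpha3} gives the uniform bound $A_k = O(L\|x_0-x^\star\|^2)$. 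So the target reduces to showing $A_k \to 0$.

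The easy half is $\liminf_k A_k = 0$: since $\sum_k 1/(k+1)=\infty$ but $\sum_k A_k/(k+1)<\infty$, no tail of $A_k$ can stay bounded away from zero. To upgrade this $\liminf$ to a $\lim$, I would establish that the positive variation is summable, $\sum_k (A_{k+1}-A_k)_+ <\infty$. Because $A_k\ge 0$, summability of the positive part forces summability of $\sum_k(A_{k+1}-A_k)_-$ as well (its partial sums are bounded by $A_0 + \sum_j (A_{j+1}-A_j)_+$), so $A_k$ converges, and $\liminf_k A_k = 0$ pins the limit at zero.

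To bound the variation, decompose $A_{k+1}-A_k = (2k+3)\phi_{k+1} + (k+1)^2(f(x_{k+1})-f(x_k))$. The first piece is summable directly by Proposition~\ref{thm: OA_generalize_>3}. For the second, $L$-smoothness of $f$ yields
\[
f(x_{k+1})-f(x_k) \le \sqrt{s}\,\langle \nabla f(x_k), v_k\rangle + \tfrac{sL}{2}\|v_k\|^2,
\]
where $v_k := (x_{k+1}-x_k)/\sqrt{s}$. Cauchy--Schwarz then bounds the positive part of $(k+1)^2(\phi_{k+1}-\phi_k)$ by a constant times $(k+1)\|\nabla f(x_k)\|\cdot(k+1)\|v_k\| + (k+1)^2\|v_k\|^2$, and both pieces are summable provided one has $\sum_k(k+1)^2\|\nabla f(x_k)\|^2<\infty$ (given by Proposition) and $\sum_k(k+1)^2\|v_k\|^2<\infty$.

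The main obstacle is establishing $\sum_k(k+1)^2\|v_k\|^2<\infty$---the discrete analog of $\int t^2\|\dot X(t)\|^2\,dt<\infty$, which is exactly the regularity enjoyed in the super-critical regime $\alpha>3$. I plan to extract this by revisiting the Lyapunov function used in the proof of Proposition~\ref{thm: OA_generalize_>3}: its quadratic velocity-position term of the shape $\|(k+1)\sqrt{s}\,v_k+2(x_{k+1}-x^\star)+(k+1)s\nabla f(x_k)\|^2$ being uniformly bounded yields the one-sided estimate $(k+1)\|v_k\|=O(1)$. An Abel summation against the telescoping identity $\|x_{k+1}-x^\star\|^2-\|x_k-x^\star\|^2 = 2\sqrt{s}\langle x_k-x^\star, v_k\rangle + s\|v_k\|^2$, combined with the summability of $(k+1)\phi_k$ and $(k+1)^2\|\nabla f(x_k)\|^2$, should deliver $\sum_k(k+1)\|v_k\|^2<\infty$; pairing this with $(k+1)\|v_k\|=O(1)$ promotes it to $\sum_k(k+1)^2\|v_k\|^2<\infty$. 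This is the step where the strict inequality $\alpha>3$ is essential---at $\alpha=3$ the margin in the Lyapunov descent collapses and only the $\liminf$ statement survives.
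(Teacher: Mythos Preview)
Your bounded-variation strategy is natural, but the final ``promotion'' step is where it breaks. You assert that $(k+1)\|v_k\|=O(1)$ together with $\sum_k (k+1)\|v_k\|^2<\infty$ yields $\sum_k (k+1)^2\|v_k\|^2<\infty$. This implication is false: take $\|v_k\|=1/\bigl((k+1)\log(k+2)\bigr)$; then $(k+1)\|v_k\|=1/\log(k+2)=O(1)$ and $\sum_k (k+1)\|v_k\|^2=\sum_k 1/\bigl((k+1)\log^2(k+2)\bigr)<\infty$, yet $\sum_k (k+1)^2\|v_k\|^2=\sum_k 1/\log^2(k+2)=\infty$. Since your control of $(A_{k+1}-A_k)_+$ hinges on the term $\tfrac{sL}{2}(k+1)^2\|v_k\|^2$ (which you cannot avoid once you invoke $L$-smoothness to bound $f(x_{k+1})-f(x_k)$ from above), the argument does not close. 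In fact the paper's own Lyapunov analysis only produces $\sum_k (k+1)\|v_{k+1}\|^2<\infty$, never the $(k+1)^2$-weighted version, so there is no reason to expect the stronger summability to hold.

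The paper circumvents this by a different mechanism: it introduces a one-parameter family of Lyapunov functions $\mathcal{E}_\nu(k)$ for $\nu\in(2,\alpha-1]$, each of which is shown to converge. Subtracting $\mathcal{E}_\nu-\mathcal{E}_{\nu'}$ for two distinct parameters isolates the cross term $(k+1)\sqrt{s}\,\langle x_{k+1}-x^\star,\,v_k+\beta\sqrt{s}\nabla f(x_k)\rangle+\tfrac{\alpha-1}{2}\|x_{k+1}-x^\star\|^2$ and forces it to converge; a separate argument then shows $\|x_k-x^\star\|$ itself converges. Expanding $\mathcal{E}_\nu(k)$ and peeling off all the now-convergent pieces leaves exactly $(k+1)^2\bigl[s(f(x_k)-f(x^\star))+\tfrac{s}{2}\|v_k+\beta\sqrt{s}\nabla f(x_k)\|^2\bigr]$ converging to some limit; summability of the $(k+1)$-weighted version (from Proposition~\ref{thm: OA_generalize_>3} and the $\nu<\alpha-1$ descent) then pins that limit at zero. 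So rather than controlling the variation of $A_k$, the paper shows directly that $A_k$ (augmented by a velocity term) converges, and only then identifies the limit. The two-$\nu$ trick is the missing ingredient your plan would need to replace the failed promotion step.
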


\begin{figure}[htb!]
\begin{minipage}[t]{0.5\linewidth}
\centering
\includegraphics[width=3.2in]{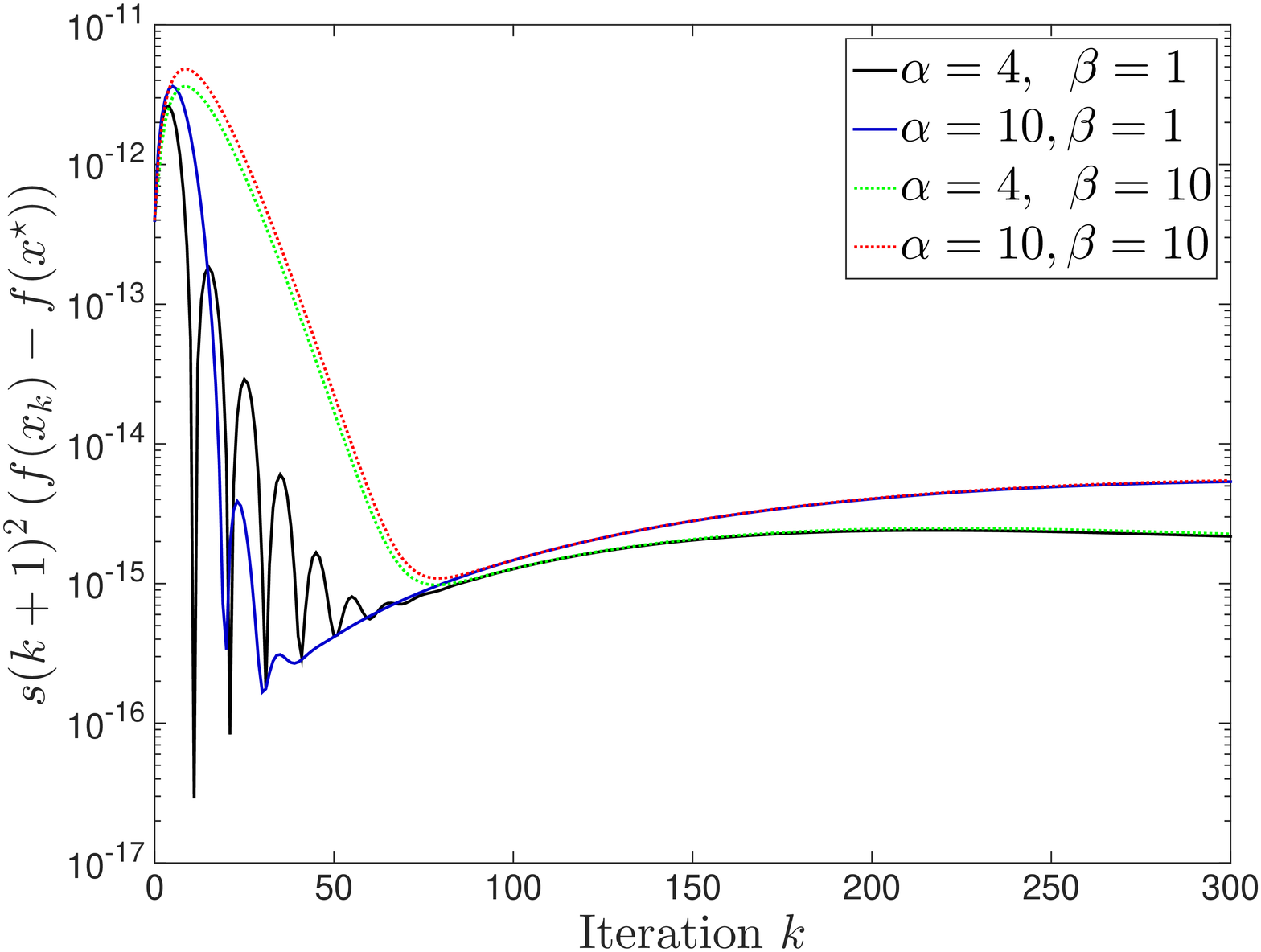}
\end{minipage}
\begin{minipage}[t]{0.5\linewidth}
\centering
\includegraphics[width=3.2in]{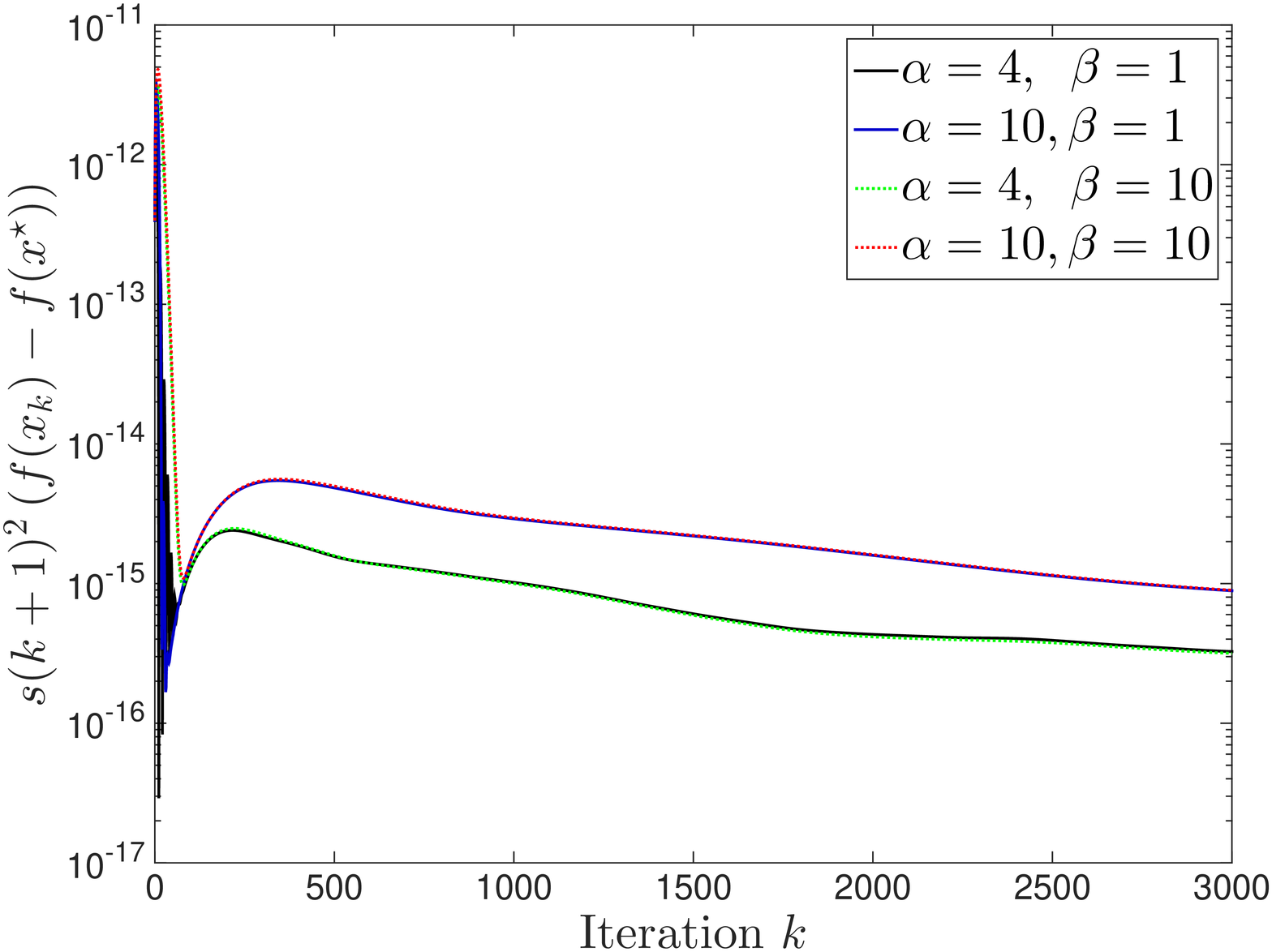}
\end{minipage}
\caption{Scaled error $s(k+1)^2 (f(x_k) - f(x^\star))$ of the generalized NAG-\texttt{C}~\eqref{eqn: generalize_NAG-C_position} with various $(\alpha, \beta)$. The setting is the same as the left plot of Figure~\ref{fig:grad_norm}, with the objective $f(x) = \frac12 \left\langle Ax, x\right\rangle + \left\langle b, x\right\rangle $. The step size is $s = 10^{-1}\|A\|_{2}^{-1}$. The left shows the short-time behaviors of the methods, while the right focuses on the long-time behaviors. The scaled error curves with the same $\beta$ are very close to each other in the short-time regime, but in the long-time regime, the scaled error curves with the same $\alpha$ almost overlap. The four scaled error curves slowly tend to zero.}
\label{fig:generalization_quadratic}
\end{figure}

\begin{figure}[htb!]
\begin{minipage}[t]{0.5\linewidth}
\centering
\includegraphics[width=3in]{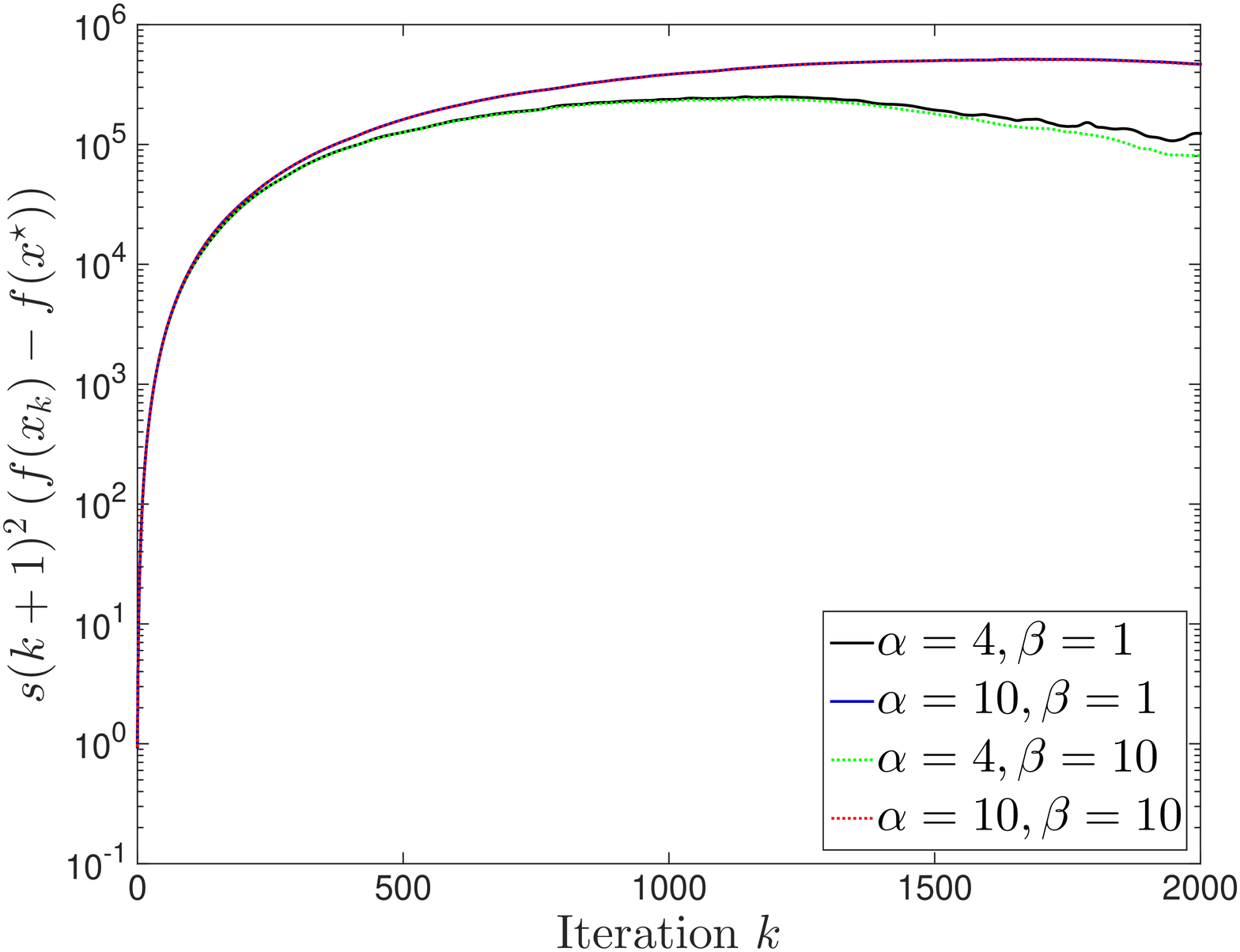}
\end{minipage}
\begin{minipage}[t]{0.5\linewidth}
\centering
\includegraphics[width=3in]{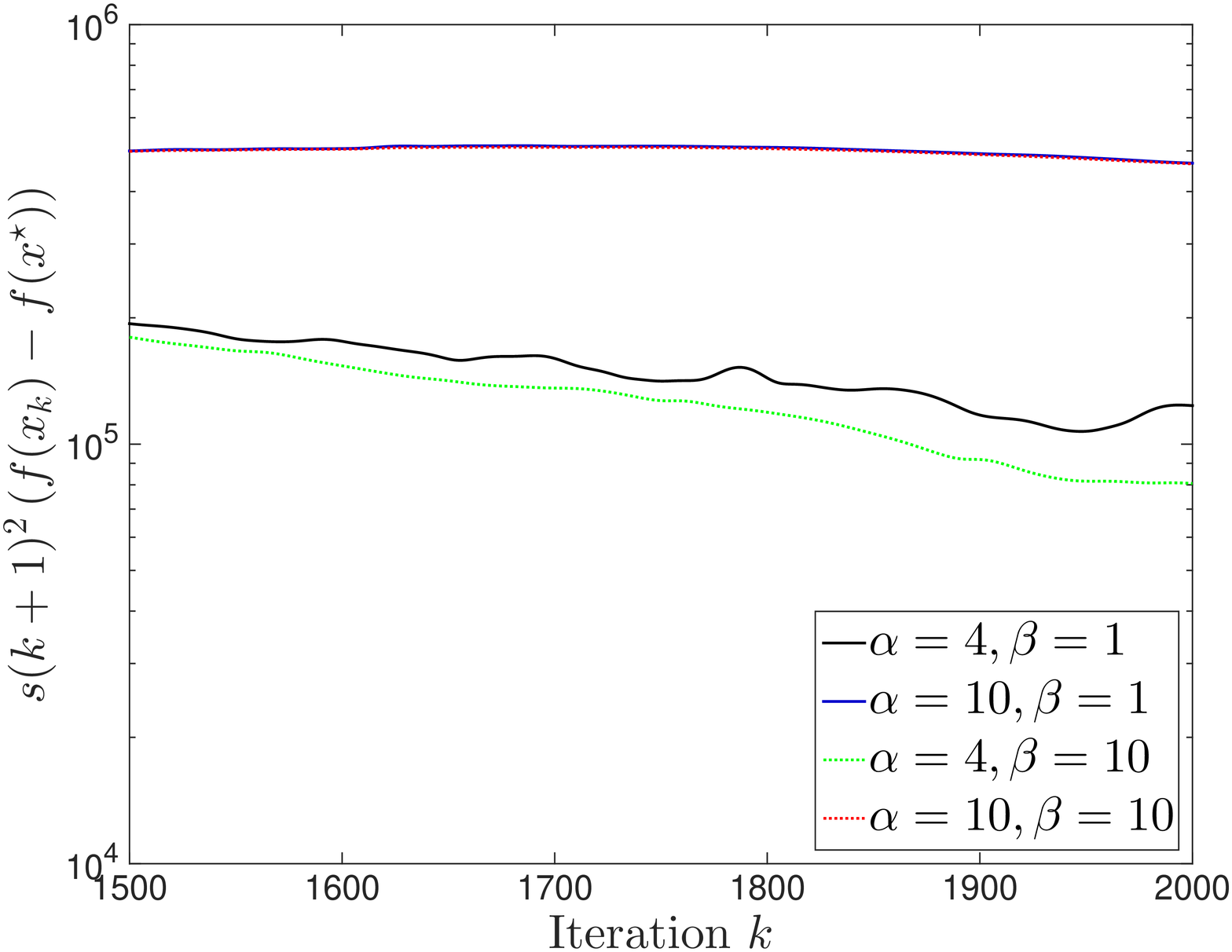}
\end{minipage}
\caption{Scaled error $s(k+1)^2 (f(x_k) - f(x^\star))$ of the generalized NAG-\texttt{C}~\eqref{eqn: generalize_NAG-C_position} with various $(\alpha, \beta)$. The setting is the same as the right plot of Figure~\ref{fig:grad_norm}, with the objective $f(x) = \rho \log \left\{ \sum\limits_{i = 1}^{200} \exp \left[\left( \left\langle a_{i}, x\right\rangle - b_{i} \right)/\rho\right]\right\}$. The step size is $s = 0.1$. This set of simulation studies implies that the convergence in Theorem~\ref{thm:faster_rate_0} is slow for some problems.}
\label{fig: generalization_log-sum-exap}
\end{figure}

Figures~\ref{fig:generalization_quadratic} and \ref{fig: generalization_log-sum-exap} 
present several numerical studies concerning the prediction of 
Theorem~\ref{thm:faster_rate_0}. For a fixed dimension $n$, the convergence 
in Theorem \ref{thm:faster_rate_0} is uniform over functions in 
$\mathcal{F}^1 = \cup_{L > 0} \mathcal{F}_L^1$ and, consequently, is 
independent of the Lipschitz constant $L$ and the initial point $x_0$. 
In addition to following the high-resolution ODE framework, the proof 
of this theorem reposes on the finiteness of the series in 
Proposition \ref{thm: OA_generalize_>3}.  See Appendix~\ref{subsec: proof_gamma_>3} and 
Appendix~\ref{subsec: high_friction_faster} for the full proofs of the 
proposition and the theorem, respectively. 

In the literature, \cite{attouch2016rate, may2017asymptotic,attouch2018fast} use low-resolution ODEs to establish the faster rate $o(1/k^2)$ for the generalized NAG-\texttt{C} \eqref{eqn: generalize_NAG-C_position} in the special case of $\beta = 1$. In contrast, our proof of Theorem~\ref{thm:faster_rate_0} is more general and applies to a broader class of methods.

In passing, we make the observation that Proposition \ref{thm: OA_generalize_>3} reveals that
\[
\sum_{k = 1}^{\infty} s k^2 \left\| \nabla f(x_{k})\right\|^{2}  \leq \frac{C'_{\alpha, \beta}\left\| x_{0} - x^{\star}\right\|^{2}}{s},
\]
which would not hold if $\min_{0 \le i \le k} \|\nabla f(x_i)\|^2 \ge c \|x_0 - x^\star\|^2/(s^2 k^3)$ for all $k$ and a constant $c > 0$. In view of the above, it might be true that the rate of the generalized NAG-\texttt{C} for minimizing the squared gradient norm can be improved to
\[
\min_{0 \le i \le k} \|\nabla f(x_i)\|^2 = o\left( \frac{\|x_0 - x^\star\|^2}{s^2k^3} \right).
\]
We leave the confirmation or disconfirmation of this asymptotic result for future research.


\section{Discussion}
\label{sec:con}

In this paper, we have proposed high-resolution ODEs for modeling 
three first-order optimization methods---the heavy-ball method, 
NAG-\texttt{SC}, and NAG-\texttt{C}.  These new ODEs are more
faithful surrogates for the corresponding discrete optimization 
methods than existing ODEs in the literature, thus serving as 
a more effective tool for understanding, analyzing, and generalizing 
first-order methods. Using this tool, we identified a term that we
refer to as ``gradient correction'' in NAG-\texttt{SC} and in its 
high-resolution ODE, and we demonstrate its critical effect in 
making NAG-\texttt{SC} an accelerated method, as compared to the 
heavy-ball method. We also showed via the high-resolution ODE of 
NAG-\texttt{C} that this method minimizes the squared norm of the 
gradient at a faster rate than expected for smooth convex functions, 
and again the gradient correction is the key to this rate. Finally, the analysis of this tool suggested a new family of accelerated methods 
with the same optimal convergence rates as NAG-\texttt{C}.

The aforementioned results are obtained using the high-resolution 
ODEs in conjunction with a new framework for translating findings 
concerning the amenable ODEs into those of the less ``user-friendly'' 
discrete methods. This framework encodes an optimization property 
under investigation to a continuous-time Lyapunov function for an 
ODE and a discrete-time Lyapunov function for the discrete method. 
As an appealing feature of this framework, the transformation from 
the continuous Lyapunov function to its discrete version is through 
a phase-space representation. This representation links continuous 
objects such as position and velocity variables to their discrete 
counterparts in a faithful manner, permitting a transparent analysis 
of the three discrete methods that we studied.

There are a number of avenues open for future research using the 
high-resolution ODE framework. First, the discussion of Section 
\ref{sec:extension} can carry over to the heavy-ball method and 
NAG-\texttt{SC}, which correspond to the high-resolution ODE
\[
\ddot{X}(t) + 2 \sqrt{\mu} \dot{X}(t) + \beta\sqrt{s} \nabla^{2} f(X(t)) \dot{X}(t) + \left( 1 + \sqrt{\mu s} \right)\nabla f(X(t))= 0
\]
with $\beta = 0$ and $\beta = 1$, respectively. This ODE with a 
general $0 < \beta < 1$ corresponds to a new algorithm that can be 
thought of as an interpolation between the two methods. It is of
interest to investigate the convergence properties of this class of 
algorithms.  Second, we recognize that new optimization algorithms are 
obtained in \cite{wibisono2016variational,wilson2016lyapunov} by using 
different discretization schemes on low-resolution ODE. Hence, a 
direction of interest is to apply the techniques therein to our 
high-resolution ODEs and to explore possible appealing properties 
of the new methods. Third, the technique of dimensional analysis, 
which we have used to derive high-resolution ODEs, can be further 
used to incorporate even higher-order powers of $\sqrt{s}$ into the ODEs. 
This might lead to further fine-grained findings concerning the discrete 
methods.

More broadly, we wish to remark on possible extensions of the 
high-resolution ODE framework beyond smooth convex optimization 
in the Euclidean setting. In the non-Euclidean case, it would be 
interesting to derive a high-resolution ODE for mirror descent 
\cite{krichene2015accelerated,wibisono2016variational}. This 
framework might also admit extensions to non-smooth optimization 
and stochastic optimization, where the ODEs are replaced, respectively, 
by differential inclusions \cite{osher2016sparse,vassilis2018differential} 
and stochastic differential equations \cite{krichene2017acceleration,hu2017diffusion,
li2017stochastic,liang2017statistical,xu2018continuous,he2018differential,
gao2018global}. Finally, recognizing that the high-resolution ODEs are
well-defined for non-convex functions, we believe that this framework will 
provide more accurate characterization of local behaviors of first-order 
algorithms near saddle points \cite{jin2017escape,du2017gradient,hu2017global}. 
On a related note, given the centrality of the problem of finding an 
approximate stationary point in the non-convex setting 
\cite{carmon2017lower,carmon2017lower2,allen2018make}, it is 
worth using the high-resolution ODE framework to explore possible 
applications of the faster rate for minimizing the squared gradient norm
that we have uncovered.


{\small
\subsection*{Acknowledgements}
B.~S.~is indebted to Xiaoping Yuan for teaching him the modern theory of ordinary differential equations and would like to thank Rui Xin Huang for teaching him how to leverage intuitions from physics to understand differential equations. We would like to thank Nicolas Flammarion for suggesting references. This work was supported in part by the NSF via grant CCF-1763314 and Army Research Office via grant W911NF-17-1-0304.

\bibliographystyle{alpha}
\bibliography{sigproc}
}

\newpage
\appendix

\section{Technical Details in Section~\ref{sec:techniques}}
\subsection{Derivation of High-Resolution ODEs}
\label{sec: ODE_2}
In this section, we formally derive the high-resolution ODEs of the heavy-ball method and NAG-\texttt{C}.  Let $t_{k} = k \sqrt{s}$. For the moment, let $X(t)$ be a sufficiently smooth map from $[0, \infty)$ (the heavy-ball method) or $[1.5\sqrt{s}, \infty)$ (NAG-\texttt{C}) to $\mathbb{R}^{n}$, with the correspondence $X(t_{k}) = X(k \sqrt{s}) = x_{k}$, where $\{x_{k}\}_{k = 0}^{\infty}$ is the sequence of iterates generated by the heavy-ball method or NAG-\texttt{C}, depending on the context.

\noindent{\bf The heavy-ball method.} For any function $f(x) \in \mathcal{S}_{\mu, L}^{2}(\mathbb{R}^{n})$, setting $\alpha = \frac{1 - \sqrt{\mu s}}{1 + \sqrt{\mu s}}$, multiplying both sides of~\eqref{eqn: polyak_heavy_ball} by $\frac{1 + \sqrt{\mu s}}{1 - \sqrt{\mu s}} \cdot \frac{1}{s}$ and rearranging the equality, we obtain                
                            \begin{align}\label{eqn: heavy-b_rewrite}
                            \frac{x_{k + 1} + x_{k - 1} - 2x_{k}}{s} + \frac{2 \sqrt{\mu s}}{ 1 - \sqrt{\mu s} } \frac{x_{k + 1} - x_{k}}{s}  + \frac{ 1 + \sqrt{\mu s} }{ 1- \sqrt{\mu s} } \nabla f(x_{k}) = 0.
                            \end{align}
                             Plugging~(\ref{eqn: 3rd_taylor_variable}) into~(\ref{eqn: heavy-b_rewrite}), we have
                            $$
                            \begin{aligned}
                            \ddot{X}(t_{k})  + O\left(\sqrt{s}\right)  + \frac{2 \sqrt{\mu}}{ 1 - \sqrt{\mu s} } \left[ \dot{X}(t_{k})  + \frac{1}{2}\sqrt{s} \ddot{X}(t_{k}) + O\left(\left(\sqrt{s}\right)^{2}\right) \right]
                                                    + \frac{ 1 + \sqrt{\mu s} }{ 1 - \sqrt{\mu s} } \nabla f(X(t_{k})) = 0.
                            \end{aligned} 
                            $$
                           By only ignoring the $O(s)$ term, we obtain the high-resolution ODE~(\ref{eqn: heavy_ball_first}) for the heavy-ball method
                            $$
                            \ddot{X} + 2 \sqrt{\mu} \dot{X} + \left(1 + \sqrt{\mu s}\right)\nabla f(X) =0.
                            $$  

\noindent{\bf NAG-\texttt{C}.} For any function $f(x) \in \mathcal{F}_{L}^{2}(\mathbb{R}^{n})$, multiplying both sides of~(\ref{eqn:nagm-c}) by $\frac{1 + \sqrt{\mu s}}{1 - \sqrt{\mu s}} \cdot \frac{1}{s}$ and rearranging the equality, we get 
                           \begin{align}
                           \label{eqn:nagm-c_rewrite}
                            \frac{x_{k + 1} + x_{k - 1} - 2x_{k}}{s} + \frac{3}{k} \cdot \frac{x_{k + 1} - x_{k}}{s} + \left( \nabla f(x_{k}) - \nabla f(x_{k - 1}) \right) +\left( 1 + \frac{3}{k} \right) \nabla f(x_{k}) = 0.
                            \end{align}
                             For convenience, we slightly change the definition  $t_{k} = k\sqrt{s} + (3/2)\sqrt{s}$ instead of $t_{k} = k\sqrt{s}$. 
                             Plugging~(\ref{eqn: 3rd_taylor_variable}) into~(\ref{eqn:nagm-c_rewrite}), we have
                            \begin{multline*}
                            \ddot{X}(t_{k})  + O\left(\left( \sqrt{s} \right)^{2} \right)  + \frac{3}{t_{k} - (3/2)\sqrt{s}} \left[ \dot{X}(t_{k})  + \frac{1}{2}\sqrt{s} \ddot{X}(t_{k}) + O\left(\left(\sqrt{s}\right)^{2}\right) \right] \\
                                                    + \nabla^{2} f(X(t_{k})) \dot{X}(t_{k}) \sqrt{s} + O\left(\left( \sqrt{s} \right)^{2} \right)+ \frac{ t_{k} + (3/2)\sqrt{ s} }{ t_{k} - (3/2)\sqrt{ s} } \nabla f(X(t_{k})) = 0.
                            \end{multline*} 
                            Ignoring any $O(s)$ terms, we obtain the high-resolution ODE~(\ref{eqn: nag-c_first}) for NAG-\texttt{C}
                            $$
                            \ddot{X} + \frac{3}{t} \dot{X} + \sqrt{s} \nabla^{2}f(X) \dot{X} +\left( 1 + \frac{3\sqrt{s}}{2t} \right) \nabla f(X) =0.
                            $$

\subsection{Derivation of Low-Resolution ODEs}
\label{sec: ODE_1}
In this section, we derive low-resolution ODEs of accelerated gradient methods for comparison. The results presented here are well-known in the literature and the purpose is for ease of reading. In~\cite{su2016differential},  the second-order Taylor expansions at both $x_{k - 1}$ and $x_{k + 1}$ with the step size $\sqrt{s}$ are,
\begin{equation}
\label{eqn: 2nd_taylor_variable}
\left\{ \begin{aligned}
& x_{k + 1} =  X\left( (k + 1)\sqrt{s} \right) = X(t_{k}) + \dot{X}(t_{k})\sqrt{s} + \frac{1}{2} \ddot{X}(t_{k})\left(\sqrt{s}\right)^{2} + O\left( \left(\sqrt{s}\right)^{3} \right) \\
& x_{k  - 1} =  X\left( (k  - 1)\sqrt{s} \right) = X(t_{k})  - \dot{X}(t_{k})\sqrt{s} + \frac{1}{2} \ddot{X}(t_{k})\left(\sqrt{s}\right)^{2} + O\left( \left(\sqrt{s}\right)^{3} \right) .
\end{aligned} \right.
\end{equation}
With the Taylor expansion~(\ref{eqn: 2nd_taylor_variable}), we obtain the gradient correction 
\begin{equation}
\label{eqn: gradient_correction1}
\nabla f(x_{k}) - \nabla f(x_{k - 1}) =  \nabla^{2} f(X(t_{k})) \dot{X}(t_{k})\sqrt{s} + O\left(\left( \sqrt{s} \right)^{2}\right) = O\left(\sqrt{s}\right). 
\end{equation}
From~\eqref{eqn: 2nd_taylor_variable} and~\eqref{eqn: gradient_correction1}, we can derive the following low-resolution ODEs.

\begin{enumerate}[label = \textbf{(\arabic*)}]
  \item For any function $f(x) \in \mathcal{S}_{\mu, L}^{1}(\mathbb{R}^{n})$. 
         \begin{enumerate}[label = \textbf{(\alph*)}]
                   \item Recall the equivalent form~(\ref{eqn:nagm-sc_rewrite}) of NAG-\texttt{SC}~(\ref{eqn: Nesterov_strongly}) is
                            \begin{align*}
                            \frac{x_{k + 1} + x_{k - 1} - 2x_{k}}{s} + \frac{2 \sqrt{\mu s}}{ 1 - \sqrt{\mu s} } \frac{x_{k + 1} - x_{k}}{s} + \left( \nabla f(x_{k}) - \nabla f(x_{k - 1}) \right) + \frac{ 1+ \sqrt{\mu s} }{ 1 - \sqrt{\mu s} } \nabla f(x_{k}) = 0.
                            \end{align*}
                            Plugging~(\ref{eqn: 2nd_taylor_variable}) and~\eqref{eqn: gradient_correction1} into~(\ref{eqn:nagm-sc_rewrite}), we have
                            \begin{multline*}
                            \ddot{X}(t_{k})  + O\left( \sqrt{s} \right)  + \frac{2 \sqrt{\mu}}{ 1 - \sqrt{\mu s} } \left[ \dot{X}(t_{k}) + \frac{1}{2}\ddot{X}\sqrt{s} + O\left( \left(\sqrt{s}\right)^{2} \right)  \right] \\
                                                    +  O\left(  \sqrt{s}\right)  +\left( 1 + O(\sqrt{s})\right)\nabla f(X(t_{k})) = 0.
                            \end{multline*}         
                            Hence, taking $s \rightarrow 0$, we obtain the low-resolution ODE~(\ref{eqn:ode_old_nagsc_hb}) of NAG-\texttt{SC} 
                            \[
                            \ddot{X} + 2 \sqrt{\mu} \dot{X} + \nabla f(X) =0.
                            \] 
                 \item  Recall the equivalent form~(\ref{eqn: heavy-b_rewrite}) of the heavy-ball method~(\ref{eqn: polyak_heavy_ball}) is
                            \[
                            \frac{x_{k + 1} + x_{k - 1} - 2x_{k}}{s} + \frac{2 \sqrt{\mu s}}{ 1 - \sqrt{\mu s} } \frac{x_{k + 1} - x_{k}}{s}  + \frac{ 1 + \sqrt{\mu s} }{ 1- \sqrt{\mu s} } \nabla f(x_{k}) = 0.
                            \]
                             Plugging~(\ref{eqn: 2nd_taylor_variable}) and~\eqref{eqn: gradient_correction1} into~(\ref{eqn: heavy-b_rewrite}), we have
                            $$
                            \begin{aligned}
                            \ddot{X}(t_{k})  + O\left(\sqrt{s}\right)  + \frac{2 \sqrt{\mu}}{ 1 - \sqrt{\mu s} } \left[ \dot{X}(t_{k})  + \frac{1}{2}\sqrt{s} \ddot{X}(t_{k}) + O\left( \left( \sqrt{s} \right)^{2}\right) \right]
                                                    + \frac{ 1 + \sqrt{\mu s} }{ 1 - \sqrt{\mu s} } \nabla f(X(t_{k})) = 0.
                            \end{aligned} 
                            $$
                             Hence, taking $s \rightarrow 0$, we obtain  the low-resolution ODE~(\ref{eqn:ode_old_nagsc_hb}) of the heavy-ball method
                            $$
                            \ddot{X} + 2 \sqrt{\mu} \dot{X} + \nabla f(X) =0.
                             $$  
  \end{enumerate}
 Notably,  NAG-\texttt{SC} and the heavy-ball method share the same low-resolution ODE~\eqref{eqn:ode_old_nagsc_hb}, which is almost consistent with~\eqref{eqn: heavy_ball_first}. Thus the low-resolution ODE fails to capture the information from the ``gradient correction" of NAG-\texttt{SC}.
  \item  For any function $f(x) \in \mathcal{F}_{L}^{1}(\mathbb{R}^{n})$,  recall the equivalent form~(\ref{eqn:nagm-c_rewrite}) of NAG-\texttt{C}~\eqref{eqn:nagm-c} is
                            \[
                            \frac{x_{k + 1} + x_{k - 1} - 2x_{k}}{s} + \frac{3}{k} \cdot \frac{x_{k + 1} - x_{k}}{s} + \left( \nabla f(x_{k}) - \nabla f(x_{k - 1}) \right) + \left( 1 + \frac{3}{k} \right) \nabla f(x_{k}) = 0.
                            \]
                            Plugging~(\ref{eqn: 2nd_taylor_variable}) and~\eqref{eqn: gradient_correction1} into~(\ref{eqn:nagm-c_rewrite}), we have
                            \begin{multline*}
                            \ddot{X}(t_{k})  + O\left( \sqrt{s} \right)  + \frac{3}{t_{k}} \cdot \left[ \dot{X}(t_{k}) + \frac{1}{2}\ddot{X}(t_{k})\sqrt{s} + O\left( \left(\sqrt{s}\right)^{2} \right)\right] \\
                                                     + O\left(\sqrt{s}\right) + \left( 1 + \frac{3\sqrt{s}}{t_k} \right) \nabla f(X(t_{k})) = 0.
                            \end{multline*} 
                            Thus,  by taking $s \rightarrow 0$, we obtain  the low-resolution ODE~(\ref{eqn:ode_old_nagmc}) of NAG-\texttt{C} 
                           \[
                            \ddot{X} + \frac{3}{t} \dot{X} + \nabla f(X) =0,
                            \]  
                            which is the same as \cite{su2016differential}.                                         

\end{enumerate}

\subsection{Solution Approximating Optimization Algorithms}
To investigate the property about the high-resolution ODEs~\eqref{eqn: heavy_ball_first},~\eqref{eqn: nag-sc_first} and~\eqref{eqn: nag-c_first}, we need to state the relationship between them and their low-resolution corresponding ODEs. Here, we denote the solution to high-order ODE by $X_s = X_s(t)$. 
Actually, the low-resolution ODE is the special case of high-resolution ODE with $s = 0$. Take NAG-\texttt{SC} for example
\[
\begin{aligned}
& \ddot{X}_{s} + \mu \dot{X}_{s} + \sqrt{s} \nabla f(X_{s}) \dot{X}_{s} + (1 + \sqrt{\mu s}) \nabla f(X_{s}) = 0\\
& X_{s}(0) = x_{0}, \quad \dot{X}_{s}(0) = - \frac{2\sqrt{s} \nabla f(x_0)}{1 + \sqrt{\mu s}}.
\end{aligned}
\]
In other words, we consider a family of ODEs about the step size parameter $s$.

\subsubsection{Proof of Proposition~\ref{prop: exist_unique_SC}}
\label{sec: proof_exist_unique_SC}

\paragraph{Global Existence and Uniqueness}
To prove the global existence and uniqueness of solution to the high-resolution ODEs~\eqref{eqn: heavy_ball_first} and~\eqref{eqn: nag-sc_first}, we first emphasize a fact that if $X_s = X_s(t)$ is the solution of~\eqref{eqn: heavy_ball_first} or~\eqref{eqn: nag-sc_first}, there exists some constant $\mathcal{C}_1 > 0$ such that
\begin{align}
\label{eqn: velocity_bound}
\sup_{0 \leq t < \infty}\left\| \dot{X}_s(t)\right\| \leq \mathcal{C}_1, 
\end{align}
which is only according to the following Lyapunov function
\begin{align}
\label{eqn:ef_simple_sc}
\mathcal{E}(t) = (1 + \sqrt{\mu s}) \left( f(X_s) - f(x^{\star}) \right)+ \frac{1}{2} \| \dot{X}_s \|^{2}.
\end{align}

Now, we proceed to prove  the global existence and uniqueness of solution to the high-resolution ODEs~\eqref{eqn: heavy_ball_first} and~\eqref{eqn: nag-sc_first}.  Recall initial value problem (IVP) for first-order ODE system in $\mathbb{R}^{m}$  as 
\begin{equation}
\label{eqn: IVP_auto}
\dot{x} = b(x), \quad x(0) = x_{0},
\end{equation}
of which the classical theory about global existence and uniqueness of solution is shown as below.
\begin{thm}[Chillingworth, Chapter 3.1, Theorem 4~\cite{perko2013differential}]
\label{thm: existence_uniqueness}
Let $M \in \mathbb{R}^{m}$ be a compact manifold and $b \in C^{1}(M)$. If the vector field $b$ satisfies the global Lipschitz condition
\[
\left\| b(x) - b(y) \right\| \leq \mathfrak{L} \left\| x - y\right\|
\]
for all $x, y \in M$. Then for any $x_{0} \in M$, the IVP~\eqref{eqn: IVP_auto} has a unique solution $x(t)$ defined for all $t \in \mathbb{R}$.
\end{thm}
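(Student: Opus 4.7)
}

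The plan is to follow the classical Picard--Lindel\"of strategy: first establish local existence and uniqueness on a small time interval using the Banach fixed--point theorem applied to the integral form of \eqref{eqn: IVP_auto}, then exploit the uniformity of the Lipschitz constant together with compactness of $M$ to extend the local solution to all of $\mathbb{R}$. The key structural input is that the local existence window $\delta > 0$ produced by Picard iteration depends only on the global Lipschitz constant $\mathfrak{L}$ and on $\sup_M \|b\|$ (which is finite by continuity of $b$ and compactness of $M$), not on the particular initial condition; this is what allows us to iterate the local result indefinitely without the existence interval shrinking.

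First I would recast \eqref{eqn: IVP_auto} as the integral equation $x(t) = x_0 + \int_0^t b(x(\tau))\,\dd\tau$ and, for a fixed $\delta>0$ to be chosen, define the operator $T$ on $C([-\delta,\delta];\mathbb{R}^m)$ by $(T\phi)(t) = x_0 + \int_0^t b(\phi(\tau))\,\dd\tau$, restricted to the closed subset of curves taking values in $M$. Using the global Lipschitz bound, one gets $\|T\phi - T\psi\|_\infty \le \mathfrak{L}\,\delta\,\|\phi-\psi\|_\infty$, so choosing $\delta < 1/\mathfrak{L}$ makes $T$ a strict contraction and the Banach fixed--point theorem yields a unique continuous fixed point, hence (by differentiating under the integral, which is legitimate since $b$ is $C^1$) a unique $C^1$ solution on $[-\delta,\delta]$. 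Uniqueness on any common interval of existence follows from the standard Gr\"onwall argument: if $x$ and $y$ both solve \eqref{eqn: IVP_auto} with the same initial datum, then $\|x(t)-y(t)\| \le \mathfrak{L}\int_0^t \|x(\tau)-y(\tau)\|\,\dd\tau$, which forces $x\equiv y$.

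Next I would pass from local to global. Let $J\subseteq\mathbb{R}$ be the maximal interval of existence of a solution $x$ starting at $x_0$. Suppose for contradiction that $J$ has a finite right endpoint $t^\star$. Because $x(t)\in M$ and $M$ is compact, there is a sequence $t_n\uparrow t^\star$ with $x(t_n)\to x^\star\in M$. Applying the local existence result at the initial point $x^\star$ furnishes a solution on $[t^\star-\delta,t^\star+\delta]$ with the same $\delta$ as before, which can be glued to $x$ via the uniqueness part of the local theorem, contradicting maximality of $t^\star$. The same argument rules out a finite left endpoint, so $J=\mathbb{R}$. The fact that the solution stays on $M$ for all time uses that $b$ is a vector field on the manifold $M$ (so it is tangent to $M$), which guarantees that Picard iterates starting in $M$ remain in $M$; this is implicit in the assumption $b\in C^1(M)$.

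The main obstacle, and the reason compactness of $M$ is needed, is ensuring that the solution cannot escape to infinity or leave $M$ in finite time. The global Lipschitz condition alone yields at most an exponential-in-time bound of the form $\|x(t)-x_0\| \le (\|b(x_0)\|/\mathfrak{L})(\ee^{\mathfrak{L}|t|}-1)$ via Gr\"onwall, which prevents blow-up; compactness of $M$ then upgrades this to the uniform boundedness needed to apply the local existence theorem repeatedly with a fixed $\delta$. Once these two ingredients---uniform local existence time from the global Lipschitz hypothesis, and boundedness from compactness---are in place, the extension argument is routine. No novel estimates beyond Gr\"onwall and the contraction-mapping theorem are required.
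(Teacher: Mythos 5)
The paper itself does not prove this statement---it is quoted from the Perko/Chillingworth reference as a classical fact---so your proposal should be measured against the standard textbook argument, which it follows in outline: Picard--Lindel\"of contraction for local existence with a step $\delta<1/\mathfrak{L}$, Gr\"onwall for uniqueness, and continuation with a uniform $\delta$ (available because the Lipschitz constant is global and $\sup_M\|b\|<\infty$ by compactness). That outline is the right one, and the continuation/no-blow-up discussion is fine.

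There is, however, one genuine gap as written: the claim that the Picard operator $T$ maps curves with values in $M$ to curves with values in $M$ (equivalently, that ``Picard iterates starting in $M$ remain in $M$'' because $b$ is tangent to $M$) is false. Integration in the ambient coordinates of $\mathbb{R}^m$ does not respect the manifold: for $M=S^1\subset\mathbb{R}^2$ and $b(x)=(-x_2,x_1)$, the first iterate $x_0+t\,b(x_0)$ leaves the circle for every $t\neq 0$. Since $b$ is only defined on $M$, your $T$ is then not a self-map of the space on which you run the contraction, so the fixed-point step is not well-posed as stated; and invariance of $M$ is not a harmless afterthought, because the solution must remain in $M$ for $b(x(t))$ to be defined at later times. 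The standard repairs are either (i) to work in local charts, flattening $M$ and running Picard--Lindel\"of there, gluing by uniqueness, with compactness supplying finitely many charts and a uniform $\delta$; or (ii) to extend $b$ to a $C^1$, globally Lipschitz field on a tubular neighborhood of $M$ (or on all of $\mathbb{R}^m$), solve globally in the ambient space---note that in that setting the global Lipschitz bound alone already rules out finite-time blow-up, so compactness is not what prevents escape to infinity, it is what keeps the trajectory in the domain where $b$ is defined---and then prove separately that $M$ is invariant under the resulting flow, using tangency of $b$ expressed in charts together with uniqueness. In the paper's application the set $M_{\mathcal{C}_1}\subset\mathbb{R}^{2n}$ carries a vector field that is already defined on all of $\mathbb{R}^{2n}$, so route (ii) is the relevant one; with that repair your argument is complete.
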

Apparently, the set $M_{\mathcal{C}_1} = \left\{ \left. (X_s, \dot{X}_s) \in \mathbb{R}^{2n} \right| \| \dot{X}_s \| \leq \mathcal{C}_1 \right\}$ is a compact manifold satisfying Theorem~\ref{thm: existence_uniqueness} with $m = 2n$.

\begin{itemize}
\item For the heavy-ball method, the phase-space representation of  high-resolution ODE~\eqref{eqn: heavy_ball_first} is
\begin{equation}
\label{eqn: high_ode_heavy-b_phase}
\frac{\dd}{\dd t} \left( \begin{aligned}
& X_s \\ 
& \dot{X}_s 
\end{aligned} \right ) = \left( \begin{aligned}
& \dot{X}_s \\ 
-\mu &  \dot{X}_s - (1 + \sqrt{\mu s}) \nabla f(X_s)
\end{aligned} \right).
\end{equation}
For any $(X_s, \dot{X}_s)^\top, (Y_s, \dot{Y}_s)^\top \in M_{\mathcal{C}_1}$, we have 
\begin{align}
\label{eqn: Lipschitz_heavy-ball_ODE}
 & \left\| \left( \begin{aligned}
& \dot{X}_s \\ 
-\mu & \dot{X}_s - (1 + \sqrt{\mu s}) \nabla f(X_s)
\end{aligned} \right) - \left( \begin{aligned}
& \dot{Y}_s \\ 
-\mu &  \dot{Y}_s - (1 + \sqrt{\mu s}) \nabla f(Y_s) \end{aligned} \right)\right\| \nonumber \\
=& 
\left\| \left( \begin{aligned}
&\dot{X}_s - \dot{Y}_s \\ 
-\mu( &   \dot{X}_s - \dot{Y} _s)
\end{aligned} \right) \right\| 
+
(1 + \sqrt{\mu s}) \left\| \left( \begin{aligned}
& 0 \\ 
&  \nabla f(X_s) -  \nabla f(Y_s)
\end{aligned} \right)\right\| \nonumber \\
 \leq & \sqrt{ 1 + \mu^{2} } \left\| \dot{X}_s - \dot{Y}_s \right\| + (1 + \sqrt{\mu s}) L \left\|  X_s - Y_s \right\| \nonumber \\
\leq &  2 \max\left\{ \sqrt{ 1 + \mu^{2} }, (1 + \sqrt{\mu s}) L  \right\} \left\| \left( \begin{aligned}
& X_s \\ 
& \dot{X}_s 
\end{aligned} \right) - \left( \begin{aligned}
& Y_s \\ 
& \dot{Y}_s 
\end{aligned} \right)\right\|. 
\end{align}

\item For NAG-\texttt{SC}, the phase-space representation of  high-resolution ODE~\eqref{eqn: nag-sc_first} is
\begin{equation}
\label{eqn: high_ode_nag-sc_phase}
\frac{\dd}{\dd t} \left( \begin{aligned}
& X_s \\ 
& \dot{X}_s 
\end{aligned} \right )= \left( \begin{aligned}
& \dot{X}_s \\ 
-\mu &  \dot{X}_s - \sqrt{s} \nabla^{2}f(X_s) \dot{X}_s- (1 + \sqrt{\mu s}) \nabla f(X_s)
\end{aligned} \right).
\end{equation}
%
%
For any $(X_s, \dot{X}_s)^\top, (Y_s, \dot{Y}_s)^\top \in M_{\mathcal{C}_1}$, we have 
\begin{align}
\label{eqn: Lipschitz_nag-sc_ODE}
 & \left\| \left( \begin{aligned}
& \dot{X}_s \\ 
-\mu &  \dot{X}_s - \sqrt{s} \nabla^{2}f(X_s) \dot{X}_s - (1 + \sqrt{\mu s}) \nabla f(X_s)
\end{aligned} \right) - \left( \begin{aligned}
& \dot{Y}_s \\ 
-\mu &  \dot{Y}_s - \sqrt{s} \nabla^{2}f(Y_s) \dot{Y}_s  - (1 + \sqrt{\mu s}) \nabla f(Y_s) \end{aligned} \right)\right\| \nonumber \\
\leq & 
\left\| \left( \begin{aligned}
& \dot{X}_s - \dot{Y}_s \\ 
-\left( \mu \bm{I}+  \sqrt{s} \nabla^2 f(X_s)\right)( &   \dot{X}_s - \dot{Y}_s )
\end{aligned} \right) \right\| 
+
\sqrt{s}  \left\| \left( \begin{aligned}
& \qquad 0 \\ 
&  \left( \nabla^2 f(X_s) -  \nabla^2 f(Y_s)\right) \dot{Y}_s
\end{aligned} \right)\right\| \nonumber \\
& \qquad +
(1 + \sqrt{\mu s}) \left\| \left( \begin{aligned}
& \quad 0 \\ 
&  \nabla f(X_s) -  \nabla f(Y_s)
\end{aligned} \right)\right\| \nonumber \\
 \leq & \sqrt{ 1 + 2\mu^{2}  + 2sL^{2}} \left\|  \dot{X}_s - \dot{Y}_s \right\| + \left[ \sqrt{s} \mathcal{C}_1 L'+ (1 + \sqrt{\mu s}) L\right] \left\|  X_s - Y_s \right\| \nonumber \\
\leq &   2\max\left\{ \sqrt{ 1 + 2\mu^{2}  + 2sL^{2}}, \sqrt{s} \mathcal{C}_1 L'+ (1 + \sqrt{\mu s}) L \right\} \left\| \left( \begin{aligned}
& X_s \\ 
& \dot{X}_s 
\end{aligned} \right) - \left( \begin{aligned}
& Y_s \\ 
& \dot{Y}_s 
\end{aligned} \right)\right\|. 
\end{align}
\end{itemize}

Based on the phase-space representation~\eqref{eqn: high_ode_heavy-b_phase} and~\eqref{eqn: high_ode_nag-sc_phase}, together with the Lipschitz condition~\eqref{eqn: Lipschitz_heavy-ball_ODE} and~\eqref{eqn: Lipschitz_nag-sc_ODE}, Theorem~\ref{thm: existence_uniqueness} leads to the following Corollary.
\begin{coro}
\label{coro: nag-sc-heavy-ball}
For any $f \in \mathcal{S}_{\mu}^2(\mathbb{R}^n) := \cup_{L \geq \mu} \mathcal{S}^2_{\mu,L}(\mathbb{R}^n)$, each of the two ODEs \eqref{eqn: heavy_ball_first} and \eqref{eqn: nag-sc_first} with the specified initial conditions has a unique global solution $X \in C^2(I; \mathbb{R}^n)$
\end{coro}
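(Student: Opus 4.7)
The plan is to combine the \emph{a priori} velocity bound \eqref{eqn: velocity_bound} with the classical global-existence result (Theorem~\ref{thm: existence_uniqueness}) applied to the first-order phase-space reformulations \eqref{eqn: high_ode_heavy-b_phase} and \eqref{eqn: high_ode_nag-sc_phase}. For any fixed $f \in \mathcal{S}^2_{\mu,L}(\mathbb{R}^n)$, the right-hand side of each phase-space ODE is $C^1$ in $(X_s,\dot X_s)$ (this uses that $\nabla^2 f$ is itself Lipschitz in the NAG-\texttt{SC} case, so $\nabla^2 f(X_s)\dot X_s$ is $C^1$ in the pair), and the Lipschitz estimates \eqref{eqn: Lipschitz_heavy-ball_ODE} and \eqref{eqn: Lipschitz_nag-sc_ODE} already show that the field is globally Lipschitz once we restrict $(X_s,\dot X_s)$ to the compact set $M_{\mathcal{C}_1}$. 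The corollary then reduces to showing that the solution, once it exists locally, stays inside such a compact set for all $t \ge 0$.

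First I would establish local existence and uniqueness via Picard--Lindel\"of on an open neighborhood of the initial point in $\mathbb{R}^{2n}$; this requires only local Lipschitzness, which is immediate from the $C^1$ regularity of the vector field. Let $[0,T^\ast)$ denote the maximal interval of existence. On this interval I would verify the energy identity for the Lyapunov function \eqref{eqn:ef_simple_sc}: differentiating along \eqref{eqn: heavy_ball_first} gives
\[
\dot{\mathcal{E}}(t) \;=\; -\mu\|\dot X_s\|^2 \;\le\; 0,
\]
while differentiating along \eqref{eqn: nag-sc_first} gives
\[
\dot{\mathcal{E}}(t) \;=\; -\mu\|\dot X_s\|^2 \;-\; \sqrt{s}\,\dot X_s^{\top}\nabla^2 f(X_s)\dot X_s \;\le\; 0,
\]
where the non-positivity uses $\nabla^2 f \succeq \mu I \succeq 0$. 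Hence $\mathcal{E}(t)\le \mathcal{E}(0)$ on $[0,T^\ast)$, and from $\tfrac{1}{2}\|\dot X_s(t)\|^2 \le \mathcal{E}(t) \le \mathcal{E}(0)$ we obtain the uniform velocity bound $\|\dot X_s(t)\| \le \mathcal{C}_1$ asserted in \eqref{eqn: velocity_bound}. Strong convexity combined with $(1+\sqrt{\mu s})(f(X_s)-f(x^\star)) \le \mathcal{E}(0)$ then gives a uniform bound on $\|X_s(t)-x^\star\|$, so the whole trajectory is confined to a fixed compact subset $K \subset \mathbb{R}^{2n}$ determined by the initial data.

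Next I would upgrade from local to global existence. The standard blow-up dichotomy for ODEs with $C^1$ right-hand side states that if $T^\ast<\infty$, then $\|(X_s(t),\dot X_s(t))\|\to\infty$ as $t\uparrow T^\ast$; since the confinement in $K$ rules this out, $T^\ast=\infty$. Equivalently, one can smoothly truncate the vector field outside $K$ so that it satisfies the global Lipschitz hypothesis of Theorem~\ref{thm: existence_uniqueness} on a compact manifold containing $K$, apply that theorem to obtain a solution defined for all $t\ge 0$, and invoke uniqueness together with the trajectory's confinement in $K$ to conclude that the truncated and original solutions coincide on $[0,\infty)$. Uniqueness across the whole half-line then follows by a routine continuation argument from local uniqueness. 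The only delicate point is therefore the transfer from local to global Lipschitzness, and the Lyapunov bound is exactly what bridges this gap.

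Finally, $C^2$ regularity is automatic: once $X_s \in C^1([0,\infty);\mathbb{R}^n)$ is known to solve the ODE, one solves the ODE algebraically for $\ddot X_s$, whose right-hand side is continuous in $t$ (using continuity of $\nabla f$ and $\nabla^2 f$), giving $\ddot X_s \in C^0([0,\infty);\mathbb{R}^n)$ and hence $X_s \in C^2([0,\infty);\mathbb{R}^n)$. I expect the main obstacle to be purely bookkeeping—namely, carefully matching the compact-manifold setup of Theorem~\ref{thm: existence_uniqueness} to the a priori invariant set produced by the Lyapunov argument—rather than any genuinely new analytic difficulty.
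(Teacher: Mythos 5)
Your proposal is correct and follows essentially the same route as the paper: the a priori velocity bound from the simple Lyapunov function \eqref{eqn:ef_simple_sc}, the phase-space reformulations \eqref{eqn: high_ode_heavy-b_phase} and \eqref{eqn: high_ode_nag-sc_phase}, the Lipschitz estimates \eqref{eqn: Lipschitz_heavy-ball_ODE} and \eqref{eqn: Lipschitz_nag-sc_ODE}, and the classical existence--uniqueness theorem, with your maximal-interval/blow-up continuation merely making explicit what the paper leaves implicit. Two cosmetic remarks: Lipschitz continuity of $\nabla^2 f$ yields only local Lipschitzness (not $C^1$) of the map $(X,\dot X)\mapsto \nabla^2 f(X)\dot X$, which is all Picard--Lindel\"of requires, and along the heavy-ball ODE one gets $\dot{\mathcal{E}}(t) = -2\sqrt{\mu}\,\|\dot X_s\|^2$ rather than $-\mu\|\dot X_s\|^2$, which does not affect the sign argument.
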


%

\paragraph{Approximation}
Based on the Lyapunov function~\eqref{eqn:ef_simple_sc}, the gradient norm is bounded along the solution of~\eqref{eqn: heavy_ball_first} or~\eqref{eqn: nag-sc_first}, that is,
\begin{align}
\label{eqn: grad_norm_bound}
\sup_{0 \leq t < \infty}\left\| \nabla f(X_s(t))\right\| \leq \mathcal{C}_2. 
\end{align}
Recall the low-resolution ODE~\eqref{eqn:ode_old_nagsc_hb}, the phase-space representation is proposed as
\begin{equation}
\label{eqn: high_ode_nag-sc_phase_low}
\frac{\dd}{\dd t}\left( \begin{aligned}
& X \\ 
& \dot{X} 
\end{aligned} \right )
 = \left( \begin{aligned}
& \dot{X} \\ 
-\mu &  \dot{X} - \nabla f(X)
\end{aligned} \right).
\end{equation}
Similarly, using a Lyapunov function argument, we can show that if $X = X(t)$ is a solution of~\eqref{eqn:ode_old_nagsc_hb}, we have
\begin{align}
\label{eqn: velocity_bound_low}
\sup_{0 \leq t < \infty}\left\| \dot{X}(t)\right\| \leq \mathcal{C}_3.
\end{align}
Simple calculation tells us that there exists some constant $\mathcal{L}_1 > 0$ such that
\begin{align}
\label{eqn:Lip_low}
 \left\| \left( \begin{aligned}
& \dot{X} \\ 
-\mu &  \dot{X}  -  \nabla f(X)
\end{aligned} \right) - \left( \begin{aligned}
& \dot{Y} \\ 
-\mu &  \dot{Y}  -  \nabla f(Y) \end{aligned} \right)\right\| \leq \mathcal{L}_{1}  \left\| \left( \begin{aligned}
& X \\ 
& \dot{X} 
\end{aligned} \right) - \left( \begin{aligned}
& Y \\ 
& \dot{Y} 
\end{aligned} \right)\right\|.
\end{align}
Now, we proceed to show the approximation.
\begin{lem}
\label{lem: approximating_sc}
Let the solution to high-resolution ODEs \eqref{eqn: heavy_ball_first} and \eqref{eqn: nag-sc_first} as $X = X_{s}(t)$ and that of~\eqref{eqn:ode_old_nagsc_hb} as $X = X(t)$, then we have

\begin{align}
\label{eqn approximating_sc}
\lim_{s \rightarrow 0} \max_{0 \le t \le T} \left\| X_s(t) - X(t) \right\| = 0
\end{align}
for any fixed $T > 0$
\end{lem}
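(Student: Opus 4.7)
The plan is to cast both ODEs as first-order systems in the phase space $\mathbb{R}^{2n}$ and then close a Gronwall-type inequality that measures the deviation. Write $Y_s(t) = (X_s(t), \dot X_s(t))^\top$ and $Y(t) = (X(t), \dot X(t))^\top$ so that $\dot Y_s = F_s(Y_s)$ and $\dot Y = F_0(Y)$, where the vector fields come from the phase-space representations \eqref{eqn: high_ode_heavy-b_phase}/\eqref{eqn: high_ode_nag-sc_phase} and \eqref{eqn: high_ode_nag-sc_phase_low}. Subtracting the two integral formulations and inserting $F_s(Y)$ in the middle gives, for every $t \in [0, T]$,
\[
\|Y_s(t) - Y(t)\| \le \|Y_s(0) - Y(0)\| + \int_0^t \|F_s(Y_s(\tau)) - F_s(Y(\tau))\|\,\dd\tau + \int_0^t \|F_s(Y(\tau)) - F_0(Y(\tau))\|\,\dd\tau.
\]

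The first key step is to show that both trajectories remain, uniformly in sufficiently small $s$, in a compact region $K \subset \mathbb{R}^{2n}$. The velocity bounds \eqref{eqn: velocity_bound} and \eqref{eqn: velocity_bound_low} extracted from the Lyapunov function \eqref{eqn:ef_simple_sc} give such $K$, and on $K$ the gradient $\nabla f$ and, for NAG-\texttt{SC}, the Hessian $\nabla^2 f$ (both Lipschitz since $f \in \mathcal{S}_\mu^2$) admit uniform bounds. Bounds of the form \eqref{eqn: Lipschitz_heavy-ball_ODE} and \eqref{eqn: Lipschitz_nag-sc_ODE} then furnish a Lipschitz constant $\mathcal{L}$ for $F_s$ on $K$ that is uniform in $s \in [0, s_0]$ for some $s_0 > 0$. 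For the second integrand, a direct computation gives
\[
\|F_s(Y) - F_0(Y)\| \le \sqrt{s}\,\bigl\|\nabla^2 f(x)\,v\bigr\| + \sqrt{\mu s}\,\|\nabla f(x)\| \le \mathcal{M}\sqrt{s}
\]
for every $Y = (x, v) \in K$ (the first summand is absent in the heavy-ball case). For the initial condition, $Y_s(0) - Y(0) = \bigl(0, -\tfrac{2\sqrt{s}\nabla f(x_0)}{1 + \sqrt{\mu s}}\bigr)$ has norm $\le \mathcal{N}\sqrt{s}$.

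Plugging these three estimates into the integral inequality above yields
\[
\|Y_s(t) - Y(t)\| \le (\mathcal{N} + \mathcal{M}T)\sqrt{s} + \mathcal{L} \int_0^t \|Y_s(\tau) - Y(\tau)\|\,\dd\tau,
\]
and Gronwall's inequality gives $\sup_{0 \le t \le T}\|Y_s(t) - Y(t)\| \le (\mathcal{N} + \mathcal{M}T)\,\mathrm{e}^{\mathcal{L}T}\sqrt{s}$, which tends to $0$ as $s \rightarrow 0$. Since $\|X_s(t) - X(t)\| \le \|Y_s(t) - Y(t)\|$, this proves \eqref{eqn approximating_sc}.

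The main obstacle is ensuring that a single compact set $K$ confines $Y_s$ for all small $s$ simultaneously, so that the Lipschitz constant $\mathcal{L}$ and the pointwise bound $\mathcal{M}$ can be taken independent of $s$; this is precisely where the $s$-uniform Lyapunov bound \eqref{eqn: velocity_bound} (and its low-resolution analogue \eqref{eqn: velocity_bound_low}) together with the Lipschitz-continuous Hessian assumption $f \in \mathcal{S}_\mu^2$ are indispensable. The remaining computations are mechanical applications of Taylor's theorem for $\nabla f$ and Gronwall's lemma.
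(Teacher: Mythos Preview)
Your proposal is correct and follows essentially the same strategy as the paper: pass to the phase space, split the difference of vector fields into a Lipschitz part plus an $O(\sqrt{s})$ perturbation, invoke the $s$-uniform velocity bounds \eqref{eqn: velocity_bound}--\eqref{eqn: velocity_bound_low} coming from the Lyapunov functions, and close via Gronwall. The only cosmetic difference is in the decomposition: the paper writes $F_s(Y_s)-F_0(Y)=[F_0(Y_s)-F_0(Y)]+[F_s(Y_s)-F_0(Y_s)]$ and therefore uses the (automatically $s$-independent) Lipschitz constant $\mathcal{L}_1$ of $F_0$ from \eqref{eqn:Lip_low}, together with the differential form of Gronwall applied to $\|Y_s-Y\|^2$; you instead insert $F_s(Y)$ and need $\mathcal{L}$ uniform in $s$ for $F_s$, which you correctly identify and justify via \eqref{eqn: Lipschitz_heavy-ball_ODE}--\eqref{eqn: Lipschitz_nag-sc_ODE}.
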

In order to prove~\eqref{eqn approximating_sc}, we  prove a stronger result as
\begin{align}
\label{eqn approximating_sc_energy}
\lim_{s \rightarrow 0} \max_{0 \le t \le T}\left(  \left\| X_s(t) - X(t) \right\|^{2} + \| \dot{X}_s(t) - \dot{X}(t) \|^{2}\right) = 0.
\end{align}
Before we start to prove~\eqref{eqn approximating_sc_energy}, we first describe the standard Gronwall-inequality as below. 
\begin{lem}
\label{lem: Granwall-inequality}
Let $m(t)$, $t \in [0, T]$, be a nonnegative function satisfying the relation
\[
m(t) \leq C + \alpha \int_{0}^{t} m(s) \dd s, \quad t \in [0, T],
\]
with $C, \alpha > 0$. Then
\[
m(t) \leq C \ee^{\alpha t}
\]
for any $t \in [0, T]$.
\end{lem}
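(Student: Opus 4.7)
The plan is to follow the standard ``integrating factor'' argument, converting the integral inequality for $m(t)$ into a differential inequality for its running integral, and then exploiting the monotone behavior of $e^{-\alpha t}$ times that integral. I would first define the auxiliary function
\[
M(t) := C + \alpha \int_{0}^{t} m(u)\, \dd u, \qquad t \in [0,T],
\]
so that by hypothesis $m(t) \le M(t)$ pointwise, $M$ is absolutely continuous (actually $C^1$ since $m$ is continuous in the use cases of interest, but even for measurable $m$ the argument still goes through), and $M(0) = C > 0$.

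Next I would differentiate: $\dot M(t) = \alpha m(t) \le \alpha M(t)$, where the inequality uses exactly the hypothesis $m(t) \le M(t)$. This is the key step where the integral inequality becomes a differential inequality. Multiplying by the integrating factor $e^{-\alpha t}$ gives
\[
\frac{\dd}{\dd t}\bigl( M(t)\, e^{-\alpha t} \bigr) = \bigl(\dot M(t) - \alpha M(t)\bigr) e^{-\alpha t} \le 0,
\]
so $M(t) e^{-\alpha t}$ is nonincreasing on $[0,T]$. Integrating from $0$ to $t$ therefore yields $M(t) e^{-\alpha t} \le M(0) = C$, i.e.\ $M(t) \le C e^{\alpha t}$. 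Combining with $m(t) \le M(t)$ produces the desired bound $m(t) \le C e^{\alpha t}$.

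There is really no serious obstacle here; the one point that deserves a brief remark is the regularity of $M$, since the statement only assumes $m$ is nonnegative (not necessarily continuous). If one wishes to avoid continuity assumptions on $m$, the cleanest route is to note that $M$ is absolutely continuous with $\dot M = \alpha m$ almost everywhere, so the inequality $\dot M \le \alpha M$ holds a.e., and the product rule for the absolutely continuous function $M(t)e^{-\alpha t}$ still gives a nonpositive derivative a.e.\ and hence a nonincreasing function. Apart from this mild technical caveat, the proof is a two-line calculation once the auxiliary $M$ is introduced.
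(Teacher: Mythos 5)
Your proof is correct: defining $M(t) = C + \alpha \int_0^t m(u)\,\dd u$, deriving $\dot M \le \alpha M$, and using the integrating factor $\ee^{-\alpha t}$ is exactly the ``simple calculus'' argument the paper alludes to when it omits the proof of this lemma, and your remark on handling merely integrable $m$ via absolute continuity is a harmless (indeed welcome) refinement, noting that in the paper's applications $m$ is continuous anyway. Nothing is missing.
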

The proof is only according to simple calculus, here we omit it. 
\begin{proof}[Proof of Lemma~\ref{lem: approximating_sc}]
We separate it into two parts.
\begin{itemize}
\item For the heavy-ball method, the phase-space representations~\eqref{eqn: high_ode_heavy-b_phase} and~\eqref{eqn: high_ode_nag-sc_phase_low} tell us that 
         \begin{align*}
         \frac{\dd}{\dd t}\left( \begin{aligned}
& X_s - X \\ 
& \dot{X}_s - \dot{X}
\end{aligned} \right ) = \left( \begin{aligned}
&\quad \dot{X}_s - \dot{X} \\ 
-\mu  &\left( \dot{X}_s - \dot{X} \right)  - \left(\nabla f(X_s) - \nabla f(X)\right)
\end{aligned} \right) - \sqrt{\mu s} \left(\begin{aligned} &0 \\ &\nabla f(X_s) \end{aligned}\right)
         \end{align*}
         By the boundedness~\eqref{eqn: grad_norm_bound},~\eqref{eqn: velocity_bound} and~\eqref{eqn: velocity_bound_low} and the inequality~\eqref{eqn:Lip_low}, we have
         \begin{align*}
            &\left\| X_s(t) - X(t) \right\|^{2} + \| \dot{X}_s(t) - \dot{X}(t) \|^{2} \\
         = & 2 \int_{0}^{t} \left\langle \left( \begin{aligned}X_s(u) - X(u)\\  \dot{X}_s(u) - \dot{X}(u) \end{aligned}\right), \frac{\dd}{\dd u}\left( \begin{aligned}
& X_s(u) - X(u) \\ 
& \dot{X}_s(u) - \dot{X}(u)
\end{aligned} \right )\right\rangle \dd u+ \left\| X_s(0) - X(0) \right\|^{2} + \| \dot{X}_s(0) - \dot{X}(0) \|^{2} \\
       \leq & 2 \mathcal{L}_{1}\int_{0}^{t}  \left\| X_s(u) - X(u) \right\|^{2} + \| \dot{X}_s(u) - \dot{X}(u) \|^{2} \dd u + \left[\left( \mathcal{C}_{1} + \mathcal{C}_{3}\right)\mathcal{C}_{2} \sqrt{\mu} t + \frac{4\sqrt{s}}{(1 + \sqrt{\mu s})^{2}} \left\| \nabla f(x_{0})\right\|^{2} \right] \sqrt{s} \\
       \leq &  2 \mathcal{L}_{1}\int_{0}^{t}  \left\| X_s(u) - X(u) \right\|^{2} + \| \dot{X}_s(u) - \dot{X}(u) \|^{2} \dd u + \mathcal{C}_{4} \sqrt{s}.   
         \end{align*}
         According to Lemma~\ref{lem: Granwall-inequality}, we have
         \[
         \left\| X_s(t) - X(t) \right\|^{2} + \| \dot{X}_s(t) - \dot{X}(t) \|^{2} \leq \mathcal{C}_{4} \sqrt{s} \ee^{2\mathcal{L}_{1} t}.
         \]
 \item For NAG-\texttt{SC}, the phase-space representations~\eqref{eqn: high_ode_nag-sc_phase} and~\eqref{eqn: high_ode_nag-sc_phase_low} tell us that    
          \begin{align*}
         \frac{\dd}{\dd t}\left( \begin{aligned}
& X_s - X \\ 
& \dot{X}_s - \dot{X}
\end{aligned} \right ) = \left( \begin{aligned}
&\quad \dot{X}_s - \dot{X} \\ 
-\mu  &\left( \dot{X}_s - \dot{X} \right)  - \left(\nabla f(X_s) - \nabla f(X)\right)
\end{aligned} \right) - \sqrt{s} \left(\begin{aligned} &0 \\ & \nabla^{2} f(X_s) \dot{X}_s+ \sqrt{\mu}\nabla f(X_s) \end{aligned}\right)
         \end{align*}   
          Similarly, by the boundedness~\eqref{eqn: grad_norm_bound},~\eqref{eqn: velocity_bound} and~\eqref{eqn: velocity_bound_low} and the inequality~\eqref{eqn:Lip_low}, we have
         \begin{align*}
            &\left\| X_s(t) - X(t) \right\|^{2} + \| \dot{X}_s(t) - \dot{X}(t) \|^{2} \\
         = & 2 \int_{0}^{t} \left\langle \left( \begin{aligned}X_s(u) - X(u)\\  \dot{X}_s(u) - \dot{X}(u) \end{aligned}\right), \frac{\dd}{\dd u}\left( \begin{aligned}
& X_s(u) - X(u) \\ 
& \dot{X}_s(u) - \dot{X}(u)
\end{aligned} \right )\right\rangle \dd u+ \left\| X_s(0) - X(0) \right\|^{2} + \| \dot{X}_s(0) - \dot{X}(0) \|^{2} \\
       \leq & 2 \mathcal{L}_{1}\int_{0}^{t}  \left\| X_s(u) - X(u) \right\|^{2} + \| \dot{X}_s(u) - \dot{X}(u) \|^{2} \dd u\\
              & + \left[\left( \mathcal{C}_{1} + \mathcal{C}_{3}\right)\left( L \mathcal{C}_1 + \mathcal{C}_{2} \sqrt{\mu}\right)t + \frac{4\sqrt{s}}{(1 + \sqrt{\mu s})^{2}} \left\| \nabla f(x_{0})\right\|^{2} \right] \sqrt{s} \\
       \leq &  2 \mathcal{L}_{1}\int_{0}^{t}  \left\| X_s(u) - X(u) \right\|^{2} + \| \dot{X}_s(u) - \dot{X}(u) \|^{2} \dd u + \mathcal{C}_{5} \sqrt{s}      
         \end{align*}
           According to Lemma~\ref{lem: Granwall-inequality}, we have
         \[
         \left\| X_s(t) - X(t) \right\|^{2} + \| \dot{X}_s(t) - \dot{X}(t) \|^{2} \leq \mathcal{C}_{5} \sqrt{s} \ee^{2\mathcal{L}_{1} t}
         \]
\end{itemize}
The proof is complete.
\end{proof}

\begin{lem}
\label{lem: discrete-sc}
 The two methods, heavy-ball method and NAG-\texttt{SC}, converge to their low-resolution ODE~\eqref{eqn:ode_old_nagsc_hb} in the sense that
\[
\lim_{s \rightarrow 0} \max_{0 \le k \le T/\sqrt{s}} \left\| x_k - X(k\sqrt{s}) \right\| = 0
\]
for any fixed $T > 0$.
\end{lem}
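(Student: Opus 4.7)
The plan is to obtain Lemma \ref{lem: discrete-sc} as a direct consequence of two results already established in the excerpt: Proposition \ref{prop: exist_unique_SC}, which states that the discrete iterates $\{x_k\}$ track the high-resolution ODE solution $X_s$ uniformly on compact time intervals, and Lemma \ref{lem: approximating_sc}, which states that $X_s$ in turn converges to the low-resolution ODE solution $X$ uniformly on compact time intervals as $s \to 0$. The two convergences will be chained via the triangle inequality.

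Concretely, I would fix $T > 0$ and write, for each $k$ with $0 \le k \le T/\sqrt{s}$,
\[
\left\| x_k - X(k\sqrt{s}) \right\| \le \left\| x_k - X_s(k\sqrt{s}) \right\| + \left\| X_s(k\sqrt{s}) - X(k\sqrt{s}) \right\|.
\]
Taking the maximum over $0 \le k \le T/\sqrt{s}$, the first term on the right is bounded by $\max_{0 \le k \le T/\sqrt{s}} \|x_k - X_s(k\sqrt{s})\|$, which tends to $0$ as $s \to 0$ by Proposition \ref{prop: exist_unique_SC}. For the second term, since $k\sqrt{s}$ ranges over a subset of $[0,T]$, I would bound
\[
\max_{0 \le k \le T/\sqrt{s}} \left\| X_s(k\sqrt{s}) - X(k\sqrt{s}) \right\| \le \max_{0 \le t \le T} \left\| X_s(t) - X(t) \right\|,
\]
and the right-hand side tends to $0$ by Lemma \ref{lem: approximating_sc}. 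Combining both gives $\limsup_{s \to 0} \max_{0 \le k \le T/\sqrt{s}} \|x_k - X(k\sqrt{s})\| = 0$, which is the claim.

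I expect no serious obstacle: the entire argument is a two-step triangle inequality combining previously proven statements. The only mild point of care is to verify that Proposition \ref{prop: exist_unique_SC} and Lemma \ref{lem: approximating_sc} are indeed applicable here in both the heavy-ball and the NAG-\texttt{SC} cases, which they are since both ODEs (and their high-resolution counterparts) have been covered uniformly in the preceding development. The argument is identical for the heavy-ball method and NAG-\texttt{SC}, so no case-by-case split is needed beyond referencing the appropriate instances of those two prior results.
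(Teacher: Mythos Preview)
Your argument is circular. In the paper's logical structure, Lemma~\ref{lem: discrete-sc} is not a consequence of Proposition~\ref{prop: exist_unique_SC}; it is one of the ingredients used to \emph{prove} the approximation part of Proposition~\ref{prop: exist_unique_SC}. Indeed, immediately after stating Lemma~\ref{lem: discrete-sc} the paper writes: ``Combined with Corollary~\ref{coro: nag-sc-heavy-ball}, Lemma~\ref{lem: approximating_sc} and Lemma~\ref{lem: discrete-sc}, we complete the proof of Proposition~\ref{prop: exist_unique_SC}.'' The chain the paper actually builds is
\[
\underbrace{x_k \to X(k\sqrt{s})}_{\text{Lemma~\ref{lem: discrete-sc}}} \quad \text{and} \quad \underbrace{X_s \to X}_{\text{Lemma~\ref{lem: approximating_sc}}} \quad \Longrightarrow \quad \underbrace{x_k \to X_s(k\sqrt{s})}_{\text{Proposition~\ref{prop: exist_unique_SC}}},
\]
whereas you are running the triangle inequality in the opposite direction, assuming the conclusion of Proposition~\ref{prop: exist_unique_SC} to recover Lemma~\ref{lem: discrete-sc}. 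Since the paper supplies no independent proof that $x_k \to X_s(k\sqrt{s})$, invoking Proposition~\ref{prop: exist_unique_SC} here begs the question.

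The paper does not prove Lemma~\ref{lem: discrete-sc} from scratch either: it simply cites \cite{wilson2016lyapunov} and refers to the method in \cite[Appendix~2]{su2016differential}. A self-contained argument would require directly comparing the discrete recursion to the low-resolution ODE $\ddot X + 2\sqrt{\mu}\dot X + \nabla f(X) = 0$ (e.g., via a Gronwall-type estimate on the discrete-to-continuous error), not routing through the high-resolution trajectory $X_s$.
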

This result has bee studied in~\cite{wilson2016lyapunov} and the method for proof refer to~\cite[Appendix 2]{su2016differential}. Combined with Corollary~\ref{coro: nag-sc-heavy-ball}, Lemma~\ref{lem: approximating_sc} and Lemma~\ref{lem: discrete-sc}, we complete the proof of Proposition~\ref{prop: exist_unique_SC}.

\subsubsection{Proof of Proposition~\ref{prop: exist_unique_C}}
\label{sec: proof_exist_unique_C}
\paragraph{Global Existence and Uniqueness}

Similar as Appendix~\ref{sec: proof_exist_unique_SC}, we first emphasize the fact that if $X_s = X_s(t)$ is the solution of high-resolution ODE~\eqref{eqn: nag-c_first}, there exists some constant $\mathcal{C}_{6}$ such that
\begin{align}
\label{eqn: velocity_bound_c}
\sup_{\frac{3\sqrt{s}}{2} \leq t < \infty}\left\| \dot{X_s}(t)\right\| \leq \mathcal{C}_6, 
\end{align}
which is only according to the following Lyapunov function
\begin{align}
\label{eqn:ef_simple_c}
\mathcal{E}(t) = \left(1 + \frac{3\sqrt{s}}{2t} \right) \left( f(X_s) - f(x^{\star}) \right)+ \frac{1}{2} \left\| \dot{X}_s\right\|^{2}.
\end{align}

Now, we proceed to prove  the global existence and uniqueness of solution to the high-resolution ODEs~\eqref{eqn: nag-c_first}.  Recall initial value problem (IVP) for first-order nonautonomous system in $\mathbb{R}^{m}$  as 
\begin{equation}
\label{eqn: IVP_nonauto}
\dot{x} = b(x, t), \quad x(0) = x_{0},
\end{equation}
of which the classical theory about global existence and uniqueness of solution is shown as below.
\begin{thm}
\label{thm: existence_uniqueness_nonauto}
Let $M \in \mathbb{R}^{m}$ be a compact manifold and $b \in C^{1}(M \times I)$, where $I = [t_{0}, \infty)$. If the vector field $b$ satisfies the global Lipschitz condition
\[
\left\| b(x, t) - b(y, t) \right\| \leq \mathfrak{L} \left\| x - y \right\|
\]
for all $(x, t), (y, t) \in M \times I$. Then for any $x_{0} \in M$, the IVP~\eqref{eqn: IVP_nonauto} has a unique solution $x(t)$ defined for all $t \in I$.
\end{thm}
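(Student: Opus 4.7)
The plan is to follow the classical Picard--Lindel\"of argument, leveraging the uniform global Lipschitz constant $\mathfrak{L}$ to glue together local solutions without any shrinking of the step length. The first step is to recast the IVP~\eqref{eqn: IVP_nonauto} as the equivalent integral equation
\[
x(t) = x_0 + \int_{t_0}^{t} b(x(u), u)\, \dd u,
\]
so that a $C^1$ solution corresponds to a fixed point of the Picard operator $T[\phi](t) := x_0 + \int_{t_0}^{t} b(\phi(u),u)\, \dd u$.

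Next, I would establish local existence and uniqueness on a short interval $[t_0, t_0 + \delta]$ with $\delta < 1/\mathfrak{L}$. On the Banach space $C([t_0, t_0+\delta]; \mathbb{R}^m)$ equipped with the sup norm, the global Lipschitz assumption gives
\[
\|T[\phi_1] - T[\phi_2]\|_\infty \le \mathfrak{L}\,\delta\, \|\phi_1 - \phi_2\|_\infty,
\]
so $T$ is a strict contraction and Banach's fixed point theorem supplies a unique fixed point. Compactness of $M$ ensures that $b$ is bounded on $M \times [t_0, t_0+\delta]$ by some $B$, so restricting to the closed ball of radius $B\delta$ around the constant function $x_0$ shows that the iteration stays inside a region where $b$ is controlled; a slight adjustment of $\delta$ together with the tangency of $b$ to $M$ (implicit in $b \in C^1(M \times I)$) keeps the iterates in $M$.

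The crucial point is that the Lipschitz constant $\mathfrak{L}$ does not depend on the base point or on time, so the step length $\delta$ chosen above is uniform. This allows us to extend the local solution: once we have the solution up to $t_0 + \delta$, we restart the same construction from the new initial condition $x(t_0 + \delta) \in M$ to reach $t_0 + 2\delta$, and so on, iterating indefinitely to cover all of $I = [t_0, \infty)$. Uniqueness on any compact subinterval follows from Gronwall: if $x_1, x_2$ are two solutions, then $\|x_1(t) - x_2(t)\| \le \mathfrak{L} \int_{t_0}^{t} \|x_1(u) - x_2(u)\|\, \dd u$, so Lemma~\ref{lem: Granwall-inequality} forces $x_1 \equiv x_2$.

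The main obstacle, and the only part that requires care beyond the standard Picard--Lindel\"of template, is guaranteeing that the iterates remain in $M$ so that the Lipschitz hypothesis is applicable. For our application $M$ will be the sublevel set $\{(X_s,\dot X_s) : \|\dot X_s\| \le \mathcal{C}_6\}$ cut out by the Lyapunov bound~\eqref{eqn: velocity_bound_c}, which is forward-invariant under the phase-space flow of~\eqref{eqn: nag-c_first}; this invariance, combined with the a priori boundedness of the solution, is what justifies applying the theorem in the first place. Once this is verified, the theorem then delivers the unique global solution $X \in C^2([1.5\sqrt{s},\infty); \mathbb{R}^n)$ asserted in Proposition~\ref{prop: exist_unique_C}, after which the approximation statement will follow by a Gronwall argument analogous to Lemma~\ref{lem: approximating_sc}.
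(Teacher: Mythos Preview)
Your proposal is essentially correct and aligns with the paper's treatment: the paper does not give an independent proof of this theorem but simply observes that the argument is identical to Theorems~3 and~4 of Chapter~3.1 in Perko's textbook (the autonomous case, Theorem~\ref{thm: existence_uniqueness}), with the sole change that the Lipschitz condition $\|b(x,t)-b(y,t)\|\le\mathfrak{L}\|x-y\|$ is uniform in $t$. Your Picard--Lindel\"of sketch with a uniform step $\delta<1/\mathfrak{L}$ and Gronwall-based uniqueness is precisely that standard argument, so you and the paper are on the same track. Your final paragraph about forward invariance of the sublevel set $M_{\mathcal{C}_6}$ and the connection to Proposition~\ref{prop: exist_unique_C} is additional context beyond the theorem itself; the paper handles that separately after stating the theorem.
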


The proof is consistent with Theorem $3$ and Theorem $4$ of Chapter $3.1$ in~\cite{perko2013differential} except the Lipschitz condition for the vector field
\[
\left\| b(x, t) - b(y, t) \right\| \leq \mathfrak{L} \left\| x - y \right\|
\]
instead of 
\[
\left\| b(x) - b(y) \right\| \leq \mathfrak{L} \left\| x - y \right\|
\]
for any $x, y \in M$.  The readers can also refer to~\cite{guckenheimer2013nonlinear}. Similarly, the set \[M_{\mathcal{C}_6} = \left\{ \left. (X_s, \dot{X}_s) \in \mathbb{R}^{2n} \right| \| \dot{X}_s \| \leq \mathcal{C}_6 \right\}\] is a compact manifold satisfying Theorem~\ref{thm: existence_uniqueness_nonauto} with $m = 2n$.

For NAG-\texttt{C}, the phase-space representation of  high-resolution ODE~\eqref{eqn: nag-sc_first} is
\begin{equation}
\label{eqn: high_ode_nag-c_phase}
\frac{\dd}{\dd t} \left( \begin{aligned}
& X_s \\ 
& \dot{X}_s 
\end{aligned} \right ) = \left( \begin{aligned}
& \dot{X}_s \\ 
-\frac{3}{t} \cdot &  \dot{X}_s - \sqrt{s} \nabla^{2}f(X_s) \dot{X}_s - \left(1 + \frac{3\sqrt{s}}{2t} \right) \nabla f(X_s)
\end{aligned} \right).
\end{equation}
For any $(X_s, \dot{X}_s, t), (Y_s, \dot{Y}_s, t) \in M_{\mathcal{C}_{6}} \times \left[ (3/2)\sqrt{s}, \infty\right)$, we have
\begin{align}
\label{eqn: Lipschitz_nag-c_ODE}
 & \left\| \left( \begin{aligned}
& \dot{X}_s \\ 
-\frac{3}{t}\cdot &\dot{X}_s - \sqrt{s} \nabla^{2}f(X_s)\dot{X}_s - \left(1 + \frac{3\sqrt{s}}{2t}\right) \nabla f(X_s)
\end{aligned} \right) - \left( \begin{aligned}
& \dot{Y}_s \\ 
-\frac{3}{t}\cdot  & \dot{Y}_s - \sqrt{s} \nabla^{2}f(Y_s) \dot{Y}_s- \left(1 + \frac{3\sqrt{s}}{2t}\right) \nabla f(Y_s) \end{aligned} \right)\right\| \nonumber \\
=& 
\left\| \left( \begin{aligned}
& \dot{X}_s - \dot{Y}_s \\ 
-\left(\frac{3}{t}\cdot  \bm{I}+  \sqrt{s} \nabla^2 f(X_s)\right)( &   \dot{X}_s - \dot{Y}_s )
\end{aligned} \right) \right\| 
+
\sqrt{s}  \left\| \left( \begin{aligned}
& 0 \\ 
&  \left( \nabla^2 f(X_s) -  \nabla^2 f(Y_s)\right) \dot{Y}_s
\end{aligned} \right)\right\| \nonumber \\
& \qquad +
\left(1 + \frac{3\sqrt{s}}{2t}\right) \left\| \left( \begin{aligned}
& 0 \\ 
&  \nabla f(X_s) -  \nabla f(Y_s)
\end{aligned} \right)\right\| \nonumber \\
 \leq & \sqrt{ 1 + \frac{18}{t_{0}^2}  + 2sL^{2}} \left\| \dot{X}_s - \dot{Y}_s \right\| + \left[ \sqrt{s} \mathcal{C}_{6} L'+ \left(1 + \frac{3\sqrt{s}}{2t_{0}}\right)L\right] \left\|  X_s - Y_s \right\| \nonumber \\
\leq &  2 \max\left\{ \sqrt{ 1 + \frac{8}{s}  + 2sL^{2}}, \sqrt{s} \mathcal{C}_{6} L'+ 2 L \right\} \left\| \left( \begin{aligned}
& X_s \\ 
& \dot{X}_s 
\end{aligned} \right) - \left( \begin{aligned}
& Y_s \\ 
& \dot{Y}_s 
\end{aligned} \right)\right\|. 
\end{align}

Based on the phase-space representation~\eqref{eqn: high_ode_nag-c_phase}, together with~\eqref{eqn: Lipschitz_nag-c_ODE}, Theorem~\ref{thm: existence_uniqueness_nonauto} leads the following Corollary.
\begin{coro}
\label{coro: nag-c}
For any $f \in \mathcal{F}^2(\mathbb{R}^n) := \cup_{L > 0} \mathcal{F}^2_L(\mathbb{R}^n)$, the ODE~\eqref{eqn: nag-c_first} with the specified initial conditions has a unique global solution $X \in C^2(I; \mathbb{R}^n)$.
\end{coro}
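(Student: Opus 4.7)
The plan is to apply the nonautonomous global existence theorem (Theorem~\ref{thm: existence_uniqueness_nonauto}) to the phase-space reformulation~\eqref{eqn: high_ode_nag-c_phase} of the high-resolution ODE~\eqref{eqn: nag-c_first}, using the a priori velocity bound~\eqref{eqn: velocity_bound_c} to confine any local solution to a compact manifold on which the right-hand side is globally Lipschitz in the spatial variables.

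Step by step, I would first cast \eqref{eqn: nag-c_first} as a first-order system in $\mathbb{R}^{2n}$ via \eqref{eqn: high_ode_nag-c_phase} and check that the right-hand side $b(X_s, \dot X_s, t)$ is $C^1$ on $\mathbb{R}^{2n} \times I$ with $I = [3\sqrt{s}/2, \infty)$. This is immediate since $f \in \mathcal{F}^2$ gives continuous $\nabla^2 f$, and the explicit time-dependent coefficients $3/t$ and $3\sqrt{s}/(2t)$ are smooth on $I$ (the singularity at $t = 0$ is outside $I$). Next, I would use the Lyapunov function~\eqref{eqn:ef_simple_c}; a direct computation along~\eqref{eqn: nag-c_first} shows that $\dd \mathcal{E}/\dd t \le 0$ up to nonpositive terms arising from the Hessian-driven damping, which yields the uniform velocity bound~\eqref{eqn: velocity_bound_c}. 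This traps any maximal local solution inside the compact manifold $M_{\mathcal{C}_6} = \{(X_s, \dot X_s) : \|\dot X_s\| \le \mathcal{C}_6\}$, so only the Lipschitz property on $M_{\mathcal{C}_6} \times I$ is required. That property is exactly \eqref{eqn: Lipschitz_nag-c_ODE}, whose Lipschitz constant is finite because $3/t \le 2/\sqrt{s}$ on $I$ and $\nabla^2 f$ is $L'$-Lipschitz on the compact set for any $f \in \mathcal{F}_L^2$. Theorem~\ref{thm: existence_uniqueness_nonauto} then delivers a unique global solution $X \in C^2(I; \mathbb{R}^n)$.

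The main (and only) obstacle is the nonautonomous nature of \eqref{eqn: nag-c_first} introduced by the $3/t$ damping, which prevents a direct appeal to the autonomous Chillingworth result used for the heavy-ball and NAG-\texttt{SC} ODEs in Corollary~\ref{coro: nag-sc-heavy-ball}. This is resolved by invoking the nonautonomous variant Theorem~\ref{thm: existence_uniqueness_nonauto}, whose hypotheses demand a Lipschitz constant that is uniform in $t$; the uniform bound $3/t \le 2/\sqrt{s}$ on $I$ makes this automatic, and the rest of the argument is then mechanical bookkeeping.
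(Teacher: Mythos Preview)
Your proposal is correct and follows essentially the same route as the paper: reduce~\eqref{eqn: nag-c_first} to the first-order system~\eqref{eqn: high_ode_nag-c_phase}, use the Lyapunov function~\eqref{eqn:ef_simple_c} to obtain the a priori velocity bound~\eqref{eqn: velocity_bound_c} confining solutions to $M_{\mathcal{C}_6}$, verify the uniform-in-$t$ Lipschitz estimate~\eqref{eqn: Lipschitz_nag-c_ODE} (which is where the bound $3/t \le 2/\sqrt{s}$ on $I$ enters), and then invoke the nonautonomous global existence result Theorem~\ref{thm: existence_uniqueness_nonauto}. This is exactly the argument the paper gives leading up to Corollary~\ref{coro: nag-c}.
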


\paragraph{Approximation}
Using a linear transformation $t + (3/2)\sqrt{s}$ instead of $t$, we can rewrite high-resolution ODE~\eqref{eqn: nag-c_first}  as
\begin{align}
\label{eqn: high_resolution_new_c}
\ddot{X_s}(t) + \frac{3}{t + 3\sqrt{s}/2} \dot{X_s}(t) + \sqrt{s} \nabla^{2} f(X_s(t)) \dot{X_s}(t) + \left(1 + \frac{3\sqrt{s}}{2t + 3\sqrt{s}}\right) \nabla f(X_s(t)) = 0
\end{align}
for $t \ge 0$, with initial $X_s(0) = x_0$ and $\dot X_s(0) = -\sqrt{s} \nabla f(x_0)$,
of which the phase-space representation is 
\begin{equation}
\label{eqn: high_ode_nag-c_phase_new}
\frac{\dd}{\dd t} \left( \begin{aligned}
& X_s \\ 
& \dot{X}_s 
\end{aligned} \right ) = \left( \begin{aligned}
& \dot{X}_s \\ 
-\frac{3}{t + 3\sqrt{s}/2} \cdot &  \dot{X}_s - \sqrt{s} \nabla^{2}f(X_s) \dot{X}_s - \left(1 + \frac{3\sqrt{s}}{2t + 3\sqrt{s}} \right) \nabla f(X_s)
\end{aligned} \right).
\end{equation}
Here, we adopt the technique $\max\{\delta, t\}$ instead of $t$ for any $\delta > 0$ to overcome the singular point $t = 0$,  which is used firstly in~\cite{su2016differential}. Then~\eqref{eqn: high_ode_nag-c_phase_new} is replaced into 
\begin{equation}
\label{eqn: high_ode_nag-c_phase_new_modif}
\frac{\dd}{\dd t} \left( \begin{aligned}
& X_s^{\delta} \\ 
& \dot{X}_s^{\delta} 
\end{aligned} \right ) = \left( \begin{aligned}
& \dot{X}_s^{\delta} \\ 
-\frac{3}{\max\{\delta, t\} + 3\sqrt{s}/2} \cdot &  \dot{X}_s^{\delta} - \sqrt{s} \nabla^{2}f(X_s) \dot{X}_s^{\delta} - \left(1 + \frac{3\sqrt{s}}{2\max\{\delta, t\} + 3\sqrt{s}} \right) \nabla f(X_s^{\delta})
\end{aligned} \right),
\end{equation}
with the initial $X_s^\delta(0) = x_0$ and $\dot{X}_s^\delta(0) = -\sqrt{s} \nabla f(x_0)$.
Recall the low-resolution ODE~\eqref{eqn:ode_old_nagmc}, with the above technique, the phase-space representation is proposed as
\begin{equation}
\label{eqn: high_ode_nag-c_phase_low_modif}
\frac{\dd}{\dd t}\left( \begin{aligned}
& X^{\delta} \\ 
& \dot{X}^{\delta} 
\end{aligned} \right )
 = \left( \begin{aligned}
& \dot{X}^{\delta} \\ 
-\frac{3}{\max\{t, \delta\}} \cdot &  \dot{X}^{\delta} - \nabla f(X^{\delta})
\end{aligned} \right),
\end{equation}
with the initial $X_s^\delta(0) = x_0$ and $\dot{X}_s^\delta(0) =0$. Then according to~\eqref{eqn: high_ode_nag-c_phase_new_modif} and~\eqref{eqn: high_ode_nag-c_phase_low_modif}, if we can prove for any $\delta > 0$ and any $t \in [0, T]$, the following equality holds
\[
\lim_{s \rightarrow 0} \|X_{s}^{\delta}(t) - X^{\delta}(t) \| =0.
\]
Then, we can obtain the desired result as
\[
\lim_{s \rightarrow 0}\left\|X_{s}(t) - X(t) \right\| = \lim_{s \rightarrow 0} \lim_{\delta \rightarrow 0} \|X_{s}^{\delta}(t) - X^{\delta}(t) \| =   \lim_{\delta \rightarrow 0} \lim_{s \rightarrow 0} \|X_{s}^{\delta}(t) - X^{\delta}(t) \| =0.
\]

Similarly, using Lyapunov function argument, we can show that the solutions $X_{s}^{\delta}$ and $X^{\delta}$ satisfy
\begin{align}
\label{eqn: bound_new_c}
\sup_{0 \leq t < \infty}\left\| \dot{X}^{\delta}_s(t)\right\| \leq \mathcal{C}_7 \quad \text{and }\quad \sup_{0 \leq t < \infty}\left\| \nabla f(X_s^{\delta}(t))\right\| \leq \mathcal{C}_8,
\end{align}
and
\begin{align}
\label{eqn: velocity_bound_low_c}
\sup_{0 \leq t < \infty}\left\| \dot{X}^{\delta}(t)\right\| \leq \mathcal{C}_9 \quad \text{and} \quad \sup_{0 \leq t < \infty}\left\| \nabla f(X^{\delta}(t))\right\| \leq \mathcal{C}_{10}.
\end{align}

Simple calculation tells us that for any $(X, \dot{X}), (Y, \dot{Y}) \in \mathbb{R}^{2n}$, there exists some constant $\mathcal{L}_{2} > 0$ such that
\begin{align}
\label{eqn:Lip_low_c}
 &\left\| \left( \begin{aligned}
& \dot{X} \\ 
-\frac{3}{\max\{t, \delta\} + (3/2)\sqrt{s}} \cdot&  \dot{X}  -  \nabla f(X)
\end{aligned} \right) - \left( \begin{aligned}
& \dot{Y} \\ 
-\frac{3}{\max\{t, \delta\}  + (3/2)\sqrt{s}} \cdot &  \dot{Y}  -  \nabla f(Y) \end{aligned} \right)\right\| \nonumber \\
\leq &\mathcal{L}_{2}  \left\| \left( \begin{aligned}
& X \\ 
& \dot{X} 
\end{aligned} \right) - \left( \begin{aligned}
& Y \\ 
& \dot{Y} 
\end{aligned} \right)\right\|.
\end{align}
for all $t \geq 0$. Now, we proceed to show the approximation.
\begin{lem}
\label{lem: approximating_c}
Denote the solution to high-resolution ODE~\eqref{eqn: nag-c_first} as $X = X_{s}(t)$ and that to~\eqref{eqn:ode_old_nagmc} as $X = X(t)$.
We have
\begin{align}
\label{eqn approximating_c}
\lim_{s \rightarrow 0} \max_{0 \le t \le T} \left\| X_s(t) - X(t) \right\| = 0
\end{align}
for any fixed $T > 0$
\end{lem}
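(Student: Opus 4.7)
}

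The plan is to follow the same template as the proof of Lemma~\ref{lem: approximating_sc}, but with the additional twist that the drift coefficient $3/t$ is singular at $t=0$. I will therefore work throughout with the regularized phase-space systems \eqref{eqn: high_ode_nag-c_phase_new_modif} and \eqref{eqn: high_ode_nag-c_phase_low_modif}, prove uniform convergence on $[0,T]$ for each fixed $\delta>0$, and then pass $\delta\to 0$ to recover the unregularized statement, as indicated in the text just above the lemma.

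First I would record the a priori bounds \eqref{eqn: bound_new_c} and \eqref{eqn: velocity_bound_low_c}. These come from the Lyapunov function $\mathcal{E}(t) = (1+\tfrac{3\sqrt{s}}{2t+3\sqrt{s}})(f(X_s^\delta)-f(x^\star)) + \tfrac{1}{2}\|\dot X_s^\delta\|^2$ (and its $s=0$ analogue), whose time derivative is nonpositive, together with the $L$-smoothness of $f$. They ensure that the trajectories stay in a compact set on which $\nabla f$, $\nabla^2 f$, $\dot X_s^\delta$ and $\dot X^\delta$ are uniformly bounded, independent of $s$.

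Next, subtracting \eqref{eqn: high_ode_nag-c_phase_low_modif} from \eqref{eqn: high_ode_nag-c_phase_new_modif}, I would write
\[
\frac{\dd}{\dd t}\!\begin{pmatrix} X_s^\delta - X^\delta \\ \dot X_s^\delta - \dot X^\delta\end{pmatrix}
= \begin{pmatrix} \dot X_s^\delta - \dot X^\delta \\[2pt] -\tfrac{3}{\max\{\delta,t\}+\frac{3\sqrt{s}}{2}}(\dot X_s^\delta-\dot X^\delta) - (\nabla f(X_s^\delta)-\nabla f(X^\delta))\end{pmatrix} + R_s(t),
\]
where the remainder $R_s(t)$ collects all terms carrying an explicit $\sqrt{s}$ factor: the Hessian damping $\sqrt{s}\nabla^2 f(X_s^\delta)\dot X_s^\delta$, the extra gradient term $\tfrac{3\sqrt{s}}{2\max\{\delta,t\}+3\sqrt{s}}\nabla f(X_s^\delta)$, and the discrepancy $\bigl(\tfrac{3}{\max\{\delta,t\}+3\sqrt{s}/2}-\tfrac{3}{\max\{\delta,t\}}\bigr)\dot X^\delta$ between the two damping coefficients. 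Each piece of $R_s(t)$ is uniformly $O(\sqrt{s})$ on $[0,T]$ by the bounds \eqref{eqn: bound_new_c}, \eqref{eqn: velocity_bound_low_c} and the smoothness of $f$; crucially, the regularization $\max\{\delta,t\}$ keeps the damping-coefficient mismatch uniformly bounded by a constant times $\sqrt{s}/\delta^2$, so $\|R_s(t)\|\le \mathcal{C}_\delta \sqrt{s}$ for a constant $\mathcal{C}_\delta$ depending on $\delta, T, L, L'$ and the a priori bounds but not on $s$.

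Then, taking the inner product with $(X_s^\delta-X^\delta,\dot X_s^\delta-\dot X^\delta)$, integrating from $0$ to $t$, and using the Lipschitz-type estimate \eqref{eqn:Lip_low_c}, I obtain
\[
\|X_s^\delta(t)-X^\delta(t)\|^2 + \|\dot X_s^\delta(t)-\dot X^\delta(t)\|^2 \le 2\mathcal{L}_2\!\int_0^t\!\bigl(\|X_s^\delta-X^\delta\|^2+\|\dot X_s^\delta-\dot X^\delta\|^2\bigr)\dd u + \mathcal{C}_\delta'\sqrt{s},
\]
using that both systems share the same initial position $x_0$ while the initial velocities differ by $-\sqrt{s}\nabla f(x_0)$, which is again $O(\sqrt{s})$. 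Gronwall's inequality (Lemma~\ref{lem: Granwall-inequality}) then yields $\|X_s^\delta(t)-X^\delta(t)\|^2+\|\dot X_s^\delta(t)-\dot X^\delta(t)\|^2 \le \mathcal{C}_\delta' \sqrt{s}\, \ee^{2\mathcal{L}_2 T}$ uniformly on $[0,T]$, so $\max_{0\le t\le T}\|X_s^\delta(t)-X^\delta(t)\|\to 0$ as $s\to 0$ for each fixed $\delta>0$. Finally, I would exchange the iterated limits exactly as sketched in the paragraph preceding \eqref{eqn: bound_new_c}: $\lim_{s\to 0}\|X_s(t)-X(t)\| = \lim_{\delta\to 0}\lim_{s\to 0}\|X_s^\delta(t)-X^\delta(t)\| = 0$, using continuous dependence of the regularized flows on $\delta$.

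The main obstacle I anticipate is controlling the difference $\tfrac{3}{\max\{\delta,t\}+3\sqrt{s}/2} - \tfrac{3}{\max\{\delta,t\}}$ uniformly in $t$: this is what forced the $\max\{\delta,t\}$ regularization in the first place, and it is also what makes the constant $\mathcal{C}_\delta'$ blow up as $\delta\to 0$, so the double-limit step must be justified carefully by first fixing $\delta>0$ and only sending $\delta\to 0$ after $s\to 0$. A secondary subtlety is verifying that the a priori Lyapunov bounds hold with constants independent of $s$ and $\delta$, which I expect to follow from the fact that the Lyapunov function $\mathcal{E}(t)$ has a time-derivative sign that is insensitive to the presence of the $\max\{\delta,t\}$ truncation.
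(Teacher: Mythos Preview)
Your proposal is correct and follows essentially the same approach as the paper: regularize via $\max\{\delta,t\}$, subtract the phase-space systems, isolate an $O(\sqrt{s})$ remainder (Hessian damping, extra gradient term, and damping-coefficient mismatch bounded by $O(\sqrt{s}/\delta^2)$), apply the Lipschitz bound~\eqref{eqn:Lip_low_c} and Gronwall, then exchange the $\delta\to 0$ and $s\to 0$ limits. If anything, you are more explicit than the paper about why the constant $\mathcal{C}_\delta'$ depends on $\delta$ and why the order of limits matters.
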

In order to prove~\eqref{eqn approximating_c}, we  prove a stronger result
\begin{align}
\label{eqn approximating_c_energy}
\lim_{s \rightarrow 0} \max_{0 \le t \le T}\left(  \left\| X_s(t) - X(t) \right\|^{2} + \| \dot{X}_s(t) - \dot{X}(t) \|^{2}\right) = 0.
\end{align}
\begin{proof}[Proof of Lemma~\ref{lem: approximating_c}]
The phase-space representation~\eqref{eqn: high_ode_nag-c_phase_new_modif} and~\eqref{eqn: high_ode_nag-c_phase_low_modif} tell us that
\begin{align*}
        \frac{\dd}{\dd t}\left( \begin{aligned}
& X^\delta_s - X^\delta \\ 
& \dot{X}^\delta_s - \dot{X}^\delta
\end{aligned} \right ) = &\left( \begin{aligned}
&\quad \dot{X}_s^\delta - \dot{X}^\delta \\ 
-  \frac{3}{\max \{t, \delta\} + (3/2)\sqrt{s}} \cdot&\left( \dot{X}_s^\delta - \dot{X}^\delta \right)  - \left(\nabla f(X_s^\delta) - \nabla f(X^\delta)\right)
\end{aligned} \right) \\
                                    &- \sqrt{s} \left(\begin{aligned} &0 \\ & \nabla^{2} f(X^\delta_s) \dot{X}^\delta_s+ \frac{3}{2\max \{t, \delta\} + 3\sqrt{s}} \cdot \nabla f(X_s) - \frac{9}{\max \{t, \delta\} \left(2\max \{t, \delta\} + 3\sqrt{s}\right)} \nabla f(X) \end{aligned}\right)
\end{align*}

By the boundedness~\eqref{eqn: bound_new_c} and~\eqref{eqn: velocity_bound_low_c} and the Lipschitz inequality~\eqref{eqn:Lip_low_c}, we have
         \begin{align*}
            &\left\| X^\delta_s(t) - X^\delta(t) \right\|^{2} + \left\| \dot{X}^\delta_s(t) - \dot{X}^\delta(t) \right\|^{2} \\
         = & 2 \int_{0}^{t} \left\langle \left( \begin{aligned}X^\delta_s(u) - X^\delta(u)\\  \dot{X}^\delta_s(u) - \dot{X}^\delta(u) \end{aligned}\right), \frac{\dd}{\dd u}\left( \begin{aligned}
& X^\delta_s(u) - X^\delta(u) \\ 
& \dot{X}^\delta_s(u) - \dot{X}^\delta(u)
\end{aligned} \right )\right\rangle \dd u+ \left\| X^\delta_s(0) - X^\delta(0) \right\|^{2} + \left \| \dot{X}^\delta_s(0) - \dot{X}^\delta(0) \right \|^{2} \\
       \leq & 2 \mathcal{L}_{2}\int_{0}^{t} \left\| X^\delta_s(u) - X^\delta(u) \right\|^{2} + \left \| \dot{X}^\delta_s(u) - \dot{X}^\delta(u) \right \|^{2} \dd u\\
              & + \left[ \left( \mathcal{C}_{7} + \mathcal{C}_{9}\right)\left( L \mathcal{C}_7  +  \frac{3\mathcal{C}_8}{2\delta} + \frac{9\mathcal{C}_{10}}{2\delta^{2}} \right)t + \sqrt{s} \left\| \nabla f(x_{0})\right\|^{2} \right] \sqrt{s} \\
       \leq &  2 \mathcal{L}_{2}\int_{0}^{t}  \left\| X_s(u) - X(u) \right\|^{2} + \| \dot{X}_s(u) - \dot{X}(u) \|^{2} \dd u + \mathcal{C}_{11} \sqrt{s}      
         \end{align*}
 According to Lemma~\ref{lem: Granwall-inequality}, we obtain the result as~\eqref{eqn approximating_c_energy}
         \[
         \left\| X^\delta_s(t) - X^\delta(t) \right\|^{2} + \| \dot{X}^\delta_s(t) - \dot{X}^\delta(t) \|^{2} \leq \mathcal{C}_{11} \sqrt{s} \ee^{2\mathcal{L}_{2} t}
         \]
The proof is complete.         
\end{proof}

\begin{lem}[Theorem $2$~\cite{su2016differential}]
\label{lem: discrete-c}
NAG-\texttt{C} converges to its low-resolution ODE in the sense that
\[
\lim_{s \rightarrow 0} \max_{0 \le k \le T/\sqrt{s}} \left\| x_k - X(k\sqrt{s}) \right\| = 0
\]
for any fixed $T > 0$.
\end{lem}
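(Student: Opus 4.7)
The plan is to mimic the continuous-limit argument already used in the proofs of Propositions~\ref{prop: exist_unique_SC} and \ref{prop: exist_unique_C} (in particular the Gronwall machinery of Lemma~\ref{lem: approximating_c}), but with the continuous trajectory $X = X(t)$ of the low-resolution ODE~\eqref{eqn:ode_old_nagmc} replacing the continuous trajectory of the high-resolution ODE. First I would introduce the discrete velocity $v_k = (x_{k+1}-x_k)/\sqrt{s}$, which turns NAG-\texttt{C}~\eqref{eqn:nagm-c} into the first-order phase-space system~\eqref{eqn: Nesterov_convex_symplectic2}. I would then interpolate the iterates by the piecewise-constant (or piecewise-linear) functions $\tilde X_s(t) := x_{\lfloor t/\sqrt{s}\rfloor}$ and $\tilde V_s(t) := v_{\lfloor t/\sqrt{s}\rfloor}$, so that $(\tilde X_s,\tilde V_s)$ approximately satisfies the same phase-space ODE that $(X,\dot X)$ satisfies for~\eqref{eqn:ode_old_nagmc}, up to a residual of order $O(\sqrt{s})$ coming from (i) second-order Taylor remainders in $\sqrt{s}$ (cf.~\eqref{eqn: 2nd_taylor_variable} and \eqref{eqn: gradient_correction1}), (ii) the difference between $\nabla f(x_k)$ and $\nabla f(\tilde X_s(t))$ on each sub-interval of length $\sqrt{s}$, and (iii) the discretization error $\tfrac{3}{k}$ vs.\ $\tfrac{3}{t}$.

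The second step is to control the size of the iterates so that the approximation lives in a compact region where $\nabla f$ is Lipschitz. Using the discrete Lyapunov function of Theorem~\ref{thm:NAGM-C_original}, namely \eqref{eqn: NAGM-C_original_lypunov}, I obtain a uniform-in-$s$ bound on $f(x_k)-f(x^\star)$ and on $\|v_k\|$ for $k\le T/\sqrt{s}$, which yields the discrete analogues of the bounds \eqref{eqn: bound_new_c}–\eqref{eqn: velocity_bound_low_c} that were used in Lemma~\ref{lem: approximating_c}. Together with the global Lipschitz property of the low-resolution vector field on this compact set (the analogue of~\eqref{eqn:Lip_low_c}), I can then write
\[
\|\tilde X_s(t)-X(t)\|^2 + \|\tilde V_s(t)-\dot X(t)\|^2 \;\le\; C_1 \sqrt{s} \;+\; 2\mathcal{L}\int_0^{t}\bigl(\|\tilde X_s(u)-X(u)\|^2 + \|\tilde V_s(u)-\dot X(u)\|^2\bigr)\,\dd u,
\]
and invoke the Gronwall inequality (Lemma~\ref{lem: Granwall-inequality}) to conclude that the left-hand side is $O(\sqrt{s}\,\ee^{2\mathcal{L} T})$ uniformly on $[0,T]$, which is exactly the desired statement after restricting to the grid points $t=k\sqrt{s}$.

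The main obstacle is the singular coefficient $3/t$ in~\eqref{eqn:ode_old_nagmc} (and, correspondingly, the $3/k$ in NAG-\texttt{C}), which prevents a direct application of the global-Lipschitz theorem and blows up the constants in the Gronwall estimate as one approaches $t=0$. I would handle this in exactly the same way as in Appendix~\ref{sec: proof_exist_unique_C}: replace $t$ by $\max\{t,\delta\}$ in the ODE and $k$ by $\max\{k,\lceil\delta/\sqrt{s}\rceil\}$ in the recursion to obtain a regularized pair whose vector fields are globally Lipschitz with constants depending only on $\delta$. On $[\delta,T]$ the Gronwall argument above then gives uniform convergence to the regularized ODE solution, and on $[0,\delta]$ both the regularized iterates and the regularized ODE solution stay within $O(\delta)$ of the common initial value $x_0$ by the a priori velocity bounds. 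Sending first $s\to 0$ and then $\delta\to 0$ and using uniform continuity of $X$ on $[0,T]$ then yields the claim. A minor additional subtlety is the slightly different indexing used here: because NAG-\texttt{C} is started at $y_0=x_0$ and the high-resolution derivation shifts time by $\tfrac32\sqrt{s}$, one has to verify that this $O(\sqrt{s})$ time shift is absorbed into the $O(\sqrt{s})$ error bound, which follows from the Lipschitz continuity of $X$ in $t$.
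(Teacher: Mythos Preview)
The paper does not actually prove this lemma: it is stated as a direct quotation of Theorem~2 in \cite{su2016differential}, and immediately afterwards the paper simply combines it with Corollary~\ref{coro: nag-c} and Lemma~\ref{lem: approximating_c} to conclude Proposition~\ref{prop: exist_unique_C}. So there is no ``paper's own proof'' to compare against; the authors defer entirely to the cited reference.

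Your proposal is a reasonable and essentially correct reconstruction of the argument in \cite{su2016differential}: phase-space interpolation of the iterates, uniform a~priori bounds on positions and velocities, a Gronwall comparison with the low-resolution vector field, and the $\max\{t,\delta\}$ regularization to handle the $3/t$ singularity at the origin. This is precisely the machinery that the present paper borrows for Lemma~\ref{lem: approximating_c}, so your plan is fully in the spirit of both papers. One small caveat: invoking the discrete Lyapunov function \eqref{eqn: NAGM-C_original_lypunov} to get uniform bounds on $\|v_k\|$ works, but note that the mixed term there controls $(k+1)\sqrt{s}\,v_k$ rather than $v_k$ directly, so for small $k$ (equivalently small $t$) the bound degenerates; this is exactly why the $\delta$-cutoff is needed, and you already account for it. With that understood, your outline would go through.
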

Combined with Corollary~\ref{coro: nag-c}, Lemma~\ref{lem: approximating_c} and Lemma~\ref{lem: discrete-c}, we complete the proof of Proposition~\ref{prop: exist_unique_C}.

\subsection{Closed-Form Solutions for Quadratic Functions}
\label{sec: qf_chf}
In this section, we propose the closed-form solutions to the three high-resolution ODEs for the quadratic objective function
\begin{align}\label{eqn: quadratic_objective}
	f(x) = \frac{1}{2}\theta x^{2}.
\end{align}
where $\theta$ is the parameter suitable for the function in $\mathcal{S}_{\mu, L}^{2}(\mathbb{R}^{n})$ and $\mathcal{F}_{L}^{2}(\mathbb{R}^{n})$.
 We compare them with the corresponding low-resolution ODEs and show the key difference.   Throughout this section, both $c_{1}$ and $c_{2}$ are arbitrary real constants.

\subsubsection{Oscillations and Non-Oscillations}
\label{subsec: quad_strongl_convex}

For any function $f(x) \in \mathcal{S}_{\mu, L}^{2}(\mathbb{R}^{n})$, the parameter $\theta$ is set in $\left[ \mu, L \right]$.
 First, plugging the quadratic objective~(\ref{eqn: quadratic_objective}) into the low-resolution ODE~(\ref{eqn:ode_old_nagsc_hb}) of both NAG-\texttt{SC} and heavy-ball method, we have
\begin{align}\label{eqn: low_resol_strongly_quadratic}
\ddot{X} + 2\sqrt{\mu}\dot{X} + \theta X = 0.
\end{align}
The closed-form solution of~(\ref{eqn: low_resol_strongly_quadratic}) can be shown from the theory of ODE, as below.
\begin{itemize}
\item When $\theta > \mu$, that is, $4 \mu - 4 \theta < 0$, the closed-form solution is the superimposition of two independent oscillation solutions
        \[
        X(t) = c_{1} \ee^{- \sqrt{\mu} t} \cos \left( \sqrt{\theta - \mu} \cdot t\right) + c_{2} \ee^{- \sqrt{\mu} t} \sin \left( \sqrt{\theta - \mu} \cdot t \right),
        \]
        of which the asymptotic estimate is 
        \[
        \left\| X(t) \right\| = \Theta\left( \ee^{- \sqrt{\mu} t} \right).
        \]
\item When $\theta = \mu$, that is, $4\mu - 4 \theta = 0$, the closed-form solution is the superimposition of two independent non-oscillation solutions
        \[
        X(t) = \left(c_{1} + c_{2}t \right) \ee^{- \sqrt{\mu} t},
        \]
        of which the asymptotic estimate is 
        \[
        \left\| X(t) \right\| = \Theta\left( t \ee^{- \sqrt{\mu} t} \right).
        \]
\end{itemize}

Second, plugging the quadratic objective~(\ref{eqn: quadratic_objective}) into the high-resolution ODE~(\ref{eqn: nag-sc_first}) of  NAG-\texttt{SC}, we have
\begin{align}\label{eqn: high_resol_strongly_quadratic_nag-sc}
\ddot{X} + (2\sqrt{\mu} + \sqrt{s} \theta )\dot{X} + (1 + \sqrt{\mu s}) \theta X = 0.
\end{align}
The closed-form solutions to~(\ref{eqn: high_resol_strongly_quadratic_nag-sc}) are shown as below.
\begin{itemize}
\item When $s < \frac{4(\theta - \mu)}{\theta^{2}}$, that is,  $4(\mu - \theta) + s\theta^{2} < 0$, the closed-form solution is the superimposition of two independent oscillation solutions
        \[
        X(t) = \ee^{- \left( \sqrt{\mu} + \frac{\sqrt{s} \theta}{2}\right) t} \left[ c_{1}  \cos \left( \sqrt{ (\theta - \mu) - \frac{1}{4}s\theta^{2}} \cdot t\right) + c_{2}  \sin \left(  \sqrt{ (\theta - \mu) - \frac{1}{4} s\theta^{2}} \cdot t\right) \right],
        \]
        the asymptotic estimate of which is 
        \[
        \left\| X(t) \right\| = \Theta\left( \ee^{- \left( \sqrt{\mu} + \frac{\sqrt{s} \theta}{2}\right) t} \right) \leq o\left( \ee^{- \sqrt{\mu} t} \right).
        \]
\item When $s = \frac{4(\theta - \mu)}{\theta^{2}}$, that is,  $4(\mu - \theta) + s \theta^{2} = 0$, the closed-form solution is the superimposition of two independent non-oscillation solutions
        \[
        X(t) = \left(c_{1} + c_{2}t \right) \ee^{-\left( \sqrt{\mu} + \frac{\sqrt{s} \theta}{2}\right) t},
        \]
        the asymptotic estimate of which is 
        \[
        \left\| X(t) \right\| \leq O\left( t\ee^{- \left( \sqrt{\mu} + \frac{\sqrt{s} \theta}{2}\right) t} \right) \leq o\left( \ee^{- \sqrt{\mu} t} \right).
        \]
\item When $s > \frac{4(\theta - \mu)}{\theta^{2}}$, that is,  $4(\mu - \theta) + s \theta^{2} > 0$, the closed-form solution is also the superimposition of two independent non-oscillation solutions
        \[
        X(t) = c_{1}  \ee^{-\left( \sqrt{\mu} + \frac{\sqrt{s} \theta}{2} + \sqrt{(\mu - \theta ) + \frac{s\theta^{2}}{4}}\right) t} + c_{2}  \ee^{-\left( \sqrt{\mu} + \frac{\sqrt{s} \theta}{2} - \sqrt{( \mu - \theta) + \frac{s\theta^{2}}{4}}\right) t} ,
        \]
          the asymptotic estimate of which is 
        \[
        \left\| X(t) \right\| \leq O\left( \ee^{-\left( \sqrt{\mu} + \frac{\sqrt{s} \theta}{2} - \sqrt{( \mu  - \theta) + \frac{s\theta^{2}}{4}}\right) t} \right) \leq o\left( \ee^{- \sqrt{\mu} t} \right).
        \]
\end{itemize}
Note that a simple calculation shows
\[
\frac{4 (\theta - \mu)}{\theta^{2}}= \frac{4}{\theta - \mu + \frac{\mu^2}{\theta - \mu} + 2} \leq \frac{2}{1 + \mu},\qquad \text{for}\;\; \theta \geq \mu.
\]
Hence, when the step size satisfies $s \geq 2$, there is always no oscillation in the closed-form solution of~(\ref{eqn: high_resol_strongly_quadratic_nag-sc}).

Finally, plugging the quadratic objective~(\ref{eqn: quadratic_objective}) into the high-resolution ODE~(\ref{eqn: heavy_ball_first}) of  the heavy-ball method, we have
\begin{align}\label{eqn: high_resol_strongly_quadratic_heavy-b}
\ddot{X} + 2\sqrt{\mu}\dot{X} +(1 + \sqrt{\mu s}) \theta X = 0.
\end{align}
Since $4\mu - 4  (1 + \sqrt{\mu s}) \theta < 0$ is well established, the closed-form solution of~\eqref{eqn: high_resol_strongly_quadratic_heavy-b} is the superimposition of two independent oscillation solutions
\[
  X(t) = c_{1} e^{- \sqrt{\mu} t} \cos \left( \sqrt{(1 + \sqrt{\mu s})\theta - \mu}  \cdot t\right) + c_{2} e^{- \sqrt{\mu} t} \sin \left( \sqrt{(1 + \sqrt{\mu s}) \theta - \mu} \cdot t \right),
\]
the asymptotic estimate is
\[
\left\| X(t) \right\| = \Theta\left( e^{- \sqrt{\mu} t} \right).
\]

In summary, both the closed-form solutions to~(\ref{eqn: low_resol_strongly_quadratic}) and~\eqref{eqn: high_resol_strongly_quadratic_heavy-b}
are oscillated except the fragile condition $\theta = \mu$ and the speed of linear convergence is  $\Theta\left( e^{- \sqrt{\mu} t} \right)$. However, the rate of convergence in the closed-form solution to the high-resolution ODE~(\ref{eqn: high_resol_strongly_quadratic_nag-sc}) is always faster than $\Theta\left( e^{- \sqrt{\mu} t} \right)$. Additionally, when the step size $s \geq 2$,  there is always no oscillation in the closed-form solution of the high-resolution ODE~(\ref{eqn: high_resol_strongly_quadratic_nag-sc}).

\subsubsection{Kummer's Equation and Confluent Hypergeometric Function}
\label{subsec: quad_convex}

For any function $f(x) \in \mathcal{F}^{2}_{L}(\mathbb{R}^{n})$, the parameter $\theta$ is required to located in $(0, L]$. Plugging the quadratic objective~(\ref{eqn: quadratic_objective}) into the low-resolution ODE~(\ref{eqn:ode_old_nagmc}) of NAG-\texttt{C}, we have
\[
\ddot{X}+ \frac{3}{t} \dot{X} + \theta X = 0,
\]
the closed-form solution of which has been proposed in~\cite{su2016differential}
\[
X(t) = \frac{1}{\sqrt{\theta} t } \cdot \left[ c_{1}J_{1}\left( \sqrt{\theta} t \right) + c_{2}Y_{1}\left( \sqrt{\theta} t \right) \right],
\]
where $J_{1}(\cdot)$ and $Y_{1}(\cdot)$ are the Bessel function of the first kind and the second kind, respectively. According to the asymptotic property of Bessel functions, 
\[
J_{1}(\sqrt{\theta} t) \sim \frac{1}{\sqrt{t}} \quad \text{and} \quad Y_{1}( \sqrt{\theta} t) \sim \frac{1}{\sqrt{ t}},
\]
we obtain the following estimate
\[
\left\| X(t) \right\| = \Theta\left(\frac{1}{t^{\frac{3}{2}}}\right).
\]

Now, we plug the quadratic objective~(\ref{eqn: quadratic_objective}) into the high-resolution ODE~(\ref{eqn: nag-c_first}) of NAG-\texttt{C} and obtain
\begin{align}\label{eqn: quadratic_high_resolution}
\ddot{X} + \left( \frac{3}{t} + \theta \sqrt{s} \right) \dot{X} + \left( 1 + \frac{3\sqrt{s}}{2t} \right)\theta X= 0.
\end{align}
For convenience, we define two new parameters as 
\[
\xi = \sqrt{s\theta^2 - 4\theta} \quad \text{and} \quad \rho = \frac{\theta \sqrt{s} + \sqrt{s\theta^{2} - 4\theta}}{2}. 
\]
Let $Y = X \ee^{ \rho t}$ and $t' = \xi t$, the high-resolution ODE~(\ref{eqn: quadratic_high_resolution}) can be rewritten as
\[
t'\ddot{Y}(t') + (3 - t') \dot{Y}(t') - (3/2)  Y(t') = 0,
\]
which actually corresponds to the Kummer's equation.  According to the closed-form solution to Kummer's equation, the high-resolution ODE~\eqref{eqn: quadratic_high_resolution} for quadratic function can be solved analytically as 
\begin{align}
\label{eqn: solution_ode_nagm-c}
X(t) = \ee^{-\rho t} \left[c_{1} M \left(\frac{3}{2}, 3, \xi t \right) + c_{2} U \left(\frac{3}{2}, 3, \xi t \right) \right]
\end{align}
where $M(\cdot, \cdot, \cdot)$ and $U(\cdot, \cdot, \cdot)$ are the confluent hypergeometric functions of the first kind and the second kind. The integral expressions of $M(\cdot, \cdot, \cdot)$ and $U(\cdot, \cdot, \cdot)$ are given as 
$$
\left\{\begin{aligned}
        &  M \left(\frac{3}{2}, 3, \xi t \right) = \frac{\Gamma(3)}{\Gamma\left(\frac{3}{2}\right)^{2}} \int_{0}^{1} e^{\xi t u} u^{\frac{1}{2}} (1 - u)^{\frac{1}{2}}du\\
        & U \left(\frac{3}{2}, 3, \xi t \right) = \frac{1}{\Gamma\left(\frac{3}{2}\right) } \int_{0}^{1} e^{\xi t u} u^{\frac{1}{2} } (1 - u)^{\frac{1}{2}}du.
        \end{aligned}\right.
$$
Since the possible value of $\arg (\xi t)$ either $0$ or $\pi/2$,  we have
\begin{equation}
\label{eqn: estimate_confluent}
\left\{
\begin{aligned}
        &  M \left(\frac{3}{2}, 3, \xi t \right) \sim  \Gamma(3) \left( \frac{e^{\xi t} (\xi t)^{ -\frac{3}{2}}}{\Gamma\left( \frac{3}{2} \right)} + \frac{(- \xi t)^{- \frac{3}{2}}}{\Gamma\left(\frac{3}{2} \right)}\right)\\
        & U \left(\frac{3}{2}, 3, \xi t \right)  \sim (- \xi t)^{- \frac{3}{2}}.
 \end{aligned}
 \right.
 \end{equation}
Apparently, from the asymptotic estimate of~\eqref{eqn: estimate_confluent}, we have 
        \begin{itemize}
        \item When $s < 4/ \theta$, that is, $s \theta^{2} - 4 \theta < 0$, the closed-form solution~\eqref{eqn: solution_ode_nagm-c} is estimated as 
                 \[
                 \left\| X(t) \right\| \leq \Theta\left( t ^{ - \frac{3}{2}} \ee^{-\frac{\sqrt{s} \theta  t}{2}} \right).
                 \] 
                Hence, when the step size satisfies $s < 4/L$, the above upper bound always holds.
        \item When $s \geq 4/\theta$, that is, $s \theta^{2} - 4 \theta \geq 0$, the closed-form solution~\eqref{eqn: solution_ode_nagm-c} is estimated as 
                 \[
                 \left\| X(t) \right\| \sim \ee^{-\frac{  \sqrt{s} \theta - \sqrt{s \theta^{2} - 4 \theta}}{2} \cdot t}  t^{- \frac{3}{2} } . 
                 \]    
                Apparently, we can bound 
                 \[
                   \left\| X(t) \right\| \leq O \left(e^{- \frac{t}{\sqrt{s}}} t^{- \frac{3}{2}} \right) = O\left( e^{- \frac{t}{\sqrt{s}} - \frac{3 \log t}{2}}\right)
                 \]
                 and 
                  \[
                   \left\| X(t) \right\| \geq \Omega \left(e^{- \frac{2t}{\sqrt{s}}} t^{- \frac{3}{2}} \right) = \Omega \left( e^{- \frac{2t}{\sqrt{s}} - \frac{3 \log t}{2}}\right).
                  \]
           \end{itemize}      

\section{Technical Details in Section~\ref{sec:strongly}}
\label{sec: heavyball_e}
\subsection{Proof of Lemma~\ref{lm:heavyball_e}}
\label{subsec: heavy_ball_continuous}

With Cauchy-Schwarz inequality 
\[
\|  \dot{X} + 2\sqrt{\mu} ( X - x^{\star} ) \|^{2} \leq 2 \left( \| \dot{X} \|^{2}  + 4 \mu \left\| X - x^{\star} \right\|_{2}^{2}   \right),
\]
the Lyapunov function~(\ref{eqn: energy_heavy-b_ode}) can be estimated as 
\begin{align}
\label{eqn: estimate_ef_heavy-b_ode}
\mathcal{E} \leq (1 + \sqrt{\mu s})\left( f(X) - f(x^{\star}) \right) + \frac{3}{4}  \| \dot{X} \|^{2} + 2 \mu \left\| X - x^{\star} \right\|^{2}.
\end{align}
Along the solution to the high-resolution ODE~(\ref{eqn: heavy_ball_first}), the time derivative of the Lyapunov function~(\ref{eqn: energy_heavy-b_ode}) is
\begin{align*}
\frac{\dd \mathcal{E}}{\dd t} & =  (1 + \sqrt{\mu s} ) \left\langle \nabla f(X), \dot{X} \right\rangle + \frac{1}{2}\left\langle \dot{X}, - 2 \sqrt{\mu} \dot{X} -  (1 + \sqrt{\mu s} ) \nabla f(X) \right\rangle \\
                                         & \quad + \frac{1}{2} \left\langle  \dot{X} + 2\sqrt{\mu} \left( X - x^{\star} \right), -  (1 + \sqrt{\mu s} ) \nabla f(X) \right\rangle \\
                                         & = - \sqrt{\mu} \left[ \| \dot{X} \|_{2}^{2} + ( 1 + \sqrt{\mu s}  ) \left\langle \nabla f(X), X - x^{\star} \right\rangle \right].
\end{align*}    
With~\eqref{eqn: estimate_ef_heavy-b_ode} and the inequality for any function $f(x) \in \mathcal{S}_{\mu, L}^{2}(\mathbb{R}^{n})$ 
\[
f(x^{\star}) \geq f(X) + \left\langle \nabla f(X), x^{\star} - X \right\rangle + \frac{\mu}{2} \left\| X - x^{\star} \right\|_{2}^{2},
\]
the time derivative of the Lyapunov function can be estimated as
\begin{align*}
\frac{\dd \mathcal{E}}{\dd t}  & \leq - \sqrt{\mu} \left[ ( 1 + \sqrt{\mu s}  ) ( f(X) - f(x^{\star}) ) +   \| \dot{X} \|_{2}^{2} + \frac{\mu}{2} \left\| X - x^{\star} \right\|_{2}^{2} \right] \\
                                         & \leq - \frac{\sqrt{\mu}}{4} \mathcal{E}
\end{align*}
Hence, the proof is complete.

\subsection{Completing the Proof of Lemma \ref{lm:nag_sc_ek1}}
\label{subsec:proof-lemma-nsc}


\subsubsection{Derivation of~(\ref{eq:proof_app_is})}
Here, we first point out that
\begin{align}
\label{eq:to_be_proved_appendix}
\mathcal{E}(k+1) - \mathcal{E}(k) \le  & - \frac{\sqrt{\mu s}}{1 - \sqrt{\mu s}} \left[   \frac{1 + \sqrt{\mu s} }{1 - \sqrt{\mu s} } \left(\left\langle \nabla f(x_{k + 1}), x_{k + 1} - x^{\star} \right\rangle  -  s\left\| \nabla f(x_{k + 1}) \right\|^{2}\right) + \left\| v_{k + 1} \right\|^{2} \right ] \nonumber \\
                                                           &- \frac{1}{2}\left( \frac{1 + \sqrt{\mu s}}{1 - \sqrt{\mu s}} + \frac{1 - \sqrt{\mu s}}{1 + \sqrt{\mu s}} \right) \left( \frac{1}{L} -  s\right) \left\| \nabla f(x_{k + 1}) - \nabla f(x_{k}) \right\|^{2}
\end{align}
implies \eqref{eq:proof_app_is} with $s \leq 1/L$.  With~\eqref{eq:to_be_proved_appendix}, 
 noting the basic inequality for $f(x) \in \mathcal{S}_{\mu, L}^{1}(\mathbb{R}^n)$ as 
$$
	\left\{
	\begin{aligned}
	&  f(x^{\star}) \geq f(x_{k + 1}) + \left\langle \nabla f(x_{k + 1}), x^{\star} - x_{k+1}\right\rangle + \frac{1}{2L} \left\| \nabla f(x_{k + 1}) \right\|_{2}^{2}        \\
	&  f(x^{\star}) \geq f(x_{k + 1}) + \left\langle \nabla f(x_{k + 1}), x^{\star} - x_{k+1}\right\rangle + \frac{\mu}{2} \left\| x_{k + 1} - x^{\star} \right\|_{2}^{2},
	\end{aligned}
	\right.
$$
when the step size satisfies $s \leq 1/(2L) \leq 1/L$, 
we have 
\begin{align*}
	\mathcal{E}(k + 1) - \mathcal{E}(k) & \leq - \frac{\sqrt{\mu s}}{1 - \sqrt{\mu s}} \left[ \left( \frac{1 + \sqrt{\mu s} }{1 - \sqrt{\mu s}}\right) \left( f(x_{k + 1}) - f(x^{\star}) \right)  +   \frac{1}{2L} \left( \frac{ \sqrt{\mu s} }{1 - \sqrt{\mu s}}\right) \left\|\nabla f(x_{k + 1})\right\|^{2} \right. \\
	& \qquad \left.+ \frac{\mu}{2} \left( \frac{1 }{1 - \sqrt{\mu s}}\right) \left\|x_{k + 1} - x^{\star} \right\|^{2}   -  \left( \frac{1 + \sqrt{\mu s} }{1 - \sqrt{\mu s}}\right) s \left\| \nabla f(x_{k + 1}) \right\|^{2} + \left\| v_{k + 1}\right\|^{2}  \right] \\
	& \leq - \sqrt{\mu s} \left[   \left(  \frac{1}{ 1 - \sqrt{\mu s}  }  \right)^{2} \left( f(x_{k + 1}) - f(x^{\star}) - s \left\| \nabla f(x_{k + 1}) \right\|^{2} \right) \right. \\
	&\qquad\qquad\quad \left.   + \frac{\sqrt{\mu s} }{(1 - \sqrt{\mu s})^{2}}  \left( f(x_{k + 1}) - f(x^{\star}) - \frac{s}{2} \left\| \nabla f(x_{k + 1}) \right\|^{2} \right)\right. \\
	&\qquad\qquad\quad \left. + \frac{\mu}{2(1 - \sqrt{\mu s} )^{2}} \left\| x_{k + 1} - x^{\star} \right\|^{2} + \frac{1}{ 1 - \sqrt{\mu s} } \left\| v_{k + 1} \right\|^{2} \right]\\
	& \leq  - \sqrt{\mu s} \left[ \frac{1 - 2Ls}{ \left( 1 - \sqrt{\mu s}  \right)^{2} }  \left( f(x_{k + 1}) - f(x^{\star})  \right) +  \frac{1}{ 1 - \sqrt{\mu s} } \left\| v_{k + 1} \right\|^{2}   \right. \\
	&\qquad\qquad\quad\left. + \frac{\mu}{2(1 - \sqrt{\mu s} )^{2}} \left\| x_{k + 1} - x^{\star} \right\|^{2} \right.\\
	&\qquad\qquad\quad\left.+  \frac{\sqrt{\mu s} }{(1 - \sqrt{\mu s})^{2}}  \left( f(x_{k + 1}) - f(x^{\star}) - \frac{s}{2} \left\| \nabla f(x_{k + 1}) \right\|^{2} \right)\right]. 
	\end{align*}

\subsubsection{Derivation of~(\ref{eq:to_be_proved_appendix})}
Now, we show the derivation of~\eqref{eq:to_be_proved_appendix}. Recall the discrete Lyapunov function~(\ref{eqn:lypunov_NAGM-SC_strongly}),  
\begin{align*}
\mathcal{E}(k) =  & \underbrace{\left( \frac{1 + \sqrt{\mu s} }{1 - \sqrt{\mu s} } \right)  \left( f(x_{k}) - f(x^{\star}) \right)}_{\mathbf{I}} +  \underbrace{\frac{1}{4} \left\| v_{k} \right\|^{2}}_{\mathbf{II}} + \underbrace{\frac{1}{4} \left\|  v_{k} + \frac{2\sqrt{\mu}}{1 - \sqrt{\mu s}} ( x_{k + 1} - x^{\star} ) +   \sqrt{s} \nabla f(x_{k}) \right\|^{2}}_{\mathbf{III}}\\
&  \underbrace{- \frac{s}{2} \left( \frac{1}{1 - \sqrt{\mu s} } \right) \left\| \nabla f(x_{k})\right\|^{2}}_{\textbf{additional\; term}}.
\end{align*}     
For convenience, we calculate the difference between 
$\mathcal{E}(k)$ and  $\mathcal{E}(k + 1)$ by the three parts, $\mathbf{I}$, $\mathbf{II}$ and $\mathbf{III}$ respectively.
	\begin{itemize}
		\item For the part $\mathbf{I}$, potential, with the convexity, we have
		\begin{align*}
		&  \left( \frac{1 + \sqrt{\mu s} }{1 - \sqrt{\mu s} } \right)  \left( f(x_{k + 1}) - f(x^{\star}) \right) - \left( \frac{1 + \sqrt{\mu s} }{1 - \sqrt{\mu s} } \right) \left( f(x_{k}) - f(x^{\star}) \right) \\
		\leq  & \left( \frac{1 + \sqrt{\mu s} }{1 - \sqrt{\mu s} } \right) \left[ \left\langle \nabla f(x_{k + 1}), x_{k + 1} - x_{k} \right\rangle - \frac{1}{2L} \left\| \nabla f(x_{k + 1}) - \nabla f(x_{k}) \right\|^{2} \right] \\
		\leq  &     \underbrace{\left( \frac{1 + \sqrt{\mu s} }{1 - \sqrt{\mu s} } \right) \sqrt{s} \left\langle \nabla f(x_{k + 1}), v_{k}  \right\rangle}_{\mathbf{I}_{1}}  \underbrace{- \frac{1}{2L} \left( \frac{1 + \sqrt{\mu s} }{1 - \sqrt{\mu s}} \right) \left\| \nabla f(x_{k + 1}) - \nabla f(x_{k}) \right\|^{2}}_{\mathbf{I}_{2}}.   
		\end{align*}
		\item For the part $\mathbf{II}$, kinetic energy, with the phase representation of NAG-\texttt{SC}~(\ref{eqn: Nesterov_sc_symplectic}), we have
		\begin{align*}
		\frac{1}{4} \left\| v_{k + 1} \right\|^{2} - \frac{1}{4} \left\| v_{k} \right\|^{2} & = \frac{1}{2} \left\langle v_{k + 1} - v_{k}, v_{k + 1} \right\rangle - \frac{1}{4} \left\| v_{k + 1} - v_{k} \right\|^{2} \\
		& = - \frac{\sqrt{\mu s} }{1 - \sqrt{\mu s}} \left\| v_{k + 1} \right\|^{2} - \frac{\sqrt{s}}{2} \left\langle \nabla f(x_{k + 1}) - \nabla f(x_{k}), v_{k + 1} \right\rangle \\
		& \quad - \frac{1 + \sqrt{\mu s} }{1 - \sqrt{\mu s} } \cdot \frac{\sqrt{s}}{2} \left\langle \nabla f(x_{k + 1}), v_{k + 1} \right\rangle - \frac{1}{4} \left\| v_{k + 1} - v_{k} \right\|^{2} \\
		& = \underbrace{- \frac{\sqrt{\mu s} }{1 - \sqrt{\mu s}} \left\| v_{k + 1} \right\|^{2}}_{\mathbf{II}_1} \underbrace{- \frac{\sqrt{s}}{2} \cdot \frac{1 - \sqrt{\mu s} }{1 + \sqrt{\mu s} }\left\langle \nabla f(x_{k + 1}) - \nabla f(x_{k}), v_{k} \right\rangle}_{\mathbf{II}_2} \\
		& \quad + \underbrace{ \frac{1 - \sqrt{\mu s} }{1 + \sqrt{\mu s} } \cdot \frac{s}{2} \left\| \nabla f(x_{k + 1}) - \nabla f(x_{k}) \right\|^{2}}_{\mathbf{II}_{3}} +\underbrace{ \frac{s}{2} \left\langle  \nabla f(x_{k + 1}) - \nabla f(x_{k}), \nabla f(x_{k + 1}) \right\rangle}_{\mathbf{II}_4} \\
		& \quad \underbrace{- \frac{1 + \sqrt{\mu s} }{1 - \sqrt{\mu s} } \cdot \frac{\sqrt{s}}{2} \left\langle \nabla f(x_{k + 1}), v_{k + 1} \right\rangle}_{\mathbf{II}_{5}} \underbrace{- \frac{1}{4} \left\| v_{k + 1} - v_{k} \right\|^{2}}_{\mathbf{II}_{6}}.
		\end{align*}
		\item For the part $\mathbf{III}$, mixed energy, with the phase representation of NAG-\texttt{SC}~(\ref{eqn: Nesterov_sc_symplectic}), we have
		\begin{align*}
		& \frac{1}{4} \left\| v_{k + 1} + \frac{2\sqrt{\mu}}{1 - \sqrt{\mu s}} ( x_{k + 2} - x^{\star} ) +   \sqrt{s} \nabla f(x_{k + 1})\right\|^{2} - \frac{1}{4} \left\| v_{k} + \frac{2\sqrt{\mu}}{1 - \sqrt{\mu s}} ( x_{k + 1} - x^{\star} ) +   \sqrt{s} \nabla f(x_{k})\right\|^{2} \\
		= & \frac{1}{2} \left\langle - \frac{1 + \sqrt{\mu s} }{1 - \sqrt{\mu s} } \sqrt{s} \nabla f(x_{k + 1}), \frac{1 + \sqrt{\mu s} }{1 - \sqrt{\mu s} } v_{k + 1} + \frac{2\sqrt{\mu}}{1 - \sqrt{\mu s} } (x_{k + 1} - x^{\star}) +  \sqrt{s} \nabla f(x_{k + 1}) \right\rangle  \\
		& - \frac{1}{4} \left( \frac{1 + \sqrt{\mu s} }{1 - \sqrt{\mu s} }\right)^{2} s \left\| \nabla f(x_{k + 1})\right\|^{2} \\
		= & \underbrace{- \frac{\sqrt{\mu s}}{1 - \sqrt{\mu s}}  \frac{1 + \sqrt{\mu s} }{1 - \sqrt{\mu s} }  \left\langle \nabla f(x_{k + 1}), x_{k + 1}- x^{\star} \right\rangle}_{\mathbf{III}_{1}} \underbrace{- \frac{1}{2} \left( \frac{1 + \sqrt{\mu s} }{1 - \sqrt{\mu s} } \right)^{2} \sqrt{s} \left\langle \nabla f(x_{k + 1}), v_{k + 1} \right\rangle}_{\mathbf{III}_{2}} \\
		&  \underbrace{ - \frac{1}{2} \left( \frac{1 + \sqrt{\mu s}  }{1 - \sqrt{\mu s} } \right) s \left\| \nabla f(x_{k + 1})\right\|^{2}}_{\mathbf{III}_{3}} \underbrace{ - \frac{1}{4} \left( \frac{1 + \sqrt{\mu s} }{1 - \sqrt{\mu s} }\right)^{2} s \left\| \nabla f(x_{k + 1})\right\|^{2} }_{\mathbf{III}_{4}}.
		\end{align*}       
	\end{itemize}
Both $\mathbf{II}_{2}$ and $\mathbf{III}_{3}$ above are the discrete correspondence of the terms $ -  \frac{\sqrt{s}}{2} \left\| \nabla f(X(t))\right\|^{2}$ and $ -  \frac{\sqrt{s}}{2} \dot{X}(t)^{\top}\nabla^{2}  f(X(t))\dot{X}(t)$ in~\eqref{eqn: nag-sc_ode_conver-rate}.
The impact can be found in the calculation.	Now, we calculate the difference of discrete Lyapunov function~(\ref{eqn:lypunov_NAGM-SC_strongly}) at $k$-th iteration by the simple operation 
	\begin{align*}
        &    \mathcal{E}(k + 1) - \mathcal{E}(k) \\
\leq  &    \underbrace{\left( \frac{1 + \sqrt{\mu s} }{1 - \sqrt{\mu s} } \right) \sqrt{s} \left\langle \nabla f(x_{k + 1}), v_{k}  \right\rangle}_{\mathbf{I}_{1}}  \underbrace{- \frac{1}{2L} \left( \frac{1 + \sqrt{\mu s} }{1 - \sqrt{\mu s} } \right) \left\| \nabla f(x_{k + 1}) - \nabla f(x_{k}) \right\|^{2}}_{\mathbf{I}_{2}} \\	
        &    \underbrace{- \frac{\sqrt{\mu s} }{1 - \sqrt{\mu s} } \left\| v_{k + 1} \right\|^{2}}_{\mathbf{II}_1} \underbrace{- \frac{\sqrt{s}}{2} \cdot \frac{1 - \sqrt{\mu s} }{1 + \sqrt{\mu s} }\left\langle \nabla f(x_{k + 1}) - \nabla f(x_{k}), v_{k} \right\rangle}_{\mathbf{II}_2}  + \underbrace{ \frac{1 - \sqrt{\mu s} }{1 + \sqrt{\mu s} } \cdot \frac{s}{2} \left\| \nabla f(x_{k + 1}) - \nabla f(x_{k}) \right\|^{2}}_{\mathbf{II}_{3}} \\
        &   +\underbrace{ \frac{s}{2} \left\langle  \nabla f(x_{k + 1}) - \nabla f(x_{k}), \nabla f(x_{k + 1}) \right\rangle}_{\mathbf{II}_4}  \underbrace{- \frac{1 + \sqrt{\mu s} }{1 - \sqrt{\mu s} } \cdot \frac{\sqrt{s}}{2} \left\langle \nabla f(x_{k + 1}), v_{k + 1} \right\rangle}_{\mathbf{II}_{5}} \underbrace{- \frac{1}{4} \left\| v_{k + 1} - v_{k} \right\|^{2}}_{\mathbf{II}_{6}}\\
         & \underbrace{- \frac{\sqrt{\mu s}}{1 - \sqrt{\mu s}}  \frac{1 + \sqrt{\mu s} }{1 - \sqrt{\mu s} }  \left\langle \nabla f(x_{k + 1}), x_{k + 1}- x^{\star} \right\rangle}_{\mathbf{III}_{1}} \underbrace{- \frac{1}{2} \left( \frac{1 + \sqrt{\mu s} }{1 - \sqrt{\mu s} } \right)^{2} \sqrt{s} \left\langle \nabla f(x_{k + 1}), v_{k + 1} \right\rangle}_{\mathbf{III}_{2}} \\
		&  \underbrace{ - \frac{1}{2} \left( \frac{1 + \sqrt{\mu s}  }{1 - \sqrt{\mu s} } \right) s \left\| \nabla f(x_{k + 1})\right\|^{2}}_{\mathbf{III}_{3}} \underbrace{ - \frac{1}{4} \left( \frac{1 + \sqrt{\mu s} }{1 - \sqrt{\mu s} }\right)^{2} s \left\| \nabla f(x_{k + 1})\right\|^{2} }_{\mathbf{III}_{4}}\\
		&  \underbrace{  - \frac{s}{2} \left( \frac{1}{1 - \sqrt{\mu s} } \right)\left( \left\| \nabla f(x_{k + 1})\right\|^{2} -    \left\| \nabla f(x_{k})\right\|^{2} \right) }_{\mathbf{additional \; term}}\\
\leq  &\underbrace{ - \frac{\sqrt{\mu s}}{1 - \sqrt{\mu s}} \left(   \frac{1 + \sqrt{\mu s} }{1 - \sqrt{\mu s} }  \left\langle \nabla f(x_{k + 1}), x_{k + 1} - x^{\star} \right\rangle + \left\| v_{k + 1} \right\|^{2}  \right)}_{\mathbf{II}_1 + \mathbf{III}_1} \\
	&\underbrace{ - \frac{1}{2} \left( \frac{1 +  \sqrt{\mu s} }{1 - \sqrt{\mu s} }\right) \left [\sqrt{s} \left\langle \nabla f(x_{k + 1}), \left( \frac{1 + \sqrt{\mu s} }{1 - \sqrt{\mu s} }\right) v_{k + 1} - v_{k}\right\rangle + s \left\| \nabla f(x_{k + 1}) \right\|^{2} \right]}_{\frac{1}{2}\mathbf{I}_{1} + \mathbf{III}_{2} + \mathbf{III}_{3} } \\
	&\underbrace{- \frac{\sqrt{s}}{2} \cdot \frac{1 - \sqrt{\mu s} }{1 + \sqrt{\mu s} }\left\langle \nabla f(x_{k + 1}) - \nabla f(x_{k}), v_{k} \right\rangle}_{\mathbf{II}_{2}} + \underbrace{\frac{s}{2} \left\langle  \nabla f(x_{k + 1}) - \nabla f(x_{k}), \nabla f(x_{k + 1}) \right\rangle}_{\mathbf{II}_{4}} \\
	&  \underbrace{- \frac{1}{4} \left[ \left\| v_{k + 1} - v_{k} \right\|^{2} + 2 \left( \frac{1 + \sqrt{\mu s} }{1 - \sqrt{\mu s} } \right) \sqrt{s} \left\langle \nabla f(x_{k + 1}), v_{k + 1} - v_{k}\right\rangle + \left( \frac{1 + \sqrt{\mu s} }{1 - \sqrt{\mu s} }\right)^{2} s \left\| \nabla f(x_{k + 1})\right\|^{2} \right]}_{\frac{1}{2}\mathbf{I}_{1} + \mathbf{II}_{5} + \mathbf{II}_{6} + \mathbf{III}_{4}} \\
        &\underbrace{- \frac{1}{2} \left[ \frac{1}{L} \left(\frac{1 + \sqrt{\mu s}}{1 - \sqrt{\mu s}}\right) - s \left(\frac{1 - \sqrt{\mu s}}{1 + \sqrt{\mu s}}\right) \right] \left\| \nabla f(x_{k + 1}) - \nabla f(x_{k}) \right\|^{2}}_{ \mathbf{I}_{2} + \mathbf{II}_{3} }\\
	& \underbrace{ - \frac{1}{2}\left( \frac{1}{1 - \sqrt{\mu s} }\right) s \left( \left\| \nabla f(x_{k + 1}) \right\|^{2} - \left\| \nabla f(x_{k}) \right\|^{2}\right) }_{\mathbf{additional \; term}}
        \end{align*}
Now, the term, $(1/2)\mathbf{I}_{1} + \mathbf{II}_{5} + \mathbf{II}_{6} + \mathbf{III}_{4}$, can be calculated as 
\begin{align*}
\frac{1}{2}\mathbf{I}_{1} + \mathbf{II}_{5} + \mathbf{II}_{6} + \mathbf{III}_{4} & = - \frac{1}{4} \left[ \left\| v_{k + 1} - v_{k} \right\|^{2} + 2 \left( \frac{1 + \sqrt{\mu s} }{1 - \sqrt{\mu s} } \right) \sqrt{s} \left\langle \nabla f(x_{k + 1}), v_{k + 1} - v_{k}\right\rangle \right.\\
                                                               &\qquad \qquad \left.+ \left( \frac{1 + \sqrt{\mu s} }{1 - \sqrt{\mu s} }\right)^{2} s \left\| \nabla f(x_{k + 1})\right\|^{2} \right] \\
                                                                                                                         & = - \frac{1}{4} \left\| v_{k + 1} - v_{k} + \left( \frac{1 + \sqrt{\mu s}}{1 - \sqrt{\mu s}} \right) \sqrt{s} \nabla f(x_{k}) \right\|^{2}\\
                                                                                                                         & \leq 0.
\end{align*}
With phase representation of NAG-\texttt{SC}~(\ref{eqn: Nesterov_sc_symplectic}), we have
\begin{align*}
\frac{1}{2}\mathbf{I}_{1} + \mathbf{III}_{2} + \mathbf{III}_{3} & = - \frac{1}{2} \left( \frac{1 +  \sqrt{\mu s} }{1 - \sqrt{\mu s} }\right) \left [\sqrt{s} \left\langle \nabla f(x_{k + 1}), \left( \frac{1 + \sqrt{\mu s} }{1 - \sqrt{\mu s} }\right) v_{k + 1} - v_{k}\right\rangle + s \left\| \nabla f(x_{k + 1}) \right\|^{2} \right] \\
                                                                                              & = \frac{1}{2}\left( \frac{1 +  \sqrt{\mu s} }{1 - \sqrt{\mu s} }\right) s \left( \left\langle  \nabla f(x_{k + 1}) - \nabla f(x_{k}), \nabla f(x_{k + 1}) \right\rangle + \frac{2\sqrt{\mu s}}{1 - \sqrt{\mu s} } \left\| \nabla f(x_{k + 1}) \right\|^{2}\right)\\
                                                                                               & =\underbrace{\frac{1}{2}\left( \frac{1 +  \sqrt{\mu s} }{1 - \sqrt{\mu s} }\right) \cdot s \cdot \left\langle  \nabla f(x_{k + 1}) - \nabla f(x_{k}), \nabla f(x_{k + 1}) \right\rangle}_{\mathbf{IV}_{1}} \\
                                                                                               & \quad + \underbrace{\left( \frac{1 +  \sqrt{\mu s} }{1 - \sqrt{\mu s} }\right)  \cdot \frac{\sqrt{\mu s}}{1 - \sqrt{\mu s} } \cdot s\left\| \nabla f(x_{k + 1}) \right\|^{2}}_{\mathbf{IV}_{2}}
\end{align*}
For convenience, we note the term $\mathbf{IV} = (1/2)\mathbf{I}_{1} + \mathbf{III}_{2} + \mathbf{III}_{3}$.
 Then, with phase representation of NAG-\texttt{SC}~(\ref{eqn: Nesterov_sc_symplectic}), the difference of Lyapunov function~(\ref{eqn:lypunov_NAGM-SC_strongly}) is
        \begin{align*}
                    \mathcal{E}(k + 1) - \mathcal{E}(k) \leq  & \underbrace{- \frac{\sqrt{\mu s}}{1 - \sqrt{\mu s}} \left(  \frac{1 + \sqrt{\mu s} }{1 - \sqrt{\mu s} } \left(\left\langle \nabla f(x_{k + 1}), x_{k + 1} - x^{\star} \right\rangle  -  s \left\| \nabla f(x_{k + 1}) \right\|^{2}\right) + \left\| v_{k + 1} \right\|^{2}  \right)}_{\mathbf{II}_{1} + \mathbf{III}_{1} + \mathbf{IV}_{2}} \\   
	&   \underbrace{ - \frac{1}{2} \cdot \frac{1 - \sqrt{\mu s} }{1 + \sqrt{\mu s} }\left\langle \nabla f(x_{k + 1}) - \nabla f(x_{k}), x_{k + 1} - x_{k} \right\rangle}_{\mathbf{II}_{2}} \\
	&   + \underbrace{\left( \frac{1}{1 - \sqrt{\mu s} }\right) s \left\langle  \nabla f(x_{k + 1}) - \nabla f(x_{k}), \nabla f(x_{k + 1}) \right\rangle}_{\mathbf{II}_{4} + \mathbf{IV}_{1}} \\
	&\underbrace{- \frac{1}{2} \left[ \frac{1}{L} \left(\frac{1 + \sqrt{\mu s}}{1 - \sqrt{\mu s}}\right) - s \left(\frac{1 - \sqrt{\mu s}}{1 + \sqrt{\mu s}}\right) \right] \left\| \nabla f(x_{k + 1}) - \nabla f(x_{k}) \right\|^{2}}_{ \mathbf{I}_{2} + \mathbf{II}_{3} }\\
	& \underbrace{ - \frac{1}{2}\left( \frac{1}{1 - \sqrt{\mu s} }\right) s \left( \left\| \nabla f(x_{k + 1}) \right\|^{2} - \left\| \nabla f(x_{k}) \right\|^{2}\right) }_{\mathbf{additional \; term}}
	\end{align*}
Now, we can find the impact of additional term in the Lyapunov function~(\ref{eqn:lypunov_NAGM-SC_strongly}). In other words, the $\mathbf{II}_{4} + \mathbf{IV}_{1}$ term added the additional term is a perfect square, as below
\begin{align*}
\mathbf{II}_{4} + \mathbf{IV}_{1} + \mathbf{additional\;term} & = \left( \frac{1}{1 - \sqrt{\mu s} }\right) s \left\langle  \nabla f(x_{k + 1}) - \nabla f(x_{k}), \nabla f(x_{k + 1}) \right\rangle \\
                                                                                                & \quad - \frac{1}{2}\left( \frac{1}{1 - \sqrt{\mu s} }\right) s \left( \left\| \nabla f(x_{k + 1}) \right\|^{2} - \left\| \nabla f(x_{k}) \right\|^{2}\right) \\
                                                                                                & =  \frac{1}{2}\left( \frac{1}{1 - \sqrt{\mu s} }\right) s \left\| \nabla f(x_{k + 1}) - \nabla f(x_{k})\right\|^{2}
\end{align*}
Merging all the similar items, $\mathbf{II}_{4} + \mathbf{IV}_{1} + \mathbf{additional\;term}$, $\mathbf{I}_{2} + \mathbf{II}_{3}$, we have
\begin{align*}
        & \quad (\mathbf{II}_{4} + \mathbf{IV}_{1} + \mathbf{additional\;term}) + ( \mathbf{I}_{2}  +  \mathbf{II}_{3}) \\
 =     &\quad  \frac{1}{2} \left( \frac{1}{1 - \sqrt{\mu s}} +  \frac{1 - \sqrt{\mu s}}{1 + \sqrt{\mu s}}  -  \frac{1 +\sqrt{\mu s}}{1 - \sqrt{\mu s}} \cdot \frac{1}{Ls}\right) s \left\| \nabla f(x_{k + 1}) - \nabla f(x_{k})\right\|^{2} \\
 \leq &\quad  \frac{1}{2} \left( \frac{1 + \sqrt{\mu s}}{1 - \sqrt{\mu s}} +  \frac{1 - \sqrt{\mu s}}{1 + \sqrt{\mu s}}  -  \frac{1 +\sqrt{\mu s}}{1 - \sqrt{\mu s}} \cdot \frac{1}{Ls}\right) s \left\| \nabla f(x_{k + 1}) - \nabla f(x_{k})\right\|^{2}
\end{align*}
Now, we obtain that the difference of Lyapunov function~(\ref{eqn:lypunov_NAGM-SC_strongly}) is
\begin{align*}
\mathcal{E}(k + 1) - \mathcal{E}(k)  & \leq  - \frac{\sqrt{\mu s}}{1 - \sqrt{\mu s}} \left(    \frac{1 + \sqrt{\mu s} }{1 - \sqrt{\mu s} } \left(\left\langle \nabla f(x_{k + 1}), x_{k + 1} - x^{\star} \right\rangle   -  s \left\| \nabla f(x_{k + 1}) \right\|^{2}\right)  + \left\| v_{k + 1} \right\|^{2}  \right) \\
                                                       & \quad - \frac{1}{2} \cdot \frac{1 - \sqrt{\mu s} }{1 + \sqrt{\mu s} }\left\langle \nabla f(x_{k + 1}) - \nabla f(x_{k}), x_{k + 1} - x_{k} \right\rangle \\
                                                       & \quad +  \frac{1}{2}\left( \frac{1 + \sqrt{\mu s}}{1 - \sqrt{\mu s}} +  \frac{1 - \sqrt{\mu s}}{1 + \sqrt{\mu s}}  -  \frac{1 +\sqrt{\mu s}}{1 - \sqrt{\mu s}} \cdot \frac{1}{Ls} \right)  s \left\| \nabla f(x_{k + 1}) - \nabla f(x_{k})\right\|^{2}
\end{align*}
With the inequality for any function $f(x) \in \mathcal{S}_{\mu, L}^{1}(\mathbb{R}^{n})$
	\[
	\left\| \nabla f(x_{k + 1}) - \nabla f(x_{k})\right\|^{2} \leq L \left\langle \nabla f(x_{k + 1}) - \nabla f(x_{k}), x_{k + 1} - x_{k}\right\rangle, 
	\]
we have		
	\begin{align*}   
\mathcal{E}(k + 1) - \mathcal{E}(k)	& \leq  - \frac{\sqrt{\mu s}}{1 - \sqrt{\mu s}} \left[     \frac{1 + \sqrt{\mu s} }{1 - \sqrt{\mu s} } \left(\left\langle \nabla f(x_{k + 1}), x_{k + 1} - x^{\star} \right\rangle   -  s \left\| \nabla f(x_{k + 1}) \right\|^{2}\right)  + \left\| v_{k + 1} \right\|^{2}  \right] \\
                                                          & \quad - \frac{1}{2} \cdot \frac{1 - \sqrt{\mu s} }{1 + \sqrt{\mu s} } \cdot \frac{1}{L} \cdot \left\| \nabla f(x_{k + 1}) - \nabla f(x_{k})\right\|^{2} \\
                                                          & \quad +  \frac{1}{2}\left( \frac{1 + \sqrt{\mu s}}{1 - \sqrt{\mu s}} +  \frac{1 - \sqrt{\mu s}}{1 + \sqrt{\mu s}}  -  \frac{1 +\sqrt{\mu s}}{1 - \sqrt{\mu s}} \cdot \frac{1}{Ls} \right)  s \left\| \nabla f(x_{k + 1}) - \nabla f(x_{k})\right\|^{2} \\
                                                         & \leq - \frac{\sqrt{\mu s}}{1 - \sqrt{\mu s}} \left(   \frac{1 + \sqrt{\mu s} }{1 - \sqrt{\mu s} } \left(\left\langle \nabla f(x_{k + 1}), x_{k + 1} - x^{\star} \right\rangle   -  s \left\| \nabla f(x_{k + 1}) \right\|^{2}\right)  + \left\| v_{k + 1} \right\|^{2}  \right) \\
                                                         & \quad - \frac{1}{2}\left( \frac{1 + \sqrt{\mu s}}{1 - \sqrt{\mu s}} + \frac{1 - \sqrt{\mu s}}{1 + \sqrt{\mu s}} \right) \left( \frac{1}{L} - s \right) \left\| \nabla f(x_{k + 1}) - \nabla f(x_{k})\right\|^{2}. 
         \end{align*}  

\subsection{Proof of Lemma~\ref{lm:heavy_dis_add}}
\label{subsec: heavy_ball_discrete}

With the phase representation of the heavy-ball method~(\ref{eqn: polyak_heavy_ball_symplectic}) and Cauchy-Schwarz inequality, we have
\begin{align*}
\left\| v_{k} + \frac{2\sqrt{\mu}}{1 - \sqrt{\mu s}}(x_{k + 1} - x^{\star})  \right\|_{2}^{2} & = \left\|  \frac{1 + \sqrt{\mu s} }{1 - \sqrt{\mu s} }v_{k} +\frac{2\sqrt{\mu}}{1 - \sqrt{\mu s}}(x_{k} - x^{\star}) \right\|_{2}^{2} \\
                                                                                                                                  & \leq 2 \left[ \left( \frac{1 + \sqrt{\mu s} }{1 - \sqrt{\mu s} } \right)^{2} \left\| v_{k} \right\|_{2}^{2} + \frac{4 \mu}{(1 - \sqrt{\mu s})^{2}} \left\| x_{k} - x^{\star} \right\|_{2}^{2}  \right].
\end{align*}
The discrete Lyapunov function~\eqref{eqn: lypunov_PHBM_strongly} can be estimated as
\begin{align}
\label{eqn: heavy-b_energy_estimate}
\mathcal{E}(k) \leq  \frac{1 + \sqrt{\mu s} }{1 - \sqrt{\mu s} } \left( f(x_{k}) - f(x^{\star}) \right) + \frac{1 + \mu s}{ (1 - \sqrt{\mu s} )^{2} } \left\| v_{k} \right\|_{2}^{2} + \frac{2\mu}{ (1 - \sqrt{\mu s} )^{2} } \left\| x_{k} - x^{\star} \right\|_{2}^{2}.
\end{align}
For convenience, we also split the discrete Lyapunov function~\eqref{eqn: lypunov_PHBM_strongly} into three parts and mark them as below
\[
\mathcal{E}(k) = \underbrace{\frac{1 + \sqrt{\mu s} }{1 - \sqrt{\mu s} } \left( f(x_{k}) - f(x^{\star}) \right)}_{\mathbf{I}} + \underbrace{\frac{1}{4} \left\| v_{k} \right\|^{2}}_{\mathbf{II}} + \underbrace{\frac{1}{4} \left\| v_{k} + \frac{2\sqrt{\mu}}{1 - \sqrt{\mu s}}(x_{k + 1} - x^{\star}) \right\|^{2} }_{\mathbf{III}},
\]
where the three parts $\mathbf{I}$, $\mathbf{II}$ and $\mathbf{III}$ are corresponding to potential, kinetic energy and mixed energy in classical mechanics, respectively. 
	\begin{itemize}
		\item For the part $\mathbf{I}$, potential, with the basic convex of $f(x) \in \mathcal{S}_{\mu, L}^{1}(\mathbb{R}^{n})$ 
		         \[
		         f(x_{k}) \geq f(x_{k + 1}) + \left\langle \nabla f(x_{k + 1}), x_{k} - x_{k + 1} \right\rangle + \frac{1}{2L} \left\| \nabla f(x_{k + 1}) - \nabla f(x_{k}) \right\|_{2}^{2},
		         \]
		         we have
		         \begin{align*}
		         		&  \left( \frac{1 + \sqrt{\mu s} }{1 - \sqrt{\mu s} } \right)  \left( f(x_{k + 1}) - f(x^{\star}) \right) - \left( \frac{1 + \sqrt{\mu s} }{1 - \sqrt{\mu s} } \right) \left( f(x_{k}) - f(x^{\star}) \right) \\
		\leq  &     \underbrace{\left( \frac{1 + \sqrt{\mu s} }{1 - \sqrt{\mu s} } \right) \sqrt{s} \left\langle \nabla f(x_{k + 1}), v_{k}  \right\rangle}_{\mathbf{I}_{1}}  \underbrace{- \frac{1}{2L} \left( \frac{1 + \sqrt{\mu s} }{1 - \sqrt{\mu s}} \right) \left\| \nabla f(x_{k + 1}) - \nabla f(x_{k}) \right\|^{2}}_{\mathbf{I}_{2}}. 
		         \end{align*}
                 \item For the part $\mathbf{II}$, kinetic energy, with the phase representation of the heavy-ball method~(\ref{eqn: polyak_heavy_ball_symplectic}), we have
                          \begin{align*}
                          \frac{1}{4} \left\| v_{k+1}\right\|^{2} - \frac{1}{4} \left\| v_{k}\right\|^{2} & = \frac{1}{2} \left\langle v_{k + 1} - v_{k}, v_{k + 1}\right\rangle - \frac{1}{4} \left\| v_{k + 1} - v_{k}\right\|^{2} \\
                                                                                                                                      & =\underbrace{ - \frac{\sqrt{\mu s}}{1 - \sqrt{\mu s}} \left\| v_{k + 1} \right\|^{2}}_{\mathbf{II}_{1}} \underbrace{- \frac{1}{2} \cdot \frac{1 + \sqrt{\mu s}}{1 - \sqrt{\mu s}} \cdot \sqrt{s} \left\langle \nabla f(x_{k + 1}), v_{k + 1} \right\rangle}_{\mathbf{II}_{2}} \\
                                                                                                                                      &\quad \underbrace{- \frac{1}{4} \left\| v_{k + 1} - v_{k}\right\|^{2}}_{\mathbf{II}_{3}}
                          \end{align*}
                 \item For the part $\mathbf{III}$, mixed energy, with the phase representation of the heavy-ball method~(\ref{eqn: polyak_heavy_ball_symplectic}), we have
                 \begin{align*}
                 & \frac{1}{4} \left\|  v_{k + 1} + \frac{2\sqrt{\mu}}{1 - \sqrt{\mu s}}(x_{k + 2} - x^{\star})  \right\|^{2} - \frac{1}{4} \left\| v_{k} +\frac{2\sqrt{\mu}}{1 - \sqrt{\mu s}}(x_{k + 1} - x^{\star})  \right\|^{2} \\
             =  & \frac{1}{4} \left\langle  v_{k + 1} - v_{k} + \frac{2\sqrt{\mu}}{1 - \sqrt{\mu s}} (x_{k+2} - x_{k +1}) ,  v_{k+1} + v_{k} + \frac{2\sqrt{\mu}}{1 - \sqrt{\mu s}} (x_{k + 2} + x_{k + 1}- 2x^{\star})  \right\rangle \\
             =  & - \frac{1}{2} \cdot \frac{1 + \sqrt{\mu s}}{1 - \sqrt{\mu s}} \cdot \sqrt{s} \left\langle \nabla f(x_{k+1}) ,  v_{k+1} + \frac{2\sqrt{\mu}}{1 - \sqrt{\mu s}} (x_{k + 2} - x^{\star}) \right\rangle - \frac{s}{4} \left( \frac{1 + \sqrt{\mu s}}{1 - \sqrt{\mu s}}\right)^{2} \left\| \nabla f(x_{k+1})\right\|^{2} \\
             = & \underbrace{- \frac{1 + \sqrt{\mu s}}{1 - \sqrt{\mu s}} \cdot \frac{\sqrt{\mu s}}{1 - \sqrt{\mu s}} \left\langle \nabla f(x_{k + 1}),  x_{k+1} - x^{\star}\right\rangle}_{\mathbf{III}_{1}} \underbrace{- \frac{1}{2}\left( \frac{1 + \sqrt{\mu s}}{1 - \sqrt{\mu s}}\right)^{2} \sqrt{s} \left\langle \nabla f(x_{k+1}), v_{k + 1} \right\rangle}_{\mathbf{III}_{2}}\\
                & \underbrace{- \frac{s}{4} \left( \frac{1 + \sqrt{\mu s}}{1 - \sqrt{\mu s}}\right)^{2} \left\| \nabla f(x_{k+1})\right\|^{2}}_{\mathbf{III}_{3}}
                 \end{align*}
         \end{itemize}    
Now, we calculate the difference of discrete Lyapunov function~(\ref{eqn:lypunov_NAGM-SC_strongly}) at the $k$-th iteration by the simple operation as         
\begin{align*}         
 \mathcal{E}(k + 1) - \mathcal{E}(k) & \leq    \underbrace{\left( \frac{1 + \sqrt{\mu s} }{1 - \sqrt{\mu s} } \right) \sqrt{s} \left\langle \nabla f(x_{k + 1}), v_{k}  \right\rangle}_{\mathbf{I}_{1}}  \underbrace{- \frac{1}{2L} \left( \frac{1 + \sqrt{\mu s} }{1 - \sqrt{\mu s}} \right) \left\| \nabla f(x_{k + 1}) - \nabla f(x_{k}) \right\|^{2}}_{\mathbf{I}_{2}} \\
                                                         & \quad \underbrace{ - \frac{\sqrt{\mu s}}{1 - \sqrt{\mu s}} \left\| v_{k + 1} \right\|^{2}}_{\mathbf{II}_{1}} \underbrace{-\frac{1}{2} \cdot \frac{1 + \sqrt{\mu s}}{1 - \sqrt{\mu s}} \cdot \sqrt{s} \left\langle \nabla f(x_{k + 1}), v_{k + 1} \right\rangle}_{\mathbf{II}_{2}} \underbrace{- \frac{1}{4} \left\| v_{k + 1} - v_{k}\right\|^{2}}_{\mathbf{II}_{3}}  \\
                                                         & \quad   \underbrace{- \frac{1 + \sqrt{\mu s}}{1 - \sqrt{\mu s}} \cdot \frac{\sqrt{\mu s}}{1 - \sqrt{\mu s}} \left\langle \nabla f(x_{k + 1}),  x_{k+1} - x^{\star}\right\rangle}_{\mathbf{III}_{1}} \underbrace{- \frac{1}{2}\left( \frac{1 + \sqrt{\mu s}}{1 - \sqrt{\mu s}}\right)^{2} \sqrt{s} \left\langle \nabla f(x_{k+1}), v_{k + 1} \right\rangle}_{\mathbf{III}_{2}}\\
                                                        & \quad \underbrace{- \frac{s}{4} \left( \frac{1 + \sqrt{\mu s}}{1 - \sqrt{\mu s}}\right)^{2} \left\| \nabla f(x_{k+1})\right\|^{2}}_{\mathbf{III}_{3}} \\
                                                        & = \underbrace{- \frac{\sqrt{\mu s}}{1 - \sqrt{\mu s}} \left(  \frac{1 + \sqrt{\mu s}}{1 - \sqrt{\mu s}} \left\langle \nabla f(x_{k + 1}), x_{k+1} - x^{\star} \right\rangle + \left\| v_{k + 1} \right\|^{2}  \right)}_{\mathbf{II}_{1} + \mathbf{III}_{1}}    \\
                                                        & \quad \underbrace{- \frac{1}{2L} \left( \frac{1 + \sqrt{\mu s} }{1 - \sqrt{\mu s}} \right) \left\| \nabla f(x_{k + 1}) - \nabla f(x_{k}) \right\|^{2}}_{\mathbf{I}_{2}} \\
                                                        & \quad  \underbrace{ - \frac{1}{2}\left( \frac{1 + \sqrt{\mu s} }{1 - \sqrt{\mu s} } \right) \sqrt{s} \left\langle \nabla f(x_{k + 1}), \left( \frac{1 + \sqrt{\mu s} }{1 - \sqrt{\mu s} } \right)v_{k + 1} - v_{k}  \right\rangle}_{\frac{1}{2}\mathbf{I}_{1} + \mathbf{III}_{2}} \\
                                                        & \quad \underbrace{- \frac{1}{4}\left( \left\| v_{k + 1} - v_{k} \right\|^{2} + 2\sqrt{s} \cdot \frac{1 + \sqrt{\mu s} }{1 - \sqrt{\mu s} }  \left\langle \nabla f(x_{k+1}), v_{k+1} - v_{k}\right\rangle +s \left( \frac{1 + \sqrt{\mu s}}{1 - \sqrt{\mu s}}\right)^{2} \left\| \nabla f(x_{k+1})\right\|^{2}\right)}_{\frac{1}{2}\mathbf{I}_{1} + \mathbf{II}_{2} + \mathbf{II}_{3} + \mathbf{III}_{3} }
\end{align*} 
 
With the phase representation of the heavy-ball method~(\ref{eqn: polyak_heavy_ball_symplectic}), we have
 \begin{align*}
 \frac{1}{2} \mathbf{I}_{1} + \mathbf{III}_{2} & = - \frac{1}{2}\left( \frac{1 + \sqrt{\mu s} }{1 - \sqrt{\mu s} } \right) \sqrt{s} \left\langle \nabla f(x_{k + 1}), \left( \frac{1 + \sqrt{\mu s} }{1 - \sqrt{\mu s} } \right)v_{k + 1} - v_{k}  \right\rangle \\
                                                                     & =  \frac{s}{2} \left( \frac{1 + \sqrt{\mu s}}{1 - \sqrt{\mu s}}\right)^{2} \left\| \nabla f(x_{k + 1}) \right\|^{2};
 \end{align*}
 and
\begin{align*}
     &\frac{1}{2}\mathbf{I}_{1} + \mathbf{II}_{2} + \mathbf{II}_{3} + \mathbf{III}_{3} \\
 =  & - \frac{1}{4}\left[ \left\| v_{k + 1} - v_{k} \right\|^{2} + 2\sqrt{s} \cdot \frac{1 + \sqrt{\mu s} }{1 - \sqrt{\mu s} }  \left\langle \nabla f(x_{k+1}), v_{k+1} - v_{k}\right\rangle +s \left( \frac{1 + \sqrt{\mu s}}{1 - \sqrt{\mu s}}\right)^{2} \left\| \nabla f(x_{k+1})\right\|^{2}\right] \\
 =  & - \frac{1}{4} \left\| v_{k + 1} - v_{k} + \frac{1 + \sqrt{\mu s}}{1 - \sqrt{\mu s}} \cdot \sqrt{s} \nabla f(x_{k + 1})\right\|^{2} \\
 \leq &0.
\end{align*}
Now, the difference of discrete Lyapunov function~(\ref{eqn: lypunov_PHBM_strongly}) can be rewritten as
\begin{align*}
\mathcal{E}(k + 1) - \mathcal{E}(k) & \leq     - \frac{\sqrt{\mu s}}{1 - \sqrt{\mu s}} \left(  \frac{1 + \sqrt{\mu s}}{1 - \sqrt{\mu s}} \left\langle \nabla f(x_{k + 1}), x_{k+1} - x^{\star} \right\rangle + \left\| v_{k + 1} \right\|^{2} \right) \\
                                                       & \quad  - \frac{1}{2L} \left( \frac{1 + \sqrt{\mu s} }{1 - \sqrt{\mu s}} \right) \left\| \nabla f(x_{k + 1}) - \nabla f(x_{k}) \right\|^{2}\\
                                                       & \quad   + \frac{s}{2} \left( \frac{1 + \sqrt{\mu s}}{1 - \sqrt{\mu s}}\right)^{2} \left\| \nabla f(x_{k + 1}) \right\|^{2}.
\end{align*}
With the inequality for any function $f(x) \in \mathcal{S}_{\mu, L}^{1}(\mathbb{R}^{n})$ 
\[
 f(x^{\star}) \geq f(x_{k + 1}) +  \left\langle \nabla f(x_{k + 1}), x^{\star} - x_{k + 1} \right\rangle + \frac{\mu}{2} \left\| x_{k + 1} - x^{\star}\right\|^{2}, 
 \]
 we have
\begin{align*}
\mathcal{E}(k + 1) - \mathcal{E}(k) & \leq - \sqrt{\mu s} \left[ \frac{1 + \sqrt{\mu s}}{(1 - \sqrt{\mu s})^{2}} \left( f(x_{k + 1}) - f(x^{\star}) \right) + \frac{\mu}{2} \cdot  \frac{1 + \sqrt{\mu s}}{(1 - \sqrt{\mu s})^{2}} \left\| x_{k + 1} - x^{\star}\right\|^{2} + \frac{1}{1 - \sqrt{\mu s}}\left\| v_{k+1} \right\|^{2} \right] \\
                                                       & \quad + \frac{s}{2} \left( \frac{1 + \sqrt{\mu s}}{1 - \sqrt{\mu s}}\right)^{2} \left\| \nabla f(x_{k + 1}) \right\|^{2}\\
                                                       & \leq - \sqrt{\mu s} \left[  \frac{1 + \sqrt{\mu s}}{1 - \sqrt{\mu s}} \left( f(x_{k + 1}) - f(x^{\star}) \right) + \frac{\mu}{2} \cdot  \frac{1 + \sqrt{\mu s}}{1 - \sqrt{\mu s}} \left\| x_{k + 1} - x^{\star}\right\|^{2} + \frac{1}{1 - \sqrt{\mu s}}\left\| v_{k+1} \right\|^{2} \right] \\
                                                       & \quad + \frac{s}{2} \left( \frac{1 + \sqrt{\mu s}}{1 - \sqrt{\mu s}}\right)^{2} \left\| \nabla f(x_{k + 1}) \right\|^{2}\\
                                                      & \leq - \sqrt{\mu s} \left[  \frac{1}{4}\cdot\frac{1 + \sqrt{\mu s}}{1 - \sqrt{\mu s}} \left( f(x_{k + 1}) - f(x^{\star}) \right) + \frac{1}{1 - \sqrt{\mu s}}\left\| v_{k+1} \right\|^{2} + \frac{\mu}{2} \cdot  \frac{1 + \sqrt{\mu s}}{1 - \sqrt{\mu s}} \left\| x_{k + 1} - x^{\star}\right\|^{2}   \right] \\
                                                       & \quad - \left[ \frac{3}{4} \sqrt{\mu s} \left( \frac{1 + \sqrt{\mu s}}{1 - \sqrt{\mu s}}\right) \left( f(x_{k + 1}) - f(x^{\star})\right) - \frac{s}{2} \left( \frac{1 + \sqrt{\mu s}}{1 - \sqrt{\mu s}}\right)^{2} \left\| \nabla f(x_{k + 1}) \right\|^{2}\right].
\end{align*}
Comparing the coefficient of the estimate of Lyapunov function~(\ref{eqn: heavy-b_energy_estimate}), we have
\begin{align*}
\mathcal{E}(k + 1) - \mathcal{E}(k) & \leq - \sqrt{\mu s} \min\left\{ \frac{1 - \sqrt{\mu s}}{1 + \sqrt{\mu s}}, \frac{1}{4}\right\} \mathcal{E}(k + 1) \\
                                                       & \quad - \left[ \frac{3}{4} \sqrt{\mu s} \left( \frac{1 + \sqrt{\mu s}}{1 - \sqrt{\mu s}}\right) \left( f(x_{k + 1}) - f(x^{\star})\right) - \frac{s}{2} \left( \frac{1 + \sqrt{\mu s}}{1 - \sqrt{\mu s}}\right)^{2} \left\| \nabla f(x_{k + 1}) \right\|^{2}\right].
\end{align*}
The proof is complete.

\section{Technical Details in Section~\ref{sec:nagm-c_analysis}} 
\label{sec: general-convex}
\subsection{Technical Details in Proof of Theorem~\ref{thm:NAGM-C_original}}
\label{subsec: technical_detail}

\subsubsection{Iterates $(x_{k}, y_{k})$ at $k = 1, 2, 3$}

The iterate $(x_{k}, y_{k})$ at $k = 1$ is
\begin{equation}
\label{eqn: iterate1}
 x_{1} = y_{1} = x_{0} - s \nabla f(x_{0}).
 \end{equation}
When $k = 2$,  the iterate $(x_{k}, y_{k})$  is
\begin{equation}
\label{eqn: iterate2} 
\left\{
 \begin{aligned}
 & y_{2} = x_{0} - s \nabla f(x_{0}) - s \nabla f(x_{0} - s \nabla f(x_{0})) \\
 & x_{2} = x_{0} - s \nabla f(x_{0}) - \frac{5}{4} s \nabla f(x_{0} - s \nabla f(x_{0})). 
 \end{aligned}\right.
 \end{equation}
When $k = 3$,  the iterate $(x_{k}, y_{k})$  is
\begin{equation}
\label{eqn: iterate3} 
\left\{ \begin{aligned}
 & y_{3} = x_{0} - s \nabla f(x_{0}) - \frac{5}{4}s \nabla f(x_{0} - s \nabla f(x_{0})) - s \nabla f\left(   x_{0} - s \nabla f(x_{0}) - \frac{5}{4}s \nabla f(x_{0} - s \nabla f(x_{0})) \right)\\
 & x_{3} = x_{0} - s \nabla f(x_{0}) - \frac{27}{20}s \nabla f(x_{0} - s \nabla f(x_{0})) - \frac{7}{5}s \nabla f\left(   x_{0} - s \nabla f(x_{0}) - \frac{5}{4}s \nabla f(x_{0} - s \nabla f(x_{0})) \right).
 \end{aligned}\right.
 \end{equation}
 
 \subsubsection{Estimate For $\left\|\nabla f(x_{k})\right\|^2$ at $k = 0, 1, 2, 3$}
According to~\eqref{eqn: iterate1}, we have 
 \begin{eqnarray}
 \label{eqn: gradient1}
 \left\| \nabla f(x_{1}) \right\|^{2}  = \left\| \nabla f(x_{0} - s \nabla f(x_{0})) \right\|^{2} & \leq& L^{2} \left\|x_{0} - x^{\star} - s\nabla f(x_{0})\right\|^{2} \nonumber \\
                                                                               & \leq & 2L^{2} \left( \left\| x_{0} - x^{\star}\right\|^{2} + s^{2} \left\| \nabla f(x_{0})\right\|^{2}\right) \nonumber \\
                                                                               & \leq & 2L^{2} (1 +L^{2}s^{2}) \left\| x_{0} - x^{\star} \right\|^{2}.
 \end{eqnarray}
According to~\eqref{eqn: iterate2}, we have
\begin{eqnarray}
 \label{eqn: gradient2}
  \left\| \nabla f(x_{2}) \right\|^{2}  & =  & \left\| \nabla f \left(x_{0} - s \nabla f(x_{0}) - \frac{5}{4} s \nabla f \left(x_{0} - s\nabla f(x_{0}) \right) \right) \right\|^{2}   \nonumber \\
                                                    & \leq & L^{2} \left\|x_{0} - x^{\star} - s\nabla f(x_{0}) - \frac{5}{4} s \nabla f \left(x_{0} - s\nabla f(x_{0}) \right) \right\|^{2}  \nonumber \\
                                                    & \leq & 3L^{2} \left( \left\| x_{0} - x^{\star}\right\|^{2} + s^{2} \left\| \nabla f(x_{0})\right\|^{2} + \frac{25}{16}s^{2} \left\| \nabla f(x_{0} - s\nabla f(x_{0}))\right\|^2\right)  \nonumber \\
                                                    & \leq & 3L^{2} \left[ (1 + L^{2}s^{2}) \left\| x_{0} - x^{\star}\right\|^{2} + \frac{25}{16}L^{2}s^{2} \left\| x_{0} - x^{\star} - s\nabla f(x_{0})\right\|^{2} \right]  \nonumber \\
                                                   & \leq & 3L^{2} \left[ (1 + L^{2}s^{2}) \left\| x_{0} - x^{\star}\right\|^{2} + \frac{25}{8}L^{2}s^{2}\left( \left\| x_{0} - x^{\star}\right\|^{2} + s^{2}\left\|\nabla f(x_{0})\right\|^{2} \right)\right]  \nonumber \\
                                                   & \leq & 3L^{2} \left( 1 + \frac{33}{8}L^{2}s^{2} + \frac{25}{8}L^{4}s^{4} \right) \left\| x_{0} - x^{\star}\right\|^{2}. 
 \end{eqnarray}
With~\eqref{eqn: iterate1}-\eqref{eqn: iterate3}, we have
\begin{eqnarray}
 \label{eqn: gradient3}
 \left\| \nabla f(x_{3})\right\|^{2} & \leq &L^{2} \left\| x_{3} - x^{\star} \right\|^{2}  \nonumber \\
                                                 & \leq &L^{2}  \left\| x_{0} - x^{\star} - s \nabla f(x_{0}) - \frac{27}{20}s \nabla f(x_{1})  - \frac{7}{5}s \nabla f\left(   x_{2} \right)  \right\|^{2}  \nonumber \\
                                                 &  =  & 4L^{2} \left( \left\| x_{0} - x^{\star} \right\|^{2} + s^{2} \left\| \nabla f(x_{0})\right\|^{2} + \frac{729}{400}s^{2} \left\| \nabla f(x_{1})\right\|^{2} + \frac{49}{25}s^{2} \left\| \nabla f(x_{2}) \right\|^{2}\right)  \nonumber \\
                                                 &  =  & 4L^{2} \left[ 1 + L^{2}s^{2} + \frac{729}{200} L^{2}s^{2} (1 + L^{2}s^{2}) + \frac{147}{25}L^{2}s^{2} \left( 1 + \frac{33}{8}L^{2}s^{2} + \frac{25}{8}L^{4}s^{4} \right)\right] \left\| x_{0} - x^{\star}\right\|^{2}  \nonumber \\
                                                 &  =  &\frac{L^{2}(40 + 381L^{2}s^{2} + 1156L^{4}s^{4} + 735L^{6}s^{6})}{10} \left\| x_{0} - x^{\star}\right\|^{2}.
 \end{eqnarray}
 Taking $s \leq 1/(3L)$ and using~\eqref{eqn: gradient1},~\eqref{eqn: gradient2} and~\eqref{eqn: gradient3}, we have
 \begin{align*}
  & \left\| \nabla f(x_{0}) \right\|^{2} \leq \frac{\left\| x_{0} - x^{\star} \right\|^{2}}{9s^{2}} , && \left\| \nabla f(x_{1}) \right\|^{2} \leq \frac{20 \left\| x_{0} - x^{\star} \right\|^{2} }{81s^{2}},\\
  &  \left\| \nabla f(x_{2}) \right\|^{2} \leq \frac{485\left\| x_{0} - x^{\star} \right\|^{2}}{972s^{2}}, && \left\| \nabla f(x_{3}) \right\|^{2} \leq \frac{2372 \left\| x_{0} - x^{\star} \right\|^{2} }{2187s^{2}}.
 \end{align*}
 
 \subsubsection{Estimate For $f(x_{k}) - f(x^\star)$ at $k = 0, 1$}
 According to~\eqref{eqn: iterate1}, we have
 \begin{eqnarray}
 \label{eqn: function_value_estimate1}
 f(x_{1}) - f(x^{\star}) & \leq &\frac{L}{2} \left\| x_{1} - x^{\star} \right\|^{2} \nonumber \\
                                 & \leq &\frac{L}{2} \left\| x_{0} - s\nabla f(x_{0}) - x^{\star} \right\|^{2} \nonumber \\
                                 & \leq &L \left(  \left\| x_{0} - x^{\star} \right\|^{2} + s^{2} \left\| \nabla f(x_{0})\right\|^{2} \right) \nonumber \\
                                 & \leq &L (1 + L^2s^2)\left\| x_{0} - x^{\star} \right\|^{2}.
  \end{eqnarray}
   Taking $s \leq 1/(3L)$,~\eqref{eqn: function_value_estimate1} tells us that
 \begin{align*}
  & f(x_{0}) - f(x^{\star}) \leq \frac{\left\| x_{0} - x^{\star} \right\|^{2}}{6s} , &&  f(x_{1}) - f(x^{\star})  \leq \frac{10 \left\| x_{0} - x^{\star} \right\|^{2} }{27s}.
 \end{align*}

 \subsubsection{Estimate for Lyapunov function $\mathcal{E}(2)$ and $\mathcal{E}(3)$}
 With the phase-space representation form~\eqref{eqn: Nesterov_convex_symplectic2}, we have
 \begin{equation}
 \label{eqn: velocity_2}
 v_{2} = \frac{x_{3} - x_{2}}{\sqrt{s}} = \frac{1}{10} \nabla f(x_{1}) + \frac{7}{5} \nabla f(x_{2}).
 \end{equation}
 According to~\eqref{eqn: NAGM-C_original_lypunov}, the Lyapunov function $\mathcal{E}(2)$ can be written as 
 \begin{align*}
 \mathcal{E}(2)  =      15s \left( f(x_{2}) - f(x^{\star})\right) + \frac{1}{2} \left\| 2(x_{2} - x^{\star}) + 5\sqrt{s} v_{2} + 3s \nabla f(x_{2})\right\|.^{2} \end{align*}
With~\eqref{eqn: velocity_2} and Cauchy-Schwarz inequality, we have
 \begin{align*}
 \mathcal{E}(2)  & \leq  \frac{15Ls}{2} \left\| x_{2} - x^{\star}\right\|^{2} + \frac{3}{2} \left( 4 \left\| x_{2} - x^{\star} \right\|^{2} + 25s \left\| v_{2}\right\|^{2} + 9s^{2} \left\| \nabla f(x_{2})\right\|^{2}\right) \\
                         & \leq \left( \frac{15Ls}{2} + 6 \right) \left\| x_{2} - x^{\star} \right\|^{2} + \frac{27}{2}s^{2} \left\| \nabla f(x_{2})\right\|^{2}  + \frac{75}{2}s^{2} \left\| \frac{1}{10} \nabla f(x_{1}) + \frac{7}{5} \nabla f(x_{2}) \right\|^{2} \\
                         & \leq  \left( \frac{15Ls}{2} + 6 \right) \left\| x_{2} - x^{\star} \right\|^{2} + \frac{27}{2}s^{2} \left\| \nabla f(x_{2})\right\|^{2}  + \frac{3}{4}s^{2} \left\|  \nabla f(x_{1}) \right\|^{2} + 147s^{2} \left\|\nabla f(x_{2})\right\|^{2}  \\
                          & =     \left( \frac{15Ls}{2} + 6 \right) \left\| x_{2} - x^{\star} \right\|^{2} + \frac{321}{2}s^{2} \left\| \nabla f(x_{2})\right\|^{2} + \frac{3}{4}s^{2} \left\|  \nabla f(x_{1}) \right\|^{2} .
 \end{align*}  
Furthermore, with~\eqref{eqn: iterate2}, we have 
 \begin{align*}                           
   \mathcal{E}(2)  \leq   \left( \frac{15Ls}{2} + 6 \right) \left\| x_{0}- x^{\star} -s\nabla f(x_{0}) - \frac{5}{4}s \nabla f(x_{0} - s\nabla f(x_{0}))  \right\|^{2} + \frac{321}{2}s^{2} \left\| \nabla f(x_{2})\right\|^{2} + \frac{3}{4}s^{2} \left\|  \nabla f(x_{1}) \right\|^{2}.  
    \end{align*}
Finally, with~\eqref{eqn: gradient1}-\eqref{eqn: gradient2}, Cauchy-Schwarz inequality tells 
 \begin{eqnarray}
\label{eqn: estimate_energy2}
\mathcal{E}(2) &\leq & \left\{ \left[\frac{3}{16} \left(12 + 15Ls\right) + \frac{963}{16}L^{2}s^{2}\right] \left(8 + 33L^{2}s^{2} + 25L^{4}s^{4} \right) + \frac{3}{2}L^{2}s^{2}(1 + L^{2}s^{2})\right\}  \cdot \left\| x_{0} - x^{\star} \right\|^{2} \nonumber \\
                        & =   &\frac{288 + 360Ls + 8916L^{2}s^{2} + 1485L^{3}s^{3} + 32703L^{4}s^{4} + 1125L^{5}s^{5} + 24075L^{6}s^{6}}{16} \nonumber \\
                        &&\cdot \left\| x_{0} - x^{\star} \right\|^{2}.
 \end{eqnarray}  
By Lemma~\ref{lm:e_decay}, when the step size $s \leq 1/(3L)$, ~\eqref{eqn: estimate_energy2} tells us
\[
\mathcal{E}(3) \leq \mathcal{E}(2) \leq 119 \left\| x_{0} - x^{\star} \right\|^{2}.
\]                         
                               

%

\subsection{Proof of Theorem~\ref{thm: modified-NAGM-C}}
\label{subsec: position_acceleration}

Let $w_{k} = (1/2) \left[ (k + 2)x_{k} - ky_{k} + (k - 1)s \nabla f(y_{k}) \right]$ for convenience. Using the dynamics of  $\{(x_{k}, y_{k})\}_{k = 0}^{\infty}$ generated by the modified NAG-\texttt{C}~\eqref{eqn: modified-NAGM-C}, we have
$$
\begin{aligned}
w_{k + 1} & = \frac{1}{2} \left[ (k + 3) x_{k + 1} - (k + 1)y_{k + 1} + s k  \nabla f(y_{k + 1}) \right] \\
                & = \frac{1}{2} \left[ (k + 3)  \left(y_{k + 1} + \frac{k}{k + 3}(y_{k + 1} - y_{k}) - \frac{sk}{k + 3} \nabla f(y_{k + 1}) \right.\right.\\
                & \qquad \qquad \qquad \quad \left. \left. + \frac{s (k - 1)}{k + 3} \nabla f(y_{k}) \right)- (k + 1)y_{k + 1} + s k  \nabla f(y_{k + 1}) \right] \\
                & = \frac{1}{2} \left[ (k + 2)y_{k + 1} - ky_{k} + s (k - 1) \nabla f(y_{k - 1}) \right] \\
                & = w_{k} - \frac{s(k + 2)}{2} \nabla f(x_{k}).
\end{aligned}
$$
Hence, the difference between $\left\| w_{k + 1} - x^{\star} \right\|^{2}$ and $\left\| w_{k} - x^{\star} \right\|^{2}$ is
$$
\begin{aligned}
     \frac{1}{2}\left\| w_{k + 1} - x^{\star} \right\|^{2} - \frac{1}{2} \left\| w_{k} - x^{\star} \right\|^{2} 
 & =  \left\langle w_{k + 1} - w_{k}, \frac{w_{k + 1} + w_{k}}{2} - x^{\star}\right\rangle \\
 & =  \frac{s^{2}(k + 2)^{2} }{8} \left\| \nabla f(x_{k})\right\|^{2} - \frac{s(k + 2)}{2} \left\langle \nabla f(x_{k}), w_{k} - x^{\star} \right\rangle \\
 & = \frac{s^{2} (k + 2)^{2} }{8} \left\| \nabla f(x_{k})\right\|^{2} - \frac{s^{2} (k - 1)(k + 2)}{4} \left\langle \nabla f(x_{k}), \nabla f(y_{k})\right\rangle \\
 & \quad - \frac{s(k + 2)}{4} \left\langle \nabla f(x_{k}), (k + 2)x_{k} - ky_{k} - 2x^{\star} \right\rangle.
\end{aligned}
$$
If the step size satisfies $s \leq 1/L$, there exists a tighter basic inequality than~\cite[Equation (22)]{su2016differential} and~\cite[Lemma 3.6]{bubeck2015convex} for any function $f(x) \in \mathcal{F}^{1}_{L}(\mathbb{R}^n)$
\begin{align}\label{eqn: more_tight_general_convex}
f(x - s \nabla f(x)) \leq f(y) + \left\langle \nabla f(x), x - y \right\rangle - \frac{s}{2} \left\| \nabla f(x) \right\|^{2} - \frac{s}{2} \left\| \nabla f(x) - \nabla f(y) \right\|^{2}.
\end{align}
With~\eqref{eqn: more_tight_general_convex}, we can obtain that 
\begin{multline*}
      (k + 2) \left( f(y_{k + 1}) - f(x^{\star}) \right) - k \left( f(y_{k}) - f(x^{\star}) \right) \leq    \left\langle \nabla f(x_{k}), (k + 2)x_{k} - ky_{k} -2x^{\star} \right\rangle \\
                                                                                                                                      - \frac{s(k + 2)}{2} \left\| \nabla f(x_{k}) \right\|^{2} - \frac{sk}{2} \left\| \nabla f(x_{k}) - \nabla f(y_{k}) \right\|^{2}.
\end{multline*}
Consider the discrete Lyapunov function
\begin{equation}
\label{eqn: discrete_energy_functional}
\mathcal{E}(k) = \frac{s(k + 1)^{2}}{4} \left( f(y_{k}) - f(x^{\star}) \right) + \frac{1}{2} \left\| w_{k} - x^{\star} \right\|^{2}.
\end{equation}
Hence, the difference between $\mathcal{E}(k + 1)$ and $\mathcal{E}(k)$ in~\eqref{eqn: discrete_energy_functional} is
\begin{align}\label{eqn: td_estimate}
          \mathcal{E}(k + 1) - \mathcal{E}(k) & =       - \frac{1}{4} \left( f(y_{k}) - f(x^{\star}) \right) - \frac{s^{2} (k - 1)(k + 2)}{2} \left\langle \nabla f(x_{k}), \nabla f(y_{k}) \right\rangle \nonumber \\
                                                                 &\quad - \frac{s^{2}k (k +2)}{8} \left\| \nabla f(x_{k}) - \nabla f(y_{k}) \right\|^{2} \nonumber \\
                                                                 & \leq   - \frac{1}{4} \left( f(y_{k}) - f(x^{\star}) \right) - \frac{s^{2} (k - 1)(k + 2)}{8} \left\| \nabla f(x_{k}) + \nabla f(y_{k}) \right\|^{2}.
\end{align}
When $k \geq 2$, we have
\begin{align*}
\mathcal{E}(k + 1) - \mathcal{E}(2) & = \sum_{i = 2}^{k} \left( \mathcal{E}(i + 1) - \mathcal{E}(i) \right) \\
                                                  & \leq - \sum_{i = 2}^{k} \frac{s^{2} (i - 1)(i + 2)}{8} \left\| \nabla f(x_{i}) + \nabla f(y_{i}) \right\|^{2} \\
                                                  & \leq - \frac{s^{2}}{8} \min_{2 \leq i \leq k} \left\| \nabla f(x_{i}) + \nabla f(y_{i}) \right\|^{2} \sum_{i = 2}^{k} (i - 1)(i + 2) \\
                                                  & \leq - \frac{s^{2}}{24} \min_{2 \leq i \leq k} \left\| \nabla f(x_{i}) + \nabla f(y_{i}) \right\|^{2} \cdot  k (k^{2} + 3k - 4)  \\
                                                  & \leq  - \frac{s^{2}}{24} \min_{2 \leq i \leq k} \left\| \nabla f(x_{i}) + \nabla f(y_{i}) \right\|^{2} \cdot \frac{(k+1)^3}{7} \\
                                                  & = - \frac{s^{2} (k+1)^{3}}{168}\min_{2 \leq i \leq k} \left\| \nabla f(x_{i}) + \nabla f(y_{i}) \right\|^{2}.
\end{align*}
Furthermore, we have
\[
\min_{2 \leq i \leq k} \left\| \nabla f(x_{i}) + \nabla f(y_{i}) \right\|^{2} \leq \frac{168 \left[\mathcal{E}(2) - \mathcal{E}(k + 1)\right]}{s^{2} (k + 1)^{3}} \leq \frac{168 \mathcal{E}(2)}{s^{2} (k + 1)^{3}}. 
\]
Combining with~\eqref{eqn: td_estimate}, we obtain that
\begin{align*}
\min_{2 \leq i \leq k} \left\| \nabla f(x_{i}) + \nabla f(y_{i}) \right\|^{2} & \leq \frac{168 \mathcal{E}(1)}{s^{2} (k + 1)^{3}} \\
                                                                                                          & \leq \frac{168}{s^{2} (k + 1)^{3}} \left[ s \left( f(y_{1}) - f(x^{\star})\right) + \frac{1}{2} \left\| w_{1} - x^{\star} \right\|^{2}\right] \\
                                                                                                          & \leq \frac{168}{s^{2} (k + 1)^{3}} \left( \frac{Ls}{2} \left\| y_{1} - x^\star \right\|^2 + \frac{1}{2} \left\| w_{0} - s \nabla f(x_{0}) - x^{\star} \right\|^{2} \right)\\
                                                                                                          & =    \frac{168}{s^{2} (k + 1)^{3}} \left( \frac{Ls}{2} \left\| x_{0} - s\nabla f(x_{0}) - x^\star \right\|^2 + \frac{1}{2} \left\| x_{0} - \frac{3s}{2} \nabla f(x_{0}) - x^{\star} \right\|^{2} \right) \\
                                                                                                          & \leq \frac{882  \left\| x_{0} - x^{\star} \right\|^{2}}{s^{2} (k + 1)^{3}}.
\end{align*}
Similarly, when $s \leq 1/L$, for $k = 0$, we have
\[
\left\| \nabla f(x_{0}) + \nabla f(y_{0}) \right\|^{2} = 4 \left\| \nabla f(x_{0})\right\|^{2} \leq \frac{4 \left\|x_{0} - x^{\star}\right\|^{2}}{s^{2}};
\]
for $k = 1$, following the modified NAG-\texttt{C}~\eqref{eqn: modified-NAGM-C}, we obtain $(x_{1}, y_{1})$  as
\[
y_{1} = x_{0} - s\nabla f(x_{0}), \quad x_{1} = x_{0} - \frac{4}{3}s \nabla f(x_{0}),
\]
furthermore we have
\begin{align*}
\left\| \nabla f(x_{1}) + \nabla f(y_{1}) \right\|^{2} & \leq 2 \left( \left\| \nabla f(x_{1}) \right\|^{2} +  \left\| \nabla f(y_{1}) \right\|^{2} \right) \\
                                                                            & \leq \frac{2}{s^{2}} \left( \left\| x_{1} - x^{\star} \right\|^{2} +  \left\| y_{1} - x^{\star} \right\|^{2} \right) \\
                                                                            & \leq \frac{4}{s^{2}} \left[ \left( 1 + L^{2}s^{2} \right) \left\| x_{0} - x^{\star} \right\|^{2} +  \left( 1 + (16/9)L^{2}s^{2} \right) \left\| x_{0} - x^{\star} \right\|^{2} \right] \\
                                                                            & \leq \frac{172s^{2 } \left\| x_{0} - x^{\star} \right\|^{2}}{9}.
\end{align*}
For function value,~\eqref{eqn: td_estimate} tells
\[
f(y_{k}) - f(x^{\star}) \leq \frac{4\mathcal{E}(1)}{s (k + 1)^2} \leq \frac{21 \left\| x_{0} - x^{\star} \right\|^{2} }{s (k + 1)^{2}}
\]
for all $k \geq 1$. Together with
\[
f(y_{0}) - f(x^{\star}) \leq \frac{\left\| x_{0} - x^{\star} \right\|^{2}}{s},
\]
we complete the proof.

\subsection{Nesterov's Lower Bound}
\label{subsec: lower_bound}
Recall~\cite[Theorem 2.1.7]{nesterov2013introductory}, for any $k$, $1 \leq k \leq (1/2) (n - 1)$, and any $x_{0} \in \mathbb{R}^{n}$, there exists a function $f \in \mathcal{F}_{L}^{1}( \mathbb{R}^{n})$ such that any first-order method obeys
\[
f(x_{k}) - f(x^{\star}) \geq \frac{3L \left\| x_{0} - x^{\star} \right\|^{2}}{32(k+1)^{2}}.
\]
Using the basic inequality for $f(x) \in \mathcal{F}_{L}^{1}( \mathbb{R}^{n})$,
\[
\left\| \nabla f(x_{k})\right\| \left\|x_{k} - x^{\star} \right\| \geq \left\langle  \nabla f(x_{k}), x_{k} - x^{\star} \right\rangle \geq f(x_{k}) - f(x^{\star}),
\]
we have 
\[
\left\| \nabla f(x_{k})\right\| \geq \frac{3L \left\| x_{0} - x^{\star} \right\|^{2}}{32(k+1)^{2}  \max\limits_{1 \leq k \leq  \frac{n - 1}{2}}\left\|x_{k} - x^{\star} \right\|}
\]
for  $1 \leq k \leq (1/2) (n - 1)$.

\section{Technical Details in Section~\ref{sec:extension}}
\label{sec: appendix2}
\subsection{Proof of Theorem~\ref{thm:c_vary_alpha3}: Case $\alpha = 3$ }
\label{subsec: proof_gamma_=3}
Before starting to prove Theorem~\ref{thm:c_vary_alpha3}, we first look back our high-resolution ODE framework in Section~\ref{sec:techniques}. 
\begin{itemize}
\item \textbf{Step~$1$}, the generalized high-resolution ODE has been given in~\eqref{eqn: generalize_NAGM-C_ode}.  
\item \textbf{Step~$2$}, the continuous Lyapunov function is constructed as 
         \begin{multline}
         \label{eqn: ef_alpha=3}
         \mathcal{E}(t) = t \left[ t + \left(\frac{3}{2} - \beta \right) \sqrt{s} \right] \left( f(X(t)) - f(x^{\star}) \right) \\
                                   + \frac{1}{2} \left\| 2(X(t) - x^{\star}) + t\left( \dot{X}(t) + \beta \sqrt{s} \nabla f(X(t)) \right) \right\|^{2}.
         \end{multline}
         Following this Lyapunov function~\eqref{eqn: ef_alpha=3}, we can definitely obtain similar results as Theorem~\ref{thm: first-order_NAGM-C_ode} and Corollary~\ref{coro:c_f_bound}. The detailed calculation, about the estimate of the optimal constant $\beta$ and how the constant $\beta$ influence the initial point, is left for readers.  
\item \textbf{Step $3$},  before constructing discrete Lyapunov functions,  we show the phase-space representation~\eqref{eqn: generalize_NAG-C_position} as   
        \begin{equation}
        \label{eqn: generalize_NAGM-C_SES}
        \begin{aligned}
       & x_{k} - x_{k - 1} = \sqrt{s}v_{k - 1} \\
       & v_{k} - v_{k - 1} = - \frac{\alpha}{k} v_{k} - \beta \sqrt{s} \left( \nabla f(x_{k}) - \nabla f(x_{k - 1}) \right) - \left(1 + \frac{\alpha}{k} \right)\sqrt{s} \nabla f(x_{k}).
       \end{aligned}
       \end{equation}   
\end{itemize}
Now, we show how to construct the discrete Lyapunov function and analyze the algorithms~\eqref{eqn: generalize_NAG-C_position} with $\alpha = 3$ in order to prove Theorem~\ref{thm:c_vary_alpha3}.


\subsubsection{Case: $\beta < 1$}
When $\beta < 1$, we know that the function
\[
g(k) = \frac{k + 3}{k + 3 - \beta}
\]
decreases monotonically. Hence we can construct the discrete Lyapunov function as
\begin{multline}
\label{eqn: NAG-C_generalize=3_lypunov_<1}
\mathcal{E}(k) = s (k + 4)(k + 1) \left( f(x_{k}) - f(x^{\star}) \right) \\
                          + \frac{k + 3}{2(k + 3 - \beta)} \left\|2 (x_{k+1} - x^{\star}) + \sqrt{s} (k + 1)\left( v_{k} + \beta \sqrt{s} \nabla f(x_{k}) \right) \right\|^{2},
\end{multline}
which is slightly different from the discrete Lyapunov function~\eqref{eqn: NAGM-C_original_lypunov} for NAG-\texttt{C}. When $\beta \rightarrow 1$, the discrete Lyapunov function~\eqref{eqn: NAG-C_generalize=3_lypunov_<1} approximate to~\eqref{eqn: NAGM-C_original_lypunov} as $k \rightarrow \infty$. 

With the phase-space representation~\eqref{eqn: generalize_NAGM-C_SES} for $\alpha = 3$, we can obtain 
\begin{align}
\label{eqn: generalize_NAGM-C_SES=3}
( k + 3 )\left( v_{k} + \beta \sqrt{s} \nabla f(x_{k}) \right) -  k \left( v_{k - 1} + \beta \sqrt{s} \nabla f(x_{k - 1}) \right) =  - \sqrt{s} \left( k + 3 - 3 \beta \right)  \nabla f(x_{k}).
\end{align}
The difference of the discrete Lyapunov function~(\ref{eqn: NAG-C_generalize=3_lypunov_<1}) of the $k$-th iteration is
\begin{align*}
\mathcal{E}(k + 1) - \mathcal{E}(k)  & =  s (k + 5)(k + 2) \left( f(x_{k + 1}) - f(x^{\star}) \right) -s (k + 4)(k + 1) \left( f(x_{k}) - f(x^{\star}) \right) \\
                                                         &\quad + \frac{k + 4}{2(k + 4 - \beta)} \left\|2 (x_{k+2} - x^{\star}) + \sqrt{s} (k + 2)  \left( v_{k + 1} + \beta \sqrt{s} \nabla f(x_{k + 1}) \right) \right\|^{2} \\
                                                         &\quad - \frac{k + 3}{2(k + 3 - \beta)} \left\|2 (x_{k+1} - x^{\star}) + \sqrt{s} (k + 1)  \left( v_{k} + \beta \sqrt{s} \nabla f(x_{k}) \right) \right\|^{2}\\
                                                         & \leq  s \left( k + 4 \right)(k + 1)\left( f(x_{k + 1}) - f(x_{k}) \right) +  s (2k + 6) \left( f(x_{k + 1}) - f(x^{\star}) \right) \\
                                                         & \quad + \frac{k + 4}{k + 4-\beta}\left [\left\langle 2(x_{k + 2} - x_{k + 1}) + \sqrt{s} (k + 2) \left( v_{k + 1} + \beta \sqrt{s} \nabla f(x_{k + 1}) \right)\right.\right. \\
                                                        & \qquad \qquad \qquad \qquad \qquad \qquad \qquad - \sqrt{s} (k + 1)\left(v_{k} + \beta \sqrt{s} \nabla f(x_{k}) \right), \\
                                                        & \qquad \qquad \qquad \qquad   \left. \left.2 (x_{k + 2} - x^{\star}) + \sqrt{s} (k + 2) \left(v_{k + 1} + \beta \sqrt{s} \nabla f(x_{k + 1}) \right)\right\rangle \right.\\
                                                         & \qquad \qquad \qquad \quad \left. - \frac{1}{2} \left\|2(x_{k + 2} - x_{k + 1}) + \sqrt{s} (k + 2)\left( v_{k + 1} + \beta \sqrt{s} \nabla f(x_{k + 1}) \right) \right.\right.\\
                                                        &\qquad \qquad \qquad \qquad \qquad\qquad \qquad \qquad  \left.\left. - \sqrt{s} (k + 1) \left( v_{k} + \beta \sqrt{s} \nabla f(x_{k}) \right) \right\|^{2} \right]\\
                                                        & = s \left( k + 4 \right) (k + 1) \left( f(x_{k + 1}) - f(x_{k}) \right) + s (2k + 6) \left( f(x_{k + 1}) - f(x^{\star}) \right) \\
                                                         &\quad - \left\langle  s(k + 4)  \nabla f(x_{k + 1}), 2 (x_{k + 2} - x^{\star}) + \sqrt{s}(k + 2) \left( v_{k + 1} + \beta\sqrt{s}\nabla f(x_{k + 1}) \right)\right\rangle \\
                                                         &\quad - \frac{1}{2} s^{2} (k + 4) \left( k + 4 - \beta \right)  \left\|  \nabla f(x_{k + 1}) \right\|^{2}. 
\end{align*}    
With the basic inequality of any function $f(x) \in \mathcal{F}_{L}^{1}(\mathbb{R}^{n})$
$$
\left\{ \begin{aligned}
         & f(x_{k})       \geq f(x_{k + 1}) + \left\langle \nabla f(x_{k + 1}), x_{k} - x_{k + 1} \right\rangle + \frac{1}{2L} \left\| \nabla f(x_{k + 1}) - \nabla f(x_{k})  \right\|^{2} \\
         & f(x^{\star})  \geq f(x_{k + 1}) + \left\langle \nabla f(x_{k + 1}), x^{\star} - x_{k + 1} \right\rangle,                
         \end{aligned}\right. 
$$
and the phase-space representation~\eqref{eqn: generalize_NAGM-C_SES}
\[
x_{k + 2} = x_{k + 1} +\sqrt{s}v_{k + 1},
\]
the difference of the discrete Lyapunov function~(\ref{eqn: NAG-C_generalize=3_lypunov_<1}) can be estimated as                                                 
\begin{align*}                                                        
\mathcal{E}(k + 1) - \mathcal{E}(k) & \leq s(k + 4)(k + 1) \left( \left\langle \nabla f(x_{k+1}), x_{k + 1} - x_{k} \right\rangle -  \frac{1}{2L} \left\| \nabla f(x_{k + 1}) - \nabla f(x_{k})  \right\|^{2} \right) \\
                                                        & \quad + s(2k + 6) \left(f(x_{k + 1}) -  f(x^{\star})  \right) -  s(2k + 8) \left\langle \nabla f(x_{k + 1}), x_{k + 1} - x^{\star}\right\rangle \\
                                                        & \quad - s^{\frac{3}{2}} (k + 4)^{2} \left\langle \nabla f(x_{k+1}), v_{k + 1} \right\rangle - \beta s^{2} (k + 2)(k + 4) \left\| \nabla f(x_{k + 1}) \right\|^{2}\\
                                                        & \quad  - \frac{1}{2} s^{2} (k + 4) \left( k + 4 - \beta \right)  \left\|  \nabla f(x_{k + 1}) \right\|^{2} \\
                                                        & \leq    - s^{\frac{3}{2}} (k + 4)  \left\langle \nabla f(x_{k + 1}), (k + 4)v_{k + 1} - (k + 1) v_{k}  \right\rangle \\
                                                        & \quad - \frac{s (k + 4)(k + 1)}{2L} \left\| \nabla f(x_{k + 1}) - \nabla f(x_{k}) \right\|^{2}\\
                                                        & \quad - 2s \left( f(x_{k + 1}) - f(x^{\star}) \right) \\
                                                        & \quad - s^{2}\left[ \beta(k +4) (k + 2) + \frac{1}{2} (k + 4) \left(k + 4 - \beta\right) \right]  \left\| \nabla f(x_{k + 1}) \right\|^{2}. 
\end{align*}                                     
Utilizing the phase-space representation~\eqref{eqn: generalize_NAGM-C_SES} again, we calculate the difference of the discrete Lyapunov function~(\ref{eqn: NAG-C_generalize=3_lypunov_<1}) as
\begin{align*}          
 \mathcal{E}(k + 1) - \mathcal{E}(k)  & \leq    s^{\frac{3}{2}} (k + 4)  \left\langle \nabla f(x_{k + 1}), \beta \sqrt{s} (k + 1) \left( \nabla f(x_{k + 1}) - \nabla f(x_{k}) \right) + \sqrt{s} (k + 4) \nabla f(x_{k + 1}) \right\rangle \\
                                                        & \quad - \frac{s (k + 4)(k + 1)}{2L} \left\| \nabla f(x_{k + 1}) - \nabla f(x_{k}) \right\|^{2}\\
                                                        & \quad - s^{2}\left[ \beta(k +4) (k + 2) + \frac{1}{2} (k + 4) \left(k + 4 - \beta\right) \right]  \left\| \nabla f(x_{k + 1}) \right\|^{2}\\
                                                       & \leq \beta s^{2} (k + 4)(k + 1) \left\langle \nabla f(x_{k + 1}), \nabla f(x_{k + 1}) - \nabla f(x_{k})  \right\rangle \\
                                                        & \quad - \frac{s (k + 4) (k + 1)}{2L} \left\| \nabla f(x_{k + 1}) - \nabla f(x_{k}) \right\|^{2}\\
                                                        & \quad - \left[ (k + 2) (k +4) \beta - \frac{1}{2} \left(k + 4 + \beta \right)(k + 4)  \right] s^{2} \left\| \nabla f(x_{k + 1}) \right\|^{2} \\
                                                        & \leq   \frac{L\beta^2  s^{3} }{2} (k + 4)(k + 1) \left\| \nabla f(x_{k+ 1})\right\|^{2}\\
                                                        & \quad - \left[ (k + 2) (k +4) \beta - \frac{1}{2} \left(k + 4 + \beta \right)(k + 4)  \right] s^{2} \left\| \nabla f(x_{k + 1}) \right\|^{2} \\
                                                        & =  - \left[ \beta(k + 2)  - \frac{1}{2}\left( k + 4 + \beta \right) - \frac{L\beta^2 s}{2} (k + 1) \right](k +4) s^{2} \left\| \nabla f(x_{k + 1}) \right\|^{2}.
 \end{align*}
To guarantee that the Lyapunov function $\mathcal{E}(k)$ is decreasing, a sufficient condition is
\begin{align}\label{eqn: sufficient_generalize_=3}
\beta(k + 2)  - \frac{1}{2}\left( k + 4 + \beta \right) - \frac{L\beta^2 s}{2} (k + 1) \geq 0.
\end{align}
Simple calculation tells us that~\eqref{eqn: sufficient_generalize_=3} can be rewritten as
\begin{align}\label{eqn: sufficient_generalize_=3_step}
s \leq \frac{(2 \beta - 1)k + 3 \beta - 4 }{(k + 1) L\beta^{2}} = \frac{1}{L\beta^{2}}\left( 2\beta- 1 + \frac{\beta - 3}{k + 1} \right).
\end{align}
Apparently, when $\beta \rightarrow 1$, the step size satisfies
\[
0 < s \leq \frac{k-1}{k + 1} \cdot \frac{1}{L}
\]
which is consistent with~\eqref{eqn: NAGM-C-condition}.
Now, we turn to discuss the parameter $0 \leq \beta < 1$ case by case.
\begin{itemize}
\item When the parameter $\beta \leq 1/2 $, the sufficient condition~(\ref{eqn: sufficient_generalize_=3}) for the Lyapunov function $\mathcal{E}(k)$ decreasing cannot be satisfied for sufficiently large $k$.
\item When the parameter $1/2 < \beta <1$,  since the function $h(k) = \frac{1}{L\beta^{2}}\left( 2\beta- 1 + \frac{\beta - 3}{k + 1} \right)$ increases monotonically for $k \geq 0$, there exists $k_{3, \beta} = \left\lfloor \frac{4 - 3\beta}{2\beta - 1}\right\rfloor + 1$ such that the step size
\[
s \leq  \frac{(2 \beta - 1)k_{3,\beta} + 3 \beta - 4 }{(k_{3,\beta} + 1) L\beta^{2}}
\]
works for any $k \geq k_{3,\beta}$ ($k_{3,\beta} \rightarrow 2$ with $\beta \rightarrow 1$). Then, the difference of the discrete Lyapunov function~(\ref{eqn: NAG-C_generalize=3_lypunov_<1}) can be estimated as
         \[
          \mathcal{E}(k + 1) - \mathcal{E}(k) \leq -s^{2} \left( \frac{2\beta- 1 - L\beta^2s}{2} \right) (k - k_{3,\beta})^{2}  \left\| \nabla f(x_{k + 1}) \right\|^{2}.
         \]
Here, the proof is actually complete. Without loss of generality, we briefly show the expression is consistent with Theorem~\ref{thm:c_vary_alpha3} and omit the proofs for the following facts.  When $k \geq k_{3, \beta} + 1$, there exists some constant $\mathfrak{C}^{0}_{3, \beta} > 0$ such that
\[
\mathcal{E}(k + 1) - \mathcal{E}(k) \leq -s^{2} \mathfrak{C}^{0}_{3, \beta} (k + 1)^{2}  \left\| \nabla f(x_{k + 1}) \right\|^{2}.
\]
For $k \leq k_{3, \beta}$, using mathematic induction,  there also exists some constant $\mathfrak{C}^{1}_{3, \beta} > 0$ such that for $s = O(1/L)$, we have
\[
\left\| \nabla f(x_{k + 1}) \right\|^{2} \leq \frac{\mathfrak{C}^{1}_{3, \beta} \left\| x_{0} - x^{\star} \right\|^{2}}{s^{2}} \quad \text{and}\quad f(x_{k})- f(x^\star)\leq \frac{\mathcal{E}(k)}{4s} \leq \frac{\mathfrak{C}^{1}_{3, \beta} \left\| x_{0} - x^{\star} \right\|^{2}}{s}.
\]
\end{itemize}

%
%
%
%


\subsubsection{Case: $\beta \geq 1$}
When $\beta \geq 1$, we know that the function
\[
g(k) = \frac{k + 2}{k + 3 - \beta}
\]
decreases monotonically. Hence we can construct the discrete Lyapunov function as
\begin{multline}
\label{eqn: NAG-C_generalize=3_lypunov_>1}
\mathcal{E}(k) = s (k + 3)(k + 1) \left( f(x_{k}) - f(x^{\star}) \right) \\
                          + \frac{k + 2}{2(k + 3 - \beta)} \left\|2 (x_{k+1} - x^{\star}) + \sqrt{s} (k + 1)\left( v_{k} + \beta \sqrt{s} \nabla f(x_{k}) \right) \right\|^{2}.
\end{multline}
which for $\beta = 1$ is consistent with the discrete Lyapunov function~\eqref{eqn: NAGM-C_original_lypunov} for NAG-\texttt{C}. 

With the expression~\eqref{eqn: generalize_NAGM-C_SES=3}
\[
\label{eqn: simple_transform_generalize_NAGM-C_SES=3}
( k + 3 )\left( v_{k} + \beta \sqrt{s} \nabla f(x_{k}) \right) -  k \left( v_{k - 1} + \beta \sqrt{s} \nabla f(x_{k - 1}) \right) =  - \sqrt{s} \left( k + 3 - 3 \beta \right)  \nabla f(x_{k}),
\]
the difference of the discrete Lyapunov function~(\ref{eqn: NAG-C_generalize=3_lypunov_>1}) of the $k$-th iteration is
\begin{align*}
\mathcal{E}(k + 1) - \mathcal{E}(k)  & =  s (k + 4)(k + 2) \left( f(x_{k + 1}) - f(x^{\star}) \right) -s (k + 3)(k + 1) \left( f(x_{k}) - f(x^{\star}) \right) \\
                                                         &\quad + \frac{k + 3}{2(k + 4 - \beta)} \left\|2 (x_{k+2} - x^{\star}) + \sqrt{s} (k + 2)  \left( v_{k + 1} + \beta \sqrt{s} \nabla f(x_{k + 1}) \right) \right\|^{2} \\
                                                         &\quad - \frac{k + 2}{2(k + 3 - \beta)} \left\|2 (x_{k+1} - x^{\star}) + \sqrt{s} (k + 1)  \left( v_{k} + \beta \sqrt{s} \nabla f(x_{k}) \right) \right\|^{2}\\
                                                         & \leq  s \left( k + 3 \right)(k + 1)\left( f(x_{k + 1}) - f(x_{k}) \right) +  s (2k + 5) \left( f(x_{k + 1}) - f(x^{\star}) \right) \\
                                                         & \quad + \frac{k + 3}{k + 4-\beta}\left [\left\langle 2(x_{k + 2} - x_{k + 1}) + \sqrt{s} (k + 2) \left( v_{k + 1} + \beta \sqrt{s} \nabla f(x_{k + 1}) \right)\right.\right. \\
                                                        & \qquad \qquad \qquad \qquad \qquad \qquad \qquad - \sqrt{s} (k + 1)\left(v_{k} + \beta \sqrt{s} \nabla f(x_{k}) \right), \\
                                                        & \qquad \qquad \qquad \qquad   \left. \left.2 (x_{k + 2} - x^{\star}) + \sqrt{s} (k + 2) \left(v_{k + 1} + \beta \sqrt{s} \nabla f(x_{k + 1}) \right)\right\rangle \right.\\
                                                         & \qquad \qquad \qquad \quad \left. - \frac{1}{2} \left\|2(x_{k + 2} - x_{k + 1}) + \sqrt{s} (k + 2)\left( v_{k + 1} + \beta \sqrt{s} \nabla f(x_{k + 1}) \right) \right.\right.\\
                                                        &\qquad \qquad \qquad \qquad \qquad\qquad \qquad \qquad  \left.\left. - \sqrt{s} (k + 1) \left( v_{k} + \beta \sqrt{s} \nabla f(x_{k}) \right) \right\|^{2} \right]\\
                                                        & = s \left( k + 3 \right) (k + 1) \left( f(x_{k + 1}) - f(x_{k}) \right) + s (2k + 5) \left( f(x_{k + 1}) - f(x^{\star}) \right) \\
                                                         &\quad - \left\langle  s(k + 3)  \nabla f(x_{k + 1}), 2 (x_{k + 2} - x^{\star}) + \sqrt{s}(k + 2) \left( v_{k + 1} + \beta\sqrt{s}\nabla f(x_{k + 1}) \right)\right\rangle \\
                                                         &\quad - \frac{1}{2} s^{2} (k + 3) \left( k + 4 - \beta \right)  \left\|  \nabla f(x_{k + 1}) \right\|^{2}. 
\end{align*}   
With the basic inequality of any function $f(x) \in \mathcal{F}_{L}^{1}(\mathbb{R}^{n})$
$$
\left\{ \begin{aligned}
         & f(x_{k})       \geq f(x_{k + 1}) + \left\langle \nabla f(x_{k + 1}), x_{k} - x_{k + 1} \right\rangle + \frac{1}{2L} \left\| \nabla f(x_{k + 1}) - \nabla f(x_{k})  \right\|^{2} \\
         & f(x^{\star})  \geq f(x_{k + 1}) + \left\langle \nabla f(x_{k + 1}), x^{\star} - x_{k + 1} \right\rangle,                
         \end{aligned}\right. 
$$
and the phase-space representation~\eqref{eqn: generalize_NAGM-C_SES}
\[
x_{k + 2} = x_{k + 1} +\sqrt{s}v_{k + 1},
\]
the difference of the discrete Lyapunov function~(\ref{eqn: NAG-C_generalize=3_lypunov_>1}) can be estimated as                                                 
\begin{align*}                                                        
\mathcal{E}(k + 1) - \mathcal{E}(k) & \leq s(k + 3)(k + 1) \left( \left\langle \nabla f(x_{k+1}), x_{k + 1} - x_{k} \right\rangle -  \frac{1}{2L} \left\| \nabla f(x_{k + 1}) - \nabla f(x_{k})  \right\|^{2} \right) \\
                                                        & \quad + s(2k + 5) \left(f(x_{k + 1}) -  f(x^{\star})  \right) -  s(2k + 6) \left\langle \nabla f(x_{k + 1}), x_{k + 1} - x^{\star}\right\rangle \\
                                                        & \quad - s^{\frac{3}{2}} (k + 3)(k + 4) \left\langle \nabla f(x_{k+1}), v_{k + 1} \right\rangle - \beta s^{2} (k + 2)(k + 3) \left\| \nabla f(x_{k + 1}) \right\|^{2}\\
                                                        & \quad  - \frac{1}{2} s^{2} (k + 3) \left( k + 4 - \beta \right)  \left\|  \nabla f(x_{k + 1}) \right\|^{2} \\
                                                        & \leq    - s^{\frac{3}{2}} (k + 3)  \left\langle \nabla f(x_{k + 1}), (k + 4)v_{k + 1} - (k + 1) v_{k}  \right\rangle \\
                                                        & \quad - \frac{s (k + 3)(k + 1)}{2L} \left\| \nabla f(x_{k + 1}) - \nabla f(x_{k}) \right\|^{2}\\
                                                        & \quad - 2s \left( f(x_{k + 1}) - f(x^{\star}) \right) \\
                                                        & \quad - s^{2}\left[ \beta(k +3) (k + 2) + \frac{1}{2} (k + 3) \left(k + 4 - \beta\right) \right]  \left\| \nabla f(x_{k + 1}) \right\|^{2}. 
\end{align*}           
Utilize the phase-space representation~\eqref{eqn: generalize_NAGM-C_SES} again, we calculate the difference of the discrete Lyapunov function~(\ref{eqn: NAG-C_generalize=3_lypunov_>1}) as         
\begin{align*}          
 \mathcal{E}(k + 1) - \mathcal{E}(k)  & \leq    s^{\frac{3}{2}} (k + 3)  \left\langle \nabla f(x_{k + 1}), \beta \sqrt{s} (k + 1) \left( \nabla f(x_{k + 1}) - \nabla f(x_{k}) \right) + \sqrt{s} (k + 4) \nabla f(x_{k + 1}) \right\rangle \\
                                                        & \quad - \frac{s (k + 3)(k + 1)}{2L} \left\| \nabla f(x_{k + 1}) - \nabla f(x_{k}) \right\|^{2}\\
                                                        & \quad - s^{2}\left[ \beta(k +3) (k + 2) + \frac{1}{2} (k + 3) \left(k + 4 - \beta\right) \right]  \left\| \nabla f(x_{k + 1}) \right\|^{2}\\
                                                       & \leq \beta s^{2} (k + 3)(k + 1) \left\langle \nabla f(x_{k + 1}), \nabla f(x_{k + 1}) - \nabla f(x_{k})  \right\rangle \\
                                                        & \quad - \frac{s (k + 3) (k + 1)}{2L} \left\| \nabla f(x_{k + 1}) - \nabla f(x_{k}) \right\|^{2}\\
                                                        & \quad - \left[ (k + 2) (k +3) \beta - \frac{1}{2} \left(k + 4 + \beta \right)(k + 3)  \right] s^{2} \left\| \nabla f(x_{k + 1}) \right\|^{2} \\
                                                        & \leq   \frac{L\beta^2  s^{3} }{2} (k + 3)(k + 1) \left\| \nabla f(x_{k+ 1})\right\|^{2}\\
                                                        & \quad - \left[ (k + 2) (k +3) \beta - \frac{1}{2} \left(k + 4 + \beta \right)(k + 3)  \right] s^{2} \left\| \nabla f(x_{k + 1}) \right\|^{2} \\
                                                        & =  - \left[ \beta(k + 2)  - \frac{1}{2}\left( k + 4 + \beta \right) - \frac{L\beta^2 s}{2} (k + 1) \right](k +3) s^{2} \left\| \nabla f(x_{k + 1}) \right\|^{2}.
 \end{align*}
Consistently, we can obtain the sufficient condition for the Lyapunov function $\mathcal{E}(k)$ decreasing~\eqref{eqn: sufficient_generalize_=3} and the sufficient condition for step size~\eqref{eqn: sufficient_generalize_=3_step}.

Now, we turn to discuss the parameter $\beta \geq 1$ case by case.
\begin{itemize}
\item When the parameter $\beta \geq 3$,  since the function $h(k) = \frac{1}{L\beta^{2}}\left( 2\beta- 1 + \frac{\beta - 3}{k + 1} \right)$  decreases monotonically for $k \geq 0$,  then the condition of the step size 
         \[
         s \leq \frac{2 \beta - 1}{(1 + \epsilon)L\beta^2} < \frac{2 \beta - 1}{L\beta^2}
         \]
         holds for~(\ref{eqn: sufficient_generalize_=3}), where $\epsilon > 0$ is a real number. Hence, when $k \geq k_{3,\beta} + 1$, where \[k_{3,\beta}= \max\left\{0, \left\lfloor \beta - 3\right\rfloor + 1, \left\lfloor \frac{4 - 3\beta + L\beta^{2} s}{2\beta - 1 - L\beta^{2}s} \right\rfloor + 1 \right\},\] the difference of the discrete Lyapunov function~(\ref{eqn: NAG-C_generalize=3_lypunov_>1}) can be estimated as
         \[
          \mathcal{E}(k + 1) - \mathcal{E}(k) \leq - s^{2}\left( \frac{2\beta - 1 - L\beta^2s}{2} \right)  (k - k_{3,\beta})^{2}  \left\| \nabla f(x_{k + 1}) \right\|^{2}.
         \]
\item When the parameter $1 \leq \beta < 3$,   since the function $h(k) = \frac{1}{L\beta^{2}}\left( 2\beta- 1 + \frac{\beta - 3}{k + 1} \right)$ increases monotonically for $k \geq 0$, there exists $k_{3,\beta} =\max\left\{0, \left\lfloor \beta - 3\right\rfloor + 1, \left\lfloor \frac{4 - 3\beta}{2\beta - 1}\right\rfloor + 1\right\}$ such that the step size
\[
s \leq  \frac{(2 \beta - 1)k_{3,\beta} + 3 \beta - 4 }{(k_{3,\beta} + 1) L\beta^{2}}
\]
works for any $k \geq k_{3, \beta}$. When $\beta = 1$, the step size satisfies
\[0 < s \leq \frac{k - 1}{k + 1} \cdot \frac{1}{L}\]
which is consistent with~\eqref{eqn: NAGM-C-condition} and $k_{3,\beta} = 2$.
Then, the difference of the discrete Lyapunov function~(\ref{eqn: NAG-C_generalize=3_lypunov_<1}) can be estimated as
         \[
          \mathcal{E}(k + 1) - \mathcal{E}(k) \leq -  s^{2} \left( \frac{2\beta- 1 - L\beta^2s}{2} \right) (k - k_{3,\beta})^{2} \left\| \nabla f(x_{k + 1}) \right\|^{2}.
         \]
for all  $k \geq k_{3,\beta} + 1$.

\end{itemize}

By simple calculation, we complete the proof.

%

\subsection{Proof of Theorem~\ref{thm:c_vary_alpha3}: Case $\alpha > 3$}
\label{subsec: proof_gamma_>3}
Before starting to prove Theorem~\ref{thm:c_vary_alpha3}: Case $\alpha > 3$, we first also look back our high-resolution ODE framework in Section~\ref{sec:techniques}. 
\begin{itemize}
\item \textbf{Step $1$}, the generalized high-resolution ODE has been given in~\eqref{eqn: generalize_NAGM-C_ode}.  
\item \textbf{Step $2$}, the continuous Lyapunov function is constructed as 
         \begin{multline}
         \label{eqn: ef_alpha>3}
         \mathcal{E}(t) = t \left[ t + \left(\frac{\alpha}{2} - \beta\right) \sqrt{s} \right] \left( f(X(t)) - f(x^{\star}) \right) \\
                                   + \frac{1}{2} \left\| (\alpha - 1)(X(t) - x^{\star}) + t\left( \dot{X}(t) + \beta \sqrt{s} \nabla f(X(t)) \right) \right\|^{2},
         \end{multline}
         which is consistent with~\eqref{eqn: ef_alpha=3} for $\alpha \rightarrow 3$. Following this Lyapunov function~\eqref{eqn: ef_alpha>3}, we can obtain 
         \begin{equation}
         \label{eqn:convergence_ode_>3}
         \begin{aligned}
         & f(X(t)) - f(x^{\star}) \leq O\left( \frac{\|X(t_{0}) - x^{\star}\|^{2}}{(t - t_{0})^{2}}\right) \\
         & \int_{t_{0}}^{t} u \left(f(X(u)) - f(x^{\star}) \right) + \sqrt{s} u^{2} \left\| \nabla f(X(u)) \right\|^{2}du  \leq O\left( \|X(t_{0}) - x^{\star}\|^{2}\right)
         \end{aligned}
         \end{equation}
         for any $t > t_{0} = \max\left\{  \sqrt{s} (\alpha/2 - \beta)(\alpha -2)/(\alpha-3), \sqrt{s}(\alpha/2)\right\}$.  The two inequalities of~\eqref{eqn:convergence_ode_>3} for the convergence rate of function value is stronger than Corollary~\ref{coro:c_f_bound}. The detailed calculation, about the estimate of the optimal constant $\beta$ and how the constant $\beta$ influences the initial point, is left for readers.  
\item \textbf{Step $3$},  before constructing discrete Lyapunov functions,  we look back the phase-space representation~\eqref{eqn: generalize_NAGM-C_SES}     
       \[
        \begin{aligned}
       & x_{k} - x_{k - 1} = \sqrt{s}v_{k - 1} \\
       & v_{k} - v_{k - 1} = - \frac{\alpha}{k} v_{k} - \beta \sqrt{s} \left( \nabla f(x_{k}) - \nabla f(x_{k - 1}) \right) - \left(1 + \frac{\alpha}{k} \right)     
       \sqrt{s} \nabla f(x_{k}).
       \end{aligned}
       \] 
      The discrete functional is constructed as 
       \begin{multline}
       \label{eqn: energy_functional_ses_>3}
       \mathcal{E}(k) = s (k + 1) (k + \alpha - \beta + 1) \left( f(x_{k}) - f(x^{\star}) \right) \\
                                  + \frac{1}{2} \left\| (\alpha - 1)(x_{k + 1} - x^{\star}) + \sqrt{s}(k + 1)\left(v_{k} + \beta \sqrt{s} \nabla f(x_{k})\right)   \right\|^{2}.
       \end{multline}
       When $\beta = 1$, with $\alpha \rightarrow 3$, the discrete Lyapunov function $\mathcal{E}(k)$ degenerates to~\eqref{eqn: NAGM-C_original_lypunov}.
\end{itemize}
 Now, we procced to \textbf{Step $4$} to analyze the algorithms~\eqref{eqn: generalize_NAG-C_position} with $\alpha > 3$ in order to prove Theorem~\ref{thm: OA_generalize_>3}. The simple transformation of~(\ref{eqn: generalize_NAGM-C_SES}) for $\alpha > 3$ is  
\begin{align}
\label{eqn: simple_transform_generalize_NAGM-C_SES>3}
( k + \alpha )\left( v_{k} + \beta\sqrt{s} \nabla f(x_{k}) \right) -  k \left( v_{k - 1} + \beta\sqrt{s} \nabla f(x_{k - 1}) \right) =  - \sqrt{s} \left( k + \gamma   - \gamma \beta \right) \nabla f(x_{k}).
\end{align}
Thus, the difference of the Lyapunov function~(\ref{eqn: energy_functional_ses_>3}) on the $k$-th iteration is
\begin{align*}
     \mathcal{E}(k + 1) - \mathcal{E}(k) & = s (k + 2) (k + \alpha - \beta + 2) \left( f(x_{k}) - f(x^{\star}) \right) \\
                                                             & \quad + \frac{1}{2} \left\| (\alpha - 1)(x_{k + 2} - x^{\star}) + \sqrt{s}(k + 2)\left(v_{k + 1} + \beta \sqrt{s} \nabla f(x_{k + 1})\right)   \right\|^{2} \\
                                                            & \quad -s (k + 1) (k + \alpha - \beta + 1) \left( f(x_{k}) - f(x^{\star}) \right) \\
                                                            & \quad - \frac{1}{2} \left\| (\alpha - 1)(x_{k + 1} - x^{\star}) + \sqrt{s}(k + 1)\left(v_{k} + \beta \sqrt{s} \nabla f(x_{k})\right)   \right\|^{2}\\
                                                            &=  s( k  + 1) \left(  k + \alpha - \beta + 1  \right) \left( f(x_{k + 1}) - f(x_{k}) \right) + s \left( 2k + \alpha - \beta + 3 \right) \left(f(x_{k + 1}) - f(x^{\star}) \right)\\
                                                             &  \quad + \left\langle (\alpha - 1)(x_{k + 2} - x_{k + 1}) + \sqrt{s} (k + 2) \left( v_{k + 1} + \beta \sqrt{s} \nabla f(x_{k + 1}) \right)\right. \\
                                                             & \qquad \qquad\qquad \qquad\qquad \qquad \qquad -\sqrt{s} (k + 1) \left( v_{k} + \beta\sqrt{s} \nabla f(x_{k}) \right),  \\
                                                              & \qquad \quad \left. ( \alpha - 1) (x_{k + 2} - x^{\star}) + \sqrt{s} (k + 2) \left( v_{k + 1} + \beta\sqrt{s} \nabla f(x_{k + 1}) \right)  \right\rangle \\
                                                              &   \quad - \frac{1}{2} \left\| (\alpha - 1)(x_{k + 2} - x_{k + 1}) + (k + 2)\sqrt{s} \left( v_{k + 1} + \beta\sqrt{s} \nabla f(x_{k + 1}) \right) \right.\\
                                                              &    \qquad \qquad\qquad\qquad\qquad\qquad \qquad\left.- (k + 1)\sqrt{s} \left( v_{k} + \beta\sqrt{s} \nabla f(x_{k}) \right)\right\|^{2} \\
                                                              &=  s( k  + 1) \left(  k + \alpha - \beta + 1 \right) \left( f(x_{k + 1}) - f(x_{k}) \right) \\
                                                             &  \quad + s \left( 2k + \alpha - \beta + 3 \right) \left(f(x_{k + 1}) - f(x^{\star}) \right)\\
                                                              &\quad- \left\langle  s \left(  k + \alpha - \beta + 1 \right) \nabla f(x_{k + 1}), (\alpha - 1) (x_{k + 1} - x^{\star}) + \sqrt{s} (k + \alpha +1)  v_{k + 1}\right.\\
                                                              &  \quad\qquad \qquad\qquad\qquad\qquad\qquad \qquad\qquad\qquad\qquad \qquad\left.+ \beta s (k + 2)  \nabla f(x_{k + 1}) \right\rangle \\
                                                               & \quad  - \frac{1}{2} s^{2}(k + \alpha - \beta + 1)^{2}\left\| \nabla f(x_{k + 1}) \right\|^{2}.
\end{align*}
With the basic inequality of convex function $f(x) \in \mathcal{F}_{L}^{1}(\mathbb{R}^{n})$, 
$$
\left\{ \begin{aligned}
         & f(x_{k})       \geq f(x_{k + 1}) + \left\langle \nabla f(x_{k + 1}), x_{k} - x_{k + 1} \right\rangle + \frac{1}{2L} \left\| \nabla f(x_{k + 1}) - \nabla f(x_{k})  \right\|^{2} \\
         & f(x^{\star})  \geq f(x_{k + 1}) + \left\langle \nabla f(x_{k + 1}), x^{\star} - x_{k + 1} \right\rangle                
         \end{aligned}\right. 
$$
and the phase-space representation~\eqref{eqn: generalize_NAGM-C_SES}
\[
x_{k + 2} = x_{k + 1} +\sqrt{s}v_{k + 1},
\]
the difference of the discrete Lyapunov function~(\ref{eqn: energy_functional_ses_>3}) can be estimated as
\begin{align*}     
        \mathcal{E}(k + 1) - \mathcal{E}(k)     &=  s( k  + 1) \left(  k + \alpha - \beta + 1 \right)  \left( \left\langle \nabla f(x_{k+1}), x_{k + 1} - x_{k} \right\rangle -  \frac{1}{2L} \left\| \nabla f(x_{k + 1}) - \nabla f(x_{k})  \right\|^{2} \right) \\
                                                                    &  \quad + s \left( 2k + \alpha - \beta + 3 \right) \left(f(x_{k + 1}) - f(x^{\star}) \right) -  s (\alpha - 1)  \left(  k + \alpha - \beta + 1 \right)\left\langle \nabla f(x_{k + 1}),  x_{k + 1} - x^{\star}\right\rangle\\
                                                                    &\quad- \left\langle  s \left(  k + \alpha - \beta + 1 \right) \nabla f(x_{k + 1}), \sqrt{s} (k + \alpha +1)  v_{k + 1} \right\rangle \\
                                                                  & \quad  - \frac{1}{2} s^{2}(k + \alpha - \beta + 1) \left[ (2\beta + 1) k   + \alpha + 3\beta + 1\right]\left\| \nabla f(x_{k + 1}) \right\|^{2} \\
                                                                  & \leq  - s^{\frac{3}{2}}\left( k + \alpha - \beta + 1 \right) \left\langle \nabla f(x_{k + 1}),  ( k + \alpha + 1) v_{k + 1} - (k + 1) v_{k} \right\rangle   \\
                                                                  &\quad - \frac{s ( k + 1) \left(  k + \alpha - \beta + 1\right)}{2L} \left\| \nabla f(x_{k + 1}) - \nabla f(x_{k}) \right\|_{2}^{2} \\
                                                                & \quad - s \left[ (\alpha - 3)k  + (\alpha - 2)\left( \alpha - \beta + 1 \right) -  2 \right] \left(f(x_{k + 1}) - f(x^{\star})\right) \\
                                                                &\quad - \frac{1}{2} s^{2}(k + \alpha - \beta + 1) \left[ (2\beta + 1) k   + \alpha + 3\beta + 1\right]\left\| \nabla f(x_{k + 1}) \right\|^{2}.
 \end{align*}
Utilizing the phase-space representation~\eqref{eqn: generalize_NAGM-C_SES} again, we calculate the difference of the discrete Lyapunov  function~(\ref{eqn: energy_functional_ses_>3}) as
 \begin{align*}
 \mathcal{E}(k + 1) - \mathcal{E}(k)     & = \beta s^{2}( k  + 1) \left(  k + \alpha - \beta + 1 \right) \left\langle \nabla f(x_{k + 1}), \nabla f(x_{k + 1}) - \nabla f(x_{k}) \right\rangle \\
                                                             & \quad +  s^{2} (k + \alpha + 1) \left(  k + \alpha - \beta + 1 \right) \left\|\nabla f(x_{k + 1}) \right\|^{2} \\
                                                             &\quad - \frac{s ( k + 1) \left(  k + \alpha - \beta + 1\right)}{2L} \left\| \nabla f(x_{k + 1}) - \nabla f(x_{k}) \right\|_{2}^{2} \\
                                                              & \quad - s \left[ (\alpha - 3)k  + (\alpha - 2)\left( \alpha - \beta + 1 \right) -  2 \right] \left(f(x_{k + 1}) - f(x^{\star})\right) \\
                                                              &\quad - \frac{1}{2} s^{2}(k + \alpha - \beta + 1) \left[ (2\beta + 1) k   + \alpha + 3\beta + 1\right]\left\| \nabla f(x_{k + 1}) \right\|^{2}\\
                                                              & = \beta s^{2}( k  + 1) \left(  k + \alpha - \beta + 1 \right) \left\langle \nabla f(x_{k + 1}), \nabla f(x_{k + 1}) - \nabla f(x_{k}) \right\rangle \\
                                                             &\quad - \frac{s ( k + 1) \left(  k + \alpha - \beta + 1\right)}{2L} \left\| \nabla f(x_{k + 1}) - \nabla f(x_{k}) \right\|_{2}^{2} \\
                                                              & \quad - s \left[ (\alpha - 3)k  + (\alpha - 2)\left( \alpha - \beta + 1 \right) -  2 \right] \left(f(x_{k + 1}) - f(x^{\star})\right) \\
                                                              &\quad - \frac{1}{2} s^{2}(k + \alpha - \beta + 1) \left[ (2\beta - 1) k   - \alpha + 3\beta - 1\right]\left\| \nabla f(x_{k + 1}) \right\|^{2}\\
                                                              & \leq \frac{L \beta^{2} s^{3}}{2}( k  + 1) \left(  k + \alpha - \beta + 1 \right) \left\| \nabla f(x_{k + 1})\right\|^{2} \\
                                                              & \quad - s \left[ (\alpha - 3)k  + (\alpha - 2)\left( \alpha - \beta + 1 \right) -  2 \right] \left(f(x_{k + 1}) - f(x^{\star})\right) \\
                                                              &\quad - \frac{1}{2} s^{2}(k + \alpha - \beta + 1) \left[ (2\beta - 1) k   - \alpha + 3\beta - 1\right]\left\| \nabla f(x_{k + 1}) \right\|^{2}\\
                                                              & =   - s \left[ (\alpha - 3)k  + (\alpha - 2)\left( \alpha - \beta + 1 \right) -  2 \right] \left(f(x_{k + 1}) - f(x^{\star})\right) \\
                                                              &\quad - \frac{1}{2} s^{2}(k + \alpha - \beta + 1) \left[ (2\beta - 1) k   - \alpha + 3\beta - 1 - L\beta^{2}s (k + 1)\right]\left\| \nabla f(x_{k + 1}) \right\|^{2}
\end{align*}
To guarantee the Lyapunov function $\mathcal{E}(k)$ decreasing, a sufficient condition is
\begin{equation} \label{eqn: sufficient_condition_>3}
          (2\beta - 1) k   - \alpha + 3\beta - 1 - L\beta^{2}s (k + 1) \geq 0.
\end{equation}
With the inequality~\eqref{eqn: sufficient_condition_>3}, the step size can be estimated as
\[
s \leq \frac{2\beta - 1}{L\beta^{2}} - \frac{\alpha - \beta}{(k + 1)L\beta^{2}}.
\]
\begin{itemize}
\item When the parameter $\beta > 1/2$ and $\alpha <\beta$, since the function 
        $h(k) = \frac{2\beta - 1}{L\beta^{2}} - \frac{\alpha - \beta}{(k + 1)L\beta^{2}}$ decreases monotonically for $k \geq 0$, thus the step size 
        \[
        s \leq \frac{2\beta - 1}{(1 + \epsilon)L\beta^{2}} < \frac{2\beta - 1}{L\beta^{2}}
        \]
        holds for~\eqref{eqn: sufficient_condition_>3}, where $\epsilon > 0$ is a real number. Hence, when $k \geq k_{\alpha,\beta} + 1$, where
        \[
       k_{\alpha,\beta} = \max\left\{0, \left\lfloor\frac{2 - (\alpha - 2)(\alpha - \beta + 1)}{\alpha - 3}\right\rfloor + 1, \left\lfloor \frac{4 - 3\beta + L\beta^{2}s}{-1 + 2\beta - L\beta^{2}s}\right\rfloor + 1, \left\lfloor \beta - \alpha - 1  \right\rfloor + 1 \right\},
       \]
       the difference of the discrete Lyapunov function~\eqref{eqn: energy_functional_ses_>3} can be estimated as
       \[
       \mathcal{E}(k + 1) - \mathcal{E}(k) \leq -s (\alpha - 3) \left( k - k_{\alpha,\beta} \right) \left( f(x_{k + 1}) - f(x^{\star}) \right) - s^{2} \left(\frac{2\beta - 1 -L\beta^{2}s}{2} \right) \left( k - k_{\alpha,\beta} \right) ^{2}\left\| \nabla f(x_{k + 1})\right\|^{2}.
       \]
\item When the parameter $\beta > 1/2$ and $\alpha \geq \beta$, since the function $h(k) = \frac{2\beta - 1}{L\beta^{2}} - \frac{\alpha - \beta}{(k + 1)L\beta^{2}}$ increases monotonically for $k \geq 0$, there exists \[k_{\alpha,\beta} = \max\left\{ 0, \left\lfloor\frac{2 - (\alpha - 2)(\alpha - \beta + 1)}{\alpha - 3}\right\rfloor + 1, \left\lfloor \beta - \alpha - 1  \right\rfloor + 1,  \left\lfloor \frac{1 + \alpha - 3\beta}{2\beta - 1} \right\rfloor + 1 \right\}\]
such that the step size satisfies
\[
s \leq \frac{(2\beta - 1)k_{\alpha, \beta} - \alpha + 3\beta - 1}{L\beta^{2}(k_{\alpha, \beta} + 1)}.
\]
When $\beta = 1$, the step size satisfies
\[
s \leq \frac{1}{L} \cdot \frac{k_{\alpha, \beta} - \alpha + 2}{(k_{\alpha, \beta} + 1)} \rightarrow \frac{1}{L} \cdot \frac{k_{\alpha, \beta} - 1}{k_{\alpha, \beta} + 1} \quad \text{with}\quad \alpha \rightarrow 3,
\]
which is consistent with ~\eqref{eqn: NAGM-C-condition}. Then, the difference of the discrete Lyapunov function~\eqref{eqn: energy_functional_ses_>3} can be estimated as
       \[
       \mathcal{E}(k + 1) - \mathcal{E}(k) \leq -s (\alpha - 3) \left( k - k_{\alpha,\beta} \right) \left( f(x_{k + 1}) - f(x^{\star}) \right) - s^{2} \left(\frac{2\beta - 1 -L\beta^{2}s}{2} \right) \left( k - k_{\alpha,\beta} \right)^2 \left\| \nabla f(x_{k + 1})\right\|^{2}.
       \]
\end{itemize}

\subsection{A Simple Counterexample}
\label{subsec: counter}
The simple counterexample is constructed as 
\[
f(x_{k}) - f(x^{\star}) = \left\{ \begin{aligned}
                                             & \frac{L \left\|x_{0} - x^{\star} \right\|^{2}}{(k + 1)^{2}}, && k = j ^{2} \\
                                             & 0,                                                                          && k \neq j ^{2} 
                                            \end{aligned}\right.
\]
where $j \in \mathbb{N}$. Plugging it into~\eqref{eqn: function_value_k^2}, we have
\[
\sum_{k = 0}^{\infty} (k + 1) \left( f(x_{k}) - f(x^{\star})\right) = L \left\|x_{0} - x^{\star} \right\|^{2} \cdot \sum_{j = 0}^{\infty} \left( \frac{1}{j^{2} + 1}\right) < \infty.
\]
Hence, Proposition~\ref{thm: OA_generalize_>3} cannot guarantee the faster convergence rate.


\subsection{Super-Critical Regime: Sharper Convergence Rate $o(1/t^{2})$ and $o(L/k^{2})$}
\label{subsec: high_friction_faster}
\subsubsection{The ODE Case}
\label{subsec: continuous_faster}

Here, we still turn back to our high-resolution ODE framework in Section~\ref{sec:techniques}. The generalized high-resolution ODE has been still shown in~\eqref{eqn: generalize_NAGM-C_ode}. A more general Lyapunov function is constructed as 
\begin{multline}
\label{eqn: continous_generalize_energy_faster}
\mathcal{E}_{\nu}(t) =  t\left[t + \left(\frac{\alpha}{2} - \beta\right)\sqrt{s} + (\alpha - \nu - 1) \beta \sqrt{s} \right] \left( f(X(t)) - f(x^{\star}) \right) \\
+ \frac{\nu (\alpha - \nu - 1)}{2} \left\| X(t) - x^{\star}\right\|^{2}+ \frac{1}{2} \left\| \nu (X(t) - x^{\star}) + t\left( \dot{X}(t) + \beta \sqrt{s} \nabla f(X(t)) \right) \right\|^{2}
\end{multline}
where $2 < \nu \leq \alpha - 1$. When $\nu = \alpha - 1$, the Lyapunov function~\eqref{eqn: continous_generalize_energy_faster} degenerates to~\eqref{eqn: ef_alpha>3}. Furthermore, when  $\nu = \alpha - 1 \rightarrow 2$, the Lyapunov function~\eqref{eqn: continous_generalize_energy_faster} degenerates to~\eqref{eqn: ef_alpha=3}. Finally, when  $2 =\nu = \alpha - 1$ and $\beta = 1$,  the Lyapunov function~\eqref{eqn: continous_generalize_energy_faster} is consistent with~\eqref{eqn: lypunov_NAGM-C_first-order_ode2}. 
 We assume that initial time is
 \[
 t_{\alpha, \beta, \nu} = \max\left\{ \sqrt{s} \left( \beta - \frac{\alpha}{2} \right) , \sqrt{s} \left( \frac{\beta(\alpha - 2)}{\nu - 2}  - \frac{\alpha (\nu - 1)}{2(\nu - 2)}\right), \frac{\sqrt{s} \alpha}{2}  \right\}.
 \]
 Based on the Lyapunov function~\eqref{eqn: continous_generalize_energy_faster}, we have the following results.
 \begin{thm}
\label{thm: faster_convergence}
Let $f(x) \in \mathcal{F}_{L}^{2}(\mathbb{R}^{n})$ and $X = X(t)$ be the solution of the ODE~\eqref{eqn: generalize_NAGM-C_ode} with $\alpha > 3$ and $\beta > 0$. Then, there exists $t_{\alpha, \beta, \nu}>0$ such that
\begin{equation}
\label{eqn: faster_convergence}
\left\{ \begin{aligned}
          & \lim_{t\rightarrow \infty} t^{2} \left (\left( f(X(t)) - f(x^{\star})\right) +  \left\| \dot{X}(t) + \beta \sqrt{s} \nabla f(X(t))\right\|^{2}\right) = \mathfrak{C}^{2}_{\alpha, \beta, \nu} \left\|x_{0} - x^{\star}\right\|^{2} \\
          & \int_{t_{0}}^{t}\left[ u\left( f(X(u)) - f(x^{\star})\right) + u \left\| \dot{X}(u) + \beta \sqrt{s} \nabla f(X(u))\right\|^{2}\right] \dd u < \infty
         \end{aligned} \right.
\end{equation}
for all $t\geq t_{\alpha, \beta, \nu}$,  where the positive constant $\mathfrak{C}^{2}_{\alpha, \beta,\nu}$ and the integer $t_{\alpha, \beta, \nu}$ depend only on $\alpha$, $\beta$ and $\nu$. In other words, the equivalent expression of~\eqref{eqn: faster_convergence} is
\[
 f(X(t)) - f(x^{\star}) +  \left\| \dot{X}(t) + \beta \sqrt{s} \nabla f(X(t))\right\|^{2} \leq o\left( \frac{\left\|x_{0} - x^{\star}\right\|^{2}}{t^{2}}\right).
\]
\end{thm}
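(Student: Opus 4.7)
The plan is to exploit the one-parameter family of Lyapunov functions $\mathcal{E}_\nu(t)$ defined in \eqref{eqn: continous_generalize_energy_faster}: the choice $\nu=\alpha-1$ is borderline and only yields monotone non-increase of $\mathcal{E}_{\alpha-1}$ (hence the standard $O(1/t^2)$ bound from Proposition~\ref{thm: OA_generalize_>3}), whereas any $\nu\in(2,\alpha-1)$ produces a strictly negative excess dissipation of size $(\alpha-\nu-1)\,t\,(f(X)-f(x^\star))$ plus a comparable term in $t\,\|\dot X+\beta\sqrt{s}\nabla f(X)\|^2$. Integrating this excess gives the second line of \eqref{eqn: faster_convergence}, and combining integrability with the $O(1/t^2)$ bound of the borderline energy will upgrade the rate to $o(1/t^2)$.

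First I would differentiate $\mathcal{E}_\nu(t)$ along the solution of \eqref{eqn: generalize_NAGM-C_ode}, using the identity
\[
\tfrac{d}{dt}\bigl(\dot X+\beta\sqrt{s}\nabla f(X)\bigr)=-\tfrac{\alpha}{t}\dot X-\bigl(1+\tfrac{\alpha\sqrt{s}}{2t}\bigr)\nabla f(X)
\]
to collapse the cross term arising from the mixed-norm part of $\mathcal{E}_\nu$. Applying the convexity bound $f(x^\star)\ge f(X)+\langle\nabla f(X),x^\star-X\rangle$ to absorb the $\langle\nabla f(X),X-x^\star\rangle$ contribution and $L$-smoothness on the square of $\nabla f(X)$, one arrives at an estimate of the form
\[
\dot{\mathcal{E}}_\nu(t)\ \le\ -(\alpha-\nu-1)\,t\,(f(X)-f(x^\star))\ -\ c_1\,t\,\bigl\|\dot X+\beta\sqrt{s}\nabla f(X)\bigr\|^{2}\ -\ c_2\,\sqrt{s}\,t^{2}\,\|\nabla f(X)\|^{2},
\]
valid for $t\ge t_{\alpha,\beta,\nu}$ with $c_1,c_2>0$ depending only on $\alpha,\beta,\nu$. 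Since $\mathcal{E}_\nu\ge 0$, integrating on $[t_{\alpha,\beta,\nu},\infty)$ yields $\int_{t_{\alpha,\beta,\nu}}^{\infty}[u(f(X(u))-f(x^\star))+u\|\dot X(u)+\beta\sqrt{s}\nabla f(X(u))\|^{2}]\,du\le \mathcal{E}_\nu(t_{\alpha,\beta,\nu})/\min\{\alpha-\nu-1,c_1\}$, which is finite and bounded by a multiple of $\|x_0-x^\star\|^2$ (using the initial conditions of \eqref{eqn: generalize_NAGM-C_ode} together with the $L$-smoothness estimate $f(x_0)-f(x^\star)\le\tfrac{L}{2}\|x_0-x^\star\|^2$).

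For the limit assertion I would set $\nu=\alpha-1$: then $\dot{\mathcal{E}}_{\alpha-1}(t)\le 0$, so writing
\[
A(t)\ :=\ t^{2}\bigl(f(X(t))-f(x^\star)\bigr)\ +\ \tfrac{1}{2}\bigl\|(\alpha-1)(X-x^\star)+t(\dot X+\beta\sqrt{s}\nabla f(X))\bigr\|^{2},
\]
one obtains $A(t)\le C_{\alpha,\beta}\,\|x_0-x^\star\|^{2}$ uniformly. Since the mixed-norm piece in $A$ and the squared norm $\|\dot X+\beta\sqrt{s}\nabla f(X)\|^2$ are tied together via $\|X-x^\star\|$, which is bounded by $\mathcal{E}_{\alpha-1}$, it suffices to prove that $t^{2}(f(X(t))-f(x^\star))\to 0$ and $t^{2}\|\dot X+\beta\sqrt{s}\nabla f(X)\|^{2}\to 0$ separately. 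Suppose for contradiction that $\limsup_{t\to\infty}t^{2}(f(X(t))-f(x^\star))=\epsilon>0$. The uniform bound on $A$ combined with the ODE gives a uniform bound on $\|\dot X\|$ and $\|\nabla f(X)\|$; this bounds $\tfrac{d}{dt}\bigl[t(f(X(t))-f(x^\star))\bigr]$, so the function $u\mapsto u(f(X(u))-f(x^\star))$ is Lipschitz. Hence on a sequence of intervals $[t_k,2t_k]$ with $t_k\to\infty$ this quantity stays $\gtrsim\epsilon/t_k$, yielding $\int t_k^{2t_k}\gtrsim\epsilon\log 2$ on each interval, contradicting the convergent integral established in the previous step. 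The same argument applied to the other nonnegative term gives the second half of the limit.

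The main obstacle will be the bookkeeping in the derivative computation: tracking exactly how the coefficient $(\alpha-\nu-1)$ emerges from combining the $\nu(\alpha-\nu-1)\|X-x^\star\|^2/2$ penalty with the mixed-norm cross terms, and verifying that the Hessian-driven damping $\beta\sqrt{s}\nabla^{2}f(X)\dot X$ contributes a nonpositive term $-\beta\sqrt{s}\,t^{2}\dot X^\top\nabla^{2}f(X)\dot X$ (rather than an uncontrolled cross term) so that $c_1>0$ can be obtained without further assumptions on $f$. The integrability-to-limit passage in the last paragraph is standard once the a priori Lipschitz continuity of the integrand is secured, but it does require the uniform velocity bound which itself relies on $\mathcal{E}_{\alpha-1}$ being bounded—so the two Lyapunov functions must be used in tandem.
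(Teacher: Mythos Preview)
Your approach to the integrability claim (the second line of \eqref{eqn: faster_convergence}) is essentially correct: differentiating $\mathcal{E}_\nu$ for $\nu\in(2,\alpha-1)$ yields strictly negative dissipation in $t(f(X)-f(x^\star))$, $t\|\dot X\|^2$, and $t^2\|\nabla f(X)\|^2$, and integrating gives the desired bound. (A minor correction: the coefficient of the function-value term is $(\nu-2)$, not $(\alpha-\nu-1)$; the latter is the coefficient of $t\|\dot X\|^2$. But this does not affect the argument.)

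The genuine gap is in your passage from integrability to the limit $t^2(f(X)-f(x^\star))\to 0$. Your contradiction argument asserts that if $t_k^2(f(X(t_k))-f(x^\star))\ge\epsilon/2$ along a sequence, then the Lipschitz function $g(u):=u(f(X(u))-f(x^\star))$ ``stays $\gtrsim\epsilon/t_k$ on $[t_k,2t_k]$.'' But at $t_k$ we only have $g(t_k)\ge\epsilon/(2t_k)$, and a Lipschitz bound of constant $M$ on $g$ controls its variation only over intervals of length $O(\epsilon/(Mt_k))$, not over $[t_k,2t_k]$. The standard Barbalat-type argument (nonnegative, integrable, uniformly continuous $\Rightarrow$ vanishing) applied to $g$ yields only $g(u)\to 0$, i.e.\ $f(X)-f(x^\star)=o(1/t)$, which is strictly weaker than the claimed $o(1/t^2)$. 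A concrete obstruction: a bounded Lipschitz function $H(t)$ with $\int H(t)/t\,dt<\infty$ need not tend to zero (take $H$ supported on intervals $[2^k,2^k+1]$).

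What is missing is a proof that the limit of $t^2\bigl[(f(X)-f(x^\star))+\tfrac12\|\dot X+\beta\sqrt{s}\nabla f(X)\|^2\bigr]$ \emph{exists}; once existence is known, integrability forces the limit to be zero. The paper obtains existence by a different mechanism that your proposal does not contain: it takes two values $\nu\ne\nu'$ in $(2,\alpha-1]$, subtracts $\mathcal{E}_\nu-\mathcal{E}_{\nu'}$ to isolate the combination $t\langle X-x^\star,\dot X+\beta\sqrt{s}\nabla f(X)\rangle+\tfrac{\alpha-1}{2}\|X-x^\star\|^2$ and shows it converges; then, via an auxiliary function $\pi(t)=\tfrac12\|X-x^\star\|^2+\beta\sqrt{s}\int\langle\nabla f(X),X-x^\star\rangle$ and an ODE-type argument for $t\dot\pi+(\alpha-1)\pi$, it proves $\|X(t)-x^\star\|$ converges separately (Lemma~\ref{lem: lem3_ode_faster}). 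Expanding $\mathcal{E}_\nu$ then shows the remaining piece $t^2[(f(X)-f(x^\star))+\tfrac12\|\dot X+\beta\sqrt{s}\nabla f(X)\|^2]$ has a limit. This two-parameter comparison and the $\pi(t)$ device are the crux of the proof and are absent from your plan.
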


 Now, we start to show the proof. Since $X = X(t)$ is the solution of the ODE~\eqref{eqn: generalize_NAGM-C_ode} with $\alpha > 3$ and $\beta > 0$, when $t > t_{\alpha, \beta, \nu}$,
 the time derivative of Lyapunov function~\eqref{eqn: continous_generalize_energy_faster}  is
\begin{align}
\label{eqn: ef_estimate_general_nu}
\frac{\dd \mathcal{E}_{\nu}(t)}{\dd t} & = \left[ 2t + \left(\frac{\alpha}{2} - \beta\right)\sqrt{s} + (\alpha - \nu - 1) \beta \sqrt{s}  \right] \left( f(X(t)) - f(x^{\star}) \right) \nonumber \\
                                                  &\quad + t\left[t + \left(\frac{\alpha}{2} - \beta\right)\sqrt{s} + (\alpha - \nu - 1) \beta \sqrt{s}  \right] \left\langle \nabla f(X(t)), \dot{X}(t) \right\rangle + \nu (\alpha - \nu - 1) \left\langle X(t) - x^{\star}, \dot{X}(t) \right\rangle \nonumber \\
                                                  &\quad- \left\langle (\alpha - 1 - \nu)\dot{X}(t) + \left[t + \left( \frac{\alpha}{2} - \beta \right) \sqrt{s} \right] \nabla f(X(t))  , \nu (X(t) - x^{\star}) + t\left( \dot{X}(t) +\beta \sqrt{s} \nabla f(X(t)) \right) \right\rangle \nonumber \\
                                                  & =  \left[ 2t + \left(\frac{\alpha}{2} - \beta\right)\sqrt{s} + (\alpha - \nu - 1) \beta \sqrt{s} \right] \left( f(X(t)) - f(x^{\star}) \right)  - (\alpha - 1 - \nu)t \left\| \dot{X}(t) \right\|^{2} \nonumber \\
                                                  &\quad  - \nu \left[ t + \left( \frac{\alpha}{2} - \beta \right) \sqrt{s} \right]  \left\langle \nabla f(X(t)), X(t) - x^{\star} \right\rangle  \\
                                                  & \quad - \beta t\sqrt{s}\left[ t + \left( \frac{\alpha}{2} - \beta \right) \sqrt{s} \right]  \left\| \nabla f(X(t)) \right\|^{2}.
\nonumber 
\end{align}
With the basic inequality for any $f(x) \in \mathcal{F}_{L}^{2}(\mathbb{R}^{n})$
\[
f(x^{\star}) \geq f(X(t)) + \left\langle \nabla f(X(t)), x^{\star} - X(t) \right\rangle, 
\]
the time derivative of Lyapunov function~\eqref{eqn: ef_estimate_general_nu} can be estimated as
\begin{multline*}
\frac{\dd \mathcal{E}_{\nu}(t)}{\dd t} \leq - \left\{ (\nu - 2)t + \sqrt{s} \left[\frac{\alpha(\nu - 1)}{2} - (\alpha - 2)\beta\right]\right\}\left( f(X(t)) - f(x^{\star}) \right) \\
- (\alpha - 1 - \nu)t \left\| \dot{X}(t) \right\|^{2}  - \beta t\sqrt{s}\left[t + \left( \frac{\alpha}{2} - \beta \right) \sqrt{s} \right]  \left\| \nabla f(X(t)) \right\|^{2}.
\end{multline*}
With the Lyapunov function $\mathcal{E}_{\nu}(t) \geq 0$ and the technique for integral, for any $t > t_{0}$ we have
\[
\int_{t_{0}}^{t} u (f(X(u)) - f(x^{\star})) du \leq \int_{t_{0}}^{t_{0}+ \delta} u (f(X(u)) - f(x^{\star})) du+ \left( 1 + \frac{t_{0}}{\delta}\right)  \int_{t_{0}+ \delta}^{t}(u - t_{0}) (f(X(u)) - f(x^{\star})) du,
\]
where $\delta < t -t_{0}$. Thus, we can obtain the following Lemma.
\begin{lem}
\label{lem: lem1_ode_faster}
Under the same assumption of Theorem~\ref{thm: faster_convergence}, the following limits exist
\[
\lim_{t \rightarrow \infty} \mathcal{E}_{\nu}(t), \; \lim_{t \rightarrow \infty}\int_{t_{0}}^{t} u(f(X(u)) - f(x^{\star})) \dd u, \; \lim_{t \rightarrow \infty}\int_{t_{0}}^{t} u \left\| \dot{X}(u) \right\|^{2} \dd u, \; \lim_{t\rightarrow \infty}\int_{t_{0}}^{t} u^{2}  \left\| \nabla f(X(u)) \right\|^{2} du.
\]
\end{lem}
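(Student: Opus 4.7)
The plan is to deduce all four limits from the derivative inequality~\eqref{eqn: ef_estimate_general_nu} for $\mathcal{E}_\nu$, which is already at hand, by showing that the bounding expression on its right-hand side is entirely nonpositive once $t \ge t_{\alpha,\beta,\nu}$. First I would verify the three coefficients: $\beta t\sqrt{s}[t+(\alpha/2-\beta)\sqrt{s}]$ is nonnegative for $t \ge t_{\alpha,\beta,\nu}$ by the clause $t_{\alpha,\beta,\nu}\ge \sqrt{s}(\beta-\alpha/2)$; the coefficient $(\alpha-1-\nu)t$ is nonnegative because $\nu \le \alpha-1$; and $(\nu-2)t + \sqrt{s}[\alpha(\nu-1)/2-(\alpha-2)\beta]$ is nonnegative by the clause $t_{\alpha,\beta,\nu}\ge \sqrt{s}[(\alpha-2)\beta/(\nu-2)-\alpha(\nu-1)/(2(\nu-2))]$, after multiplying through by $\nu-2>0$. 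Consequently $\dot{\mathcal{E}}_\nu(t) \le 0$ for $t \ge t_{\alpha,\beta,\nu}$, so $\mathcal{E}_\nu$ is monotone nonincreasing and bounded below by $0$; its limit as $t\to\infty$ therefore exists.

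Integrating~\eqref{eqn: ef_estimate_general_nu} from $t_0 := t_{\alpha,\beta,\nu}$ to any $t>t_0$ yields, with $C_1:=\sqrt{s}[\alpha(\nu-1)/2-(\alpha-2)\beta]$, the combined bound
\begin{multline*}
\int_{t_0}^{t}\bigl[(\nu-2)u+C_1\bigr]\bigl(f(X(u))-f(x^\star)\bigr)\,\dd u + (\alpha-1-\nu)\int_{t_0}^{t}u\,\|\dot X(u)\|^2\,\dd u \\
{} + \beta\sqrt{s}\int_{t_0}^{t}u\bigl[u+(\alpha/2-\beta)\sqrt{s}\bigr]\,\|\nabla f(X(u))\|^2\,\dd u \;\le\; \mathcal{E}_\nu(t_0).
\end{multline*}
Each integrand is nonnegative on $[t_0,\infty)$, so each of the three integrals is monotone nondecreasing in $t$ and dominated by $\mathcal{E}_\nu(t_0)$, hence has a finite limit; choosing $\nu\in(2,\alpha-1)$ so that $\alpha-1-\nu>0$ activates the middle term and immediately delivers the limit of $\int_{t_0}^{t}u\|\dot X\|^2\,\dd u$.

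The remaining step is to upgrade the two ``tilted'' bounds to the clean $u$- and $u^2$-weighted limits stated in the lemma using the splitting estimate printed just before the lemma. For $\int u(f-f^\star)\,\dd u$, the definition of $t_0$ gives $(\nu-2)t_0+C_1\ge 0$, so $(\nu-2)u+C_1 \ge (\nu-2)(u-t_0)$ on $[t_0,\infty)$; plugging this into the displayed splitting
$$\int_{t_0}^{t} u(f-f^\star)\,\dd u \le \int_{t_0}^{t_0+\delta} u(f-f^\star)\,\dd u + \bigl(1+\tfrac{t_0}{\delta}\bigr)\int_{t_0+\delta}^{t}(u-t_0)(f-f^\star)\,\dd u$$
with any fixed $\delta>0$ gives an a~priori bound uniform in $t$, and monotone convergence supplies the limit. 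For $\int u^2\|\nabla f\|^2\,\dd u$ I would pick a threshold $T\ge \max(t_0,\,2|\alpha/2-\beta|\sqrt{s})$ so that $u+(\alpha/2-\beta)\sqrt{s}\ge u/2$ on $[T,\infty)$, bound the head integral $\int_{t_0}^{T}u^2\|\nabla f\|^2\,\dd u$ by continuity on a compact interval, and dominate the tail by twice the already-controlled integral.

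The only real obstacle is this last transfer: $-\dot{\mathcal{E}}_\nu$ naturally controls the tilted weights $(\nu-2)u+C_1$ and $u[u+(\alpha/2-\beta)\sqrt{s}]$ rather than the clean $u$ and $u^2$. The splitting trick highlighted in the excerpt is engineered for exactly this mismatch, so the obstacle is mild bookkeeping; the care required is in checking that $t_{\alpha,\beta,\nu}$ as defined makes all the needed nonnegativity statements hold simultaneously, which is precisely why its three-fold maximum was introduced.
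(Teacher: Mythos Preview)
Your proposal is correct and follows essentially the same route as the paper: the paper's ``proof'' of this lemma is just the sentence preceding it, which points to the derivative estimate obtained from~\eqref{eqn: ef_estimate_general_nu} via convexity, the nonnegativity of $\mathcal{E}_\nu$, and the splitting trick for passing from the weight $(u-t_0)$ to the weight $u$. You have simply filled in the details the paper omits, including the explicit check that each of the three clauses in the definition of $t_{\alpha,\beta,\nu}$ forces the corresponding coefficient to be nonnegative, and the tail-comparison argument for upgrading $u[u+(\alpha/2-\beta)\sqrt{s}]$ to $u^2$.
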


With~\eqref{eqn: ef_estimate_general_nu} and Lemma~\ref{lem: lem1_ode_faster}, the following Lemma holds.
\begin{lem}
\label{lem: lem2_ode_faster}
Under the same assumption of Theorem~\ref{thm: faster_convergence}, the following limit exists
\[
\lim_{t \rightarrow \infty}\int_{t_{0}}^{t} u  \left\langle \nabla f(X(u)), X(u) - x^{\star} \right\rangle du.
\]
\end{lem}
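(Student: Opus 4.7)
The plan is to isolate the integrand $u\langle \nabla f(X(u)), X(u) - x^{\star}\rangle$ from the derivative identity \eqref{eqn: ef_estimate_general_nu} and then read off its integrability from the limits already granted by Lemma~\ref{lem: lem1_ode_faster}. Rearranging \eqref{eqn: ef_estimate_general_nu}, I would write, for every $t>t_{\alpha,\beta,\nu}$,
\[
\nu\!\int_{t_{0}}^{t}\!\!\Big[u+\big(\tfrac{\alpha}{2}-\beta\big)\sqrt{s}\Big]\langle\nabla f(X(u)),X(u)-x^{\star}\rangle\,\dd u
= \mathcal{E}_{\nu}(t_{0})-\mathcal{E}_{\nu}(t) + R(t),
\]
where $R(t)$ collects the remaining three integrals, namely $\int_{t_{0}}^{t}\!\bigl[2u+(\tfrac{\alpha}{2}-\beta)\sqrt{s}+(\alpha-\nu-1)\beta\sqrt{s}\bigr](f(X(u))-f(x^{\star}))\,\dd u$, $-(\alpha-1-\nu)\!\int_{t_{0}}^{t}\!u\|\dot X(u)\|^{2}\,\dd u$, and $-\beta\sqrt{s}\!\int_{t_{0}}^{t}\!u[u+(\tfrac{\alpha}{2}-\beta)\sqrt{s}]\|\nabla f(X(u))\|^{2}\,\dd u$.

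Next I would argue that each piece on the right-hand side admits a finite limit as $t\to\infty$. For $\mathcal{E}_{\nu}(t)$ the convergence is exactly the first item of Lemma~\ref{lem: lem1_ode_faster}. For $R(t)$ the kinetic and squared-gradient integrals are covered by the third and fourth items of the same lemma, while the potential-energy piece reduces to $\int u(f(X)-f(x^{\star}))\,\dd u$ (second item) plus a harmless multiple of $\int(f(X)-f(x^{\star}))\,\dd u$; the latter is bounded by $t_{0}^{-1}\!\int u(f(X)-f(x^{\star}))\,\dd u$ because $u\ge t_{0}$ on the domain of integration, and so also converges. Therefore the left-hand side has a finite limit as $t\to\infty$.

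Finally I would pass from this weighted integral to the one stated in the lemma. The integrand $u\langle\nabla f(X),X-x^{\star}\rangle$ is non-negative (by convexity of $f$), so $A(t):=\int_{t_{0}}^{t}u\langle\nabla f(X),X-x^{\star}\rangle\,\dd u$ is non-decreasing and it suffices to bound it above. If $\alpha/2-\beta\ge 0$ this is immediate since $u+(\alpha/2-\beta)\sqrt{s}\ge u$. If $\alpha/2-\beta<0$, I would enlarge $t_{\alpha,\beta,\nu}$ so that $u+(\alpha/2-\beta)\sqrt{s}\ge u/2$ for all $u\ge t_{0}$, whence $A(t)\le 2\!\int_{t_{0}}^{t}[u+(\alpha/2-\beta)\sqrt{s}]\langle\nabla f(X),X-x^{\star}\rangle\,\dd u$, which is bounded by the previous step. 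A monotone bounded sequence converges, so $\lim_{t\to\infty}A(t)$ exists.

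The only delicate point, and the one I would be most careful with, is verifying that the choice of $t_{0}=t_{\alpha,\beta,\nu}$ declared after \eqref{eqn: continous_generalize_energy_faster} is already large enough to make the weight $u+(\alpha/2-\beta)\sqrt{s}$ positive and comparable to $u$; if not, it should be enlarged by a factor depending only on $\alpha$ and $\beta$, which is harmless because $t_{\alpha,\beta,\nu}$ may always be increased without affecting the conclusion. Everything else is bookkeeping built directly on top of Lemma~\ref{lem: lem1_ode_faster} and the identity \eqref{eqn: ef_estimate_general_nu}.
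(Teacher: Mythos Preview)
Your proposal is correct and follows exactly the approach the paper intends: the paper's own proof of this lemma is the single sentence ``With~\eqref{eqn: ef_estimate_general_nu} and Lemma~\ref{lem: lem1_ode_faster}, the following Lemma holds,'' and you have simply written out the bookkeeping that this sentence hides---integrating the identity \eqref{eqn: ef_estimate_general_nu}, isolating the $\langle\nabla f(X),X-x^{\star}\rangle$ term, and invoking each limit from Lemma~\ref{lem: lem1_ode_faster}. Your care in passing from the weighted integral $\int[u+(\alpha/2-\beta)\sqrt{s}]\langle\nabla f(X),X-x^{\star}\rangle\,\dd u$ to the unweighted one via monotonicity and possibly enlarging $t_{\alpha,\beta,\nu}$ is a detail the paper does not mention but is indeed needed; your handling of it is fine.
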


\begin{lem}
\label{lem: lem3_ode_faster}
Under the same assumption of Theorem~\ref{thm: faster_convergence}, the following limits exist
\[
\lim_{t \rightarrow \infty} \left\| X(t) - x^{\star}\right\| \quad \text{and} \quad \lim_{t \rightarrow \infty} t \left\langle X(t) - x^{\star}, \dot{X}(t) + \beta\sqrt{s} \nabla f(X(t)) \right\rangle.
\]
\end{lem}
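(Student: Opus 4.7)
The plan is to exploit the affine dependence of the family $\mathcal{E}_\nu(t)$ defined in~\eqref{eqn: continous_generalize_energy_faster} on the parameter $\nu$, in order to extract one additional scalar limit beyond what Lemmas~\ref{lem: lem1_ode_faster}--\ref{lem: lem2_ode_faster} already give, and then to apply an Opial-type calculus argument to $\zeta(t):=\|X(t)-x^\star\|^2$. With the shorthand $y(t)=X(t)-x^\star$ and $v(t)=\dot X(t)+\beta\sqrt{s}\,\nabla f(X(t))$, a direct algebraic expansion of~\eqref{eqn: continous_generalize_energy_faster} yields the identity
\[
\mathcal{E}_\nu(t)=\mathcal{E}_{\alpha-1}(t)-(\alpha-1-\nu)\,G(t),\qquad G(t):=-\beta\sqrt{s}\,t\bigl(f(X)-f(x^\star)\bigr)+\tfrac{\alpha-1}{2}\zeta(t)+t\langle y,v\rangle.
\]
Since Lemma~\ref{lem: lem1_ode_faster} provides $\lim_{t\to\infty}\mathcal{E}_\nu(t)$ for every admissible $\nu\in(2,\alpha-1]$, taking differences at two distinct $\nu$-values shows that $G_\infty:=\lim_{t\to\infty}G(t)$ exists. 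Moreover, the elementary bound $\mathcal{E}_{\alpha-1}(t)\ge\tfrac{1}{2}t^2(f(X)-f(x^\star))$ for large $t$, together with the boundedness of $\mathcal{E}_{\alpha-1}$, shows that $t^2(f(X)-f(x^\star))$ is bounded; hence $t(f(X(t))-f(x^\star))\to 0$.

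Next, I would compute $\dot\zeta(t)=2\langle y,v\rangle - 2\beta\sqrt{s}\langle y,\nabla f(X)\rangle$ via $\dot X=v-\beta\sqrt{s}\nabla f(X)$, and combine this with the definition of $G$ to rewrite
\[
(\alpha-1)\zeta(t)+t\dot\zeta(t) = 2G(t) + 2\beta\sqrt{s}\,t\bigl(f(X)-f(x^\star)\bigr) - 2\beta\sqrt{s}\,t\,B(t),
\]
where $B(t):=\langle y,\nabla f(X)\rangle-(f(X)-f(x^\star))\ge 0$ is the Bregman divergence. Combining Lemmas~\ref{lem: lem1_ode_faster} and~\ref{lem: lem2_ode_faster} yields $\Phi_\infty:=\int_{t_0}^{\infty}\!uB(u)\,du<\infty$, and in view of the previous paragraph the right-hand side above simplifies to $2G_\infty-2\beta\sqrt{s}\,tB(t)+o(1)$.

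The main technical step is an Opial-type calculus argument. Using the identity $\tfrac{d}{dt}[t^{\alpha-1}\zeta(t)]=t^{\alpha-2}[(\alpha-1)\zeta+t\dot\zeta]$, integrating from $t_0$ to $t$, and dividing by $t^{\alpha-1}$ gives
\[
\zeta(t) = \frac{2G_\infty}{\alpha-1}+o(1)-\frac{2\beta\sqrt{s}}{t^{\alpha-1}}\int_{t_0}^{t} u^{\alpha-1}B(u)\,du.
\]
The crux---and the main expected obstacle---is showing that the last averaged integral tends to $0$. My plan is integration by parts: writing $u^{\alpha-1}B(u)\,du = u^{\alpha-2}\,d\Phi(u)$ with $\Phi(u):=\int_{t_0}^{u}wB(w)\,dw$ yields
\[
\frac{1}{t^{\alpha-1}}\int_{t_0}^{t}u^{\alpha-1}B(u)\,du = \frac{\Phi(t)}{t}-\frac{\alpha-2}{t^{\alpha-1}}\int_{t_0}^{t}u^{\alpha-3}\Phi(u)\,du,
\]
and since $\Phi(t)\to\Phi_\infty$ and $\alpha>3$, both terms tend to $0$ (the second by a standard Cesaro estimate giving $\sim\Phi_\infty/((\alpha-2)t)$). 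This establishes $\zeta_\infty:=\lim_{t\to\infty}\zeta(t)=2G_\infty/(\alpha-1)$.

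The second limit in Lemma~\ref{lem: lem3_ode_faster} is then immediate: rearranging the definition of $G$ gives $t\langle y(t),v(t)\rangle = G(t)+\beta\sqrt{s}\,t(f(X)-f(x^\star))-\tfrac{\alpha-1}{2}\zeta(t)$, whose right-hand side has limit $G_\infty-\tfrac{\alpha-1}{2}\zeta_\infty=0$. The bulk of the work is therefore concentrated in the affine decomposition of the first paragraph and the integration-by-parts/Cesaro estimate in the third; the remaining manipulations are bookkeeping.
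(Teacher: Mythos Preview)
Your proposal is correct and follows essentially the same strategy as the paper's proof: exploit the affine dependence of $\mathcal{E}_\nu$ on $\nu$ to obtain one extra scalar limit (your $G(t)$ is exactly the bracketed quantity in~\eqref{eqn: exist_limitation1}), and then apply the integrating-factor identity $\frac{d}{dt}\bigl[t^{\alpha-1}\psi\bigr]=t^{\alpha-2}\bigl((\alpha-1)\psi+t\dot\psi\bigr)$ together with a Cesaro argument.

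The only difference is the choice of auxiliary function $\psi$. The paper sets $\pi(t)=\tfrac12\zeta(t)+\beta\sqrt{s}\int_{t_0}^{t}\langle\nabla f(X(u)),X(u)-x^\star\rangle\,du$, which has the clean property $\dot\pi=\langle y,v\rangle$; consequently $t\dot\pi+(\alpha-1)\pi$ already has a finite limit (the integral converges by Lemma~\ref{lem: lem2_ode_faster} and nonnegativity of the integrand), and the Cesaro step applies immediately. You work with $\zeta$ directly, which leaves the residual $-2\beta\sqrt{s}\,tB(t)$ on the right-hand side; you then absorb it \emph{after} integrating, via the integration-by-parts step with $\Phi(u)=\int_{t_0}^{u}wB(w)\,dw$ and the bound $\Phi_\infty<\infty$ from Lemmas~\ref{lem: lem1_ode_faster}--\ref{lem: lem2_ode_faster}. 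The paper's choice of $\pi$ is slightly slicker because it sidesteps this integration by parts, but your route is equally valid.

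One minor algebraic slip: your displayed identity should actually read $(\alpha-1)\zeta+t\dot\zeta=2G(t)-2\beta\sqrt{s}\,tB(t)$, since the term $+2\beta\sqrt{s}\,t(f(X)-f(x^\star))$ cancels when you substitute $\langle y,\nabla f(X)\rangle=B(t)+(f(X)-f(x^\star))$. As you absorb it into $o(1)$ anyway, this does not affect the argument.
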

\begin{proof}[Proof of Lemma~\ref{lem: lem3_ode_faster}]
Taking $\nu \neq \nu' \in [2, \gamma - 1]$, we have
\begin{multline*}
\mathcal{E}_{\nu}(t) - \mathcal{E}_{\nu'}(t) = (\nu - \nu') \left[-\beta \sqrt{s} t\left( f(X(t)) - f(x^{\star}) \right)\right. \\ \left.+ t \left\langle X(t) - x^{\star}, \dot{X}(t) + \beta \sqrt{s}\nabla f(X(t))\right\rangle + \frac{\alpha - 1}{2}\left\|X(t) - x^{\star}\right\|^2 \right]
\end{multline*}
With Lemma~\ref{lem: lem1_ode_faster} and~\eqref{eqn:convergence_ode_>3}, the following limit exists
\begin{align}
\label{eqn: exist_limitation1}
\lim_{t \rightarrow \infty} \left[ t \left\langle X(t) - x^{\star}, \dot{X}(t) + \beta \sqrt{s}\nabla f(X(t))\right\rangle + \frac{\alpha - 1}{2}\left\|X(t) - x^{\star}\right\|^2 \right].
\end{align}
Define a new function about time variable $t$: 
\[
\pi(t) := \frac{1}{2} \left\| X(t) - x^{\star} \right\|^{2} + \beta \sqrt{s}\int_{t_{0}}^{t }  \left\langle \nabla f(X(u)), X(u) - x^{\star} \right\rangle du.
\]
If we can prove the existence of the limit $\pi(t)$ with $t \rightarrow \infty$, we can guarantee $\lim\limits_{t \rightarrow \infty} \left\| X(t) - x^{\star}\right\|$ exists with Lemma~\ref{lem: lem2_ode_faster}.  We observe the following equality
\begin{multline*}
    t\dot{\pi}(t) + (\alpha - 1) \pi(t)  \\
=  \beta (\alpha - 1) \sqrt{s}  \int_{t_{0}}^{t } \left\langle \nabla f(X(u)), X(u) - x^{\star} \right\rangle du + t \left\langle X(t) - x^{\star}, \dot{X}(t) +  \beta\sqrt{s}\nabla f(X(t))\right\rangle + \frac{\alpha - 1}{2}\left\|X(t) - x^{\star}\right\|^2 . 
\end{multline*}
With~\eqref{eqn: exist_limitation1}  and Lemma~\ref{lem: lem2_ode_faster}, we obtain that the following limit exists
\[
\lim_{t\rightarrow \infty} \left[ t\dot{\pi}(t) + (\alpha- 1) \pi(t)\right],
\]
that is, there exists some constant $\mathfrak{C}^{3}$ such that the following equality holds,
\[
\lim_{t\rightarrow \infty}\frac{\frac{ \dd (t^{\alpha - 1}\pi(t))}{\dd t}}{t^{\alpha - 2}} = \lim_{t\rightarrow \infty} \left[ t\dot{\pi}(t) + (\alpha - 1) \pi(t)\right] = \mathfrak{C}^{3}.
\]
For any $\epsilon > 0$, there exists $t_{0}>0$ such that when $t \geq t_{0}$, we have
\[
t^{\alpha - 1} \left(\pi(t) - \frac{\mathfrak{C}^{3}}{\alpha - 1} \right)- t_{0}^{\alpha - 1} \left(\pi(t_{0}) - \frac{\mathfrak{C}^{3}}{\alpha - 1} \right) \leq \frac{\epsilon }{\alpha - 1}  \cdot \left(t^{\alpha - 1} - t_{0}^{\alpha - 1}\right)
\]
that is,
\[
\left|\pi(t) - \frac{\mathfrak{C}^3}{\alpha - 1}\right| \leq \left| \pi(t_0) - \frac{\mathfrak{C}^3}{\alpha - 1}\right| \left( \frac{t_{0}}{t}\right)^{\alpha - 1} + \frac{\epsilon}{\alpha - 1}.
\]
The proof is complete.
\end{proof}

Finally, we finish the proof for Theorem~\ref{thm: faster_convergence}.

\begin{proof}[Proof of Theorem~\ref{thm: faster_convergence}]
When $t > t_{\alpha, \beta, \nu}$, we expand the Lyapunov function~\eqref{eqn: continous_generalize_energy_faster} as
\begin{multline*}
\mathcal{E}_{\nu}(t) =  t \left[t + \left(\frac{\alpha}{2} - \beta\right)\sqrt{s} + (\alpha - \nu - 1) \beta \sqrt{s} \right] \left( f(X(t)) - f(x^{\star}) \right) + \frac{\nu (\alpha  - 1)}{2} \left\| X(t) - x^{\star}\right\|^{2}\\
+ \frac{t^{2}}{2} \left\| \dot{X}(t) + \beta \sqrt{s} \nabla f(X(t))  \right\|^{2} +  t \left\langle X(t) - x^{\star}, \dot{X}(t) + \beta \sqrt{s} \nabla f(X(t)) \right\rangle.
\end{multline*}
With Lemma~\ref{lem: lem1_ode_faster} and Lemma~\ref{lem: lem3_ode_faster}, we obtain the first equation of~\eqref{eqn: faster_convergence}. Furthermore, Cauchy-Scharwz inequality tells that
\begin{align*}
       &  \left[t + \left(\frac{\alpha}{2} - \beta\right)\sqrt{s} + (\alpha - \nu - 1) \beta \sqrt{s} \right] \left( f(X(t)) - f(x^{\star}) \right) + \frac{t}{2} \left\| \dot{X}(t) + \beta\sqrt{s} \nabla f(X(t))  \right\|^{2} \\
\leq  & \left[t + \left(\frac{\alpha}{2} - \beta\right)\sqrt{s} + (\alpha - \nu - 1) \beta \sqrt{s} \right] \left( f(X(t)) - f(x^{\star}) \right) + t\left\| \dot{X}(t)   \right\|^{2} + \beta^2 s t \left\| \nabla f(X(t)) \right\|^{2}.
\end{align*}
With Lemma~\ref{lem: lem1_ode_faster}, we obtain the second equation of~\eqref{eqn: faster_convergence}. With basic calculation, we complete the proof.
\end{proof}

\subsubsection{Proof of Theorem~\ref{thm:faster_rate_0}}
\label{subsec: discrete_faster}
Similarly, under the assumption of Theorem~\ref{thm:faster_rate_0}, if we can show a discrete version of~\eqref{eqn: faster_convergence}, that is, there exists some constant $\mathfrak{C}^4_{\alpha, \beta, \nu} > 0$ and $\mathfrak{c}_{\alpha, \beta, \nu}> 0$ such that when the step size satisfies $0 < s \leq \mathfrak{c}_{\alpha, \beta, \nu}/L$, the following relationship holds
\begin{equation}
\label{eqn: faster_convergence_discrete}
\left\{ \begin{aligned}
          & \lim_{k\rightarrow \infty} (k+1)^{2} \left ( f(x_{k}) - f(x^{\star}) +  \left\| v_{k} + \beta \sqrt{s} \nabla f(x_{k})\right\|^{2}\right) = \frac{\mathfrak{C}^4_{\alpha, \beta, \nu} \left\|x_{0} - x^{\star}\right\|^{2}}{s} \\
          & \sum_{k=0}^{\infty}(k + 1) \left( \left( f(x_{k}) - f(x^{\star})\right) +  \left\| v_{k} + \beta \sqrt{s} \nabla f(x_{k})\right\|^{2}\right) < \infty.
         \end{aligned} \right.
\end{equation}
Thus, we obtain the sharper convergence rate as 
\[
f(x_{k}) - f(x^{\star}) +  \left\| v_{k} + \beta \sqrt{s} \nabla f(x_{k})\right\|^{2} \leq o\left( \frac{\left\|x_{0} - x^{\star}\right\|^{2}}{s k^{2}} \right).
\]

Now we show the derivation of the inequality~\eqref{eqn: faster_convergence_discrete}. The discrete Lyapunov function is constructed as
\begin{multline}
\label{eqn: discrete_generalize_energy_faster}
\mathcal{E}(k) =  \underbrace{s( k  + 1) \left[  k + \alpha + 1 - \beta + \frac{( k + 2 )(\alpha - 1 -\nu) \beta}{k+ \alpha + 1}\right] \left( f(x_{k}) - f(x^{\star}) \right)}_{\mathbf{I}}\\
                           + \underbrace{\frac{\nu(\alpha - \nu - 1)}{2} \left\| x_{k+1} - x^{\star}\right\|^{2}}_{\mathbf{II}} + \underbrace{\frac{1}{2} \left\| \nu (x_{k + 1} - x^{\star}) + (k + 1)\sqrt{s} \left( v_{k} + \beta \sqrt{s} \nabla f(x_{k}) \right)  \right\|^{2}}_{\mathbf{III}},
 \end{multline}  
 where $2 \leq \nu < \alpha - 1$ and parts $\mathbf{I}$, $\mathbf{II}$ and $\mathbf{III}$ are potential, Euclidean distance and mixed energy respectively.  Apparently, when $\nu = \alpha - 1$, the discrete Lyapunov function~\eqref{eqn: discrete_generalize_energy_faster} is consistent with~\eqref{eqn: energy_functional_ses_>3}. When $\beta = 1$ and $\nu = \alpha - 1 \rightarrow 2$,  the discrete Lyapunov function~\eqref{eqn: discrete_generalize_energy_faster} degenerates to~\eqref{eqn: NAGM-C_original_lypunov}, 
Now, we turn to estimate the difference of Lyapunov function~\eqref{eqn: discrete_generalize_energy_faster}.  
 \begin{itemize}
 \item For the part $\mathbf{I}$, potential, we have
          \begin{align*}
            &   s ( k  + 2) \left[  k + \alpha + 2 - \beta + \frac{( k + 3 )(\alpha - 1 -\nu)\beta}{k+ \alpha + 2}\right] \left( f(x_{k + 1}) - f(x^{\star}) \right) \\
            &   - s ( k  + 1) \left[  k + \alpha + 1 - \beta + \frac{( k + 2 )(\alpha - 1 -\nu) \beta }{k+ \alpha + 1}\right] \left( f(x_{k}) - f(x^{\star}) \right)\\
          = & s ( k  + 1) \left[  k + \alpha + 1 - \beta + \frac{( k + 2 )(\alpha - 1 -\nu) \beta }{k+ \alpha + 1}\right] \left( f(x_{k + 1}) - f(x_{k}) \right) \\
             &  +s \left(2k + \alpha + 3- \beta\right) \left( f(x_{k + 1}) - f(x^{\star}) \right) \\
             &  + s(k + 2)(\alpha - 1 - \nu)\beta  \left[ \frac{k + 3}{k + \alpha + 2} - \frac{k + 1}{k + \alpha + 1} \right] \left( f(x_{k + 1}) - f(x^{\star}) \right) \\
       \leq &  \underbrace{s ( k  + 1) \left[  k + \alpha + 1 - \beta + \frac{( k + 2 )(\alpha - 1 -\nu) \beta }{k+ \alpha + 1}\right] \left( f(x_{k + 1}) - f(x_{k}) \right) }_{\mathbf{I_{1}}} \\
             & + \underbrace{s \left[2k + \alpha + 3 + (2\alpha - 3 - 2\nu)\beta\right] \left( f(x_{k + 1}) - f(x^{\star}) \right)}_{\mathbf{I}_{2}},
          \end{align*}
where the last inequality follows $k+ \alpha +2 > k+ \alpha +1 > k + 2$.        

\item  For the part $\mathbf{II}$, Euclidean distance, we have
          \begin{align*}
           &\frac{\nu (\alpha - \nu - 1)}{2} \left\| x_{k+2} - x^{\star}\right\|^{2} - \frac{\nu (\alpha - \nu - 1)}{2} \left\| x_{k+1} - x^{\star}\right\|^{2} \\
     =    &  \underbrace{\nu (\alpha - \nu - 1) \left\langle  x_{k + 2} - x_{k + 1}, x_{k + 2} - x^{\star}\right\rangle}_{\mathbf{II}_{1}} \underbrace{- \frac{\nu (\alpha - \nu - 1)}{2} \left\|x_{k + 2} - x_{k + 1} \right\|^{2}}_{\mathbf{II}_{2}}.
          \end{align*}
\item For the part $\mathbf{III}$, mixed energy, with the simple transformation~\eqref{eqn: simple_transform_generalize_NAGM-C_SES>3} for $\alpha > 3$
\begin{align*}
( k + \alpha )\left( v_{k} + \beta\sqrt{s} \nabla f(x_{k}) \right) -  k \left( v_{k - 1} + \beta\sqrt{s} \nabla f(x_{k - 1}) \right) =  - \sqrt{s} \left( k + \gamma   - \gamma \beta \right) \nabla f(x_{k}),
\end{align*}
we have
\begin{align*}
     &   \frac{1}{2} \left\| \nu (x_{k + 2} - x^{\star}) + (k + 2)\sqrt{s} \left( v_{k + 1} + \beta\sqrt{s} \nabla f(x_{k + 1}) \right)  \right\|^{2} \\
     & \qquad \qquad - \frac{1}{2} \left\| \nu (x_{k + 1} - x^{\star}) + (k + 1)\sqrt{s} \left( v_{k} + \beta \sqrt{s} \nabla f(x_{k}) \right)  \right\|^{2} \\
=   &   \left\langle \nu (x_{k +2} - x_{k + 1}) + (k + 2)\sqrt{s} \left( v_{k + 1} + \beta \sqrt{s} \nabla f(x_{k+1})\right) - (k + 1)\sqrt{s}\left( v_{k} + \beta \sqrt{s} \nabla f(x_{k})\right),  \right.\\
                                                         & \quad \qquad \qquad \qquad \qquad \qquad \qquad \qquad \quad    \left. \nu (x_{k+2} - x^{\star}) + (k + 2)\sqrt{s} \left( v_{k + 1} + \beta \sqrt{s} \nabla f(x_{k+1})\right)\right\rangle\\
                                                         & \quad - \frac{1}{2}\left\| \nu (x_{k +2} - x_{k + 1}) + (k + 2)\sqrt{s} \left( v_{k + 1} + \beta\sqrt{s} \nabla f(x_{k+1})\right)- (k + 1)\sqrt{s}\left( v_{k} + \beta\sqrt{s} \nabla f(x_{k})\right) \right\|^{2} \\
=   & - \left\langle   s\left( k + \alpha + 1- \beta\right) \nabla f(x_{k+1}) + (\alpha - 1 - \nu) (x_{k+2} - x_{k+1}),\right.\\
                                                         & \qquad\qquad  \left. \nu (x_{k+2} - x^{\star}) + (k + 2 )(x_{k+2} - x_{k+1}) + \beta s(k + 2) \nabla f(x_{k+1})\right\rangle\\
                                                         &  - \frac{1}{2}\left\| s\left( k + \alpha + 1- \beta\right) \nabla f(x_{k+1}) + (\alpha - 1 - \nu) (x_{k+2} - x_{k+1})\right\|^2    \\
=                                                       &  - \nu (\alpha - \nu - 1) \left\langle  x_{k + 2} - x_{k + 1}, x_{k + 2} - x^{\star}\right\rangle - (k + 2)(\alpha - \nu - 1)\left\| x_{k+2} - x_{k+1} \right\|^{2} \\
                                                         & - \beta s(k+2)(\alpha -1 - \nu) \left\langle \nabla f(x_{k+1}), x_{k+2} - x_{k+1} \right\rangle \\
                                                         &  -  \left\langle    s\left( k + \alpha + 1- \beta\right) \nabla f(x_{k+1}),\right.\\
                                                         & \qquad\quad  \left. \nu (x_{k+1} - x^{\star}) + (k + 2 + \nu )(x_{k+2} - x_{k+1}) + \beta s (k + 2) \nabla f(x_{k+1})\right\rangle\\
                                                         &  - \frac{1}{2}\left\| s\left( k + \alpha + 1- \beta\right) \nabla f(x_{k+1}) \right\|^2  \\
                                                         &  - \left\langle s\left( k + \alpha + 1- \beta\right) \nabla f(x_{k+1}), (\alpha - 1 - \nu) (x_{k + 2} - x_{k + 1})\right\rangle  \\
                                                         &  - \frac{\left( \alpha - 1 - \nu \right)^{2} }{2} \left\| x_{k + 2} - x_{k + 1}\right\|^{2}  \\
 = & \underbrace{- \nu (\alpha - \nu - 1) \left\langle  x_{k + 2} - x_{k + 1}, x_{k + 2} - x^{\star}\right\rangle}_{\mathbf{III}_{1}} \underbrace{- \frac{(2k + \alpha +3 - \nu)(\alpha - \nu - 1)}{2} \left\| x_{k + 2} - x_{k + 1} \right\|^{2}}_{\mathbf{III}_{2}}\\
    & \underbrace{- s(k + \alpha + 1) \left[ k + \alpha + 1-\beta + \frac{(k + 2)(\alpha - 1 - \nu)\beta}{k + \alpha + 1}\right]   \left\langle \nabla f(x_{k + 1}), x_{k + 2} - x_{k + 1} \right\rangle}_{\mathbf{III}_{3}}   \\
    & \underbrace{-   s\nu\left( k + \alpha + 1- \beta\right) \left\langle \nabla f(x_{k + 1}), x_{k + 1} - x^{\star} \right\rangle}_{\mathbf{III}_{4}} \\
    & \underbrace{- \frac{1}{2}s^2\left[  k + \alpha + 1- \beta+ 2(k + 2)\beta \right] \left( k + \alpha + 1- \beta\right)\left\| \nabla f(x_{k + 1}) \right\|^{2} }_{\mathbf{III}_{5}}.                                                                                                 
\end{align*}
\end{itemize}
Apparently, we can observe that
\[
\mathbf{II}_{1} + \mathbf{III}_{1} = 0,
\]
and 
\[
\mathbf{II}_{2} + \mathbf{III}_{2} = - \frac{s(2k + \alpha + 3)(\alpha - \nu - 1)}{2}  \left\| v_{k + 1}\right\|^{2}.
\]
Using the basic inequality for $f(x) \in \mathcal{F}_{L}^{1}(\mathbb{R}^n)$                    
\[
 f(x_{k}) \geq f(x_{k + 1}) + \left\langle \nabla f(x_{k + 1}), x_{k} - x_{k + 1} \right\rangle + \frac{1}{2L} \left\| \nabla f(x_{k + 1}) - \nabla f(x_{k}) \right\|^{2}, 
\]
 we have  
\begin{align*}
\mathbf{I}_{1} + \mathbf{III}_{3} + \mathbf{III}_{5} & = s ( k  + 1) \left[  k + \alpha + 1 - \beta + \frac{( k + 2 )(\alpha - 1 -\nu) \beta }{k+ \alpha + 1}\right] \left( f(x_{k + 1}) - f(x_{k}) \right)\\
                                                                               & \quad  - s(k + \alpha + 1) \left[ k + \alpha + 1-\beta + \frac{(k + 2)(\alpha - 1 - \nu)\beta}{k + \alpha + 1}\right]   \left\langle \nabla f(x_{k + 1}), x_{k + 2} - x_{k + 1} \right\rangle\\
                                                                               & \quad - \frac{1}{2}s^2\left[  k + \alpha + 1- \beta+ 2(k + 2)\beta \right] \left( k + \alpha + 1- \beta\right)\left\| \nabla f(x_{k + 1}) \right\|^{2}\\
                                                                               & \leq - s^{\frac{3}{2}} \left[  k + \alpha + 1 - \beta + \frac{( k + 2 )(\alpha - 1 -\nu) \beta }{k+ \alpha + 1}\right] \left\langle \nabla f(x_{k+1}), (k + \alpha + 1)v_{k+1} - (k+1)v_{k} \right\rangle   \\
                                                                              &   \quad - \frac{s(k + 1)}{2L}  \left[  k + \alpha + 1 - \beta + \frac{( k + 2 )(\alpha - 1 -\nu) \beta }{k+ \alpha + 1}\right] \left\|\nabla f(x_{k + 1}) - \nabla f(x_{k})\right\|^2\\
                                                                              &   \quad- \frac{1}{2}s^2\left[  k + \alpha + 1- \beta+ 2(k + 2)\beta \right] \left( k + \alpha + 1- \beta\right)\left\| \nabla f(x_{k + 1}) \right\|^{2}.                         
\end{align*}
Utilizing~\eqref{eqn: simple_transform_generalize_NAGM-C_SES>3} again, 
we have
\begin{align*}
\mathbf{I}_{1} + \mathbf{III}_{3} + \mathbf{III}_{5}  & \leq   \beta s^{2} ( k  + 1) \left[  k + \alpha + 1 - \beta + \frac{( k + 2 )(\alpha - 1 -\nu) \beta }{k+ \alpha + 1}\right] \left\langle \nabla f(x_{k+1}), \nabla f(x_{k + 1}) - \nabla f(x_{k}) \right\rangle   \\
                                                    &  \quad + s^{2} (k + \alpha + 1) \left[  k + \alpha + 1 - \beta + \frac{( k + 2 )(\alpha - 1 -\nu) \beta }{k+ \alpha + 1}\right] \left\| \nabla f(x_{k + 1}) \right\|^{2} \\
                                                      &   \quad - \frac{s(k + 1)}{2L}  \left[  k + \alpha + 1 - \beta + \frac{( k + 2 )(\alpha - 1 -\nu) \beta }{k+ \alpha + 1}\right] \left\|\nabla f(x_{k + 1}) - \nabla f(x_{k})\right\|^2\\
                                                                              &   \quad- \frac{1}{2}s^2\left[  k + \alpha + 1- \beta+ 2(k + 2)\beta \right] \left( k + \alpha + 1- \beta\right)\left\| \nabla f(x_{k + 1}) \right\|^{2}\\            
                                                    & \leq \frac{L\beta^2s^2}{2}  ( k  + 1) \left[  k + \alpha + 1 - \beta + \frac{( k + 2 )(\alpha - 1 -\nu) \beta }{k+ \alpha + 1}\right] \left\| \nabla f(x_{k + 1}) \right\|^{2} \\   
                                                     &  \quad + s^{2} (k + \alpha + 1) \left[  k + \alpha + 1 - \beta + \frac{( k + 2 )(\alpha - 1 -\nu) \beta }{k+ \alpha + 1}\right] \left\| \nabla f(x_{k + 1}) \right\|^{2} \\
                                                     &   \quad- \frac{1}{2}s^2\left[  k + \alpha + 1- \beta+ 2(k + 2)\beta \right] \left( k + \alpha + 1- \beta\right)\left\| \nabla f(x_{k + 1}) \right\|^{2}\\
                                                     & = s^2\left[  \frac{L\beta^2 s}{2}  (k + 1) + (k + \alpha + 1)\right]  \left[  k + \alpha + 1 - \beta + \frac{( k + 2 )(\alpha - 1 -\nu) \beta }{k+ \alpha + 1}\right] \left\| \nabla f(x_{k + 1}) \right\|^{2} \\
                                                      &   \quad- \frac{1}{2}s^2\left[  (2\beta + 1)k + \alpha + 1+3 \beta \right] \left( k + \alpha + 1- \beta\right)\left\| \nabla f(x_{k + 1}) \right\|^{2}\\
                                                       & \leq s^2\left[  \frac{L\beta^2 s}{2}  (k + 1) + (k + \alpha + 1)\right]  \left[  k + \alpha + 1 - \beta + (\alpha - 1 -\nu) \beta \right] \left\| \nabla f(x_{k + 1}) \right\|^{2} \\
                                                      &   \quad- \frac{1}{2}s^2\left[  (2\beta + 1)k + \alpha + 1+3 \beta \right] \left( k + \alpha + 1- \beta\right)\left\| \nabla f(x_{k + 1}) \right\|^{2}
\end{align*}
Since $\beta > 1/2$, let $n \in \mathbb{N}^{+}$ satisfy
\[
n = \left\lfloor \frac{2}{2\beta - 1}\right\rfloor + 1.
\]
When $k \geq n (\alpha - 1 - \nu) \beta - (\alpha + 1 - \beta)$, we have
\begin{align*}
\mathbf{I}_{1} + \mathbf{III}_{3} + \mathbf{III}_{5}   & \leq s^2\left[  \frac{L\beta^2 s}{2}  (k + 1) + (k + \alpha + 1)\right]  \left[  k + \alpha + 1 - \beta + (\alpha - 1 -\nu) \beta \right] \left\| \nabla f(x_{k + 1}) \right\|^{2} \\
                                                      &   \quad- \frac{s^2 n}{2(n + 1)} \cdot \left[  (2\beta + 1)k + \alpha + 1+3 \beta \right]  \left[  k + \alpha + 1 - \beta + (\alpha - 1 -\nu) \beta \right]  \left\| \nabla f(x_{k + 1}) \right\|^{2} 
\end{align*}

With the monotonicity of the following function about $k$
\begin{align*}
h(k) = &\frac{\left(\frac{n(2 \beta + 1)}{2(n + 1)} -1\right)k + \frac{n}{2 (n + 1)} \cdot (\alpha + 1 + 3\beta) - \alpha - 1}{\frac{L \beta^2 (k + 1)}{2}} \\= &\frac{(2\beta n- n -2)(k + 1)+ (\beta - \alpha )n - 2\alpha  }{L\beta^{2} (n + 1)(k + 1)},
\end{align*}
we know there exists some constant $\mathfrak{c}_{\alpha, \beta, \nu}$ and $k_{1,\alpha, \beta, \nu}$ such that the step size satisfies $0 < s \leq \mathfrak{c}_{\alpha, \beta, \nu}/L$.
When $k \geq k_{1,\alpha, \beta, \nu}$, the following inequality holds
\[
\mathbf{I}_{1} + \mathbf{III}_{3} + \mathbf{III}_{5}  \leq -  \frac{s^{2}}{2} \left( \frac{2 \beta n}{n + 1}  - \frac{n + 2}{n + 1}- L\beta^2s\right) (k - k_{1,\alpha, \beta, \nu})^{2}\left\| \nabla f(x_{k + 1}) \right\|^{2}.
\]

With the basic inequality for $f(x) \in \mathcal{F}_{L}^{1}(\mathbb{R}^{n})$,
\[
f(x^{\star}) \geq f(x_{k + 1}) + \left\langle \nabla f(x_{k + 1}), x^{\star} - x_{k + 1} \right\rangle,
\]
we know that there exists $ k_{2,\alpha,\beta,\nu}$ such that when $k \geq k_{2,\alpha,\beta,\nu}$,
\[
\mathbf{I}_{2} + \mathbf{III}_{4} \leq - s (\nu - 2) (k - k_{2,\alpha,\beta,\nu})  \left\langle \nabla f(x_{k + 1}), x_{k + 1} - x^{\star} \right\rangle.
\] 

Let $k_{\alpha,\beta,\nu} = \max \{k_{1,\alpha,\beta,\nu}, k_{2,\alpha,\beta,\nu}\} + 1$. Summing up all the estimates above, when $\beta > 1/2$, the difference of discrete Lyapunov function, for any $k \geq k_{\alpha,\beta,\nu}$,
\begin{align*}
\mathcal{E}(k + 1) - \mathcal{E}(k)    & \leq  - \frac{s^{2} }{2} \left( \frac{2 \beta n}{n + 1}  - \frac{n + 2}{n + 1}- L\beta^2s\right)(k - k_{\alpha, \beta, \nu})^{2}\left\| \nabla f(x_{k + 1}) \right\|^{2} \\
                                                           &\quad - s (\nu - 2) (k - k_{\alpha,\beta,\nu})  \left\langle \nabla f(x_{k + 1}), x_{k + 1} - x^{\star} \right\rangle \\
                                                           & \quad - \frac{s(2k + \alpha + 3)(\alpha - \nu - 1)}{2}  \left\| v_{k + 1}\right\|^{2}.
\end{align*}
With the basic inequality for any function $f(x) \in \mathcal{F}_{L}^{1}(\mathbb{R}^{n})$
\[
 \left\langle \nabla f(x_{k + 1}), x_{k + 1} - x^{\star} \right\rangle \geq f(x_{k + 1}) - f(x^{\star}),
\]
we can obtain the following lemma. 
\begin{lem}
\label{lem: lem1_discrete_faster}
Under the same assumption of Theorem~\ref{thm:faster_rate_0}, the following limit exists
\[
\lim_{k \rightarrow \infty} \mathcal{E}(k)
\]
and the summation of the following series exist 
\begin{align*}
& \sum_{k = 0}^{\infty} (k + 1)^2\left\| \nabla f(x_{k + 1}) \right\|^{2}, && \sum_{k = 0}^{\infty} (k + 1)\left\langle \nabla f(x_{k + 1}), x_{k + 1} - x^{\star} \right\rangle, \\
& \sum_{k =0}^{\infty} (k + 1)(f(x_{k +1}) - f(x^{\star})), && \sum_{k =0}^{\infty}(k + 1)\left\| v_{k + 1}\right\|^{2}.
\end{align*}
\end{lem}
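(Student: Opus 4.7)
The plan is to exploit the one-step difference inequality for $\mathcal{E}(k+1) - \mathcal{E}(k)$ that is displayed just above the statement of the lemma, which expresses the decrement as a sum of three separately non-positive contributions. First I would observe that under $\beta > 1/2$, $2 \leq \nu < \alpha - 1$, and the prescribed step-size condition $0 < s \leq \mathfrak{c}_{\alpha,\beta,\nu}/L$, the coefficient of $(k - k_{\alpha,\beta,\nu})^{2} \|\nabla f(x_{k+1})\|^{2}$ is strictly positive; moreover, convexity of $f$ gives $\langle \nabla f(x_{k+1}), x_{k+1} - x^\star\rangle \geq f(x_{k+1}) - f(x^\star) \geq 0$, and $\|v_{k+1}\|^{2} \geq 0$. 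Hence $\mathcal{E}(k+1) \leq \mathcal{E}(k)$ for every $k \geq k_{\alpha,\beta,\nu}$. Since each of the three summands composing $\mathcal{E}(k)$ in \eqref{eqn: discrete_generalize_energy_faster} is non-negative (the potential term has a positive prefactor, and the other two are squared norms), we have $\mathcal{E}(k) \geq 0$. The sequence is thus eventually monotone and bounded below, so $\mathcal{E}_{\infty} := \lim_{k \to \infty} \mathcal{E}(k) \geq 0$ exists.

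Next, I would telescope the difference inequality from $k = k_{\alpha,\beta,\nu}$ up to $K$ and pass $K \to \infty$ to obtain
\begin{multline*}
\sum_{k = k_{\alpha,\beta,\nu}}^{\infty} \Bigl[ c_{1} (k - k_{\alpha,\beta,\nu})^{2} \|\nabla f(x_{k+1})\|^{2} + c_{2} (k - k_{\alpha,\beta,\nu}) \langle \nabla f(x_{k+1}), x_{k+1} - x^\star\rangle \\
 + c_{3} (2k + \alpha + 3) \|v_{k+1}\|^{2} \Bigr] \leq \mathcal{E}(k_{\alpha,\beta,\nu}) - \mathcal{E}_{\infty} < \infty,
\end{multline*}
with positive constants $c_{1}, c_{2}, c_{3}$ read off from the displayed inequality. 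Using $\langle \nabla f(x_{k+1}), x_{k+1} - x^\star \rangle \geq f(x_{k+1}) - f(x^\star) \geq 0$ inside the middle summand also yields finiteness of $\sum (k - k_{\alpha,\beta,\nu}) (f(x_{k+1}) - f(x^\star))$.

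The final step is an elementary index shift to recover the weights $(k+1)^{2}$, $(k+1)$ displayed in the lemma. For $k \geq 2 k_{\alpha,\beta,\nu} + 1$ one has $(k+1)^{2} \leq 4 (k - k_{\alpha,\beta,\nu})^{2}$, $(k+1) \leq 2 (k - k_{\alpha,\beta,\nu})$, and $2k + \alpha + 3 \geq k+1$, so the tails of the four series in the lemma are dominated (up to constants) by the series already shown to be finite. The heads, consisting of a finite number of terms with indices $k \leq 2 k_{\alpha,\beta,\nu}$, are automatically finite since the iterates $\{x_{k}, v_{k}\}$ produced by the recursion \eqref{eqn: generalize_NAGM-C_SES} from finite initial data are finite, and $L$-smoothness of $f$ then makes $\|\nabla f(x_{k+1})\|$, $f(x_{k+1}) - f(x^\star)$, and $\|v_{k+1}\|$ finite at each step; the inner product $\langle \nabla f(x_{k+1}), x_{k+1} - x^\star\rangle$ is bounded by $\|\nabla f(x_{k+1})\| \, \|x_{k+1} - x^\star\|$ via Cauchy--Schwarz.

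The only real subtlety I foresee is verifying that the constant $c_{1}$ of the leading $\|\nabla f(x_{k+1})\|^{2}$ term is strictly positive uniformly over the admissible step-size range; this, however, has already been carried out in the computation immediately preceding the lemma, where the choice $n = \lfloor 2/(2\beta - 1)\rfloor + 1$ and the smallness of $s$ together guarantee $\frac{2\beta n}{n+1} - \frac{n+2}{n+1} - L\beta^{2} s > 0$. Everything else is bookkeeping, and no new analytic input beyond convexity, $L$-smoothness, and the phase-space representation \eqref{eqn: generalize_NAGM-C_SES} is required.
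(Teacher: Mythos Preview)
Your proposal is correct and follows essentially the same approach as the paper: the paper's proof is a one-line remark that the displayed difference inequality together with $\langle \nabla f(x_{k+1}), x_{k+1}-x^\star\rangle \ge f(x_{k+1})-f(x^\star)$ yields the lemma, and you have supplied precisely the telescoping, non-negativity, and index-shift bookkeeping that this remark leaves implicit. The only point to tidy is that the middle coefficient $c_2 = s(\nu-2)$ is strictly positive only for $\nu>2$, so you should take $\nu\in(2,\alpha-1)$ rather than $\nu\ge 2$; this is harmless since $\nu$ is a free parameter in the construction.
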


\begin{lem}
\label{lem: lem2_discrete_faster}
Under the same assumption of Theorem~\ref{thm:faster_rate_0}, the following limits exist
\[
\lim_{k \rightarrow \infty} \left\| x_{k} - x^{\star} \right\|\quad \mathbf{and} \quad \lim_{k \rightarrow \infty} (k + 1)\left\langle x_{k + 1} -x^{\star}, v_{k} + \beta \sqrt{s} \nabla f(x_{k})\right\rangle.
\]
\end{lem}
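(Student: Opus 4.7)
The plan is to transport the two-step argument of Lemma~\ref{lem: lem3_ode_faster} from continuous to discrete time. Since $\mathcal{E}_\nu(k)$ in \eqref{eqn: discrete_generalize_energy_faster} is defined for every $\nu \in [2,\alpha-1)$ and, by Lemma~\ref{lem: lem1_discrete_faster} applied to each admissible $\nu$ separately, the limit $\lim_{k\to\infty} \mathcal{E}_\nu(k)$ exists for each such $\nu$, I would first isolate the position-dependent part by forming the difference $\mathcal{E}_\nu(k) - \mathcal{E}_{\nu'}(k)$ for two distinct admissible values. After expanding the squared mixed-energy term and using $\tfrac{\nu(\alpha-\nu-1)+\nu^2}{2} = \tfrac{\nu(\alpha-1)}{2}$, the $\nu$-dependence factors out as
\begin{equation*}
\mathcal{E}_\nu(k) - \mathcal{E}_{\nu'}(k) = (\nu - \nu')\,\Phi(k) - \tfrac{(k+1)(k+2)(\nu-\nu')\beta s}{k+\alpha+1}\bigl(f(x_k) - f(x^\star)\bigr),
\end{equation*}
where $\Phi(k) := \tfrac{\alpha-1}{2}\|x_{k+1}-x^\star\|^2 + (k+1)\sqrt{s}\,\langle x_{k+1} - x^\star,\, v_k + \beta\sqrt{s}\,\nabla f(x_k)\rangle$. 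Since Theorem~\ref{thm:c_vary_alpha3} gives $f(x_k) - f(x^\star) = O(1/(sk^2))$, the residual term is $O(1/k)$, so $\lim_{k\to\infty}\Phi(k) =: \bar\Phi$ exists.

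Next, to extract $\lim \|x_k - x^\star\|$, I would expand $(k+1)\sqrt{s}\langle x_{k+1}-x^\star, v_k\rangle = (k+1)\langle x_{k+1} - x^\star, x_{k+1} - x_k\rangle$ via the identity $2\langle a, a-b\rangle = \|a\|^2 - \|b\|^2 + \|a-b\|^2$, and split $\langle x_{k+1} - x^\star, \nabla f(x_k)\rangle = \langle x_k - x^\star, \nabla f(x_k)\rangle + \sqrt{s}\langle v_k, \nabla f(x_k)\rangle$. This rewrites $\Phi(k)$ as
\begin{equation*}
\tfrac{k+\alpha}{2}\|x_{k+1} - x^\star\|^2 - \tfrac{k+1}{2}\|x_k - x^\star\|^2 + \tfrac{(k+1)s}{2}\|v_k\|^2 + (k+1)s\beta\langle \nabla f(x_k), x_k - x^\star\rangle + (k+1)s^{3/2}\beta\langle v_k, \nabla f(x_k)\rangle.
\end{equation*}
By Lemma~\ref{lem: lem1_discrete_faster} the non-negative series $\sum (k+1)\|v_{k+1}\|^2$ and $\sum (k+1)\langle \nabla f(x_{k+1}), x_{k+1} - x^\star\rangle$ are convergent, hence their summands vanish; a Cauchy--Schwarz bound on $(k+1)s^{3/2}\beta\langle v_k,\nabla f(x_k)\rangle$ together with the summability of $(k+1)^2\|\nabla f(x_{k+1})\|^2$ dispatches the cross term. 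Consequently $(k+\alpha)\|x_{k+1} - x^\star\|^2 - (k+1)\|x_k - x^\star\|^2$ converges to $2\bar\Phi$.

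Setting $w_k := \|x_k - x^\star\|^2$, this yields the linear recurrence $w_{k+1} = \tfrac{k+1}{k+\alpha}w_k + \tfrac{c_k}{k+\alpha}$ with $c_k \to 2\bar\Phi$. Using the explicit product $P_k := \prod_{j=0}^{k-1}\tfrac{j+1}{j+\alpha} = \Gamma(\alpha)\Gamma(k+1)/\Gamma(k+\alpha) \sim \Gamma(\alpha)\,k^{1-\alpha}$, one obtains $w_k = P_k\bigl(w_0 + \sum_{j=0}^{k-1} \tfrac{c_j}{(j+\alpha)P_{j+1}}\bigr)$; a Stolz--Cesaro argument applied to this sum---where $\tfrac{1}{(j+\alpha)P_{j+1}} \sim j^{\alpha-2}/\Gamma(\alpha)$ and $\sum_{j=0}^{k-1} j^{\alpha-2} \sim k^{\alpha-1}/(\alpha-1)$---yields $w_k \to 2\bar\Phi/(\alpha-1)$, proving that $\lim_{k\to\infty}\|x_k - x^\star\|$ exists. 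The second limit then falls out by rearranging $\Phi(k)$, since both $\Phi(k)$ and $\|x_{k+1} - x^\star\|^2$ now have limits.

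The hard part will be the Stolz--Cesaro step, which plays the role of the ODE manipulation $\frac{d(t^{\alpha-1}\pi)/dt}{t^{\alpha-2}} = t\dot\pi + (\alpha-1)\pi$ used in the proof of Lemma~\ref{lem: lem3_ode_faster}; the hypothesis $\alpha > 3$ enters crucially to ensure that $\sum j^{\alpha-2}$ diverges at the correct rate for the limit of $c_k$ to propagate to $w_k$. The bookkeeping for the three vanishing error terms---particularly the mixed term $(k+1)s^{3/2}\beta\langle v_k, \nabla f(x_k)\rangle$---is delicate but routine given the summability conclusions of Lemma~\ref{lem: lem1_discrete_faster}.
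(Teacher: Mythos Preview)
Your proposal is correct and follows essentially the same two-step strategy as the paper: first form $\mathcal{E}_\nu(k)-\mathcal{E}_{\nu'}(k)$ to extract the convergence of $\Phi(k)$, then reduce to a linear recurrence $(k+\alpha)a_{k+1}-(k+1)a_k\to C$ and solve it by a Stolz--Ces\`aro/discrete-integrating-factor argument. The only organizational difference is in the second step: the paper introduces the auxiliary sequence $\pi(k)=\tfrac12\|x_k-x^\star\|^2+\beta s\sum_{i}\langle\nabla f(x_i),x_{i+1}-x^\star\rangle$ (the discrete analogue of $\pi(t)$ in Lemma~\ref{lem: lem3_ode_faster}), which absorbs the inner-product error term into the recurrence, whereas you work directly with $w_k=\|x_k-x^\star\|^2$ and dispose of all three error terms individually using the summability conclusions of Lemma~\ref{lem: lem1_discrete_faster}. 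Your route is slightly more direct and uses the exact product $P_k=\Gamma(\alpha)\Gamma(k+1)/\Gamma(k+\alpha)$ rather than the paper's approximate factor $(k+1)^{\alpha-1}$; both lead to the same limit $w_k\to 2\bar\Phi/(\alpha-1)$. One minor remark: the Stolz--Ces\`aro step actually only needs $\alpha>1$ (so that $1/P_k\uparrow\infty$); the hypothesis $\alpha>3$ is used upstream to make Lemma~\ref{lem: lem1_discrete_faster} available, not in the recurrence itself.
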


\begin{proof}[Proof of Lemma~\ref{lem: lem2_discrete_faster}]
Taking $\nu \neq \nu' \in (2, \gamma - 1]$, we have
\begin{align*}
\mathcal{E}_{\nu}(k) - \mathcal{E}_{\nu'}(k) & = (\nu - \nu') \left[ - s \beta \cdot \frac{(k + 1)(k + 2) }{k + \alpha + 1} \left( f(x_{k}) - f(x^{\star}) \right)\right. \\
                                                                     & \qquad \qquad  \quad \left.+ (k + 1)\sqrt{s}\left\langle x_{k + 1} - x^{\star}, v_{k} + \beta \sqrt{s} \nabla f(x_{k})\right\rangle+  \frac{ (\alpha - 1)}{2} \left\| x_{k + 1} - x^{\star} \right\|^{2} \right]
\end{align*}
With Lemma~\ref{lem: lem1_discrete_faster}, the following limit exists 
\begin{equation}
\label{eqn: discrete_general_ok^2}
\lim_{k \rightarrow \infty}\left[ (k + 1)\sqrt{s} \left\langle x_{k + 1} -x^{\star}, v_{k} + \beta \sqrt{s} \nabla f(x_{k})\right\rangle  + \frac{\alpha - 1}{2} \left\| x_{k + 1} - x^{\star} \right\|^{2}\right].
\end{equation}
Define a new function about $k$:
\begin{align*}
\pi(k) := \frac{1}{2} \left\| x_{k } - x^{\star} \right\|^{2} + \beta s \sum_{i = k_{0}}^{k - 1} \left\langle \nabla f(x_{i}), x_{i + 1} - x^{\star} \right\rangle.
\end{align*}
If we can show the existence of the limit $\pi(k)$ with $k \rightarrow \infty$, we can guarantee $\lim\limits_{k \rightarrow \infty} \left\| x_{k + 1} - x^{\star} \right\|$ exists with Lemma~\ref{lem: lem1_discrete_faster}. We observe the following equality
\begin{align*}
    & (k + 1) (\pi (k +1) - \pi(k)) + (\alpha - 1)\pi(k + 1) - s\left( \alpha - 1 \right)\beta \sum_{i = 0}^{k} \left\langle \nabla f(x_{i}), x_{i + 1} - x^{\star} \right\rangle \\
= & (k + 1) \left\langle x_{k + 1} - x_{k}, x_{k + 1} - x^{\star}\right\rangle - \frac{(k+1)s}{2} \left\| v_{k}\right\|^{2} + \frac{\alpha - 1}{2} \left\| x_{k + 1} - x^{\star} \right\|^{2} + s (k + 1)\beta \left\langle \nabla f(x_{k }), x_{k + 1} - x^{\star} \right\rangle \\
= & (k + 1)\sqrt{s} \left\langle x_{k + 1} -x^{\star}, v_{k} + \beta \sqrt{s} \nabla f(x_{k})\right\rangle - \frac{(k+1)s}{2} \left\| v_{k}\right\|^{2} + \frac{\alpha - 1}{2} \left\| x_{k + 1} - x^{\star} \right\|^{2}.
\end{align*}
Lemma~\ref{lem: lem1_discrete_faster} and~\eqref{eqn: discrete_general_ok^2} tell us there exists some constant $\mathfrak{C}^5$ such that
\[
\lim_{k \rightarrow \infty} \left[ (k + \alpha) \pi(k + 1) - (k + 1) \pi(k) \right] = \mathfrak{C}^5,
\]
that is, taking a simple translation $ \pi'(k) = \pi(k) - \mathfrak{C}^5/(\gamma - 1)$, we have
\[
\lim_{k \rightarrow \infty} \left[ (k + \alpha) \pi'(k + 1) - (k + 1) \pi'(k) \right] =0.
\]
Since $\mathcal{E}(k)$ decreases for $k \geq k_{\alpha, \beta, \nu}$, thus, $ \left\| x_{k } - x^{\star} \right\|^{2}$ is bounded. With  Lemma~\ref{lem: lem1_discrete_faster}, we obtain that $\pi(k)$ is bounded, that is, $\pi'(k)$ is bounded. Then we have
\[
\lim_{k \rightarrow \infty}\frac{(k+2)^{\alpha - 1} \pi'(k+1) - (k + 1)^{\alpha - 1} \pi'(k)}{ (k + 1)^{\alpha - 2}} = 0,
\]
that is, for any $\epsilon>0$, there exists $k'_{0} > 0$ such that 
\[
 \left| \pi'(k) \right| \leq \left( \frac{k'_{0} + 1}{k + 1}\right)^{\alpha - 1} \left| \pi'(k'_{0}) \right| + \frac{\epsilon \sum\limits_{i = k'_{0}}^{k-1} (i + 1)^{\alpha - 2} }{(k + 1)^{\alpha - 1}} .
\]
With arbitrary $\epsilon > 0$, we complete the proof of Lemma~\ref{lem: lem2_discrete_faster}.
\end{proof}
\begin{proof}[Proof of~\eqref{eqn: faster_convergence_discrete}]
When $k \geq k_{\alpha, \beta, \nu}$, we expand the discrete Lyapunov function~\eqref{eqn: discrete_generalize_energy_faster} as
\begin{multline*}
\mathcal{E}(k) =  s( k  + 1)  \left[  k + \alpha + 1 - \beta + \frac{( k + 2 )(\alpha - 1 -\nu) \beta}{k+ \alpha + 1}\right] \left( f(x_{k}) - f(x^{\star}) \right)\\
                           +\sqrt{s} (k + 1)\nu\left\langle x_{k + 1} - x^{\star}, v_{k}   + \beta \sqrt{s} \nabla f(x_{k}) \right\rangle\\ + \frac{\nu(\alpha  - 1)}{2} \left\| x_{k+1} - x^{\star}\right\|^{2}+ \frac{s(k  +1)^{2}}{2} \left\| v_{k} + \beta \sqrt{s} \nabla f(x_{k})  \right\|^{2}.
 \end{multline*}  
 With Lemma~\ref{lem: lem1_discrete_faster} and Lemma~\ref{lem: lem2_discrete_faster}, we obtain the first equation of~\eqref{eqn: faster_convergence_discrete}. Additionally, we have
 \begin{align*}
         & s  \left[  k + \alpha + 1 - \beta + \frac{( k + 2 )(\alpha - 1 -\nu) \beta}{k+ \alpha + 1}\right] \left( f(x_{k}) - f(x^{\star}) \right) + \frac{(k  +1)s}{2} \left\| v_{k} + \beta \sqrt{s} \nabla f(x_{k})  \right\|^{2} \\
 \leq & s   \left[  k + \alpha + 1 - \beta + \frac{( k + 2 )(\alpha - 1 -\nu) \beta}{k+ \alpha + 1}\right] \left( f(x_{k}) - f(x^{\star}) \right)  + (k + 1)s\left\| v_{k}   \right\|^{2}  + (k+1)\beta^{2}s^{2} \left\|\nabla f(x_{k})  \right\|^{2}.
 \end{align*}
 With Lemma~\ref{lem: lem1_discrete_faster}, we obtain the second equation of~\eqref{eqn: faster_convergence_discrete}.  
\end{proof}

\end{document}